\theoremstyle{plain}
\newtheorem{theorem}{Theorem}[section]
\newtheorem{corollary}[theorem]{Corollary}
\newtheorem{definition}[theorem]{Definition}
\newtheorem{definition-proposition}[theorem]{Definition-Proposition}
\newtheorem{lemma}[theorem]{Lemma}
\newtheorem{proposition}[theorem]{Proposition}
\newtheorem{remark}[theorem]{Remark}
\newtheorem*{theorem-no-num}{Theorem}
\newtheorem*{proposition-no-num}{Proposition}
\newtheorem*{corollary-no-num}{Corollary}
\newcommand{\kbf}{\mathbf{k}}
\newcommand{\et}{\mathrm{\acute{e}t}}
\newcommand{\ket}{\mathrm{k\acute{e}t}}
\newcommand{\fl}{\mathrm{fl}}
\newcommand{\kfl}{\mathrm{kfl}}
\newcommand{\fs}{\mathrm{fs}}
\newcommand{\gp}{\mathrm{gp}}
\newcommand{\id}{\mathrm{Id}}
\newcommand{\Hom}{\mathrm{Hom}}
\newcommand{\Ext}{\mathrm{Ext}}
\newcommand{\Mod}{\mathrm{Mod}}
\newcommand{\Gal}{\mathrm{Gal}}
\newcommand{\cHom}{\mathcal{H}om}
\newcommand{\cExt}{\mathcal{E}xt}
\newcommand{\Spec}{\mathop{\mathrm{Spec}}}
\newcommand{\Gm}{\mathbb{G}_{\mathrm{m}}}
\newcommand{\Gml}{\mathbb{G}_{\mathrm{m,log}}}
\newcommand{\GL}{\mathrm{GL}}
\newcommand{\N}{\mathbb{N}}
\newcommand{\Z}{\mathbb{Z}}
\newcommand{\Q}{\mathbb{Q}}
\newcommand{\R}{\mathbb{R}}
\newcommand{\F}{\mathbb{F}}
\newcommand{\cC}{\mathcal{C}}
\newcommand{\cD}{\mathcal{D}}
\newcommand{\Ocal}{\mathcal{O}}
\newcommand{\Mcal}{\mathcal{M}}
\newcommand{\Qcal}{\mathcal{Q}}
\newcommand{\Mbf}{\mathbf{M}}
\newcommand{\LP}{\mathbf{LP}}
\newcommand{\qW}{q\text{-Weil}}
\newcommand{\coker}{\mathrm{coker}}
\newcommand{\Mor}{\mathrm{Mor}}
\newcommand{\MorGammaP}{\mathrm{Mor}_{\Gamma}^{P}}
\newcommand{\MorGammaPpPol}{\mathrm{Mor}_{\Gamma}^{P,\mathrm{pPol}}}
\newcommand{\MorGammaNpPol}{\mathrm{Mor}_{\Gamma}^{\mathbb{N},\mathrm{pPol}}}
\newcommand{\MorGammaN}{\mathrm{Mor}_{\Gamma}^{\mathbb{N}}}
\DeclareFontFamily{U}{wncy}{}
\DeclareFontShape{U}{wncy}{m}{n}{<->wncyr10}{}
\DeclareSymbolFont{mcy}{U}{wncy}{m}{n}
\DeclareMathSymbol{\Sha}{\mathord}{mcy}{"58}
\begin{document}
	\title{Honda-Tate theory for log abelian varieties over finite fields}
	\author{Xiaoyu Zhang, Heer Zhao}
        
	\address{Universität Duisburg-Essen, Fakultät für Mathematik, Mathematikcarrée, Thea-Leymann-Straße 9, 45127 Essen, Germany}
        \email{xiaoyu.zhang@uni-due.de}

        \address{Harbin Institute of Technology, Institute for Advanced Study in Mathematics, 150001 Harbin, China}
        \email{heer.zhao@gmail.com}

	\subjclass[2020]{14A21 (primary), 14K02, 11G99 (secondary)}
        \keywords{Honda-Tate theory, log abelian varieties}
        \begin{abstract}
		In this article we study the Honda-Tate theory for log abelian varieties over an fs log point $S=(\Spec\kbf,M_S)$ for $\kbf=\F_q$ a finite field, generalizing the classical Honda-Tate theory for abelian varieties over $\kbf$. For the standard log point $S$, we give a complete description of the isogeny classes of such log abelian varieties using Weil $q$-numbers of weight 0,1, and 2. In the general case where $M_S$ admits a global chart $P\to\kbf$ with $P=\N^k$, we also give a complete description of simple isogeny classes of log abelian varieties over $S$ in terms of rational points in generalized simplices.
	\end{abstract}

	\maketitle

	\tableofcontents

	\section{Introduction}
	Honda-Tate theory is an important result in the theory of abelian varieties over finite fields (\cite{Honda1968,Tate1966}). It gives a complete description of the isogeny classes of simple abelian varieties over a finite field $\kbf=\mathbb{F}_q$ with $q$ elements in terms of Galois conjugacy classes of Weil $q$-numbers (of weight 1).
	It has many deep applications, for example, in the proof of the Manin conjecture on the Newton polygons of abelian varieties by Honda and Serre(\cite{Tate1968}); in the point counting problems on Shimura varieties (\cite{Kottwitz1992}). There are also many generalizations and refinements of this theory, see for example \cite{KisinMadapusiShin2022}.

	Recall that for an integer $w$, a \emph{Weil $q$-number of weight $w$} is an algebraic integer whose images in $\mathbb{C}$ all have absolute value equal to $q^{w/2}$.	
	In particular, a Weil $q$-number of weight $0$ is a root of unity by a classical result of Kronecker on algebraic integers (\cite{Kronecker1857}, see also \cite[Theorem 4.5.4]{Prasolov2004} for a convenient reference). We write
	\[
	\qW(w)
	\]
	for the set of Galois conjugacy classes $[\alpha]$ of Weil $q$-numbers $\alpha$ of weight $w$.
	For an abelian variety $A$ over $\kbf$, we write $[A]$ for its isogeny class and 
        \[P_{A,\pi_A}\]
    for the characteristic polynomial of the $q$-Frobenius endomorphism $\pi_A$ on $A$ (or equivalently on any member of $[A]$), see \cite[\S10, p.48]{Milne2008}.
    Then the theory of Honda-Tate states as follows:
	\begin{theorem}\label{Honda-Tate for AVs}
		Let $\kbf=\mathbb{F}_q$. There is a bijection between the set of isogeny classes of simple abelian varieties over $\kbf$ and the set of Galois conjugacy classes of Weil $q$-numbers of weight $1$
		\begin{align*}
			\begin{split}
				\left\{
				\text{simple abelian varieties over }
				\kbf
				\right\}/\text{isogeny}
				\rightarrow
				&
				\qW(1)
				\\
				[A]
				\mapsto
				&
				[\alpha],
			\end{split}
		\end{align*} 
                where $\alpha$ is a root of $P_{A,\pi_A}$.
	\end{theorem}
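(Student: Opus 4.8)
The plan is to establish the two directions of the bijection separately, following the classical arguments of Tate (injectivity) and Honda (surjectivity). For well-definedness I would first invoke Weil's Riemann hypothesis for abelian varieties over finite fields: for $A$ over $\kbf$ and any prime $\ell\nmid q$, the $q$-Frobenius $\pi_A$ acts on the rational Tate module $V_\ell A$ with characteristic polynomial $P_{A,\pi_A}\in\Z[T]$ independent of $\ell$, each of whose complex roots has absolute value $q^{1/2}$; thus the roots of $P_{A,\pi_A}$ lie in $\qW(1)$. When $A$ is simple, $\mathrm{End}^0(A)=\mathrm{End}(A)\otimes_{\Z}\Q$ is a division algebra, $\Q(\pi_A)\subseteq\mathrm{End}^0(A)$ is a subfield, and $P_{A,\pi_A}$ is a power of the minimal polynomial of $\pi_A$ over $\Q$; hence the Galois conjugacy class $[\alpha]$ of a root is independent of the choice of root, and it is unchanged under isogeny since isogenous abelian varieties have $\Gal(\bar\kbf/\kbf)$-equivariantly isomorphic rational Tate modules. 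This produces a well-defined map.

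For injectivity I would use Tate's theorem that $\Hom_{\kbf}(A,B)\otimes_{\Z}\Z_\ell\to\Hom_{\Gal(\bar\kbf/\kbf)}(T_\ell A,T_\ell B)$ is an isomorphism and that $V_\ell A$ is a semisimple $\Gal(\bar\kbf/\kbf)$-module. It follows that $A$ and $B$ are isogenous if and only if $V_\ell A\cong V_\ell B$ as $\Q_\ell[\pi]$-modules, equivalently (using that $\pi$ acts semisimply with characteristic polynomial in $\Z[T]$) if and only if $P_{A,\pi_A}=P_{B,\pi_B}$. For simple $A$, Tate moreover identifies $\mathrm{End}^0(A)$ as the division algebra over $\Q(\pi_A)$ whose local invariant at a place $v\mid p$ is $\frac{v(\pi_A)}{v(q)}[\Q(\pi_A)_v:\Q_p]\bmod\Z$ (trivial at the finite places not above $p$, and with a possible $\tfrac12$ at real places); since $2\dim A=[\mathrm{End}^0(A):\Q(\pi_A)]^{1/2}\cdot[\Q(\pi_A):\Q]$, the integer $\dim A$, and hence the exponent with which the minimal polynomial of $\pi_A$ appears in $P_{A,\pi_A}$, is determined by $[\alpha]$. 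Therefore $[\alpha_A]=[\alpha_B]$ forces $P_{A,\pi_A}=P_{B,\pi_B}$, so $A$ and $B$ are isogenous.

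For surjectivity, given $[\alpha]\in\qW(1)$ one checks that $\Q(\alpha)$ is either totally real — forcing $\alpha=\pm\sqrt q$, which is realized by a supersingular elliptic curve over $\kbf$ (or a twist) — or a CM field. In the CM case I would, following Honda, produce an abelian variety $B$ over a number field $L$ having good reduction at some place $v\mid p$ and complex multiplication by $\Q(\alpha)$ for a suitable CM type $\Phi$, chosen via the Shimura–Taniyama formula so that the $q^n$-Frobenius of the reduction of $B$ over the residue field $\F_{q^n}$ is conjugate to $\alpha^n$ for some $n\geq1$; then a descent step — form the Weil restriction of the reduction from $\F_{q^n}$ to $\F_q$, whose $q$-Frobenius has $\alpha$ among its eigenvalues, and take the simple isogeny factor on which $\alpha$ occurs — yields a simple abelian variety over $\kbf$ with Frobenius conjugate to $\alpha$. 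The main obstacle is precisely this surjectivity argument: constructing the characteristic-zero CM abelian variety with prescribed reduction requires the full theory of complex multiplication together with the Shimura–Taniyama reduction formula, and the descent from $\F_{q^n}$ back to $\F_q$ must be done carefully since Weil restriction changes the dimension and the endomorphism algebra and one must isolate the correct simple factor.
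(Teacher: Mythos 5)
The paper does not actually prove Theorem \ref{Honda-Tate for AVs}: it is quoted as the classical Honda--Tate theorem and simply cited from Honda (1968) and Tate (1966), so there is no internal proof to compare against. Your outline is exactly the classical argument those references supply --- Weil's Riemann hypothesis for well-definedness, Tate's theorem on homomorphisms (plus the computation of $\mathrm{End}^0(A)$ and its local invariants) for injectivity, and Honda's CM-lifting via the Shimura--Taniyama formula together with Weil restriction and extraction of a simple factor for surjectivity --- so it matches the paper's treatment, which takes precisely these results as input. One small correction in the totally real case: when $q$ is not a square, the Weil number $\pm\sqrt{q}$ is irrational and cannot be the Frobenius of an elliptic curve over $\kbf$ (its trace would not be an integer); it is realized instead by a simple supersingular abelian surface, e.g.\ obtained by applying your Weil-restriction descent step to a supersingular elliptic curve over $\F_{q^2}$ with Frobenius $q$, so the method covers it but the parenthetical ``supersingular elliptic curve over $\kbf$ (or a twist)'' should be amended.
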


    Throughout this introduction we write 
    \[\Gamma:=\mathrm{Gal}(\overline{\kbf}/\kbf)=\overline{\langle\gamma\rangle}\]
    for the absolute Galois group of $\kbf=\F_q$ with $\gamma:\kbf\to\kbf,\, x\mapsto x^q$
    the canonical topological generator.    
    The characteristic polynomial $P_{A,\pi_A}$ of $\pi_A$ can also be obtained as the characteristic polynomial of $\gamma$ on the $\ell$-adic Tate module $T_{\ell}(A)$ of $A$, where $\ell\neq\mathrm{char}(\kbf)$ is a prime number (see \cite[Chap. II, \S2, the paragraph after Theorem 2.9]{Milne2008}).
    This fact will be important for dealing with log abelian varieties (see below), for which we are only able to use $\ell$-adic Tate module to define a polynomial which plays the role of $P_{A,\pi_A}$ for an abelian variety $A$, see Definition \ref{char poly of log av} and \eqref{equation char poly of Frobenius on log 1-motive}.

	Log abelian varieties, developed by Kajiwara, Kato and Nakayama in their series of papers starting with \cite{KajiwaraKatoNakayama2008}, are proper (log) smooth group objects which degenerate abelian varieties. They are expected to behave like abelian varieties in many aspects. In particular, it would be desirable to classify log abelian varieties up to isogeny over an fs log point $S=(\Spec\kbf,M_S)$ with $\kbf$ a finite field, just as in the case of abelian varieties over $\kbf$.

	Our first main result reduces the problem to the classification of log abelian varieties without abelian part, and the latter is essentially a problem of Galois modules over $\kbf$.

        \begin{theorem}[See Theorem \ref{isogeneous decomposition of log AV}]\label{main theorem-0}
            Let $S=(\Spec \kbf,M_S)$ be an fs log point with $\kbf=\F_q$ finite. Then for any log abelian variety $A$ over $S$, we have an isogeny
            \[A\sim A_1\times B,\]
            where $B$ is an abelian variety over $\kbf$ and $A_1$ is a log abelian variety over $S$ whose corresponding log 1-motive $\Mbf_1$  (see \cite[Theorem 3.4]{KajiwaraKatoNakayama2008}) has no abelian part, i.e. $\Mbf_1=[Y_1\to T_{1\log}]$ with $T_1$ a torus over $\kbf$.
        \end{theorem}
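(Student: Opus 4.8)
The plan is to translate the statement into the language of log $1$-motives, where the abelian part of $A$ appears as the weight $-1$ graded piece of the weight filtration, and then to peel that piece off by two isogenies, each of which exists only because the base field is finite. By \cite[Theorem 3.4]{KajiwaraKatoNakayama2008} the log abelian variety $A$ corresponds to a log $1$-motive $\Mbf=[Y\xrightarrow{u}G_{\log}]$ over $S$, where $G$ is a semiabelian variety over $\kbf$ sitting in an exact sequence $0\to T\to G\to B_0\to 0$ with $T$ a torus and $B_0$ an abelian variety over $\kbf$, and $Y$ is a lisse $\Z$-sheaf on the \'etale site of $\Spec\kbf$, i.e. a finitely generated free $\Z$-module equipped with a continuous $\Gamma$-action; the abelian part of $\Mbf$ is exactly $B_0$. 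This equivalence is additive, sends products to products and isogenies to isogenies, so it is enough to produce an isogeny of log $1$-motives $\Mbf\sim\Mbf_1\times[0\to B_0]$ with $\Mbf_1=[Y_1\to T_{1\log}]$ having trivial abelian part, after which we set $B:=B_0$ and let $A_1$ be the log abelian variety attached to $\Mbf_1$.

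The first isogeny splits the semiabelian variety. The group $\Ext^1_{\kbf}(B_0,T)$ is finite: by Weil--Barsotti duality $\Ext^1_{\kbf}(B_0,\Gm)\cong\hat B_0(\kbf)$, which is finite, and the general case follows by passing to a finite extension of $\kbf$ that splits $T$ (using $\Hom(B_0,T)=0$ to kill the descent term). Hence, if $n$ denotes the order of the class of $0\to T\to G\to B_0\to 0$ in $\Ext^1_\kbf(B_0,T)$, then pulling this extension back along $[n]\colon B_0\to B_0$ makes it split and provides an isogeny $G\sim T\times B_0$. Applying the log-enhancement functor, which on torus parts is again an isogeny, we obtain an isogeny of log $1$-motives $\Mbf\sim[Y\xrightarrow{(u_1,u_2)}T_{\log}\times B_0]$ for suitable homomorphisms $u_1\colon Y\to T_{\log}$ and $u_2\colon Y\to B_0$; note that the lattice $Y$ is not changed.

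The second isogeny kills the remaining map $u_2$ to the abelian part. Since $\kbf$ is finite, every point of $B_0$ over $\overline{\kbf}$ is defined over some $\F_{q^{m}}$ and is therefore torsion, so $B_0(\overline{\kbf})$ is a torsion group. Consequently $u_2\in\Hom_{\Spec\kbf}(Y,B_0)=\Hom_{\Gamma}\!\left(Y(\overline{\kbf}),\,B_0(\overline{\kbf})\right)$ has image a finitely generated subgroup of a torsion group, hence a finite group; let $N$ be its exponent, so that $Nu_2=0$. Then $(\id_{Y},\,[N]_{T\times B_0})$ — where $[N]_{T\times B_0}$ is multiplication by $N$ on $T\times B_0$ — is an isogeny of log $1$-motives from $[Y\xrightarrow{(u_1,u_2)}T_{\log}\times B_0]$ to $[Y\xrightarrow{(Nu_1,\,0)}T_{\log}\times B_0]=[Y\xrightarrow{Nu_1}T_{\log}]\times[0\to B_0]$. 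Combining with the previous step gives $\Mbf\sim\Mbf_1\times[0\to B_0]$ with $\Mbf_1=[Y\xrightarrow{Nu_1}T_{\log}]$, $Y_1=Y$ and $T_1=T$; transporting back through \cite[Theorem 3.4]{KajiwaraKatoNakayama2008} yields the asserted isogeny $A\sim A_1\times B$.

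The delicate point is not the group-theoretic content above but the bookkeeping forced by the precise definition of log $1$-motive (equivalently, of log abelian variety) in \cite{KajiwaraKatoNakayama2008}: one must verify that the admissibility/polarizability condition built into that notion is inherited by both factors $\Mbf_1$ and $[0\to B_0]$, so that $\Mbf_1$ indeed corresponds to a log abelian variety $A_1$ over $S$. This comes down to the strict compatibility of a polarization with the weight filtration, which forces the polarization of $\Mbf$ to induce one on each graded constituent and hence on $\Mbf_1$ (whose polarizability only involves the monodromy pairing on $Y\times X^{*}(T)$, left intact up to the harmless scalar $N$ by the manipulations above). One also needs that the equivalence of \cite[Theorem 3.4]{KajiwaraKatoNakayama2008} matches the two notions of isogeny, which is routine and presumably recorded earlier.
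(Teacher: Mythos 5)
Your strategy is in substance the paper's own: the two inputs are the finiteness of $\Ext^1_{\kbf}(B,T)$ (to split the semiabelian part after pulling back along multiplication by an integer on $B$) and the torsion-ness of $B(\overline{\kbf})$, hence finiteness of the relevant Hom group (to kill the component of $u$ mapping to the abelian part); these are exactly the contents of Lemma \ref{torsion phenomenon over finite fields}. However, your first step has a real gap as written. From the isogeny of semiabelian varieties $f\colon T\times B\to G$ you ``apply the log-enhancement functor'' and assert an isogeny of log $1$-motives $\Mbf\sim[Y\xrightarrow{(u_1,u_2)}T_{\log}\times B]$ ``with the lattice $Y$ not changed''. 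But an isogeny $f_{\log}\colon T_{\log}\times B\to G_{\log}$ does not by itself produce a morphism of complexes: you must lift $u\colon Y\to G_{\log}$ (or a multiple of it) through $f_{\log}$, and you never specify the lattice component of the claimed isogeny. With the identity on $Y$ the lift can genuinely fail: since $(T\times B)_{\log}\simeq G_{\log}\times_{B,n_B}B$, such a lift amounts to writing $v=n\,w$ for the composite $v\colon Y\to G_{\log}\to B$ with $w$ a $\Gamma$-equivariant homomorphism $Y\to B$, and already for $Y=\Z$ with trivial action and $v(1)$ a generator of $B(\kbf)\cong\Z/n\Z$ no such $w$ exists. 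The repair is to take the lattice component to be multiplication by an integer $N$ killing $v$ (which exists by your own step-two argument); then $Nu$ factors through $T_{\log}$ and the compatibility is explicit, $f_{\log}\circ(u_1,0)=Nu=u\circ N_Y$. This is precisely how the paper proceeds (Lemma \ref{torsion phenomenon over finite fields} (2) and Corollary \ref{log 1-motive splits up to isogeny}), and it collapses your two isogenies into one, since the ``suitable $u_2$'' is then zero.

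A second, lighter point: the polarizability of $\Mbf_1$ is not established by invoking ``strict compatibility of a polarization with the weight filtration''; the definition of polarization for log $1$-motives here is not stated in those terms, and something must be proved. The paper's argument (Theorem \ref{isogeneous decomposition of log AV} (1)) is to pull the polarization of $\Mbf$ back along the constructed isogeny and its dual, obtaining a polarization of $\Mbf_{\mathrm{s}}=[Y\to T_{\log}\times B]$ by \cite[Lemma 3.4]{Zhao2017}, which then visibly induces one on $\Mbf_1$; you should include this short argument rather than gesture at it. Your remaining appeal, that the equivalence of \cite[Theorem 3.4]{KajiwaraKatoNakayama2008} matches the two notions of isogeny, is indeed recorded in the paper as Proposition \ref{Hom of LAVs is isogeny iff the corresponding hom of log 1-motives is isogeny}, so that part is fine.
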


        By Theorem \ref{main theorem-0}, we have
        \begin{align*}
            &\left\{
		\text{simple LAVs over }S
		\right\}/\text{isogeny}  \\
           =&\left\{
		\text{simple AVs over }\kbf
		\right\}/\text{isogeny}\bigsqcup \left\{
		\text{simple LAVs without abelian part over $S$}
		\right\}/\text{isogeny},
        \end{align*}
        where AV (resp. LAV) stands for abelian variety (resp. log abelian variety).
        Therefore in order to classify log abelian varieties over $S$ up to isogeny, it suffices to classify log abelian varieties without abelian part over $S$ up to isogeny.
        
    We focus on the case that $S$ is a finite charted fs log point, i.e.
    \begin{equation}\label{charted log point}
    \text{\parbox{.85\textwidth}{$S=(\Spec \kbf,M_S)$ is an fs log point such that $\kbf=\F_q$ and $M_S$ admits a global chart $P\to \kbf,a\mapsto \begin{cases}1,&\text{ if $a=0$}\\0,&\text{ otherwise}\end{cases}$  with $P$ a sharp fs monoid.}}
    \end{equation}
    Here we use additive notation for $P$. 
    
	In the special case of $S$ being a \textbf{standard log point}, i.e. $P=\N$ in (\ref{charted log point}),  we have a classification theorem of log abelian varieties over $S$ up to isogeny parallel to Theorem \ref{Honda-Tate for AVs}. To state the theorem, we need the following definition: let $Y$ be a group scheme over $\kbf$ which is \'etale locally isomorphic to the constant group scheme $\Z^r$ for some $r>0$. Then $\gamma\in\Gamma$ gives rise to a $\Z$-linear automorphism $\pi_Y$ of $\Z^r$, and we call the characteristic polynomial of $\pi_Y$ the \emph{characteristic polynomial of the geometric Frobenius on $Y$}, and denote it by 
    \[P_{Y,\pi_Y}\]
    (see §\ref{1-motives} for more details).
 
	\begin{theorem}[See Theorem \ref{classificaton of isogenous classes of log Gamma-module of rank 1}]\label{main theorem-1}
	Let $S$ be as in \eqref{charted log point} with $P=\N$, i.e. a standard log point. Let
    \begin{equation}\label{q-Weil(0,2)}
        \qW(0,2):=\{([\alpha],[q\alpha^{-1}])\mid [\alpha]\in \qW(0)\},
    \end{equation}
    which is a subset of $\qW(0)\times\qW(2)$.
    Then the bijection of sets from Theorem \ref{Honda-Tate for AVs} extends to a bijection of sets (as in the second row of the commutative digram below)
            \begin{equation}\label{main theorem-1-diagram}
                \xymatrix{
            \left\{
		\text{simple AVs over $\kbf$}
		\right\}/\text{isogeny} \ar[r]^-{\simeq}\ar@{^(->}[d] &\qW(1)\ar@{^(->}[d] \\
            \left\{
		\text{simple LAVs over }S
		\right\}/\text{isogeny} \ar[r]^-{\simeq} &\qW(0,2)\bigsqcup\qW(1)
            }
            \end{equation}
		such that any simple log abelian varieties without abelian part, whose corresponding log 1-motive (see \cite[Theorem 3.4]{KajiwaraKatoNakayama2008}) is $[Y\to T_{\log}]$, is mapped to $([\alpha],[q\alpha^{-1}])\in \qW(0,2)$ with $\alpha$ a root of the characteristic polynomial $P_{Y,\pi_Y}$. 
	\end{theorem}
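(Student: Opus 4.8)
The plan is to combine Theorem~\ref{main theorem-0} with the classical Honda--Tate Theorem~\ref{Honda-Tate for AVs} and reduce everything to a purely Galois-module classification. By Theorem~\ref{main theorem-0} a simple log abelian variety over $S$ is either isogenous to a simple abelian variety over $\kbf$ — in which case the top row of \eqref{main theorem-1-diagram} applies verbatim, and the left square commutes because for an honest abelian variety $B$ the polynomial of Definition~\ref{char poly of log av} is the usual $P_{B,\pi_B}$ — or it has no abelian part. An abelian variety and a nontrivial log abelian variety without abelian part cannot be isogenous (the latter has $T\neq 0$, an isogeny invariant), so the two families of simple objects are disjoint and the bottom-right of \eqref{main theorem-1-diagram} is genuinely a disjoint union. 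Hence the whole content is to construct a bijection
\[
\left\{\text{simple LAVs without abelian part over }S\right\}/\text{isogeny}\;\xrightarrow{\ \simeq\ }\;\qW(0,2),\qquad A\mapsto([\alpha],[q\alpha^{-1}]),
\]
where $\alpha$ is a root of $P_{Y,\pi_Y}$ for the log $1$-motive $[Y\to T_{\log}]$ of $A$.

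Next I would unwind, via \cite[Theorem 3.4]{KajiwaraKatoNakayama2008}, the isogeny-invariant data of such an $A$. It corresponds to a log $1$-motive $[Y\xrightarrow{u}T_{\log}]$ with $T$ a torus over $\kbf$, character lattice $X=X^*(T)$, $Y$ a $\Gamma$-lattice, and $u$ packaging a homomorphism $Y\to T(\kbf)$ together with a $\Gamma$-equivariant monodromy pairing $\langle-,-\rangle\colon Y\times X\to\Z$, which is non-degenerate because a log abelian variety is proper. Since $\F_q^\times$ is finite, $T(\kbf)$ is finite, so $Y\to T(\kbf)$ dies after isogeny; and non-degeneracy makes $\langle-,-\rangle$ induce a $\Gamma$-equivariant isomorphism $Y_{\Q}\xrightarrow{\sim}X_{*}(T)_{\Q}$. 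Thus, up to isogeny, $[Y\to T_{\log}]$ is determined by the $\Q[\Gamma]$-module $Y_{\Q}$, on which $\gamma$ acts through a finite quotient of $\Gamma$, so $\pi_Y$ has finite order and $P_{Y,\pi_Y}$ is a product of cyclotomic polynomials. Conversely every finite-order $\Q[\Gamma]$-module arises: pick a $\Gamma$-stable lattice $Y$, let $T$ be the torus with $X^*(T)=Y$, and take as monodromy a $\Gamma$-invariant positive-definite symmetric form on $Y$ (average any positive-definite form over the finite image of $\Gamma$). I would then check that this assignment matches sub-log-$1$-motives up to isogeny with $\Gamma$-subrepresentations of $Y_{\Q}$, so that $A$ is simple iff $Y_{\Q}$ is a simple $\Q[\Gamma]$-module iff $P_{Y,\pi_Y}$ is irreducible, i.e. the minimal polynomial over $\Q$ of a root of unity.

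With this dictionary the bijection is quick. To a simple $A=[Y\to T_{\log}]$ attach the Galois conjugacy class $[\alpha]\in\qW(0)$ of a root $\alpha$ of the irreducible polynomial $P_{Y,\pi_Y}$ (a root of unity, so of weight $0$ by Kronecker's theorem); then $[q\alpha^{-1}]\in\qW(2)$ is well defined since $q\in\Q$ commutes with $\Gamma$, so $([\alpha],[q\alpha^{-1}])\in\qW(0,2)$. Well-definedness on isogeny classes, injectivity, and surjectivity onto $\qW(0,2)$ all follow from the previous paragraph together with the fact that $[\alpha]\mapsto([\alpha],[q\alpha^{-1}])$ is a bijection $\qW(0)\xrightarrow{\sim}\qW(0,2)$. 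To see that $[q\alpha^{-1}]$ is literally the "weight $2$" part of the Frobenius polynomial of Definition~\ref{char poly of log av}, I would compute the action of $\gamma$ on the log $\ell$-adic Tate module: it carries a weight filtration with graded pieces $Y\otimes\Z_{\ell}$ (on which $\gamma$ acts by $\pi_Y$, eigenvalues $\alpha$) and the toric part $X_{*}(T)\otimes\Z_{\ell}(1)$ (eigenvalues $q$ times those of $\gamma$ on $X_{*}(T)$, which by the $\Gamma$-equivariant isomorphism $X_{*}(T)_\Q\cong Y_\Q$ are again the $\alpha$'s, hence eigenvalues $q\alpha$); and since $\alpha$ is a root of unity one has $q\alpha^{-1}=q\bar\alpha$, which is Galois-conjugate to $q\alpha$, so $[q\alpha^{-1}]=[q\alpha]$ in $\qW(2)$. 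Thus $P_{A,\pi_A}(t)=P_{Y,\pi_Y}(t)\cdot\prod_{\alpha}(t-q\alpha^{-1})$, exactly the datum $([\alpha],[q\alpha^{-1}])$, and $P_{A,\pi_A}$ is $q$-symmetric as expected.

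The main obstacle is the second step: extracting from \cite[Theorem 3.4]{KajiwaraKatoNakayama2008} the clean isogeny-invariant description of log $1$-motives without abelian part over the standard log point — in particular showing the monodromy/positivity condition is always satisfiable and, for simple objects, irrelevant up to isogeny (Schur's lemma makes the pairing on a simple $Y_\Q$ unique up to a positive scalar), and that subobjects up to isogeny correspond to $\Gamma$-subrepresentations — together with the companion $\ell$-adic bookkeeping identifying the log Tate module and its weight filtration with $Y\otimes\Z_\ell$ and $X_{*}(T)\otimes\Z_\ell(1)$. The remaining input, semisimple representation theory of $\widehat{\Z}$ over $\Q$ and Kronecker's identification of $\qW(0)$ with conjugacy classes of roots of unity, is standard.
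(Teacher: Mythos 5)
Your proposal is correct and follows essentially the paper's own route: decompose via Theorem \ref{main theorem-0}, kill the classical part of $[Y\to T_{\log}]$ using finiteness of $T(\kbf)$, identify simple totally degenerate log abelian varieties up to isogeny with simple $\Q[\Gamma]$-modules $\Q(\zeta_r)$ (the paper's normal form $(\Z[\zeta_r],\Z[\zeta_r]^\vee,\mathrm{Id})$ in $\MorGammaNpPol$), and read off $([\alpha],[q\alpha^{-1}])$ from the lattice and torus factors of the Frobenius characteristic polynomial. One cosmetic fix: on a simple $Y_\Q\simeq\Q(\zeta_r)$ the endomorphism algebra is $\Q(\zeta_r)$, not $\Q$, so an invariant pairing is unique only up to an element of $\Q(\zeta_r)$ (totally positive for polarizations) rather than "a positive scalar"; the fact you actually need — that for $P=\N$ any non-degenerate pairing becomes the standard one up to isogeny — is the paper's Corollary \ref{canonical form of log free Gamma-module} and needs no Schur-type uniqueness.
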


    By Theorem \ref{main theorem-0} and Theorem \ref{main theorem-1}, the Weil $q$-numbers associated to a general log abelian variety could have weights 0, 1 and 2 simultaneously, a feature typically exclusive to mixed motives in the classical picture. On the other hand, a log abelian variety, as an analogue of an abelian variety, should give rise to a pure log motive. Such a phenomena gives an example of the idea from \cite{KatoNakayamaUsui2022} that mixed motives should be embedded into log pure motives.
 
    The upper bijection of \eqref{main theorem-1-diagram} is defined via the characteristic polynomial $P_{B,\pi_B}$ of the $q$-Frobenius endomorphism $\pi_B$ of an abelian variety $B$. Note that the linear map $T_{\ell}(\pi_B)$ on $T_{\ell}(B)$ induced by $\pi_B$ agrees with the canonical linear map given by $\gamma\in\Gamma$ on $T_{\ell}(B)$. For a log abelian variety $A$ over $S$, we can also define a certain ``characteristic polynomial'' $P_{A,\gamma}$ of  $\gamma\in\Gamma$, in analogue to $P_{B,\pi_{B}}$ (see Definition \ref{char poly of log av}). However if $A$ is not an abelian variety, $P_{A,\gamma}$ has always more that one irreducible factor, even if $A$ is simple. Nevertheless under the setting of Theorem \ref{main theorem-1}, for a simple log abelian variety $A$ corresponding to $\Mbf=[Y\to T_{\log}]$, we have
    \[P_{A,\gamma}(\theta)
    =
    P_{Y,\pi_Y}(\theta)\cdot P_{T,\pi_T}(\theta)
    =
    P_{Y,\pi_Y}(\theta)\cdot
    \left(
    \frac{(-\theta)^d}{\det(\pi_Y)}\cdot P_{Y,\pi_Y}(\frac{q}{\theta})
    \right),\]
    where $P_{Y,\pi_Y}$ is irreducible and $d$ is the degree of $P_{Y,\pi_Y}(\theta)$, and the identification $P_{T,\pi_T}(\theta)=\frac{(-\theta)^d}{\det(\pi_Y)}\cdot P_{Y,\pi_Y}(\frac{q}{\theta})$ follows from the fact that $Y$ is isogenous to $X:=\cHom_{\kbf}(T,\Gm)$ which follows from that $\Mbf$ is pointwise polarized.   Then the lower bijection beyond the upper bijection in (\ref{main theorem-1-diagram}) is just the map sending a simple log abelian variety without abelian part $A$ over $S$ to $([\alpha],[\beta])\in \qW(0,2)$ with $\alpha$ (resp. $\beta$) a root of $P_{Y,\pi_Y}(\theta)$ (resp. $\theta^d\cdot P_{Y,\pi_Y}(\frac{q}{\theta})$) (see §\ref{1-motives}, \S\ref{Log 1-motives} and \S\ref{Log abelian varieties with constant degeneration} for more details). Of course, here one can take $\beta=\frac{q}{\alpha}$.

    Now consider the case $S=(\Spec \kbf,M_S)$ with $P$ a free monoid $P\cong\mathbb{N}^k$ as in (\ref{charted log point}). For each positive integer $r$, we define a subset $T_{r,k}$ of $(\Q(\zeta_{r})^+)^k$ (\textit{cf}. Definition \ref{T_{r,k}}), which is in some sense a generalization of (rational points in) topological $(k-1)$-simplex. Then we have
	\begin{theorem}[Theorem \ref{classificaton of isogenous classes of log Gamma-module of higher ranks}]
		\label{main theorem-2}
		Let $S$ be as in \eqref{charted log point} with $P=\N^k$. Then the bijection of sets from Theorem \ref{Honda-Tate for AVs} extends to a bijection of sets (as in the second row of the commutative digram below)
            \[
            \begin{tikzcd}
                \left\{
		\text{simple AVs over $S$}
		\right\}/\text{isogeny} 
            \arrow[r,"\simeq"]
            \arrow[d,hookrightarrow]
            &
            \qW(1)
            \arrow[d,hookrightarrow]
            \\
            \left\{
            \begin{tabular}{c}
                simple LAVs $A$ over S with $Y_1=0$
                \\
                or a simple free $\Gamma$-module
            \end{tabular}
            \right\}\big/\text{isogeny}
            \arrow[r,"\simeq"]
            &
            \left(
		\bigsqcup_{r>0}T_{r,k}
		\right)
		\bigsqcup
		\qW(1)
            \end{tikzcd}
            \]		
            under which the elements of the set $\bigsqcup_{r>0}T_{r,k}$ correspond to the log abelian varieties $A$ without abelian part. Here $Y_1$ is as in Theorem \ref{main theorem-0}, and we refer to Definition \ref{simple free Gamma-modules} for the terminology ``simple free $\Gamma$-module''.
	\end{theorem}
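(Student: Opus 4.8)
The plan is to deduce Theorem~\ref{main theorem-2} from Theorem~\ref{main theorem-0} together with a direct classification, via the equivalence \cite[Theorem 3.4]{KajiwaraKatoNakayama2008}, of simple log abelian varieties over $S$ without abelian part in terms of their log $1$-motives.

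First I would reduce to the case with no abelian part. By Theorem~\ref{main theorem-0} every log abelian variety $A$ over $S$ is isogenous to $A_1\times B$ with $B$ an abelian variety over $\kbf$ and $A_1$ a log abelian variety whose log $1$-motive $\Mbf_1=[Y_1\to T_{1\log}]$ has no abelian part; if $A$ is simple then $A_1=0$ or $B=0$. Since a log $1$-motive attached to a log abelian variety has $\mathrm{rank}\,Y_1=\dim T_1$, the case $Y_1=0$ forces $T_1=0$, hence $A_1=0$; conversely if $Y_1\neq0$ then $A$ is simple if and only if $Y_1$ is a simple free $\Gamma$-module (a proper nonzero sub-log-$1$-motive with zero lattice part would again have zero torus part, which is impossible). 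This produces the disjoint-union shape of the target: the abelian summand is handled by Theorem~\ref{Honda-Tate for AVs} and contributes the copy of $\qW(1)$ in the bottom row, so the theorem reduces to a bijection
\[
\{\text{simple LAVs over }S\text{ without abelian part}\}/\text{isogeny}\;\longleftrightarrow\;\bigsqcup_{r>0}T_{r,k}
\]
refining Theorem~\ref{main theorem-1}. Commutativity of the square is then immediate from Theorem~\ref{Honda-Tate for AVs}.

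Next I would extract the isogeny invariants of $\Mbf_1=[Y_1\to T_{1\log}]$. Since $Y_1$ is étale-locally constant over $\kbf=\F_q$, the geometric Frobenius acts through a finite quotient of $\Gamma$, so $\pi_{Y_1}$ has finite order and $P_{Y_1,\pi_{Y_1}}$ is a product of cyclotomic polynomials; simplicity forces it to be irreducible, hence $P_{Y_1,\pi_{Y_1}}=\Phi_r$ for a unique $r>0$ and $Y_1\otimes\Q\cong\Q(\zeta_r)$ with $\pi_{Y_1}$ acting as multiplication by $\zeta_r$. Pointwise polarizability forces $Y_1$ to be isogenous to $X_1:=\cHom_\kbf(T_1,\Gm)$ (as in the discussion after Theorem~\ref{main theorem-1}), so $T_1$ is determined up to isogeny by $r$, and the only remaining datum is the monodromy pairing $u\colon Y_1\times X_1\to\overline{M_S^{\gp}}=P^{\gp}=\Z^k$ obtained by composing $Y_1\to T_{1\log}=\cHom_\kbf(X_1,\Gml)$ with $\cHom_\kbf(X_1,\Gml)\to\cHom_\kbf(X_1,\Gml/\Gm)=\Hom(X_1,\overline{M_S^{\gp}})$; the residual lifting datum $Y_1\to T_1$ lies in $\Hom_\Gamma(Y_1,T_1(\overline{\kbf}))$, which is finite (fixed points of a Frobenius-type endomorphism on an affine algebraic group over $\overline{\kbf}$), hence trivialized by multiplication by an integer on $Y_1$ and so irrelevant up to isogeny. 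Rationalizing and using that every $\Gamma$-invariant $\Q$-bilinear form on $\Q(\zeta_r)$ has the shape $\mathrm{Tr}_{\Q(\zeta_r)/\Q}(c\,x\bar y)$, the pairing $u\otimes\Q$ becomes a tuple $(c_1,\dots,c_k)\in\Q(\zeta_r)^k$; the condition that $\Mbf_1$ be the log $1$-motive of a proper, pointwise polarizable log abelian variety translates — after choosing a polarization, an element of the totally positive cone of $\Q(\zeta_r)^+$ — into each $c_i$ lying in $\Q(\zeta_r)^+$ and a joint strict-positivity condition relative to the cone $P$. Finally, an isogeny of such $\Mbf_1$ is a pair of $\Gamma$-isogenies on $Y_1$ and on $X_1$ compatible with $u$, which acts on $(c_1,\dots,c_k)$ by simultaneous scaling only (the monoid $P$, hence the factor $\Z^k$, cannot be rescaled independently), and this is precisely the equivalence relation defining $T_{r,k}$ in Definition~\ref{T_{r,k}}; conversely each element of $T_{r,k}$ is realized by the explicit log $1$-motive built from the torus over $\F_q$ with character group $\Z[\zeta_r]$, the lattice $\Z[\zeta_r]$ and the pairing $(c_1,\dots,c_k)$, whose conditions from \cite[Theorem 3.4]{KajiwaraKatoNakayama2008} one checks directly (no constraint linking $r$ to $q$ appears, so all $r>0$ occur). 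Combining with Theorem~\ref{Honda-Tate for AVs} on the abelian part yields the commutative diagram.

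The main obstacle I expect is the middle step: pinning down exactly which positivity/non-degeneracy condition on $(c_1,\dots,c_k)$ singles out log abelian varieties among all log $1$-motives over $S$, translating it faithfully into a condition on totally real cyclotomic elements, and verifying that the isogeny-scaling action both preserves it and cuts it out exactly — i.e. checking that $T_{r,k}$ really is the right ``generalized $(k-1)$-simplex'' and that the correspondence is a bijection and not merely a surjection. The case $P=\N$ of Theorem~\ref{main theorem-1}, where $T_{r,1}$ collapses to a single point, serves as a consistency check and a template for this argument.
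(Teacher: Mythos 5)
Your overall route is the paper's: reduce via Theorem \ref{isogeneous decomposition of log AV}, pass from the log $1$-motive to its monodromy-pairing data (this is exactly the functor $\LP$ and the isogeny-category equivalence of Corollary \ref{equivalence from pairs of Galois modules to LAVs}), and classify $(\Z[\zeta_r],\Z[\zeta_r]^\vee,\Phi)$ by the tuple $(t_1,\dots,t_k)$ up to simultaneous $\Q(\zeta_r)^\times$-scaling, normalized by $t=\sum_i t_i$. But there is one genuinely false step: your claim that for $Y_1\neq0$, ``$A$ is simple if and only if $Y_1$ is a simple free $\Gamma$-module,'' and the resulting displayed reduction to a bijection between \emph{all} simple LAVs without abelian part and $\bigsqcup_{r>0}T_{r,k}$. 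Only the implication ``$Y_1$ simple $\Rightarrow$ the lattice pairing (hence $A$) simple'' holds in general (Lemma \ref{pPol LP with simple factor is simple and the converse is true for rank 1 monoid}); the converse is special to $P=\N$. For $k\geq2$ it fails: the Remark following that lemma gives, for $P=\N^2$, a simple object of $\MorGammaPpPol$ with $M=N=\Z^2$ carrying the \emph{trivial} $\Gamma$-action (so $M$ is far from simple), with $\Phi$ given by the identity matrix and $\begin{pmatrix}1&1\\1&2\end{pmatrix}$. Your parenthetical justification only rules out sub-$1$-motives with zero lattice part; it says nothing about a proper $\Gamma$-stable sublattice of smaller positive rank, and for $k\geq2$ such a sublattice need not underlie a sub-object compatible with all $k$ components of the pairing -- which is precisely why simplicity of $A$ does not force simplicity of $Y_1$. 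This is the reason the theorem restricts its lower-left set to $Y_1=0$ or $Y_1$ a simple free $\Gamma$-module, and the reason the paper needs $T_{r,k}^{(a)}$ and Theorem \ref{classificaton of isogenous classes of log Gamma-module of higher ranks-2} (via Proposition \ref{isogeny classes in Mor_Gamma^{pPol}} (3)) to capture the remaining simple classes. The theorem as stated can still be proved along your lines, but only after deleting the false biconditional and keeping the restriction on $Y_1$ throughout; your intermediate bijection with the full set of simple totally degenerate LAVs is wrong.

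Granting that repair, the remaining step you yourself flag as the main obstacle is carried out in the paper by Proposition \ref{polarization of higher rank vs rank-1}, Proposition \ref{condition for lambda_t being a polarization} and Corollary \ref{characterization of polarizations-higher rank}, and your formulation of it is slightly off: the integral parameters $t_i$ (your $c_i$) need not lie in $\Q(\zeta_r)^+$ individually; the correct condition is that they are not all zero and all products $t_i\overline{t_j}$ are totally positive or zero, equivalently that the normalized ratios $t_i/t$ are totally positive or zero elements of $\Q(\zeta_r)^+$ summing to $1$, which is exactly the definition of $T_{r,k}$. With that statement in place, your well-definedness, surjectivity (clear denominators and quote Corollary \ref{characterization of polarizations-higher rank}) and injectivity (an explicit isogeny by multiplication by $t$ and $\widetilde t$) arguments coincide with the paper's proof of Theorem \ref{classificaton of isogenous classes of log Gamma-module of higher ranks}; also your observation that the finite ``classical part'' of $u$ is killed by an integer matches Lemma \ref{torsion phenomenon over finite fields} and Proposition \ref{decomposition of log 1-motives}.
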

        We also have a classification result for log abelian varieties $A$ over $S$ with $Y_1$ of the form $Y_1=\widetilde{Y}_1^a$ where $\widetilde{Y}_1$ is a simple free $\Gamma$-module and $a$ is a positive integer, see Theorem \ref{classificaton of isogenous classes of log Gamma-module of higher ranks-2}. We classify these log abelian varieties in terms of $T_{r,k}^{(a)}$, which is in some sense a higher dimensional generalization of $T_{r,k}$ (see Definition \ref{T_{r,k}}). Moreover, we also give a very explicit criterion for the simpleness of these $A$. We refer to §\ref{Isogeny classes of Z^a} for more details. The \emph{upshot} is that any simple log abelian variety without abelian part is isogenous to such a log abelian variety, see Proposition \ref{isogeny classes in Mor_Gamma^{pPol}} (3), therefore we actually get a description of all the isogeny classes of \emph{simple} log abelian varieties for the case $P=\N^k$.

	For $k=1$, $T_{r,k}$ has only one element. If we identify $T_{r,1}$ with $([\zeta_r],[q\zeta_r^{-1}])\in \qW(0,2)$ for all $r>0$, then Theorem \ref{main theorem-2} gives rise to Theorem \ref{main theorem-1}.  We remark that for $k>1$, the description of a simple log abelian variety $A$ over $S$ with trivial abelian part is much more complicated than the case $k=1$, and has no such simple characterization as using Weil $q$-numbers (of certain weights).
	
    \subsection*{Outline}	
    In §\ref{Review of log abelian varieties with CD} we present a brief review of the notion of log abelian varieties (with constant degeneration), which is the main object of study of this article. In §\ref{1-motives}, the notion of 1-motives is recalled and when the base is a field $\kbf$, we define the characteristic polynomial of an endomorphism of a 1-motive and in particular study various properties of the Froenius morphism for $k$ a finite field. In §§\ref{Log 1-motives} and \ref{Log abelian varieties with constant degeneration}, similar studies are carried out for log 1-motives and log abelian varieties. In §\ref{section lattice pairings}, we introduce the notion of lattice pairings, which is a two-term complex of certain $\Gamma$-modules. We then relate the category of lattice pairings to the category of log 1-motives without abelian part, and further to the category of log abelian varieties without abelian part. In the remaning part of the article, we concentrate on a finite log point $S$ (here ``finite'' means that the underlying field $\kbf$ of $S$ is finite). In §\ref{log av over finite log points}, we study the isogeny category of log abelian varieties over $S$. In §\ref{Decomposition of log abelian varieties up to isogeny over finite log points}, we show that a log abelian variety $A$ over $S$ is isogenous to $A_1\times B$ with $A_1$ a log abelian variety without abelian part over $S$ and $B$ an abelian variety over $\kbf$. In passing we also establish the Honda-Tate theorem for 1-motives (Theorem \ref{Honda-Tate theorem for 1-motives}). In §\ref{subsection LAV without abelian part over finite log points are classified by LP up to isogeny}, we establish an equivalence between the isogeny category of pointwise polarizable lattice pairings and the isogeny category of log abelian varieties without abelian part over $S$. So the classification of isogeny classes of log abelian varieties over $S$ is reduced to the classification of isogeny classes of pointwise polarizable lattice pairings, the latter of which is the goal for the remaining two technical sections of this article. In §\ref{Isogeny classes in MorGammaPpPol over the standard finite log point}, we consider the case $S$ a standard finite log point and we give a complete description of simple lattice pairings up to isogeny (Theorem \ref{classificaton of isogenous classes of log Gamma-module of rank 1}). This gives Theorem \ref{main theorem-1}. We also classify all possible polarizations on lattice pairings as a by-product (Proposition \ref{polarization on (Z[zeta_r],Z[zeta_r]^vee,Id)}). In §\ref{Isogeny classes in MorGammaPpPol over a general finite log point}, we consider the case of a general finite log point $S$ with $P=\N^k$ in \eqref{charted log point}. Based on the preceding section, we describe such lattice pairings using simplices inside cyclotomic fields and their generalizations (Theorems \ref{classificaton of isogenous classes of log Gamma-module of higher ranks} and \ref{classificaton of isogenous classes of log Gamma-module of higher ranks-2}). This gives Theorem \ref{main theorem-2}.

    \subsection*{Notations}
    For a positive integer $n$ and a set $T$, we write $\mathrm{Mat}_n(T)$ for the set of $n\times n$-matrices with entries in $T$. For a matrix $A$, we write $A^\mathrm{t}$ for its transpose.

    For a commutative group scheme $G$ over a base scheme $S$ and an integer $n$, we write $n_G$ for the homomorphism $G\to G$ of multiplication-by-$n$ and $G[n]$ for the kernel of $n_G$.

	\section{Review of log abelian varieties with constant degeneration}\label{Review of log abelian varieties with CD}
	For the readers' convenience, we present a short introduction to log abelian varieties with constant degeneration in this section. For details, we refer to \cite{KajiwaraKatoNakayama2008} and \cite{Zhao2017}.

    \subsection{1-motives}\label{1-motives}
    We review in this subsection the theory of 1-motives.
    Let $S$ be a scheme. Let $\cC_{\fl}$ be the category of sheaves of abelian groups on the fppf site (see \cite[\href{https://stacks.math.columbia.edu/tag/021S}{Tag 021S}]{Stacks-project}) of $S$, and let $\cD_{\fl}$ be the derived category of $\cC_\fl$. 
    \begin{definition}\label{category of 1-motives}
        Let $Y$ be a group scheme over $S$, which is \'etale locally a free abelian group of finite rank, and let $G$ be an extension of an abelian scheme $B$ by a torus $T$ over $S$. A \emph{1-motive with lattice part $Y$ and semi-abelian part $G$} over $S$, is a two term complex 
        \[\Mbf=[Y\xrightarrow{u} G]\] 
        in $\cC_\fl$, where $Y$ sits in degree -1. A \emph{morphism} of 1-motives over $S$ is just a morphism of complexes. We denote the category of 1-motives over $S$ by $\Mcal_1$.

        A morphism of 1-motives $(f_{-1},f_0):[Y\xrightarrow{u} G]\to [Y'\xrightarrow{u'} G']$ is called an \emph{isogeny}, if $f_{-1}:Y\to Y'$ is injective with finite cokernel and $f_0:G\to G'$ is surjective with finite kernel.
    \end{definition}

    \begin{definition}
        Let $S$ be a scheme, and let $\Mbf=[Y\xrightarrow{u} G]$ be a 1-motive over $S$. For any positive integer $n$, let 
        \[T_{\Z/n\Z}(\Mbf):=H^{-1}(\Mbf\otimes^L\Z/n\Z).\]
        For any prime number $\ell$, let 
        \[T_{\ell}(\Mbf):=\varprojlim_rT_{\Z/\ell^r\Z}(\Mbf).\]
    \end{definition}

    The canonical distinguished triangle $Y\xrightarrow{u}G\to\Mbf\to Y[1]$ in $\cD_\fl$ gives rise to a long exact sequence
    \begin{align*}
        H^{-1}(Y\otimes^L\Z/n\Z)\to H^{-1}(G\otimes^L\Z/n\Z)\to H^{-1}(\Mbf\otimes^L\Z/n\Z)\to  \\
        H^{0}(Y\otimes^L\Z/n\Z)\to H^{0}(G\otimes^L\Z/n\Z)\to\cdots .
    \end{align*}
    Here $Y[1]$ denotes the shift by 1 of $Y$ as a complex concentrated in degree 0. This should not lead to any confusion with the $n$-torsion subgroup scheme $G[n]$ of a semi-abelian scheme $G$, since we never consider the torsion subgroups of the torsion-free group $Y$. Since $Y$ is torsion-free and $n_G$ is surjective, we get a short exact sequence
    \[0\to G[n]\to T_{\Z/n\Z}(\Mbf)\to Y/nY\to 0\]
    in $\cC_\fl$. Both $G[n]$ and $Y/nY$ are finite locally free group schemes over $S$. By descent, $T_{\Z/n\Z}(\Mbf)$ as a $G[n]$-torsor over $Y/nY$ is also a finite locally free group scheme over $S$. If $n$ is invertible on $S$, $T_{\Z/n\Z}(\Mbf)$ is \'etale over $S$. Since $(G[\ell^i])_i$ is a Mittag-Leffler system, we also have a short exact sequence
    \begin{equation}\label{l-adic s.e.s. of 1-motive}
        0\to T_{\ell}(G)\to T_{\ell}(\Mbf)\to Y\otimes_{\Z}\Z_{\ell}\to 0
    \end{equation}
    in $\cC_\fl$. If $\ell$ is invertible on $S$ and $S$ is connected, then the sequence (\ref{l-adic s.e.s. of 1-motive}) is a short exact sequence of finite rank free $\Z_\ell$-modules endowed with a continuous $\pi_1^{\et}(S)$-action, where $\pi_1^{\et}(S)$ denotes the \'etale fundamental group of $S$.

    In the rest of this subsection, the base $S$ will be $\Spec\kbf$ with $\kbf$ a field. We will use the notation $\Gamma:=\Gal(\overline{\kbf}/\kbf)$ for $\pi_1^{\et}(S)$.

    \begin{definition}\label{def simple 1-motives}
            A non-zero 1-motive $[Y\to G]$ over $\kbf$ is called \emph{simple}, if any non-zero morphism
            \[
            (f_{-1},f_0)\colon [Y'\to G']\to[Y\to G],
            \]
            which is injective as a morphism of complexes, has to be an isogeny.
    \end{definition}

    \begin{remark}\label{simple 1-motives}
        Clearly a simple 1-motive over $\kbf$ must be of the form $Y[1]$, or $T$, or $B$, where
    \begin{enumerate}
        \item $Y$ is a \emph{simple lattice}, i.e. it contains no proper sub-lattice,
        \item $T$ is a \emph{simple torus}, i.e. it contains no proper sub-torus,
        \item and $B$ is a simple abelian variety.
    \end{enumerate}
    \end{remark}

    \begin{proposition}\label{char poly has integral coefficients and is indep of the choice of the prime}
        Let $f=(f_{-1},f_0)$ be an endomorphism of a 1-motive $\Mbf=[Y\to G]$ over $\kbf$. Let $\ell$ be a prime number which is invertible in $\kbf$, and let $P_{\Mbf,f,\ell}(\theta)\in\Z_\ell[\theta]$ be the characteristic polynomial of the endomorphism $T_{\ell}(f)$ on $T_\ell(\Mbf)$.     
        In case that $\Mbf=Y[1]$, we have $f=(f_{-1},0)$ and also write $P_{\Mbf,f}$ as $P_{Y,f_{-1}}$. Similarly, in case that $\Mbf=G$, we also write $P_{\Mbf,f}$ as $P_{T,f_0}$.
        Then we have the following.
        \begin{enumerate}
            \item $P_{\Mbf,f,\ell}[\theta]=P_{Y,f_{-1},\ell}[\theta]\cdot P_{G,f_0,\ell}[\theta]=P_{Y,f_{-1},\ell}[\theta]\cdot P_{B,f_{\mathrm{ab}},\ell}[\theta]\cdot P_{T,f_{\mathrm{t}},\ell}[\theta]$, where $f_{\mathrm{ab}}:B\to B$ (resp. $f_{\mathrm{t}}:T\to T$) denotes the abelian (resp. torus) part of $f_0$ (see Lemma \ref{hom of semiabelian schemes} (1)).
            \item $P_{\Mbf,f,\ell}[\theta]\in\Z[\theta]$.
            \item $P_{\Mbf,f,\ell}[\theta]$ is independent of the choice of $\ell$.
        \end{enumerate}
    \end{proposition}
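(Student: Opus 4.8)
The plan is to reduce the statement to the two short exact sequences that assemble $T_\ell(\Mbf)$ from $T_\ell(G)$ and $Y$, and then to the classical theory for abelian varieties.

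First I would prove (1). The short exact sequence \eqref{l-adic s.e.s. of 1-motive}, namely $0\to T_\ell(G)\to T_\ell(\Mbf)\to Y\otimes_\Z\Z_\ell\to 0$, arises from the canonical distinguished triangle $G\to\Mbf\to Y[1]\to G[1]$ in $\cD_\fl$, which is functorial in $\Mbf$. Hence the endomorphism $T_\ell(f)$ preserves the subobject $T_\ell(G)$, where it acts as $T_\ell(f_0)$, and induces $f_{-1}\otimes\Z_\ell$ on the quotient $Y\otimes_\Z\Z_\ell$. Multiplicativity of characteristic polynomials in short exact sequences gives $P_{\Mbf,f,\ell}=P_{G,f_0,\ell}\cdot P_{Y,f_{-1},\ell}$. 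Next, the extension $0\to T\to G\to B\to 0$ of group schemes over $\kbf$ induces a short exact sequence $0\to T_\ell(T)\to T_\ell(G)\to T_\ell(B)\to 0$; by Lemma \ref{hom of semiabelian schemes} (1) the endomorphism $f_0$ carries $T$ into itself, with $f_0|_T=f_{\mathrm t}$ and induced endomorphism $f_{\mathrm{ab}}$ on $B$, so this sequence is equivariant for $T_\ell(f_0)$ and yields $P_{G,f_0,\ell}=P_{T,f_{\mathrm t},\ell}\cdot P_{B,f_{\mathrm{ab}},\ell}$. Combining the two factorizations (and using commutativity of polynomial multiplication) proves (1).

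By (1) it then suffices to establish (2) and (3) for the three factors separately. For the lattice part, $T_\ell(Y[1])=Y(\overline{\kbf})\otimes_\Z\Z_\ell$ with $Y(\overline{\kbf})\cong\Z^r$, and $f_{-1}$ acts through a matrix in $\mathrm{Mat}_r(\Z)$ that is visibly independent of $\ell$, so $P_{Y,f_{-1},\ell}\in\Z[\theta]$ and does not depend on $\ell$. For the torus part, $T_\ell(T)\cong X_*(T)\otimes_\Z\Z_\ell(1)$ and $f_{\mathrm t}$ acts through the integral endomorphism $X_*(f_{\mathrm t})$ of the cocharacter lattice $X_*(T)\cong\Z^{\dim T}$, so again $P_{T,f_{\mathrm t},\ell}\in\Z[\theta]$ is $\ell$-independent. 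For the abelian part, the assertion that $P_{B,f_{\mathrm{ab}},\ell}\in\Z[\theta]$ and is independent of $\ell$ is the classical theorem of Weil on characteristic polynomials of endomorphisms of abelian varieties, e.g.\ through the identity $P_{B,f_{\mathrm{ab}},\ell}(n)=\deg(n_B-f_{\mathrm{ab}})$ for $n\in\Z$; see \cite[\S10]{Milne2008}. Multiplying the three factors and invoking (1) gives (2) and (3).

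The only genuinely non-formal ingredient is the classical abelian-variety case, which I would simply quote; everything else is the bookkeeping of functoriality. The point deserving care is the verification that both exact sequences above are equivariant for $T_\ell(f)$: for \eqref{l-adic s.e.s. of 1-motive} this uses that $f=(f_{-1},f_0)$ is a morphism of complexes, i.e.\ $f_0\circ u=u\circ f_{-1}$, and for the sequence $0\to T_\ell(T)\to T_\ell(G)\to T_\ell(B)\to 0$ it uses Lemma \ref{hom of semiabelian schemes} (1), that a homomorphism of semi-abelian schemes respects the toric part. Once these are in place, the factorization — and hence the integrality and $\ell$-independence — follow formally.
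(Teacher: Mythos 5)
Your proposal is correct and follows essentially the same route as the paper: both factor the characteristic polynomial through the filtration $T_\ell(T)\subset T_\ell(G)\subset T_\ell(\Mbf)$ (the paper phrases this as a block upper-triangular matrix after choosing bases, you as $T_\ell(f)$-equivariant short exact sequences), both quote the classical abelian-variety theorem for the factor $P_{B,f_{\mathrm{ab}},\ell}$, and both observe that the lattice and torus factors are characteristic polynomials of integer matrices, hence lie in $\Z[\theta]$ and are independent of $\ell$. The only cosmetic difference is in the torus part, where you use the cocharacter lattice via $T_\ell(T)\cong X_*(T)\otimes\Z_\ell(1)$, while the paper works with the character group $X$ and identifies the matrix of $T_\ell(f_{\mathrm t})$ with the transpose of that of $f_{\mathrm t}^\ast\otimes\id_{\Z_\ell}$ through the canonical pairing; both reduce to the same integral datum.
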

    \begin{proof}
        \begin{enumerate}
            \item
            After choosing $\mathbb{Z}_\ell$-basis for $T_\ell(Y)$, $T_\ell(T)$ and $T_\ell(B)$, it is easy to see that the endomorphism $T_\ell(f)$ is represented by a matrix of the following form
            \[
            \begin{pmatrix}
                a_{11} & a_{12} & a_{13} \\
                0 & a_{22} & a_{23} \\
                0 & 0 & a_{33}
            \end{pmatrix}
            \]
            with $a_{11}\in\GL_{\mathrm{dim}(T)}(\Z_\ell)$, $a_{22}\in\GL_{2\mathrm{dim}(B)}(\Z_\ell)$ and
            $a_{33}\in\GL_{\mathrm{rk}(Y)}(\Z_\ell)$. Thus the characteristic polynomial of $T_\ell(f)$ is the product of the characteristic polynomials of $a_{11}$, $a_{22}$ and $a_{33}$, which are respectively the characteristic polynomials of $T_\ell(f_{\mathrm{t}})$, $T_\ell(f_{\mathrm{ab}})$ and $T_\ell(f_{-1})$.

            \item 
            It is well known that $P_{B,f_{\mathrm{ab}},\ell}\in\mathbb{Z}[\theta]$ (see \cite[Chap. IV, \S19, Theorem 4]{Mumford1970}). Write $X=\cHom(T,\mathbb{G}_m)$ for the character group of $T$, which is \'{e}tale locally a free abelian group of finite rank. The endomorphism $f_{\mathrm{t}}$ on $T$ induces an endomorphism $f_{\mathrm{t}}^\ast$ on $X$. Moreover the canonical pairing $\langle-,-\rangle\colon X\times T\to\mathbb{G}_m$ satisfies
            \[
            \langle f_{\mathrm{t}}^\ast(x),t\rangle
            =
            \langle x,f_{\mathrm{t}}(t)\rangle,
            \quad
            \forall
            x\in X(\overline{\kbf}),\,
            t\in T(\overline{\kbf}).
            \]
            We have an induced pairing $\langle-,-\rangle\colon T_{\ell}(X[1])\times T_\ell(T)=(X\otimes\Z_{\ell})\times T_\ell(T)\to T_\ell(\mathbb{G}_m)=\mathbb{Z}_\ell(1)$ such that
            \[
            \langle (f_{\mathrm{t}}^\ast\otimes\id_{\Z_\ell})(x),t\rangle
            =
            \langle x,T_\ell(f_{\mathrm{t}})(t)\rangle,
            \quad
            \forall
            x\in X\otimes\Z_{\ell},\,
            t\in T_\ell(T).
            \]
            After fixing $\mathbb{Z}_\ell$-basis for $X\otimes\Z_{\ell}$ and $T_\ell(T)$, we see immediately that the matrices representing $f_{\mathrm{t}}^\ast\otimes\id_{\Z_\ell}$ and $T_\ell(f_{\mathrm{t}})$ are transpose of each other. In particular,
            \[
            P_{T,f_{\mathrm{t}},\ell}
            =
            P_{X,f_{\mathrm{t}}^\ast,\ell}.
            \]
            So it is enough to show that $P_{Y,f_{-1},\ell}\in\mathbb{Z}[\theta]$ for any $Y$ and any endomorphism $f_{-1}$ on $Y$. However, we have
            \[
            T_\ell(Y[1])=Y\otimes_{\mathbb{Z}}\mathbb{Z}_\ell,
            \quad
            T_\ell(f_{-1})=f_{-1}\otimes\id_{\Z_\ell}.
            \]
            So $P_{Y,f_{-1},\ell}$ is exactly the characteristic polynomial of $f_{-1}$ on $Y$, which clearly lies in $\mathbb{Z}[\theta]$.

            \item 
            Again it is well known that $P_{B,f_{\mathrm{ab}},\ell}$ is independent of $\ell$ (see \emph{loc.cit.}). From the above argument, it is enough to show that $P_{Y,f_{-1},\ell}$ is independent of $\ell$ for any $Y$ and any $f_{-1}$, which again follows from the fact that $T_\ell(f_{-1})=f_{-1}\otimes\mathrm{Id}_{\Z_\ell}$.
        \end{enumerate}
    \end{proof}

    \begin{definition}\label{definition of degree and char poly for 1-motives}
        Let $f=(f_{-1},f_0)$ be an endomorphism of a 1-motive $\Mbf=[Y\xrightarrow{u} G]$ over $\kbf$. By Proposition \ref{char poly has integral coefficients and is indep of the choice of the prime}, 
        \[P_{\Mbf,f}(\theta):=P_{\Mbf,f,\ell}(\theta)\in \Z[\theta]\]
        is independent of the choice of $\ell$, and we call it the \emph{characteristic polynomial of $f$}.
    \end{definition}

    Now assume that $\kbf=\F_q$ is finite, then $\Gamma$ is topologically generated by the $q$-Frobenius map:
    \begin{equation}\label{description of Galois gp of finite field}        \Gamma=\Gal(\overline{\kbf}/\kbf)=\overline{\langle\gamma\rangle}
        \text{ with } \gamma:\overline{\kbf}\to\overline{\kbf},x\mapsto x^q.
    \end{equation}
    The lattice $Y$ over $\kbf$ amounts to a $\Z$-representation $\rho_Y:\Gamma\to\mathrm{GL}(\Z^r)$ for some $r>0$, where we identify $Y\times_{\Spec\kbf}\Spec\overline{\kbf}$ with $\Z^r$. Since $\Gamma$ is commutative, the linear map $\rho_Y(\gamma):\Z^r\to\Z^r$ gives rise to an endomorphism of $Y$, and we denote it by $\pi_Y:Y\to Y$. Let $\pi_G:G\to G$ (resp. $\pi_B:B\to B$, resp. $\pi_T:T\to T$) be the geometric Frobenius morphism of $G$ (resp. $B$, resp. $T$) over $\kbf$, see \cite[Chap. II, the beginning of \S1]{Milne2008}. Actually $\pi_Y$ can be regarded as the geometric Frobenius of the scheme $Y$ over $\kbf$. One can check easily that $\pi_G\circ u=u\circ\pi_Y$, and thus $\pi_{\Mbf}=(\pi_Y,\pi_G)$ defines an endomorphism of $\Mbf$.

    \begin{lemma}\label{charpoly of lattices, AVs and tori}
        Let $\Mbf=[Y\xrightarrow{u}G]$ be a 1-motive over the finite field $\kbf=\F_q$, let $B$ (resp. $T$) be the abelian (resp. torus) part of $G$, and let $X$ be the character group of $T$.
        \begin{enumerate}
            \item The roots of the characteristic polynomial $P_{Y,\pi_Y}(\theta)$ (resp. $P_{X,\pi_X}(\theta)$) of the endomorphism $\pi_Y$ (resp. $\pi_X$) of $Y$ (resp. $X$) are roots of unity. In particular they are Weil $q$-numbers of weight 0.
            
            \item For the characteristic polynomial $P_{T,\pi_T}(\theta)$ of the endomorphism $\pi_T$ of $T$, we have
            \[P_{T,\pi_T}(\theta)=\frac{(-\theta)^n}{\det(\pi_X)}P_{X,\pi_X}(\frac{q}{\theta}),\] 
            where $n$ denotes the rank of $X$ and $\det(\pi_X)$ denotes the determinant of the linear map $\pi_X$ on $X$ regarded as a finite rank free abelian group.
            In particular, the roots of $P_{T,\pi_T}(\theta)$ are of the form $q\alpha^{-1}$ with $\alpha$ a root of $P_{X,\pi_X}(\theta)$, and thus they are Weil $q$-numbers of weight 2.
            
            \item
            The roots of the characteristic polynomial $P_{B,\pi_B}(\theta)$ of the endomorphism $\pi_B$ of $B$ are Weil $q$-numbers of weight 1.
            
            \item
            $P_{\Mbf,\pi_{\Mbf}}(\theta)=P_{Y,\pi_Y}(\theta)\cdot P_{B,\pi_B}(\theta)\cdot P_{T,\pi_T}(\theta)$.
            
            \item
            Let $\rho_{\Mbf,\ell}$ (resp. $\rho_{G,\ell}$, resp. $\rho_{Y,\ell}$) denote the canonical action of $\Gamma$ on $T_{\ell}(\Mbf)$ (resp. $T_{\ell}(G)$, resp. $T_{\ell}(Y[1])=Y\otimes_{\Z}\Z_{\ell}$). Then 
            \[
            T_{\ell}(\pi_{G})=\rho_{G,\ell}(\gamma),
            \quad
            T_{\ell}(\pi_{Y})=\pi_Y\otimes\Z_{\ell}=\rho_{Y,\ell}(\gamma),
            \quad
            T_{\ell}(\pi_{\Mbf})=\rho_{\Mbf,\ell}(\gamma).
            \]
            In particular, $P_{\Mbf,\pi_{\Mbf}}(\theta)$ is the characteristic polynomial of $\rho_{\Mbf,\ell}(\gamma)$. 
        \end{enumerate}
    \end{lemma}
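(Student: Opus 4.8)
The plan is to prove the five statements almost independently, using Proposition~\ref{char poly has integral coefficients and is indep of the choice of the prime} and the identification of the geometric Frobenius morphisms with the canonical $\gamma$-action throughout. I would establish $(5)$ first, since $(2)$ and $(4)$ rely on it. The elementary point is that for a scheme $Z_0$ of finite type over $\kbf=\F_q$, its relative $q$-power Frobenius endomorphism becomes, after base change to $\overline{\kbf}$, the morphism raising coordinates to the $q$-th power, hence induces on $Z_0(\overline{\kbf})$ exactly the map coming from the generator $\gamma\colon x\mapsto x^q$ of $\Gamma$. For $Y$ the assertion $T_\ell(\pi_Y)=\pi_Y\otimes\Z_\ell=\rho_{Y,\ell}(\gamma)$ is immediate from $T_\ell(Y[1])=Y\otimes_\Z\Z_\ell$ and the definition $\pi_Y:=\rho_Y(\gamma)$. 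For $G$ and for $\Mbf$ I would apply the previous point to the finite $\kbf$-group schemes $G[\ell^r]$ and $T_{\Z/\ell^r\Z}(\Mbf)=H^{-1}(\Mbf\otimes^L\Z/\ell^r\Z)$, using that $\pi_\Mbf=(\pi_Y,\pi_G)$ is the relative Frobenius of the complex $\Mbf=[Y\xrightarrow{u}G]$ and commutes with $(-)\otimes^L\Z/\ell^r\Z$; passing to the limit over $r$ gives $T_\ell(\pi_G)=\rho_{G,\ell}(\gamma)$ and $T_\ell(\pi_\Mbf)=\rho_{\Mbf,\ell}(\gamma)$, and the last sentence of $(5)$ is then Definition~\ref{definition of degree and char poly for 1-motives}.

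Statements $(1)$, $(3)$, $(4)$ are short. For $(1)$: $Y$ (resp. $X$) is \'etale-locally the constant sheaf $\Z^r$ (resp. $\Z^n$), so the $\Gamma$-action factors through a finite quotient and $\pi_Y$ (resp. $\pi_X$) has finite order; its minimal polynomial divides some $\theta^m-1$, so every root of $P_{Y,\pi_Y}$ (resp. $P_{X,\pi_X}$) is a root of unity, hence an algebraic integer all of whose archimedean absolute values equal $1=q^{0/2}$, i.e. a Weil $q$-number of weight $0$ (alternatively invoke Kronecker's theorem, as in the introduction). For $(3)$: by Proposition~\ref{char poly has integral coefficients and is indep of the choice of the prime}$(2)$ applied to $f=\pi_\Mbf$, the polynomial $P_{B,\pi_B}$ is monic with integer coefficients, so its roots are algebraic integers, and the statement that all their archimedean absolute values equal $q^{1/2}$ is the Riemann hypothesis for abelian varieties over finite fields (Weil; see e.g. \cite[Chap.~IV, \S21]{Mumford1970} or \cite{Tate1966}), so they are Weil $q$-numbers of weight $1$. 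For $(4)$: apply Proposition~\ref{char poly has integral coefficients and is indep of the choice of the prime}$(1)$ to $f=\pi_\Mbf=(\pi_Y,\pi_G)$; its lattice part is $\pi_Y$, and by functoriality of the geometric Frobenius the abelian and torus parts of $\pi_G$ are $\pi_B$ and $\pi_T$, which gives $P_{\Mbf,\pi_\Mbf}(\theta)=P_{Y,\pi_Y}(\theta)\cdot P_{B,\pi_B}(\theta)\cdot P_{T,\pi_T}(\theta)$.

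Statement $(2)$ requires a short computation. The canonical pairing $(X\otimes\Z_\ell)\times T_\ell(T)\to T_\ell(\Gm)=\Z_\ell(1)$ is perfect and $\Gamma$-equivariant, and $\gamma$ acts on it by $\pi_X\otimes\id$ on $X\otimes\Z_\ell$, by multiplication by $q$ on $\Z_\ell(1)$ (as $\gamma$ raises $\ell$-power roots of unity to the $q$-th power), and by $\rho_{T,\ell}(\gamma)=T_\ell(\pi_T)$ on $T_\ell(T)$ by $(5)$. Representing the pairing by an invertible matrix $M$ over $\Z_\ell$ in a pair of bases, $\Gamma$-equivariance reads $\pi_X^{\mathrm{t}}\,M\,T_\ell(\pi_T)=qM$, so $T_\ell(\pi_T)$ is conjugate to $q(\pi_X^{-1})^{\mathrm{t}}$ and $P_{T,\pi_T}(\theta)=\det(\theta\cdot\id-q\pi_X^{-1})$. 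Writing $\theta\cdot\id-q\pi_X^{-1}=-q\pi_X^{-1}\bigl(\id-\tfrac{\theta}{q}\pi_X\bigr)$, using $\det(-q\pi_X^{-1})=(-q)^n/\det(\pi_X)$ and $P_{X,\pi_X}(q/\theta)=(q/\theta)^n\det\bigl(\id-\tfrac{\theta}{q}\pi_X\bigr)$, this gives $P_{T,\pi_T}(\theta)=\frac{(-\theta)^n}{\det(\pi_X)}P_{X,\pi_X}(q/\theta)$. Since by $(1)$ the roots $\alpha$ of $P_{X,\pi_X}$ are roots of unity, the roots of $P_{T,\pi_T}$ are the numbers $q\alpha^{-1}$, of absolute value $q=q^{2/2}$, hence Weil $q$-numbers of weight $2$. (One could also argue from the identity $P_{T,\pi_T,\ell}=P_{X,\pi_T^{\ast},\ell}$ in the proof of Proposition~\ref{char poly has integral coefficients and is indep of the choice of the prime}; since $P_{X,\pi_X}\in\Z[\theta]$ has its multiset of roots stable under complex conjugation, the precise form of $\pi_T^{\ast}$ is immaterial for the conclusion.)

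The main obstacle I foresee is making $(5)$ fully rigorous. The short exact sequences $0\to T_\ell(G)\to T_\ell(\Mbf)\to Y\otimes\Z_\ell\to 0$ and $0\to T_\ell(T)\to T_\ell(G)\to T_\ell(B)\to 0$ only force $T_\ell(\pi_\Mbf)$ and $\rho_{\Mbf,\ell}(\gamma)$ to agree on sub- and quotient pieces, and the relevant $\Hom$-groups between these pieces need not vanish, so a pure d\'evissage does not close the argument; one genuinely has to identify the Frobenius endomorphism with the coordinate-wise $q$-power map at the finite level (or quote the abelian-variety case from \cite[Chap.~II, \S2]{Milne2008} together with its torus analogue). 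Once $(5)$ is in hand, steps $(1)$–$(4)$ are routine.
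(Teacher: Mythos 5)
Your proof is correct, and for parts (1), (3) and (4) it is essentially the paper's argument: (1) via finiteness of the $\Gamma$-action on $Y$ and $X$, (3) via Weil's theorem, (4) via Proposition \ref{char poly has integral coefficients and is indep of the choice of the prime}(1) together with functoriality of Frobenius. Your (2) is the Tate-module incarnation of the paper's computation: the paper shows $\pi_T^\ast=q\cdot\pi_X^{-1}$ directly on $\overline{\kbf}$-points using Galois equivariance of $X\times T\to\Gm$ and then performs the same determinant manipulation, while you read the relation $\pi_X^{\mathrm{t}}M\,T_\ell(\pi_T)=qM$ off the perfect $\Gamma$-equivariant pairing $(X\otimes\Z_\ell)\times T_\ell(T)\to\Z_\ell(1)$; these are the same argument in different clothing. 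The genuine divergence is in (5): you prove $T_\ell(\pi_{\Mbf})=\rho_{\Mbf,\ell}(\gamma)$ at finite level, identifying the Frobenius endomorphism of the finite \'etale group schemes $T_{\Z/\ell^r\Z}(\Mbf)$ with the $q$-power (i.e.\ $\gamma$-) action on $\overline{\kbf}$-points and passing to the limit, whereas the paper argues by d\'evissage: the two operators agree on the submodule $T_\ell(G)$ and on the quotient $Y\otimes_{\Z}\Z_\ell$, so their difference factors through a $\Gamma$-equivariant map $Y\otimes_{\Z}\Z_\ell\to T_\ell(G)$, which vanishes for weight reasons. Your stated obstacle --- that the relevant $\Hom$-groups ``need not vanish'' so that pure d\'evissage cannot close --- is not accurate: $\gamma$ acts on $Y\otimes_{\Z}\Z_\ell$ with eigenvalues that are roots of unity, and on $T_\ell(G)=\varprojlim G[\ell^r](\overline{\kbf})$ with eigenvalues of archimedean absolute value $q^{1/2}$ or $q$ (by the classical facts for $B$ and $T$ together with the first equality of (5), which you too treat as known), so a $\Gamma$-equivariant map between them must kill every generalized eigenspace and hence is zero; this is exactly how the paper closes the argument. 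So both routes work: yours is more hands-on, stays at the level of the finite group schemes $T_{\Z/\ell^r\Z}(\Mbf)$, and does not need the weight computation as input to (5); the paper's is shorter once the weights of $Y$, $B$, $T$ are already on the table, at the price of the (valid) vanishing-of-equivariant-maps step you were suspicious of.
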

    \begin{proof}
        \begin{enumerate}
            \item
            Since the action of $\Gamma\simeq\widehat{\mathbb{Z}}$ on $Y[1]$ factors through the finite quotient $\mathbb{Z}/m\mathbb{Z}$ for some $m>0$, $\pi_Y^m=\mathrm{Id}$, thus the roots of $P_{Y,\pi_Y}$ are roots of unity and are Weil $q$-numbers of weight $0$. The same holds for $X$.

            \item 
            Write $\pi_T^\ast$ for the endomorphism on $X$ induced by $\pi_T$. We claim that
            $\pi_T^\ast=q\cdot\pi_X^{-1}$.
            The canonical pairing $\langle-,-\rangle\colon X\times T\to\mathbb{G}_m$ is Galois equivariant. In particular, for any $x\in X(\overline{\kbf})$ and $t\in T(\overline{\kbf})$, we have
            \begin{align*}
           \pi_T^*(x)(t)=&\langle x,\pi_T(t)\rangle=\langle \pi_X(\pi_X^{-1}(x)),\pi_T(t)\rangle=\langle \pi_X^{-1}(x),t\rangle^q \\
           =&\langle \pi_X^{-1}(x)^q,t\rangle=(q\cdot\pi_X^{-1}(x))(t), 
           \end{align*}
           and thus $\pi_T^*(x)=q\cdot\pi_X^{-1}(x)$. 
           Then we have
           \begin{align*}
           P_{T,\pi_T}(\theta)
           &=
           \det(\theta-\pi_T)
           =
           \det(\theta-\pi_T^\ast)
           =
           \det(\theta-q\pi_X^{-1})
           \\
           &=
           \det(-\theta\pi_X^{-1})\det(\frac{q}{\theta}-\pi_X)
           =
           \frac{(-\theta)^n}{\det(\pi_X)}
           P_{X,\pi_X}(\frac{q}{\theta}).
           \end{align*}

           \item 
           This follows from the classical Honda-Tate theory.

           \item 
           This follows from Proposition \ref{char poly has integral coefficients and is indep of the choice of the prime} (1).

           \item
           The two equalities $T_{\ell}(\pi_{G})=\rho_{G,\ell}(\gamma)$ and $T_{\ell}(\pi_{Y})=\rho_{Y,\ell}(\gamma)$ are clear. We show the last one.
           Since $T_\ell(\pi_{\Mbf})|_{T_\ell(G)}=T_\ell(\pi_{G})$ and similarly $\rho_{\Mbf,\ell}(\gamma)|_{T_\ell(G)}=\rho_{G,\ell}(\gamma)$, the map $T_{\ell}(\pi_{\Mbf})-\rho_{\Mbf,\ell}(\gamma)$ factors as
           \[T_{\ell}(\Mbf)\twoheadrightarrow Y\otimes_{\Z}\Z_{\ell}\to T_{\ell}(G)\hookrightarrow T_{\ell}(\Mbf).\]
           Due to weight reason, there is no non-trivial Galois equivariant map between $Y\otimes_{\Z}\Z_{\ell}$ and $T_{\ell}(G)$. It follows that $T_{\ell}(\pi_{\Mbf})-\rho_{\Mbf,\ell}(\gamma)=0$. So we are done.
        \end{enumerate}
    \end{proof}

    \subsection{Log 1-motives}\label{Log 1-motives} 
	Let $S$ be an fs log scheme, and let $(\fs/S)$ be the category of fs log schemes over $S$. We denote by $(\fs/S)_{\kfl}$ (resp. $(\fs/S)_{\et}$) the Kummer log flat site (resp.  the classical \'etale site) on $(\fs/S)$, see \cite[\S 2.3]{Kato2021} and \cite[\S 2.5]{Illusie2002}. Sometimes we abbreviate $(\fs/S)_{\kfl}$ (resp. $(\fs/S)_{\et}$) as $S_{\kfl}$ (resp. $S_{\et}$) in order to shorten notations.	

    Let $\cC_{\kfl}$ be the category of sheaves of abelian groups on the  site $(\fs/S)_{\kfl}$, and let $\cD_{\kfl}$ be the derived category of $\cC_\kfl$. 

    We denote by $\mathring{S}$ the underlying scheme of $S$. For any scheme $V$ over $\mathring{S}$, we will regard $V$ as an fs log scheme over $S$ by endowing it with the induced log structure from $S$.

	Let $G$ be an extension of an abelian scheme $B$ by a torus $T$ over $\mathring{S}$. We denote by $G_{\log}$ the pushout of $G$ in the category $\cC_\kfl$ along the canonical inclusion $T\hookrightarrow T_{\log}:=\cHom_{S}(X, \Gml)$, where $X=\cHom_S(T,\Gm)$ is the character group of $T$ and $\Gml$ is Kato's logarithmic multiplicative group (see \cite[Theorem 3.2]{Kato2021}).
 
	\begin{definition}(\cite[Definition 2.2]{KajiwaraKatoNakayama2008})
    Let $Y$ be a group scheme over $\mathring{S}$, which is \'etale locally a free abelian group of finite rank, and let $G$ and $G_{\log}$ be as above. A \emph{log 1-motive with lattice part $Y$ and semi-abelian part $G$} over $S$, is a two term complex 
    \[\Mbf=[Y\xrightarrow{u} G_{\log}]\]
    in $\cC_\kfl$, where $Y$ sits in degree -1. We call $\Mbf$ a \emph{log 1-motive without abelian part}, if $B=0$.  A \emph{morphism of log 1-motives} is just a morphism of complexes.  We denote the category of log 1-motives (resp. log 1-motives without abelian part) over $S$ by $\Mcal^{\log}_1$ (resp. $\Mcal^{\log,\mathrm{ab}=0}_1$). 
    \end{definition}

    For $F_1\subset F_2$ two sheaves of abelian groups on $(\fs/S)_{\kfl}$, we denote by $F_2/F_1$ (resp. $(F_2/F_1)_{S_{\et}}$) the quotient on $(\fs/S)_{\kfl}$ (resp. $(\fs/S)_{\et}$). The composition
	\begin{center}$Y\xrightarrow{u} G_{\log}\to (G_{\log}/G)_{S_{\et}}=(T_{\log}/T)_{S_{\et}}=\cHom_{S}(X, (\Gml/\Gm)_{S_{\et}})$
	\end{center}
	corresponds to a pairing 
	\[\langle-,-\rangle: Y\times X \to (\Gml/\Gm)_{S_\et}.\]
	We call the pairing $\langle-,-\rangle$ the \emph{monodromy pairing} of the log 1-motive $\mathbf{M}$. Warning: here we switch the order of $X$ and $Y$ from \cite[\S2.3]{KajiwaraKatoNakayama2008}

    Let $X=\cHom_S(T,\Gm)$ as before, and let $T^*:=\cHom_S(Y,\Gm)$, $B^*:=\cExt^1_{S_{\et}}(B,\Gm)$ which is the dual abelian scheme of $B$, and $v$ the composition $Y\xrightarrow{u}G_{\log}\to B$. Then $G^*:=\cExt^1_{S_{\et}}([Y\xrightarrow{v}B],\Gm)$ fits into a short exact sequence $0\to T^*\to G^*\to B^*\to 0$ of \'etale sheaves of abelian groups, in particular $G^*$ is representable by a group scheme. For any $x\in X$, the pushout of $0\to T\to G\to B\to 0$ along $x:T\to\Gm$ gives rise to a homomorphism $v^*:X\to\cExt^1_{S_{\et}}(B,\Gm)=B^*$. The homomorphism $v^*$ can be lifted to a homomorphism $u^*:X\to G^*_{\log}$, see \cite[\S2.7.3]{KajiwaraKatoNakayama2008}, and the resulting log 1-motive $\Mbf^*=[X\xrightarrow{u^*}G^*_{\log}]$ is called the \emph{dual log 1-motive of $\Mbf$}, see \cite[Definition 2.7.4]{KajiwaraKatoNakayama2008}.

    \begin{definition}
        Let $\Mbf=[Y\xrightarrow{u} G_{\log}]$ be a log 1-motive over $S$, and let $\Mbf^*=[X\xrightarrow{u^*}G^*_{\log}]$ be the dual log 1-motive of $\Mbf$ (see \cite[Definition 2.7.4]{KajiwaraKatoNakayama2008}). A \emph{polarization} of $\Mbf$ is a homomorphism $h=(h_{-1},h_0):\Mbf\to\Mbf^*$ of log 1-motives satisfying
        \begin{enumerate}
            \item the homomorphism $f_{\mathrm{ab}}:B\to B^*$ induced by $h_0$ is a polarization on $B$;
            \item the homomorphism $h_{-1}$ is injective and of finite cokernel;
            \item $\langle y,h_{-1}(y)\rangle_{\bar{s}}\in (M_{S,\bar{s}}/\mathcal{O}^\times_{S,\bar{s}})\backslash\{1\}$ for any $s\in S$ and $y\in Y_{\bar{s}}$;
            \item the homomorphism $f_{\mathrm{t}}:T\to T^*$ induced by $f_0$ coincide with the one induced by $f_{-1}$.
        \end{enumerate}
        We say that $\Mbf$ is \emph{pointwise polarizable}, if the pullback of $\Mbf$ to $(\fs/\bar{s})$ admits a polarization for any $s\in S$.  We denote the category of pointwise polarizable log 1-motives over $S$ by 
        \[\Mcal_1^{\log,\mathrm{pPol}},\]
        and the full subcategory of $\Mcal_1^{\log,\mathrm{pPol}}$ consisting of the ones without abelian part by 
        \[\Mcal_1^{\log,\mathrm{ab}=0,\mathrm{pPol}}.\]
    \end{definition}

    \begin{definition}
        A morphism $(f_{-1},f_0):[Y\xrightarrow{u} G_{\log}]\to [Y'\xrightarrow{u'} G'_{\log}]$	 of log 1-motives over $S$ is called an \emph{isogeny}, if $f_{-1}$ is injective with finite cokernel and the homomorphism $G\to G'$ induced by $f_0$ (see \cite[Proposition 2.5]{KajiwaraKatoNakayama2008}) is surjective with kernel a finite flat group scheme.
	\end{definition}
	Note that $G_{\log}$ in \cite[\S 2.1]{KajiwaraKatoNakayama2008} is defined on the site $(\fs/S)_{\et}$, and agrees with ours by \cite[Proposition 2.1]{Zhao2017}.

     \begin{definition}
        Let $S$ be an fs log scheme, and let $\Mbf=[Y\xrightarrow{u} G_{\log}]$ be a log 1-motive over $S$. For any positive integer $n$, let 
        \[T_{\Z/n\Z}(\Mbf):=H^{-1}(\Mbf\otimes^L\Z/n\Z).\]
        For any prime number $\ell$, let 
        \[T_{\ell}(\Mbf):=\varprojlim_r T_{\Z/\ell^r\Z}(\Mbf).\]
    \end{definition}

    The canonical distinguished triangle $Y\xrightarrow{u}G_{\log}\to\Mbf\to Y[1]$ in $\cD_\kfl$ gives rise to a long exact sequence
    \begin{align*}
        H^{-1}(Y\otimes^L\Z/n\Z)\to H^{-1}(G_{\log}\otimes^L\Z/n\Z)\to H^{-1}(\Mbf\otimes^L\Z/n\Z)\to  \\
        H^{0}(Y\otimes^L\Z/n\Z)\to H^{0}(G_{\log}\otimes^L\Z/n\Z)\to\cdots .
    \end{align*}
    We have seen that $H^{-1}(Y\otimes^L\Z/n\Z)=0$ from last subsection. We have 
    \[H^{0}(G_{\log}\otimes^L\Z/n\Z)=G_{\log}\otimes\Z/n\Z=0\]
    by \cite[Proposition 2.3 (5)]{Zhao2017}. Recall that $X$ is the character group of $T$. By \cite[Proposition 2.2]{Zhao2017} we have $G_{\log}/G=\cHom_S(X,\Gml/\Gm)$  and thus $n:G_{\log}/G\hookrightarrow G_{\log}/G$. It follows that 
    \[H^{-1}(G_{\log}\otimes^L\Z/n\Z)=G_{\log}[n]=G[n].\]
    Thus we get a short exact sequence
    \[0\to G[n]\to T_{\Z/n\Z}(\Mbf)\to Y/nY\to 0\]
    in $\cC_\kfl$. Both $G[n]$ and $Y/nY$ are finite locally free group schemes over $S$. Assume that $\mathring{S}$ is locally noetherian. By \cite[Theorem 9.1]{Kato2021}, $T_{\Z/n\Z}(\Mbf)$ as a $G[n]$-torsor over $Y/nY$ is an object of $(\mathrm{fin}/S)_{\mathrm{r}}$, where $(\mathrm{fin}/S)_{\mathrm{r}}$ denotes the category of finite Kummer log flat group log schemes over $S$ which are logarithmic analogues of finite flat group schemes, see \cite[Definition A.1]{Zhao2017} or \cite[Definition 1.3]{Kato2023}. If $n$ is invertible on $S$, $T_{\Z/n\Z}(\Mbf)$ is Kummer log \'etale over $S$. Since $(G[\ell^i])_i$ is a Mittag-Leffler system, we also have a short exact sequence
    \begin{equation}\label{l-adic s.e.s. of log 1-motive}
        0\to T_{\ell}(G)\to T_{\ell}(\Mbf)\to Y\otimes_{\Z}\Z_{\ell}\to 0
    \end{equation}
    in $\cC_\kfl$. If $\ell$ is invertible on $S$ and $S$ is connected, then the sequence \eqref{l-adic s.e.s. of log 1-motive} is a short exact sequence of free $\Z_\ell$-modules of finite rank endowed with a continuous $\pi_1^{\ket}(S)$-action, where $\pi_1^{\ket}(S)$ denotes the Kummer log \'etale fundamental group of $S$, see \cite[\S10]{Kato2021} or \cite[\S4]{Illusie2002}.

    \begin{definition}
        Assume that $S$ is a log point, and let $\Mbf=[Y\xrightarrow{u}G_{\log}]$ be an object of $\Mcal_1^{\log,\mathrm{pPol}}$. We say that $\Mbf$ is \emph{simple} (in $\Mcal_1^{\log,\mathrm{pPol}}$), if
        any non-zero morphism 
        \[f=(f_{-1},f_0):[Y'\to G'_{\log}]\to [Y\to G_{\log}]\] in $\Mcal_1^{\log,\mathrm{pPol}}$ with both $f_{-1}$ and $f_0$ injective 
        has to be an isogeny.
    \end{definition}

    We warn the reader that a simple object in $\Mcal_1^{\log,\mathrm{pPol}}$ is not simple in $\Mcal_1^{\log}$ in general if we define the notion of being simple in $\Mcal_1^{\log}$ as in Definition \ref{def simple 1-motives}.

    Being simple is preserved under isogeny.

    \begin{proposition}\label{simpleness of ppol log 1-motives is preserved by isogeny}
        Let $(f_{-1},f_0):\Mbf'=[Y'\to G'_{\log}]\to [Y\to G_{\log}]=\Mbf$ be an isogeny of pointwise polarizable log 1-motives. Then $\Mbf'$ is simple if and only if $\Mbf$ is simple.
    \end{proposition}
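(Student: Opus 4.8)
The plan is to deduce both implications from a single one by using quasi-inverses of isogenies. First I would record two preliminary facts: (i) every isogeny $f=(f_{-1},f_0)\colon\Mbf'\to\Mbf$ in $\Mcal_1^{\log,\mathrm{pPol}}$ admits a quasi-inverse, i.e.\ an isogeny $g\colon\Mbf\to\Mbf'$ and an integer $N>0$ with $fg=N\cdot\id_\Mbf$ and $gf=N\cdot\id_{\Mbf'}$ — on lattice parts one takes $N\cdot f_{-1}^{-1}$, on semiabelian parts the quasi-inverse of the isogeny $G'\to G$ induced by $f_0$, extended to the log parts using that $X\hookrightarrow X'$ has cokernel killed by $N$, and one repairs the complex identity by a further rescaling, which is harmless because $f_0$ itself need not be injective; and (ii) pointwise polarizability is preserved under isogeny (transport a polarization along a quasi-inverse). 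Granting (i) and (ii), it suffices to prove the implication ``$\Mbf$ simple $\Rightarrow\Mbf'$ simple'', since the converse follows by applying it to $g$.

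So assume $\Mbf$ is simple and let $\phi=(\phi_{-1},\phi_0)\colon\mathbf{N}=[Z\to H_{\log}]\to\Mbf'$ be a nonzero morphism in $\Mcal_1^{\log,\mathrm{pPol}}$ with $\phi_{-1}$ and $\phi_0$ injective; I must show $\phi$ is an isogeny. Put $\psi:=f\circ\phi\colon\mathbf{N}\to\Mbf$. It is nonzero: otherwise $\mathrm{im}\,\phi_0\subseteq\ker f_0$, which is the finite group scheme $\ker(G'\to G)$, and injectivity of $\phi_0$ then forces $H_{\log}$, hence $\mathbf{N}$, hence $\phi$, to vanish, a contradiction.

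The key difficulty is that $\psi_0=f_0\circ\phi_0$ is not injective in general: its kernel $K':=\phi_0^{-1}(\ker f_0)$ is a finite subgroup scheme of $H_{\log}$, necessarily contained in $H$, and when $K'$ meets the torus part of $H$ the scheme-theoretic image of $\psi$ is not a log $1$-motive (one cannot quotient a log $1$-motive by a finite subgroup meeting its torus). To get around this I would pass to the ``saturation'' $\mathbf{M}^\sharp:=[\,\mathrm{im}\,\phi_{-1}\to G''_{\log}\,]$, where $G'':=\mathrm{im}(\bar\psi_0\colon H\to G)\cong H/K'$ is a semiabelian subscheme of $G$ and the structure map is the restriction of that of $\Mbf$; this is legitimate because the image of any homomorphism $H_{\log}\to G_{\log}$ lies in $G'''_{\log}$ for $G'''$ the image of the induced map $H\to G$ (decompose $H_{\log}=H+T_{H,\log}$ and chase character groups). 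Then $\psi$ factors as $\mathbf{N}\xrightarrow{\bar\psi}\mathbf{M}^\sharp\to\Mbf$, the second map having injective components, and $\bar\psi$ is an isogeny — because an isogeny of log $1$-motives only constrains the induced map on semiabelian parts, which here is $H\twoheadrightarrow H/K'$, even though $\bar\psi$ is typically not surjective on the log parts. By (ii), $\mathbf{M}^\sharp\in\Mcal_1^{\log,\mathrm{pPol}}$; being a nonzero morphism with injective components into the simple $\Mbf$, the inclusion $\mathbf{M}^\sharp\to\Mbf$ is an isogeny. Hence $\psi=f\phi$ is a composite of two isogenies, so an isogeny; since $f$ is an isogeny, a routine check (comparing the cokernels of $f_{-1},\phi_{-1}$ and of the induced maps on semiabelian parts, or passing to $\ell$-adic Tate modules) shows $\phi$ itself is an isogeny, so $\Mbf'$ is simple.

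I expect the construction of $\mathbf{M}^\sharp$ to be the real obstacle: the naive image of a morphism of log $1$-motives need not be a log $1$-motive, and the point is that replacing it by the ``completed'' object $[\,\mathrm{im}\,\phi_{-1}\to G''_{\log}\,]$ still receives an \emph{isogeny} from $\mathbf{N}$ because the definition of isogeny is insensitive to the log parts. The two preliminary facts — existence of quasi-inverses and isogeny-invariance of pointwise polarizability — are the other ingredients I would want in hand (they fit naturally into the preceding discussion of the isogeny category).
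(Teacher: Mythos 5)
Your proposal is correct and takes essentially the same route as the paper: the paper likewise factors the composite $f\circ\phi$ through an intermediate pointwise polarizable log $1$-motive obtained by dividing the semiabelian part by the kernel of the induced map to $G$ while keeping (a copy of) the lattice, applies simpleness of $\Mbf$ to the resulting morphism with injective components, cancels the isogeny $f$, and treats the converse exactly as your quasi-inverse reduction does, by factoring $n_{\Mbf'}$ through $f$ to obtain an isogeny $\Mbf\to\Mbf'$. The only slip is notational: the lattice of your $\mathbf{M}^\sharp$ should be $\mathrm{im}(f_{-1}\circ\phi_{-1})\subset Y$ (with structure map restricted from $\Mbf$), not $\mathrm{im}\,\phi_{-1}\subset Y'$; this is immaterial since $f_{-1}\circ\phi_{-1}$ is injective, so that lattice is just a copy of $Z$, as in the paper.
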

    \begin{proof}
        Let $f_{\mathrm{c}}:G'\to G$ be the homomorphism induced by $f_0$.
        
        Assume that $\Mbf$ is simple. We show that $\Mbf'$ is also simple. Let 
        \[(i_{-1},i_0):\Mbf'_1=[Y_1'\xrightarrow{u_1'} G'_{1\log}]\to\Mbf'\]
        be a morphism such that both $i_{-1}$ and $i_0$ are injective. Let $i_{\mathrm{c}}:G'_1\to G'$ be the homomorphism induced by $i_0$. Clearly
        $\ker(f_{\mathrm{c}}\circ i_{\mathrm{c}})\subset \ker(f_{\mathrm{c}})$, and thus $\ker(f_{\mathrm{c}}\circ i_{\mathrm{c}})$ is a finite flat group scheme. Let $\overline{G}'_1:=G'_1/\ker(f_{\mathrm{c}}\circ i_{\mathrm{c}})$, $\pi_{\mathrm{c}}:G'_1\to\overline{G}'_1$ the canonical projection, $\pi:G'_{1\log}\to\overline{G}'_{1\log}$ the map induced by $\pi_{\mathrm{c}}$, and $\overline{u}'_1:=\pi\circ u'_1$. Then $f_{\mathrm{c}}\circ i_{\mathrm{c}}$ factors as $G'_1\xrightarrow{\pi_{\mathrm{c}}}\overline{G}'_1\xrightarrow{j_{\mathrm{c}}}G$ for some $j_{\mathrm{c}}$, and thus $f_0\circ i_0$ factors as $G'_{1\log}\xrightarrow{\pi}\overline{G}'_{1\log}\xrightarrow{j}G_{\log}$, so $(f_{-1}\circ i_{-1},f_0\circ i_0)=(f_{-1}\circ i_{-1},j)\circ (1_{Y'_1},\pi)$. Since both $f_{-1}\circ i_{-1}$ and $j$ are injective and $\Mbf$ is simple, $(f_{-1}\circ i_{-1},j)$ is an isogeny. Since  $(1_{Y'_1},\pi)$ and $(f_{-1},f_0)$ are isogenies, so is $(i_{-1},i_0)$. It follows that $\Mbf'$ is simple.

        Conversely, assume that $\Mbf'$ is simple. Let $n$ be a positive integer which kills both $\coker(f_{-1})$ and $\ker(f_{\mathrm{c}})$. Then the multiplication-by-$n$ map $n_{\Mbf'}$ on $\Mbf'$ factors as $n_{\Mbf'}=(g_{-1},g_0)\circ (f_{-1},f_0)$ with $(g_{-1},g_0):\Mbf\to\Mbf'$ an isogeny. Applying the above to $(g_{-1},g_0)$, we have that $\Mbf$ is simple by the simpleness of $\Mbf'$.
    \end{proof}

    In the rest of this subsection, we assume that $S=(\Spec\kbf,M_S)$ is a finite log point as in \eqref{charted log point}. Let $\Gamma:=\Gal(\overline{\kbf}/\kbf)=\overline{\langle\gamma\rangle}$ be as in (\ref{description of Galois gp of finite field}). By \cite[Example 4.7 (a)]{Illusie2002}, we have a short exact sequence
    \begin{equation}\label{s.e.s. of ket fundamental gp}
        1\to \Hom(P^{\gp},\widehat{\Z}'(1)(\overline{\kbf}))\to\pi_1^{\ket}(S)\to \Gamma\to 1,
    \end{equation}
    where $\widehat{\Z}'(1)$ denotes the product of $\Z_{\ell}(1)=\varprojlim_r\mu_{\ell^r}(\overline{\kbf})$ for $\ell$ different from $\mathrm{char}(\kbf)$.

    \begin{proposition}\label{char polynomial of Frobenius of log 1-motive}
        Let $\Mbf=[Y\xrightarrow{u}G_{\log}]$ be a log 1-motive over the log point $S$, and let $\ell$ be a prime number which is coprime to $\mathrm{char}(\kbf)$. Let $\tilde{\gamma}\in \pi_1^{\ket}(S)$ be a lifting of $\gamma\in\Gamma$ along the sequence \eqref{s.e.s. of ket fundamental gp}, and let $\tilde{\rho}_{\Mbf,\ell}$ (resp. $\rho_{G,\ell}$, resp. $\rho_{Y,\ell}$) denote the canonical action of $\pi_1^{\ket}(S)$ (resp. $\Gamma$, resp. $\Gamma$) on $T_{\ell}(\Mbf)$ (resp. $T_{\ell}(G)$, resp. $T_{\ell}(Y[1])=Y\otimes_{\Z}\Z_{\ell}$). Then we have the following.
        \begin{enumerate}
            \item The characteristic polynomial $P_{\tilde{\rho}_{\Mbf,\ell}(\tilde{\gamma})}(\theta)$ of $\tilde{\rho}_{\Mbf,\ell}(\tilde{\gamma})$ factors as
            \[P_{\tilde{\rho}_{\Mbf,\ell}(\tilde{\gamma})}=P_{Y,\pi_Y}(\theta)\cdot P_{B,\pi_B}(\theta)\cdot P_{T,\pi_T}(\theta),\]
            where $P_{Y,\pi_Y}(\theta)$, $P_{B,\pi_B}(\theta)$ and $P_{T,\pi_T}(\theta)$ are as in Lemma \ref{charpoly of lattices, AVs and tori}.
            
            \item The polynomial $P_{\tilde{\rho}_{\Mbf,\ell}(\tilde{\gamma})}(\theta)\in\Z[\theta]$ and is independent of the choice of $\ell$.
            
            \item The polynomial $P_{\tilde{\rho}_{\Mbf,\ell}(\tilde{\gamma})}(\theta)$ is independent of the choice of the lifting $\tilde{\gamma}$ of $\gamma$.

            \item Let $f:\Mbf'\to\Mbf$ be an isogeny of log 1-motives. Then $T_{\ell}(f):T_{\ell}(\Mbf')\to T_{\ell}(\Mbf)$ is injective with finite cokernel. In particular, $P_{\tilde{\rho}_{\Mbf',\ell}(\tilde{\gamma})}(\theta)=P_{\tilde{\rho}_{\Mbf,\ell}(\tilde{\gamma})}(\theta)$.
        \end{enumerate}
    \end{proposition}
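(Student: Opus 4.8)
The plan is to analyze $T_\ell(\Mbf)$ via the three-step $\pi_1^{\ket}(S)$-stable filtration
\[
0 \subseteq T_\ell(T) \subseteq T_\ell(G) \subseteq T_\ell(\Mbf),
\]
obtained from the exact sequence $0\to T_\ell(T)\to T_\ell(G)\to T_\ell(B)\to 0$ together with \eqref{l-adic s.e.s. of log 1-motive}. Its graded pieces are $T_\ell(T)$, $T_\ell(B)$ and $Y\otimes_\Z\Z_\ell$. Since $T$, $B$ and $Y$ are schemes over $\mathring S=\Spec\kbf$, the $\pi_1^{\ket}(S)$-action on each graded piece factors through the quotient $\Gamma$ in \eqref{s.e.s. of ket fundamental gp}, and by Lemma \ref{charpoly of lattices, AVs and tori}(5) the image of $\gamma$ acts there as $T_\ell(\pi_T)$, $T_\ell(\pi_B)$, respectively $\pi_Y\otimes\Z_\ell$, with characteristic polynomials $P_{T,\pi_T}$, $P_{B,\pi_B}$, $P_{Y,\pi_Y}$.

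For (1) and (3) I would fix a $\Z_\ell$-basis of $T_\ell(\Mbf)$ adapted to this filtration, so that $\tilde\rho_{\Mbf,\ell}$ takes every element of $\pi_1^{\ket}(S)$ to a block upper-triangular matrix whose diagonal blocks give the action on the three graded pieces. For any lift $\tilde\gamma$ of $\gamma$, the diagonal blocks of $\tilde\rho_{\Mbf,\ell}(\tilde\gamma)$ are the matrices of $T_\ell(\pi_T)$, $T_\ell(\pi_B)$ and $\pi_Y\otimes\Z_\ell$, which do not depend on $\tilde\gamma$; hence the characteristic polynomial of $\tilde\rho_{\Mbf,\ell}(\tilde\gamma)$ is $P_{Y,\pi_Y}\cdot P_{B,\pi_B}\cdot P_{T,\pi_T}$, proving (1). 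If $\tilde\gamma'$ is a second lift, then $\tilde\gamma'=n\tilde\gamma$ for some $n$ in the kernel $N$ of $\pi_1^{\ket}(S)\to\Gamma$; as $N$ acts trivially on the graded pieces, $\tilde\rho_{\Mbf,\ell}(n)$ is block upper-triangular with identity diagonal blocks, so $\tilde\rho_{\Mbf,\ell}(\tilde\gamma')=\tilde\rho_{\Mbf,\ell}(n)\tilde\rho_{\Mbf,\ell}(\tilde\gamma)$ has the same diagonal blocks, hence the same characteristic polynomial as $\tilde\rho_{\Mbf,\ell}(\tilde\gamma)$; this gives (3). Part (2) is then immediate from the factorization in (1) and Proposition \ref{char poly has integral coefficients and is indep of the choice of the prime}(2)--(3), by which each factor lies in $\Z[\theta]$ and is independent of $\ell$.

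For (4), an isogeny $f=(f_{-1},f_0)$ induces a morphism between the sequences \eqref{l-adic s.e.s. of log 1-motive} for $\Mbf'$ and for $\Mbf$. On the quotients, $Y'\otimes\Z_\ell\to Y\otimes\Z_\ell$ is injective with finite cokernel since $\Z_\ell$ is flat over $\Z$ and $f_{-1}$ has finite cokernel. On the sub-objects, the induced isogeny $f_{\mathrm c}\colon G'\to G$ restricts to an isogeny on torus parts and induces an isogeny on abelian parts, so $T_\ell(f_{\mathrm c})$ is injective with finite cokernel (dualizing to character lattices for the torus part and invoking the classical theory for the abelian part, plus a snake-lemma comparison). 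Applying the snake lemma to the morphism of short exact sequences shows $T_\ell(f)$ is injective with finite cokernel; hence $T_\ell(f)\otimes_{\Z_\ell}\Q_\ell$ is a $\pi_1^{\ket}(S)$-equivariant isomorphism intertwining $\tilde\rho_{\Mbf',\ell}(\tilde\gamma)$ and $\tilde\rho_{\Mbf,\ell}(\tilde\gamma)$, and the two characteristic polynomials coincide.

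The step I expect to be the main obstacle is the one underlying the first paragraph: verifying that the $\pi_1^{\ket}(S)$-action on the ``classical'' pieces $T_\ell(T)$, $T_\ell(B)$ and $Y\otimes\Z_\ell$ --- all pulled back from the \'etale site of $\Spec\kbf$ --- genuinely factors through $\Gamma$ compatibly with \eqref{s.e.s. of ket fundamental gp} and matches the Galois action computed in Lemma \ref{charpoly of lattices, AVs and tori}. Once this comparison is secured, everything else reduces to block-triangular linear algebra and diagram chasing; the only other point needing care is the claim in (4) that the semiabelian isogeny $f_{\mathrm c}$ restricts to an isogeny on torus parts.
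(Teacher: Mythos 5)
Your proposal is correct and follows essentially the same route as the paper: both rest on the $\pi_1^{\ket}(S)$-stable sequence \eqref{l-adic s.e.s. of log 1-motive}, the observation that the action on the classical pieces factors through $\Gamma$ and matches the Frobenius endomorphisms of Lemma \ref{charpoly of lattices, AVs and tori}, and then block-triangular linear algebra plus a snake-lemma/known-isogeny argument for (4). The only cosmetic difference is that you refine the filtration by inserting $T_\ell(T)\subseteq T_\ell(G)$, whereas the paper keeps the two-step sequence and delegates the splitting of $P_{\rho_{G,\ell}(\gamma)}$ into $P_{B,\pi_B}\cdot P_{T,\pi_T}$ to Lemma \ref{charpoly of lattices, AVs and tori}(4)--(5).
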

    \begin{proof}
        (1) Let $P_{\rho_{G,\ell}(\gamma)}(\theta)$ (resp. $P_{\rho_{Y,\ell}(\gamma)}(\theta)$) be the characteristic polynomial of the linear map $\rho_{G,\ell}(\gamma)$ (resp. $\rho_{Y,\ell}(\gamma)$). The restriction of $\tilde{\rho}_{\Mbf,\ell}(\tilde{\gamma})$ to $T_{\ell}(G)$ is $\rho_{G,\ell}(\gamma)$, and the map induced by $\tilde{\rho}_{\Mbf,\ell}(\tilde{\gamma})$ on $Y\otimes\Z_{\ell}$ is $\rho_{Y,\ell}(\gamma)$. Therefore by the short exact sequence \eqref{l-adic s.e.s. of log 1-motive} we get 
        \[P_{\tilde{\rho}_{\Mbf,\ell}(\tilde{\gamma})}(\theta)=P_{\rho_{G,\ell}(\gamma)}(\theta)\cdot P_{\rho_{Y,\ell}(\gamma)}(\theta).\]
        Then the result follows from Lemma \ref{charpoly of lattices, AVs and tori} (4) (applied to $G$ as a 1-motive) and (5) in \emph{loc.cit}.

        (2) The result follows from (1), Proposition \ref{char poly has integral coefficients and is indep of the choice of the prime}, and Lemma \ref{charpoly of lattices, AVs and tori} (5).

        (3) The result follows from (1).

        (4) By the exact sequence \eqref{l-adic s.e.s. of log 1-motive}, one is reduced to the case of $Y$ and the case of $G$ which are known.
    \end{proof}

    Let $\Mbf$ be as above. By (2) and (3) of the proposition, the polynomial $P_{\tilde{\rho}_{\Mbf,\ell}(\tilde{\gamma})}(\theta)$ is independent of the choice of $\ell$ and the choice of $\tilde{\gamma}$, we rewrite it as
        \begin{equation}\label{equation char poly of Frobenius on log 1-motive}
            P_{\Mbf,\gamma}(\theta):=P_{\tilde{\rho}_{\Mbf,\ell}(\tilde{\gamma})}(\theta)
        \end{equation}
        and call it the \emph{characteristic polynomial of $\gamma$ on $\Mbf$} by abuse of terminology. 

        \begin{remark}\label{char poly of log 1-motive is invariant under isogeny}
            By Proposition \ref{char polynomial of Frobenius of log 1-motive} (1), we have 
            \[P_{\Mbf,\gamma}(\theta)=P_{Y,\pi_Y}(\theta)\cdot P_{B,\pi_B}(\theta)\cdot P_{T,\pi_T}(\theta).\]
            Moreover $P_{\Mbf,\gamma}(\theta)$ is invariant under isogeny by Proposition \ref{char polynomial of Frobenius of log 1-motive} (4).
        \end{remark}

\subsection{Log abelian varieties with constant degeneration}\label{Log abelian varieties with constant degeneration}
        Let $S$ be an fs log scheme. We consider a log 1-motive $\Mbf=[Y\xrightarrow{u}G_{\log}]$ over $S$.
	
	Via the monodromy pairing $\langle-,-\rangle:Y\times X\to (\Gml/\Gm)_{S_{\et}}$ of $\Mbf$, we define the subsheaf 
	\[\cHom(X, (\Gml/\Gm)_{S_{\et}})^{(Y)}\subset \cHom(X, (\Gml/\Gm)_{S_{\et}})\]
	on $(\fs/S)_{\et}$ by letting $\cHom(X, \Gml/\Gm)^{(Y)}(U)$ to be:
	\begin{center}
		$\{ \phi\in \cHom(X, (\Gml/\Gm)_{S_{\et}})(U) :$ for any $u\in U$ and $x\in X_{\bar{u}}$, there exist $y_{u, x}, y_{u, x}'\in Y_{\bar{u}}$, such that $\langle y_{u, x},x\rangle_{\bar{u}} \vert \phi(x)_{\bar{u}} \vert \langle y'_{u, x},x\rangle_{\bar{u}} \}$.
	\end{center}
	for any $U\in (\fs/S)$.  Here $\overline{u}$ is a geometric point over $u$, and in an integral monoid $Q$ the relation $a\mid b$ for $a,b\in Q^\gp$ means that $b=a\cdot c$ for some $c\in Q$.
 
    We then define $G_{\log}^{(Y)}$ as the preimage of $\cHom(X, (\Gml/\Gm)_{S_{\et}})^{(Y)}$ under the natural map 
	\[G_{\log}\to (G_{\log}/G)_{S_{\et}}\cong \cHom(X, (\Gml/\Gm)_{S_{\et}})\]
	on $(\fs/S)_{\et}$. By \cite[Proposition 2.4]{Zhao2017}, $G_{\log}^{(Y)}$ is also a sheaf on $(\fs/S)_{\kfl}$.
	\begin{definition}(\cite[Definition 3.3]{KajiwaraKatoNakayama2008})
		A \emph{log abelian variety with constant degeneration} over $S$ is a sheaf $A$ of abelian groups on $(\fs/S)_{\et}$, which is isomorphic to $G_{\log}^{(Y)}/Y$ for a pointwise polarizable log 1-motive $[Y\xrightarrow{u} G_{\log}]$. 
        
		A \emph{homomorphism} of log abelian varieties with constant degeneration is just a homomorphism of sheaves of abelian groups on $(\fs/S)_{\et}$. We denote the category of log abelian varieties with constant degeneration over $S$ by 
        \[\mathrm{LAVwCD}.\]
		
		If the abelian part of $G$ is zero, we call $A$ \emph{totally degenerate} or \emph{without abelian part}. We denote the full subcategory of $\mathrm{LAV}$ consisting of totally degenerate log abelian varieties with constant degeneration over $S$ by 
        \[\mathrm{LAVwCD}^{\mathrm{ab}=0}.\]
	\end{definition}
	
	By \cite[Theorem 2.1]{Zhao2017}, a log abelian variety with constant degeneration is also a sheaf on $(\fs/S)_{\kfl}$.

    \begin{definition}(\cite[Definition 3.1]{Zhao2017})
        A homomorphism $f:A\to A'$ in $\mathrm{LAVwCD}$ is called an \emph{isogeny}, if $f$ is a surjective map of sheaves on $(\fs/S)_{\kfl}$ and $\ker(f)\in(\mathrm{fin}/S)_{\mathrm{r}}$, where $(\mathrm{fin}/S)_{\mathrm{r}}$ denotes the category of finite Kummer log flat group log schemes over $S$ which are logarithmic analogues of finite flat group schemes, see \cite[Definition A.1]{Zhao2017} or \cite[Definition 1.3]{Kato2023}.
    \end{definition}
	
	In this paper, we do not give the definition of log abelian varieties in general, see \cite[Definition 4.1]{KajiwaraKatoNakayama2008}. This is because we are only concerned with $S$ being a log point, over which log abelian varieties are always log abelian varieties with constant degeneration by \cite[Theorem 4.6]{KajiwaraKatoNakayama2008}.
	
	The following theorem is very important for studying log abelian varieties with constant degeneration. 
	
	\begin{theorem}\label{equivalence from log 1-motives to LAVwCD}
		(\cite[Theorem 3.4]{KajiwaraKatoNakayama2008})
		The association 
		\begin{center}
			$\Mcal_1^{\log,\mathrm{pPol}}\to \mathrm{LAVwCD},\quad [Y\xrightarrow{u} G_{\log}] \mapsto G^{(Y)}_{\log}/Y$
		\end{center}
		defines an equivalence from the category of pointwise polarizable log 1-motives over $S$ to the category of log abelian varieties with constant degeneration over $S$.
	\end{theorem}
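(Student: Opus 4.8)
This is \cite[Theorem 3.4]{KajiwaraKatoNakayama2008}, so I only describe the shape of the argument; the full verification is carried out in \emph{loc.\ cit.} Essential surjectivity is immediate, since by definition an object of $\mathrm{LAVwCD}$ is a sheaf isomorphic to $G^{(Y)}_{\log}/Y$ for some pointwise polarizable log $1$-motive. Hence the whole content is full faithfulness, and the natural plan is to reconstruct the datum $[Y\xrightarrow{u}G_{\log}]$ intrinsically, and functorially, from the sheaf $A:=G^{(Y)}_{\log}/Y$.

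For the reconstruction on objects, one starts from the two defining short exact sequences
\[0\to G\to G^{(Y)}_{\log}\to Q\to 0,\qquad 0\to Y\xrightarrow{u}G^{(Y)}_{\log}\to A\to 0,\]
where $Q:=\cHom(X,(\Gml/\Gm)_{S_{\et}})^{(Y)}$ and $X=\cHom_S(T,\Gm)$. Pointwise polarizability guarantees that the composite $Y\to G_{\log}\to (G_{\log}/G)_{S_{\et}}$ is injective (condition (3) on a polarization of $\Mbf$ forces $\langle y,-\rangle\neq 0$ for $0\neq y\in Y_{\bar s}$ at each point $s$), so $u(Y)\cap G=0$, and one gets a canonical exact sequence $0\to G\to A\to Q/Y\to 0$. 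One then argues: (i) $G$ is a canonically determined subsheaf of $A$ — for instance its maximal subsheaf representable by a semi-abelian scheme over $\mathring{S}$ — so that $T$, $B=G/T$ and $X$ are intrinsic to $A$; (ii) the ``purely logarithmic'' quotient $A/G\cong Q/Y$ admits $Q$ as a canonical covering built out of $X$ alone, and $Y$ is recovered as $\ker(Q\to A/G)$, with the monodromy pairing read off from the $Y$-action; (iii) $G^{(Y)}_{\log}$ is then recovered as the fibre product $A\times_{A/G}Q$, and $u$ as the induced map $Y\to G^{(Y)}_{\log}$. This exhibits a quasi-inverse on objects.

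For morphisms, one checks that a homomorphism $f\colon A\to A'$ of sheaves of abelian groups on $(\fs/S)_{\et}$ respects all of this structure: by rigidity of semi-abelian schemes, together with the absence of non-constant homomorphisms from a semi-abelian scheme to a ``purely logarithmic'' sheaf, $f$ carries $G$ into $G'$, hence induces $f_0\colon G\to G'$, a map $X'\to X$, a map $Q\to Q'$ of coverings carrying $Y$ into $Y'$, and thus a morphism $(f_{-1},f_0)$ of log $1$-motives lifting $f$; uniqueness of the lift, together with the obvious fact that a morphism of log $1$-motives induces one on the quotients, gives full faithfulness. The main obstacle is steps (ii)--(iii): one must prove that the covering $Q\to A/G$ and the sublattice $Y\subset Q$ are genuinely canonical and functorial in $A$, and it is precisely here that the pointwise polarizability hypothesis is essential — it is what makes $A$ log proper (via the valuative criterion) and forces $Y$ to have full rank, so that $Q\to A/G$ is the correct covering. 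The detailed verification of these points is \cite[\S3]{KajiwaraKatoNakayama2008}.
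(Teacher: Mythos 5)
The paper gives no proof of this theorem: it is imported verbatim as \cite[Theorem 3.4]{KajiwaraKatoNakayama2008}, so there is nothing internal to compare your argument against. Your deferral to \emph{loc.\ cit.}, together with your sketch (essential surjectivity by definition of $\mathrm{LAVwCD}$, full faithfulness by functorially reconstructing $G$, $Y$ and the monodromy data from $A=G^{(Y)}_{\log}/Y$, with pointwise polarizability ensuring the nondegeneracy needed for the reconstruction), is consistent with the cited proof and is the appropriate treatment here.
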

        Under the above equivalence of categories, suppose that a log abelian varieties $A$ corresponds to a log 1-motive $\Mbf$, we say $\Mbf$ is the \emph{log 1-motive corresponding to $A$} and \emph{vice versa}.
        
 \begin{proposition}\label{Hom of LAVs is isogeny iff the corresponding hom of log 1-motives is isogeny}
     Let $f:A\to A'$ be a homomorphism of log abelian varieties with constant degeneration over $S$, and let 
     \[(f_{-1},f_0):\Mbf=[Y\to G_{\log}]\to \Mbf'=[Y'\to G'_{\log}]\] 
     be the corresponding morphism of log 1-motives corresponding to $f$ under the equivalence of categories from Theorem \ref{equivalence from log 1-motives to LAVwCD}. Then $f$ is an isogeny if and only if $(f_{-1},f_0)$ is an isogeny, and if this is the case, we have a short exact sequence
     \[0\to\ker(f_0)\to\ker(f)\to \mathrm{coker}(f_{-1})\to 0\]
     of sheaves of abelian groups on $(\fs/S)_{\kfl}$.
 \end{proposition}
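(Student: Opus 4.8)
The plan is to deduce both implications from a single diagram chase. Write $f_{\mathrm c}\colon G\to G'$ for the homomorphism of semiabelian schemes induced by $f_0$ (\cite[Proposition 2.5]{KajiwaraKatoNakayama2008}), $f_{\mathrm t}\colon T\to T'$ for its torus part, and $f_{\mathrm t}^{\ast}\colon X'\to X$ for the induced map on character groups; write $A'={G'}^{(Y')}_{\log}/Y'$. Because the assignment of Theorem \ref{equivalence from log 1-motives to LAVwCD} is an equivalence of categories, the datum $\bigl(f,(f_{-1},f_0)\bigr)$ is the same thing as a morphism in $\cC_{\kfl}$ of short exact sequences
\[0\to Y\to G_{\log}^{(Y)}\to A\to 0,\qquad 0\to Y'\to {G'}^{(Y')}_{\log}\to A'\to 0,\]
with vertical arrows $f_{-1}$, $\bar f_0:=f_0|_{G_{\log}^{(Y)}}$ and $f$ (the middle arrow lands in ${G'}^{(Y')}_{\log}$ by functoriality of the $(Y)$-construction, and $u,u'$ are injective since $\Mbf,\Mbf'$ are pointwise polarizable). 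So the first step is to apply the snake lemma, obtaining
\[0\to\ker f_{-1}\to\ker\bar f_0\to\ker f\to\coker f_{-1}\to\coker\bar f_0\to\coker f\to 0 ,\]
and to record that $n_A$ and $n_{A'}$ are isogenies of log abelian varieties, with $n$-torsion $A[n],A'[n]\in(\mathrm{fin}/S)_{\mathrm r}$ (a basic property of log abelian varieties, \cite{KajiwaraKatoNakayama2008,Zhao2017}).

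For ``$(f_{-1},f_0)$ an isogeny $\Rightarrow$ $f$ an isogeny'' I would pick $n$ killing $\coker f_{-1}$ and $\ker f_{\mathrm c}$ and use the quasi-inverse construction already appearing in the proof of Proposition \ref{simpleness of ppol log 1-motives is preserved by isogeny}: there is $(g_{-1},g_0)\colon\Mbf'\to\Mbf$ with $(g_{-1},g_0)\circ(f_{-1},f_0)=n_{\Mbf}$ and $(f_{-1},f_0)\circ(g_{-1},g_0)=n_{\Mbf'}$. Applying the additive equivalence of Theorem \ref{equivalence from log 1-motives to LAVwCD} yields $\bar g\colon A'\to A$ in $\mathrm{LAVwCD}$ with $\bar g\circ f=n_A$ and $f\circ\bar g=n_{A'}$. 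Then $f$ is surjective in $\cC_{\kfl}$ because $n_{A'}$ is, and $\ker f\subseteq\ker n_A=A[n]$ exhibits $\ker f$ as the kernel of the morphism $f|_{A[n]}\colon A[n]\to A'[n]$ of $(\mathrm{fin}/S)_{\mathrm r}$, so $\ker f\in(\mathrm{fin}/S)_{\mathrm r}$; hence $f$ is an isogeny.

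For the converse I would argue symmetrically. If $f$ is an isogeny, then $\ker f$ is killed by some $n$, so $\ker f\subseteq\ker n_A=A[n]$; as $f$ is an epimorphism in $\cC_{\kfl}$, $n_A$ factors as $\bar g\circ f$ with $\bar g\colon A'\to A$ in $\mathrm{LAVwCD}$, and $f\circ\bar g=n_{A'}$ since $f$ is epi. Transporting $\bar g$ back through the equivalence gives $(g_{-1},g_0)\colon\Mbf'\to\Mbf$ whose two composites with $(f_{-1},f_0)$ are $n_{\Mbf}$ and $n_{\Mbf'}$. Reading this off componentwise: $g_{-1}\circ f_{-1}=n_Y$ shows $f_{-1}$ is injective; $f_{-1}\circ g_{-1}=n_{Y'}$ shows $\mathrm{im}\,f_{-1}\supseteq nY'$, so $\coker f_{-1}$ is finite; $f_{\mathrm c}\circ g_{\mathrm c}=n_{G'}$ shows $f_{\mathrm c}$ is fppf-surjective; and $g_{\mathrm c}\circ f_{\mathrm c}=n_G$ puts $\ker f_{\mathrm c}$ inside $G[n]$, whence $\ker f_{\mathrm c}=f_{\mathrm c}^{-1}(e)$, being flat over the base ($f_{\mathrm c}$ is a surjection of semiabelian schemes admitting a quasi-inverse) and a closed subscheme of the finite flat $G[n]$, is finite flat. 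So $(f_{-1},f_0)$ is an isogeny.

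Finally, to extract the short exact sequence (when $f$, equivalently $(f_{-1},f_0)$, is an isogeny), I would feed back into the snake sequence that $f_{\mathrm t}^{\ast}$ is injective with finite cokernel and that $\Gml/\Gm$ is uniquely $n$-divisible in $\cC_{\kfl}$ for every $n$ (because $(\Gml/\Gm)[n]=(\Gml/\Gm)/n=0$, a consequence of $G_{\log}[n]=G[n]$ and $G_{\log}/n=0$, \cite[Proposition 2.3]{Zhao2017}). These give that $\cHom(X,\Gml/\Gm)\to\cHom(X',\Gml/\Gm)$ is an isomorphism, hence $\ker f_0=\ker f_{\mathrm c}\subseteq G\subseteq G_{\log}^{(Y)}$ and $\ker\bar f_0=\ker f_0$ (a finite flat group scheme); once one also knows $\coker\bar f_0=0$, the six-term sequence collapses to $\coker f=0$ together with $0\to\ker f_0\to\ker f\to\coker f_{-1}\to 0$, and (since $(\mathrm{fin}/S)_{\mathrm r}$ is closed under extensions, \cite[Theorem 9.1]{Kato2021}) $\ker f$ lies in $(\mathrm{fin}/S)_{\mathrm r}$ as an extension of the finite $\coker f_{-1}$ by the finite flat $\ker f_0$. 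The hard part will be precisely the surjectivity of $\bar f_0\colon G_{\log}^{(Y)}\to{G'}^{(Y')}_{\log}$ in $\cC_{\kfl}$, i.e.\ the vanishing $\coker\bar f_0=0$; using the surjectivity of $f_{\mathrm c}$ this reduces to showing that $\cHom(X,\Gml/\Gm)^{(Y)}\to\cHom(X',\Gml/\Gm)^{(Y')}$ is surjective, and since the ambient map is the isomorphism above, the real content is that the monodromy condition $(Y)$ is preserved in the inverse direction. I would establish this after passing to a Kummer log flat cover of $S$ making the local monoids $M_{S,\bar s}/\Ocal_{S,\bar s}^{\times}$ sufficiently divisible, by comparing the two monodromy pairings through the identity $\langle f_{-1}(y),x'\rangle_{\Mbf'}=\langle y,f_{\mathrm t}^{\ast}(x')\rangle_{\Mbf}$ and exploiting the finiteness of $\coker f_{-1}$ and of $\coker f_{\mathrm t}^{\ast}$ together with the saturatedness of the log monoids, so that divisibility relations can be pulled back along the multiplication-by-$n$ maps. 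All remaining steps are the snake lemma and the additivity of the equivalence of Theorem \ref{equivalence from log 1-motives to LAVwCD}.
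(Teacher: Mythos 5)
Your overall architecture (the snake lemma applied to the two presentations $0\to Y\to G_{\log}^{(Y)}\to A\to 0$, plus quasi-inverses up to multiplication by $n$) is sound, and your converse direction ($f$ an isogeny $\Rightarrow$ $(f_{-1},f_0)$ an isogeny) is a legitimately different route from the paper's: the paper argues fibrewise, using the equivalence $(1)\Leftrightarrow(4)$ of \cite[Proposition 3.3]{Zhao2017} at each geometric point and then globalizing via Lemmas \ref{hom of semiabelian schemes} and \ref{hom of lattices}, whereas you factor $n_A=\bar g\circ f$ through $A'\simeq A/\ker f$ and read the relations off componentwise through the equivalence of Theorem \ref{equivalence from log 1-motives to LAVwCD}; that works, provided you flesh out why $\ker f_{\mathrm c}$ is \emph{flat} over a general fs log base (the paper's appendix lemma, via the fibrewise flatness criterion) rather than the parenthetical you give.

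The genuine gap is in the other direction and in the exact sequence. First, your quick conclusion that $\ker f\in(\mathrm{fin}/S)_{\mathrm r}$ because it is the kernel of $f|_{A[n]}\colon A[n]\to A'[n]$, a morphism in $(\mathrm{fin}/S)_{\mathrm r}$, is unjustified: $(\mathrm{fin}/S)_{\mathrm r}$ is not closed under kernels. After a Kummer log flat cover (whose underlying scheme is in general non-reduced, e.g.\ $\Spec\kbf[s]/(s^n)$ over the standard log point) the two objects become classical finite flat group schemes, but the kernel of a homomorphism of finite flat group schemes over such a base need not be flat. Second, your fallback — deducing $\ker f\in(\mathrm{fin}/S)_{\mathrm r}$ and the displayed short exact sequence from the six-term snake sequence — requires exactly the vanishing $\coker\bar f_0=0$, i.e.\ the surjectivity of $G_{\log}^{(Y)}\to G'^{(Y')}_{\log}$, equivalently that the monodromy condition $(Y)$ is preserved in the inverse direction. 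You correctly isolate this as ``the hard part,'' but you only sketch a plan for it (pass to a cover, compare pairings, pull back divisibility), and that sketch is not carried out; without it neither the membership $\ker f\in(\mathrm{fin}/S)_{\mathrm r}$ nor the short exact sequence is established. This is precisely the point where the paper's proof does its real work: it reduces, via the diagrams (2.10) and (2.12) of \cite{Zhao2017} and \cite[Theorem 9.1]{Kato2021}, to showing that the induced map $\tilde f_{\mathrm d}\colon\Qcal\to\Qcal'$ on the ``discrete parts'' is an isomorphism, and proves this using that pointwise polarizable log 1-motives are non-degenerate together with the argument of \cite[Lemma 3.2]{Zhao2017} (the character-group map $X'\to X$ being injective with finite cokernel). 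To complete your proof you would need to supply this step, or cite it, rather than defer it. (A smaller point: the quasi-inverse $(g_{-1},g_0)$ of an isogeny of log 1-motives is only a morphism of complexes after possibly enlarging $n$ — the discrepancy $g_0\circ u'-u\circ g_{-1}$ a priori lands in $G[n]$, so one should replace $n$ by $n^2$; this is easily fixed.)
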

 \begin{proof}
     Assume that $f$ is an isogeny. Let $f_{\mathrm{c}}:G\to G'$ be the homomorphism induced by $f_0$ (see Proposition \cite[Proposition 2.5]{KajiwaraKatoNakayama2008}). For any $s\in S$, $f_{\bar{s}}$ is an isogeny. By the equivalence $(1)\Leftrightarrow(4)$ of \cite[Proposition 3.3]{Zhao2017}, $(f_{\mathrm{c}})_{\bar{s}}$ is an isogeny and $(f_{-1})_{\bar{s}}$ is injective with finite cokernel. Then $f_{\mathrm{c}}$ is faithfully flat with finite locally free kernel by Lemma \ref{hom of semiabelian schemes} (2) and $f_{-1}$ is injective with cokernel finite locally constant by Lemma \ref{hom of lattices}. Therefore $(f_{-1},f_0)$ is an isogeny of log 1-motives.

     Conversely, assume that $(f_{-1},f_0)$ is an isogeny. By the diagrams (2.10) and (2.12) from \cite{Zhao2017} and \cite[Theorem 9.1]{Kato2021}, it suffices to show that the homomorphism $\tilde{f}_{\mathrm{d}}:\mathcal{Q}\to\mathcal{Q}'$ in the diagram (2.12) of \cite{Zhao2017} is an isomorphism. Note that both $\Mbf$ and $\Mbf'$ are pointwise polarizable, and thus they are non-degenerate (see \cite[Remark 2.2]{Zhao2017} for the notion of being non-degenerate). Let $f_{\mathrm{t}}:T\to T'$ be the homomorphism induced by $f_{\mathrm{c}}$ on the torus parts, and let $f_{\mathrm{l}}:X'\to X$ be the homomorphism induced by $f_{\mathrm{t}}$ on the character groups. By Lemma \ref{hom of semiabelian schemes} (2) the homomorphism $f_{\mathrm{t}}$ is surjective with finite locally free kernel, and thus $f_{\mathrm{l}}$ is injective with finite cokernel. Then the same argument from the proof of \cite[Lemma 3.2]{Zhao2017} works also here, and thus $\tilde{f}_{\mathrm{d}}$ is an isomorphism. Note that the short exact sequence in the statement also follows.
 \end{proof}

     In the rest of this article, the base $S$ will always be a log point. In particular, a log abelian variety over $S$ will always be a log abelian variety with constant degeneration. We will call a log abelian variety with constant degeneration over $S$ simply a \emph{log abelian variety} over $S$, and write $\mathrm{LAVwCD}$ and $\mathrm{LAVwCD}^{\mathrm{ab}=0}$
     simply as 
     \[\mathrm{LAV} \text{ and } \mathrm{LAV}^{\mathrm{ab}=0}\]
     respectively.

     The following definition is taken from \cite[Definition 3.2]{Zhao2017}.

     \begin{definition}
         A log abelian variety $A$ over a log point $S$ is called \emph{simple}, if any non-zero injective homomorphism $A'\to A$ has to be an isomorphism. 
     \end{definition}

     \begin{proposition}\label{LAV is simple iff its log 1-motive is simple}
         Let $A\in\mathrm{LAV}$ be non-zero, and let $\Mbf=[Y\xrightarrow{u}G_{\log}]\in\Mcal_1^{\log,\mathrm{pPol}}$ be the object corresponding to $A$. Then $A$ is simple if and only if $\Mbf$ is simple.
     \end{proposition}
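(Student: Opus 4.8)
Throughout, write $(f_{-1},f_0)\colon\Mbf'=[Y'\xrightarrow{u'}G'_{\log}]\to\Mbf=[Y\xrightarrow{u}G_{\log}]$ for the morphism of log $1$-motives corresponding to a homomorphism $f\colon A'\to A$ under Theorem~\ref{equivalence from log 1-motives to LAVwCD}, let $f_{\mathrm c}\colon G'\to G$ be the induced homomorphism of semi-abelian schemes, and let $X$ (resp. $X'$) be the character group of the torus part $T$ of $G$ (resp. $T'$ of $G'$). The plan is to deduce the statement from this equivalence, from Proposition~\ref{Hom of LAVs is isogeny iff the corresponding hom of log 1-motives is isogeny}, and from the snake lemma applied to $0\to Y\xrightarrow{u}G^{(Y)}_{\log}\to A\to 0$ and its analogue for $\Mbf'$. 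The key preliminary step is to show that \emph{if $f$ is injective, then $f_{-1}$ and $f_0$ are injective}: since $\Mbf'$ is non-degenerate, $G'$ meets $Y'$ trivially inside $G'^{(Y')}_{\log}$, so $G'\hookrightarrow A'$, hence $G'\to A'\xrightarrow{f}A$ is injective; the snake lemma for the morphism $[0\to G']\to\Mbf$ (whose semi-abelian component is $G'\xrightarrow{f_{\mathrm c}}G\hookrightarrow G^{(Y)}_{\log}$) then forces $f_{\mathrm c}$ to be injective, so $T'\hookrightarrow T$, $X\twoheadrightarrow X'$, and $f_0\colon G'_{\log}\to G_{\log}$ is injective by the five lemma together with $G_{\log}/G=\cHom_S(X,\Gml/\Gm)$; injectivity of $f_0$ makes $G'^{(Y')}_{\log}\to G^{(Y)}_{\log}$ injective, and a last application of the snake lemma to $(f_{-1},f_0)$ gives $f_{-1}$ injective.

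Granting this, \emph{simplicity of $\Mbf$ implies simplicity of $A$} is immediate: a non-zero injective $f\colon A'\to A$ yields a non-zero $(f_{-1},f_0)$ with $f_{-1},f_0$ injective, hence an isogeny of log $1$-motives because $\Mbf$ is simple; by Proposition~\ref{Hom of LAVs is isogeny iff the corresponding hom of log 1-motives is isogeny}, $f$ is an isogeny, and an injective isogeny is an isomorphism.

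For the converse I would argue by contraposition. Assume $\Mbf$ is not simple, and pick a non-zero $(f_{-1},f_0)\colon\Mbf'\to\Mbf$ in $\Mcal_1^{\log,\mathrm{pPol}}$ with $f_{-1},f_0$ injective which is not an isogeny. Replacing $\Mbf'$ by the sub-log $1$-motive $[f_{-1}(Y')\to f_{\mathrm c}(G')_{\log}]$ of $\Mbf$ — which is isomorphic to $\Mbf'$, since $f_{-1}$ and $f_{\mathrm c}$ are injective, hence again pointwise polarizable — we may assume $\Mbf'\subseteq\Mbf$ is a pointwise polarizable sub-log $1$-motive whose inclusion is not an isogeny; among all such, choose one maximizing $\mathrm{rk}\,Y'$. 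The corresponding $f\colon A'\to A$ is non-zero, and the snake lemma identifies $\ker f$ with $(Y\cap G'^{(Y')}_{\log})/Y'$. I claim $Y''\mathrel{:=}Y\cap G'^{(Y')}_{\log}$ has the same rank as $Y'$. Since $u(Y'')\subseteq G'_{\log}$, the monodromy pairing of $\Mbf$ annihilates $Y''\times X_1$, where $X_1=\ker(X\to X')$, so $Y''\subseteq X_1^{\perp}$; moreover $\mathrm{rk}\,X_1^{\perp}<\mathrm{rk}\,Y$ because $X_1\neq 0$ is not annihilated by the (non-degenerate) monodromy of $\Mbf$. If one had $\mathrm{rk}\,Y''>\mathrm{rk}\,Y'$, then $[Y''\to\widetilde G_{\log}]$ — with $\widetilde G\supseteq G'$ the semi-abelian subgroup of $G$ whose character lattice is a suitable saturated quotient of $X$ of rank $\mathrm{rk}\,Y''\,(<\mathrm{rk}\,Y)$, and the polarization obtained by restricting that of $\Mbf$ — would be a pointwise polarizable proper sub-log $1$-motive of $\Mbf$ with lattice rank exceeding $\mathrm{rk}\,Y'$, contradicting maximality; so $\mathrm{rk}\,Y''=\mathrm{rk}\,Y'$. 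Hence $\ker f$ is finite, lies in $(\mathrm{fin}/S)_{\mathrm r}$, and $f$ is not surjective (else it would be an isogeny, forcing $(f_{-1},f_0)$ to be one by Proposition~\ref{Hom of LAVs is isogeny iff the corresponding hom of log 1-motives is isogeny}). Therefore $f$ factors as $A'\twoheadrightarrow A'/\ker f\hookrightarrow A$ with $A'/\ker f$ a log abelian variety (an isogeny quotient of $A'$) and $A'/\ker f\to A$ a non-zero injective homomorphism which is not an isomorphism, so $A$ is not simple.

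The two snake-lemma computations and the reduction to a sub-log $1$-motive are routine. The step I expect to be the main obstacle is the rank statement for $Y''=Y\cap G'^{(Y')}_{\log}$, together with the assertion that after enlarging the character lattice to saturate the lattice part one still obtains a pointwise polarizable object whose kernel lies in $(\mathrm{fin}/S)_{\mathrm r}$: this is exactly where the non-degeneracy and positivity of the monodromy pairing, the fact that $B(\overline{\kbf})$ is torsion, and the pointwise structure theory of \cite{KajiwaraKatoNakayama2008} and \cite{Zhao2017} (notably \cite[Proposition~3.3]{Zhao2017}) must be invoked with care.
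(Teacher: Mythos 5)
Your argument for ``$\Mbf$ simple $\Rightarrow$ $A$ simple'' is correct and is essentially the paper's: from an injective $f\colon A'\to A$ you deduce injectivity of $f_{\mathrm c}$, then of $f_0$, then of $f_{-1}$ (the paper gets the same conclusions via the diagrams (2.10), (2.12) and Proposition 2.3(4) of \cite{Zhao2017}, in a slightly different order), and then simplicity of $\Mbf$ plus Proposition \ref{Hom of LAVs is isogeny iff the corresponding hom of log 1-motives is isogeny} finishes.

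The converse, however, has a genuine gap exactly where you flag it, and that gap is the heart of the matter rather than a routine verification. Your contrapositive argument stands or falls on the finiteness of $\ker f\cong\bigl(Y\cap G'^{(Y')}_{\log}\bigr)/Y'$ and on its membership in $(\mathrm{fin}/S)_{\mathrm r}$, which you reduce to the rank claim $\mathrm{rk}\,Y''=\mathrm{rk}\,Y'$ via a maximality argument that in turn requires constructing a pointwise polarizable sub-log $1$-motive $[Y''\to\widetilde G_{\log}]$ of $\Mbf$ (existence of the semi-abelian subgroup $\widetilde G$ with the prescribed saturated character quotient, the fact that $u(Y'')$ lands in $\widetilde G_{\log}$, and that the restricted polarization really is one, e.g.\ that $Y''\to\widetilde X$ is injective with finite cokernel). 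None of this is carried out, and you explicitly defer it as ``the main obstacle''; there is also a case you do not address, namely when the maximizing sub-object has $X_1=\ker(X\to X')=0$ (this can be repaired by noting that pointwise polarizability forces $\mathrm{rk}\,Y'=\mathrm{rk}\,X'$, so $X_1=0$ can only occur when $\mathrm{rk}\,Y'=\mathrm{rk}\,Y$, where finiteness is automatic, but the case split must be made). Moreover one must check that the sheaf $Y\cap G'^{(Y')}_{\log}$ on the Kummer log flat site is indeed an \'etale-locally constant subgroup of $Y$ before passing to ranks. The paper avoids all of this: it argues the direction ``$A$ simple $\Rightarrow$ $\Mbf$ simple'' directly (no contraposition, no maximality), by observing that for \emph{any} injective $(f_{-1},f_0)$ with pointwise polarizable source, \cite[Lemma 3.4]{Zhao2017} already gives $\ker(f)\in(\mathrm{fin}/S)_{\mathrm r}$ and \cite[Proposition 3.2]{Zhao2017} makes $A_1/\ker(f)$ a log abelian subvariety of $A$; simplicity of $A$ then forces $f$ to be an isogeny and Proposition \ref{Hom of LAVs is isogeny iff the corresponding hom of log 1-motives is isogeny} transfers this to $(f_{-1},f_0)$. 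In effect you are attempting to reprove that finiteness statement from scratch; either complete that construction in detail or invoke the cited results of \cite{Zhao2017}, after which the direct (non-contrapositive) argument is also considerably shorter than your reduction.
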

     \begin{proof}
         Assume that $A$ is simple. It suffices to show that any morphism 
         \[(f_{-1},f_0):\Mbf_1=[Y_1\to G_{1\log}]\to\Mbf\]
         in $\Mcal_1^{\log,\mathrm{pPol}}$, which is injective as a morphism of complexes, has to be an isogeny. Let $f:A_1\to A$ be the homomorphism in LAV induced by $(f_{-1},f_0)$. Since $\Mbf_1\in \Mcal_1^{\log,\mathrm{pPol}}$, it satisfies the conditions in \cite[Lemma 3.4]{Zhao2017}, and thus $\ker(f)\in (\mathrm{fin}/S)_{\mathrm{r}}$. By \cite[Proposition 3.2 (3)]{Zhao2017}, $A_1/\ker(f)$ is also a log abelian variety, and thus a log abelian subvariety of $A$. Since $A$ is simple, $f$ is an isogeny, and thus $(f_{-1},f_0)$ is an isogeny by Proposition \ref{Hom of LAVs is isogeny iff the corresponding hom of log 1-motives is isogeny}.

         Conversely, assume that $\Mbf$ is simple. Let $f\colon A'\to A$ be a non-zero injective homomorphism in LAV. We show that $f$ is an isomorphism. Let $\Mbf'=[Y'\xrightarrow{u'}G'_{\log}]\in\Mcal_1^{\log,\mathrm{pPol}}$ be the log 1-motive corresponding to $A'$, and let $(f_{-1},f_0):\Mbf'\to\Mbf$ be the morphism corresponding to $f$. Let $f_{\mathrm{c}}:G'\to G$ be the homomorphism induced by $f_0$, let $f_{\mathrm{t}}:T'\to T$ be the homomorphism induced by $f_{\mathrm{c}}$, and let $f_{\mathrm{l}}:X\to X'$ be the homomorphism of character groups induced by $f_{\mathrm{t}}$. By the diagram \cite[(2.10)]{Zhao2017}, the injectivity of $f$ implies that $f_{\mathrm{c}}$ is injective, which further implies that $f_{\mathrm{t}}$ is injective and $f_{\mathrm{l}}$ is surjective. Then the surjectivity of $f_{\mathrm{l}}$ implies $\cHom_S(X',\Gml/\Gm)\hookrightarrow\cHom_S(X,\Gml/\Gm)$, in particular the map $\tilde{f}_{\mathrm{d}}:\Qcal'\to\Qcal$ corresponding to the middle vertical map of \cite[(2.12)]{Zhao2017} is injective, and thus $f_{-1}:Y'\to Y$ is injective by \cite[(2.12)]{Zhao2017}. By \cite[Proposition 2.3 (4)]{Zhao2017} and the injectivity of $f_{\mathrm{c}}$, $f_0$ is injecitve. Since $\Mbf$ is simple, the morphism $(f_{-1},f_0)$ as an injective map of complexes has to be an isogeny of log 1-motives. Then the inclusion $f$ is also an isogeny by Proposition \ref{Hom of LAVs is isogeny iff the corresponding hom of log 1-motives is isogeny}, and thus it is an isomorphism. Therefore $A$ is simple.
     \end{proof}

     The simpleness of log abelian varieties is preserved under isogeny.

     \begin{proposition}\label{simpleness of LAV is preserved by isogeny}
         Let $f:A'\to A$ be an isogeny of log abelian varieties. Then $A$ is simple if and only if $A'$ is simple.
     \end{proposition}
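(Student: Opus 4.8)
The plan is to transport the statement across the equivalence of Theorem~\ref{equivalence from log 1-motives to LAVwCD} and reduce it to the already-established fact that simpleness is isogeny-invariant for pointwise polarizable log 1-motives (Proposition~\ref{simpleness of ppol log 1-motives is preserved by isogeny}). So the whole argument is a matter of stringing together three results proved above, and the only thing one needs to be careful about is that each arrow in the dictionary is applied in the correct direction.

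Concretely, first I would let $\Mbf'=[Y'\xrightarrow{u'}G'_{\log}]$ and $\Mbf=[Y\xrightarrow{u}G_{\log}]$ be the pointwise polarizable log 1-motives corresponding to $A'$ and $A$ under the equivalence of Theorem~\ref{equivalence from log 1-motives to LAVwCD}, and let $(f_{-1},f_0)\colon\Mbf'\to\Mbf$ be the morphism corresponding to $f$. Since $f$ is an isogeny of log abelian varieties, Proposition~\ref{Hom of LAVs is isogeny iff the corresponding hom of log 1-motives is isogeny} gives that $(f_{-1},f_0)$ is an isogeny of log 1-motives. Then Proposition~\ref{simpleness of ppol log 1-motives is preserved by isogeny} applies to $(f_{-1},f_0)$ and yields: $\Mbf'$ is simple if and only if $\Mbf$ is simple.

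Finally I would invoke Proposition~\ref{LAV is simple iff its log 1-motive is simple} twice, once for the pair $(A',\Mbf')$ and once for $(A,\Mbf)$: $A'$ is simple if and only if $\Mbf'$ is simple, and $A$ is simple if and only if $\Mbf$ is simple. Combining the three equivalences $A'\text{ simple}\iff\Mbf'\text{ simple}\iff\Mbf\text{ simple}\iff A\text{ simple}$ gives the claim. There is no real obstacle here beyond bookkeeping; if anything, the point worth double-checking is that $A'$ and $A$ being log abelian varieties (and $f$ an isogeny) guarantees that the corresponding log 1-motives lie in $\Mcal_1^{\log,\mathrm{pPol}}$ and the corresponding morphism is genuinely an isogeny there, which is exactly the content of Theorem~\ref{equivalence from log 1-motives to LAVwCD} and Proposition~\ref{Hom of LAVs is isogeny iff the corresponding hom of log 1-motives is isogeny}.
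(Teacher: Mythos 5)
Your proposal is correct and is essentially identical to the paper's own proof: both transfer the statement through Theorem~\ref{equivalence from log 1-motives to LAVwCD}, use Proposition~\ref{Hom of LAVs is isogeny iff the corresponding hom of log 1-motives is isogeny} to see that the corresponding morphism of log 1-motives is an isogeny, and then conclude by combining Proposition~\ref{LAV is simple iff its log 1-motive is simple} with Proposition~\ref{simpleness of ppol log 1-motives is preserved by isogeny}. No gaps; you have merely spelled out the bookkeeping that the paper leaves implicit.
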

     \begin{proof}
         Let $(f_{-1},f_0):\Mbf'\to\Mbf$ be the morphism corresponding to $f$. By Proposition \ref{Hom of LAVs is isogeny iff the corresponding hom of log 1-motives is isogeny}, $(f_{-1},f_0)$ is an isogeny. Then the result follows from Proposition \ref{LAV is simple iff its log 1-motive is simple} and Proposition \ref{simpleness of ppol log 1-motives is preserved by isogeny}.
     \end{proof}

     We end this subsection with the following
     \begin{definition}\label{char poly of log av}
        Let $\kbf$, $S$ and $\gamma$ be as in Proposition \ref{char polynomial of Frobenius of log 1-motive}. Let $A$ be a log abelian variety over the log point $S$ and $\Mbf=[Y\to G_{\log}]$ the corresponding log 1-motive. We define the \emph{characteristic polynomial} $P_{A,\gamma}$ of $\gamma$ on $A$ to be the characteristic polynomial of $\gamma$ on $\Mbf$ (see \eqref{equation char poly of Frobenius on log 1-motive}).
    \end{definition}

    \begin{remark}\label{char poly of LAV is invariant under isogeny}
        Clearly this polynomial $P_{A,\gamma}$ for log abelian varieties satisfies similar properties as listed in Proposition \ref{char polynomial of Frobenius of log 1-motive} for log 1-motives. In particular, it is invariant under isogeny.
    \end{remark}

 \section{Lattice pairings}\label{section lattice pairings}

 \subsection{Lattice pairings}\label{subsection lattice pairings}
 Let $\kbf$ be a field, let $P$ be a sharp fs monoid, and let $S=(\Spec \kbf,M_S)$ be a log point such that its log structure $M_S$ admits a chart $P_S\to M_S$ satisfying $P\xrightarrow{\cong}\Gamma(\Spec\kbf,M_S)/\kbf^\times$ (here $P_S$ is the constant sheaf over $S$ associated to $P$). We will identify $P$ with $\Gamma(\Spec\kbf,M_S)/\kbf^\times$ via this isomorphism.

 We fix $\overline{\mathbf{k}}$ an algebraic closure of the field $\mathbf{k}$, and let $\Gamma:=\mathrm{Gal}(\overline{\mathbf{k}}/\mathbf{k})$ be the absolute Galois group of $\kbf$. Note that $P^{\gp}$ is a free $\mathbb{Z}$-module of finite rank. We will define three categories in the following
	\[
	\Mod_\Gamma,
	\quad
        \MorGammaP,
        \quad
        \MorGammaPpPol,
	\]
 which will be used to study the ``pure log part'' (with respect to the given chart of $S$) of log abelian varieties over $S$.

 \begin{definition}\label{free Gamma-modules}
     We write 
     \[\Mod_{\Gamma}\]
     for the category of free $\Gamma$-modules: 
     \begin{enumerate}
         \item A \emph{free $\Gamma$-module} is a free $\mathbb{Z}$-module of finite rank $M$ (equipped with the discrete topology) together with a continuous action of $\Gamma$ (note that any such action of $\Gamma$ necessarily factors through a finite quotient of $\Gamma$).
         \item A \emph{morphism} of free $\Gamma$-modules is a $\mathbb{Z}$-linear $\Gamma$-equivariant map $M\rightarrow N$.
     \end{enumerate}  
     We also call an object of $\Mod_{\Gamma}$ a \emph{lattice}.
    \end{definition}

    A morphism $M\to N$ in $\Mod_{\Gamma}$ is called an \emph{isogeny} if it is injective and has finite cokernel. In this case, we say that $M$ is isogenous to $N$ (by the morphism $M\rightarrow N$). By Lemma \ref{isogeny between free Z-modules} below, $M$ is isogenous to $N$ if and only if $N$ is isogenous to $M$.
    
    \begin{lemma}\label{isogeny between free Z-modules}
		Let $M,N$ be two objects in $\Mod_\Gamma$. For $l$ injective $\Gamma$-equivariant $\mathbb{Z}$-linear maps $F_i\colon M\rightarrow N$ with finite cokernel ($i=1,\cdots,l$), there are $\mathbb{Z}$-linear maps $G_i\colon N\rightarrow M$ ($i=1,\cdots,l$) and a positive integer $r$ such that
		\normalfont
		\[
		F_i\circ G_i=r\times\mathrm{Id}_N,
		\quad
		G_i\circ F_i=r\times\mathrm{Id}_M,
		\quad
		\forall
		i=1,\cdots,l.
		\]
    \end{lemma}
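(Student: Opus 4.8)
The plan is pure linear algebra over $\mathbb{Z}$: each $F_i$ becomes an isomorphism after $\otimes\mathbb{Q}$, so it admits an inverse up to a bounded denominator, and clearing all the denominators at once produces the common $r$.

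First I would fix $i$ and observe that, since $F_i$ is injective with finite cokernel between free $\mathbb{Z}$-modules, the modules $M$ and $N$ have the same rank and $F_i\otimes\mathbb{Q}$ is an isomorphism. Choosing a positive integer $n_i$ annihilating $\coker(F_i)$, so that $n_iN\subseteq F_i(M)$, I would define $G_i'\colon N\to M$ by sending $x\in N$ to the unique $y\in M$ with $F_i(y)=n_ix$; uniqueness comes from injectivity of $F_i$, and $\mathbb{Z}$-linearity is immediate. I would then check that $G_i'$ is automatically $\Gamma$-equivariant: for $\sigma\in\Gamma$ one has $F_i(\sigma\, G_i'(x))=\sigma\, F_i(G_i'(x))=\sigma(n_i x)=F_i(G_i'(\sigma x))$, and injectivity of $F_i$ forces $\sigma\, G_i'(x)=G_i'(\sigma x)$. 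By construction $F_i\circ G_i'=n_i\cdot\id_N$, and composing with $F_i$ and using injectivity once more gives $G_i'\circ F_i=n_i\cdot\id_M$.

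To finish I would set $r:=n_1\cdots n_l$ (any common multiple of the $n_i$ works) and $G_i:=(r/n_i)\, G_i'$, which still lands in $M$ because $r/n_i\in\mathbb{Z}$; then $F_i\circ G_i=r\cdot\id_N$ and $G_i\circ F_i=r\cdot\id_M$ for all $i=1,\dots,l$. There is no real obstacle here; the only points that need a line of care are the well-definedness of $G_i'$ (which is exactly where injectivity of $F_i$ enters) and the simultaneous choice of $r$. I would also remark that the $G_i$ produced are $\Gamma$-equivariant, since this is what is needed to deduce, as the text does immediately afterwards, that being isogenous is a symmetric relation in $\Mod_\Gamma$.
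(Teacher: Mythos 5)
Your proposal is correct and follows essentially the same route as the paper: both use that a multiple of $\mathrm{Id}_N$ killing $\coker(F_i)$ factors uniquely through $F_i$, and then deduce $G_i\circ F_i=r\,\mathrm{Id}_M$ from injectivity of $F_i$. The only cosmetic difference is that the paper picks one $r$ killing all cokernels at the outset, while you take individual $n_i$ and rescale afterwards; your explicit check of $\Gamma$-equivariance (and of the deduction the paper calls an ``easy exercise of matrices'') is a harmless bonus.
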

    \begin{proof}
            Let $r$ be a positive integer which kills $\mathrm{coker}(F_i)$ for all $i=1,\cdots,l$. Since the composition $N\xrightarrow{r}N\xrightarrow{\mathrm{pr}}\mathrm{coker}(F_i)$ is trivial in the diagram
            \[\xymatrix{
            &&N\ar[d]^{r}\ar@{-->}[ld]_{G_i} \\
            0\ar[r] &M\ar[r]_{F_i} &N\ar[r]_-{\mathrm{pr}} &\mathrm{coker}{F_i}\ar[r] &0,
            }\]
            there exists a unique map $G_i:N\to M$ such that $F_i\circ G_i=r\times\mathrm{Id}_N$. The equality $G_i\circ F_i=r\times\mathrm{Id}_M$ is an easy exercise of matrices.
    \end{proof}

        For $M\in \mathrm{Mod}_{\Gamma}$, the \emph{dual} of $M$ is 
    \[
    M^\vee=\mathrm{Hom}_{\mathbb{Z}}(M,\mathbb{Z}),
    \]
    which is equipped with the induced action of $\Gamma$: for $\gamma\in\Gamma$, $f\in M^\vee$ and $m\in M$, $(\gamma f)(m)=f(\gamma^{-1}m)$). Thus the natural pairing between $M$ and $M^\vee$ is $\Gamma$-equivariant
    \begin{equation}\label{natural pairing is Gammma-equivariant}
        M\times M^\vee\to\Z,
        \quad
        (m,f)
        \mapsto
        f(m)=(\gamma f)(\gamma m).
    \end{equation}    
    Moreover we have a canonical isomorphism in $\mathrm{Mod}_{\Gamma}$
	\[
	M\simeq(M^\vee)^\vee.
	\]    
    For a morphism $\psi\colon M\rightarrow N$ in $\mathrm{Mod}_{\Gamma}$, we write $\psi^\vee\colon N^\vee\rightarrow M^\vee$ for the induced morphism on the duals of $M$ and $N$.

    \begin{definition}\label{simple free Gamma-modules}
        A free $\Gamma$-module $M$ is called \emph{simple}, if there is no proper $\mathbb{Z}$-submodule $M'$ of $M$ stable under $\Gamma$ such that $M/M'$ is an abelian group of rank $>0$. In other words, $M$ is simple if there is no proper $\mathbb{Z}$-submodule $M'$ of $M$ stable under $\Gamma$ such that $M'\otimes_{\mathbb{Z}}\mathbb{Q}$ is a proper subspace of $M\otimes_{\mathbb{Z}}\mathbb{Q}$.
    \end{definition}

    Simple objects in $\Mod_{\Gamma}$ are preserved under isogenies:
    \begin{lemma}\label{simpleness of lattices is preversed under isogeny}
        Let $f:M\to M'$ be an isogeny in $\Mod_{\Gamma}$. Then $M$ is a simple free $\Gamma$-module if and only if so is $M'$.
    \end{lemma}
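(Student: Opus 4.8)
The plan is to pass to rational coefficients, where an isogeny becomes an isomorphism, and then invoke the rational reformulation of simpleness built into Definition \ref{simple free Gamma-modules}. So first I would record that a free $\Gamma$-module $M$ is simple if and only if the $\Q[\Gamma]$-module $M\otimes_{\Z}\Q$ is irreducible, i.e.\ has no $\Gamma$-stable $\Q$-subspace other than $0$ and $M\otimes_{\Z}\Q$. This is immediate from the two operations $M'\mapsto M'\otimes_{\Z}\Q$ and $V\mapsto M\cap V$: a proper $\Gamma$-stable submodule $M'\subsetneq M$ with $M'\otimes_{\Z}\Q$ a proper subspace produces such a $V$, and conversely any proper $\Gamma$-stable $V\subsetneq M\otimes_{\Z}\Q$ gives the proper $\Gamma$-stable submodule $M\cap V$ with $(M\cap V)\otimes_{\Z}\Q=V$.

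Next I would observe that since $f$ is injective with finite cokernel and $\Q$ is flat over $\Z$, the $\Z$-linear $\Gamma$-equivariant map $f\otimes_{\Z}\id_{\Q}\colon M\otimes_{\Z}\Q\to M'\otimes_{\Z}\Q$ is injective, and it is surjective because $\coker(f)$ is a torsion group; hence $f\otimes_{\Z}\id_{\Q}$ is an isomorphism of $\Q[\Gamma]$-modules. (Alternatively one can apply Lemma \ref{isogeny between free Z-modules} to obtain a $\Gamma$-equivariant $g\colon M'\to M$ with $f\circ g$ and $g\circ f$ equal to multiplication by a positive integer, which again exhibits $f\otimes_{\Z}\id_{\Q}$ as an isomorphism.) An isomorphism of $\Q[\Gamma]$-modules sets up a bijection between the lattices of $\Gamma$-stable subspaces on the two sides, so $M\otimes_{\Z}\Q$ is irreducible if and only if $M'\otimes_{\Z}\Q$ is. Combining with the first step, $M$ is simple if and only if $M'$ is simple.

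I do not expect any real obstacle: the whole content is the reduction to $\Q$-coefficients, where ``isogeny'' degenerates to ``isomorphism''. The only point needing a line of care is the equivalence between the two formulations of simpleness in the first step, but this is routine because $(-)\otimes_{\Z}\Q$ and $M\cap(-)$ are mutually inverse on the posets of $\Gamma$-stable subobjects of $M$ (of full quotient-rank complement) and of $\Gamma$-stable subspaces of $M\otimes_{\Z}\Q$.
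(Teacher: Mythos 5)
Your proof is correct; the paper itself leaves this lemma as an exercise, and your argument — tensoring with $\Q$ so that the isogeny $f$ becomes a $\Gamma$-equivariant isomorphism $M\otimes_{\Z}\Q\simeq M'\otimes_{\Z}\Q$, combined with the observation that simpleness in the sense of Definition \ref{simple free Gamma-modules} is exactly irreducibility of $M\otimes_{\Z}\Q$ as a $\Q[\Gamma]$-module — is precisely the intended routine argument (and your fallback via Lemma \ref{isogeny between free Z-modules} works equally well).
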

    \begin{proof}
        The proof is easy, and we leave it as an exercise to the reader.
    \end{proof}

     \begin{definition}
     We write 
     \[\mathrm{Mor}_{\Gamma}^{P}\]
     for the following category:
     \begin{enumerate}
         \item Objects are tuples $(M,N,\langle-,-\rangle)$ where $M,N$ are free $\Gamma$-modules and $\langle-,-\rangle\colon M\times N\rightarrow P^{\gp}$ is a $\Gamma$-equivariant bilinear map. We give $\mathrm{Hom}(N,P^{\gp})$ the natural $\Gamma$-module structure:
	   for $\gamma\in\Gamma$, $f\in\mathrm{Hom}(N,P^{\gp})$ and $x\in N$,
	   $(\gamma f)(x)=f(\gamma^{-1}x)$. The bilinear map $\langle-,-\rangle$ is naturally identified with a morphism 
	  \[\Phi\colon M\rightarrow\mathrm{Hom}(N,P^{\gp})=N^{\vee}\otimes P^{\gp}\]
	  in $\mathrm{Mod}_{\Gamma}$. So we also write $(M,N,\Phi)$ for $(M,N,\langle-,-\rangle)$.
         \item A morphism 
	\[\psi=(\psi_1,\psi_2)\colon(M,N,\Phi)\rightarrow(M',N',\Phi')\]
	is a pair of morphisms $\psi_1\colon M\rightarrow M'$ and $\psi_2\colon N'\rightarrow N$ in $\mathrm{Mod}_{\Gamma}$ such that the following diagram commutes
	\begin{equation}\label{morphism of log free Gamma-modules}
		\begin{tikzcd}
			M
			\arrow[r,"\Phi"]
			\arrow[d,"\psi_1"]
			&
			\mathrm{Hom}(N,P^{\gp})=N^\vee\otimes P^\gp
			\arrow[d,"\psi_2^\vee\otimes \id_{P^\gp}"]
			\\
			M'
			\arrow[r,"\Phi'"]
			&
			\mathrm{Hom}(N',P^{\gp})=(N')^\vee\otimes P^\gp
		\end{tikzcd}	
	\end{equation}
     \end{enumerate}

     We also call an object of $\MorGammaP$ a \emph{lattice pairing with respect to $P$} or simply a \emph{lattice pairing} when the context is clear.
	\end{definition}	

     Given $(M,N,\langle-,-\rangle)\in\MorGammaP$, we get another object $(N,M,\langle-,-\rangle^*)\in\MorGammaP$, where $\langle-,-\rangle^*:N\times M\to P^\gp$ is the pairing  obtained by switching the two factors of $\langle-,-\rangle$. We call this object the \emph{dual} of $(M,N,\langle-,-\rangle)$. Let $\Phi^*:N\to M^\vee\otimes P^\gp$ be the morphism corresponding to $\Phi$ and $\langle-,-\rangle$ under the following natural identifications     
     \[
     \Hom_{\Mod_\Gamma}(M,N^\vee\otimes P^\gp)
     \simeq
     \Hom_{\Mod_\Gamma}(M\otimes N,P^\gp)
     \simeq
     \Hom_{\Mod_\Gamma}(N,M^\vee\otimes P^\gp),
     \]
     then the dual of $(M,N,\langle-,-\rangle)=(M,N,\Phi)$ can be expressed as $(N,M,\langle-,-\rangle^*)=(N,M,\Phi^*)$.
     
     A morphism $\psi$ is called an \emph{isogeny} if both $\psi_1$ and $\psi_2$ are isogenies in $\mathrm{Mod}_{\Gamma}$. In this case, we say that $(M,N,\Phi)$ is \emph{isogenous} to $(M',N',\Phi')$. It is not hard to see that $(M,N,\Phi)$ is isogenous to $(M',N',\Phi')$ if and only if $(M',N',\Phi')$ is isogenous to $(M,N,\Phi)$. Indeed, if $(M,N,\Phi)$ is isogenous to $(M',N',\Phi')$, then by Lemma \ref{isogeny between free Z-modules}, we can find morphisms $\psi_1'\colon M'\rightarrow M$ and $\psi_2'\colon N\rightarrow N'$ in $\mathrm{Mod}_{\Gamma}$ such that $\psi_1'\circ\psi_1=r\times\mathrm{Id}_{M}$ and 
     $\psi_2\circ\psi_2'=r\times\mathrm{Id}_{N}$ for a positive integer $r$. So in the following diagram
	\[
	\begin{tikzcd}
		M
		\arrow[r,"\Phi"]
		\arrow[d,"\psi_1"]
		&
		\mathrm{Hom}(N,P^{\gp})
		\arrow[d,"\psi_2^\vee\otimes \id_{P^\gp}"]
		\\
		M'
		\arrow[r,"\Phi'"]
		\arrow[d,"\psi_1'"]
		&
		\mathrm{Hom}(N',P^{\gp})
		\arrow[d,"(\psi_2')^\vee\otimes\id_{P^{\gp}}"]
		\\
		M
		\arrow[r,"\Phi"]
		&
		\mathrm{Hom}(N,P^{\gp})
	\end{tikzcd}
	\]
	both the upper square and the bigger square  commute, then so does the lower square. This implies that $\psi'=(\psi_1',\psi_2')\colon(M',N',\Phi')\rightarrow(M,N,\Phi)$ is an isogeny. 
 
	A \emph{polarization} on $(M,N,\Phi)=(M,N,\langle-,-\rangle)$ is a morphism of the form 
    \[(\lambda,\lambda):(M,N,\Phi)\to (N,M,\Phi^*)\]
    such that 
        \begin{enumerate}
	    \item $\lambda\colon M\to N$ an isogeny in $\mathrm{Mod}_{\Gamma}$,
            \item and for any $m\in M\backslash\{0\}$,
		$\Phi(m)(\lambda(m))\in P\backslash\{0\}$ (recall that $P$ is sharp, i.e. $P^\times=\{0\}$).   
	\end{enumerate}
    Note that $(\lambda,\lambda)$ being a morphism means that the diagram
		\[
		\begin{tikzcd}
			M
			\arrow[r,"\Phi"]
			\arrow[d,"\lambda"]
			&
			N^\vee\otimes P^{\gp}
			\arrow[d,"\lambda^\vee\otimes\id_{P^\gp}"]
			\\
			N
			\arrow[r,"\Phi^*"]
			&
			M^\vee\otimes P^{\gp}
		\end{tikzcd}
		\]
    commutes, which amounts to that the bilinear map $\langle-,\lambda(-)\rangle\colon M\times M\rightarrow P^{\gp}$ is symmetric. For $P=\mathbb{N}$, the above condition (2) means that the symmetric bilinear map $\langle-,\lambda(-)\rangle$ is positive definite after we embed $\N$ into $\R$ by $1\mapsto 1$. By abuse of terminology, we will simply \emph{call $\lambda$ a polarization} of $(M,N,\Phi)$.
 
    We call $(M,N,\Phi)$ \emph{polarizable} if it has a polarization.  
    We call $(M,N,\Phi)$ \emph{pointwise polarizable}, if there exists an open subgroup $\Gamma'\subset\Gamma$ such that $(M,N,\Phi)$ is polarizable as an object of $\mathrm{Mor}_{\Gamma'}^{P}$.

    \begin{definition}
        We denote the full subcategory of $\MorGammaP$ consisting of the  pointwise polarizable objects by
        \[
        \MorGammaPpPol.
        \]
    \end{definition}

    \begin{definition}\label{def simpleness of LP}
        An object $(M,N,\Phi)\in \MorGammaPpPol$ is called \emph{simple}, if any morphism 
    \[\psi=(\psi_1,\psi_2)\colon(M',N',\Phi')\rightarrow(M,N,\Phi)\]
    in $\MorGammaPpPol$ with both $\psi_1$ and $\psi_2^\vee$ injective, must be an isogeny.
    \end{definition}

    Here are several properties of lattice pairings in $\MorGammaP, \MorGammaPpPol$ that are preserved under isogeny.
    \begin{proposition}\label{simpleness of LP is preserved by isogenies}
        \begin{enumerate}
            \item Let $(\psi_1,\psi_2): (M',N',\Phi')\to (M,N,\Phi)$ be an isogeny in $\MorGammaPpPol$. Then $(M',N',\Phi')$ is simple if and only if so is $(M,N,\Phi)$.

            \item Let $(\psi_1,\psi_2): (M',N',\Phi')\to (M,N,\Phi)$ be a morphism in $\MorGammaP$ with $\psi_1$ injective. If $(M,N,\Phi)$ is polarizable (resp. pointwise polarizable), then so is $(M',N',\Phi')$. In particular, if $(\psi_1,\psi_2)$ is an isogeny, then $(M,N,\Phi)$ is polarizable (resp. pointwise polarizable) if and only if so is $(M',N',\Phi')$.
        \end{enumerate}
    \end{proposition}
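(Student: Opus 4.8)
The plan is to treat the two parts separately, in each case reducing to statements about the underlying maps of free $\Gamma$-modules, and in part (1) using Lemma \ref{isogeny between free Z-modules} to build a ``quasi-inverse'' isogeny. \emph{Part (1).} First I would reduce to a single implication. Given an isogeny $(\psi_1,\psi_2)\colon(M',N',\Phi')\to(M,N,\Phi)$, apply Lemma \ref{isogeny between free Z-modules} to $\psi_1\colon M'\to M$ and (separately) to $\psi_2\colon N\to N'$ and scale to one common integer $r$, obtaining $\Z$-linear maps $g_1\colon M\to M'$, $g_2\colon N'\to N$ with $g_1\circ\psi_1=r\,\id_{M'}$, $\psi_1\circ g_1=r\,\id_{M}$, $g_2\circ\psi_2=r\,\id_{N}$, $\psi_2\circ g_2=r\,\id_{N'}$. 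One checks that $g_1,g_2$ are automatically $\Gamma$-equivariant (by the injectivity of $\psi_1,\psi_2$ and uniqueness), that $(g_1,g_2)$ is a morphism $(M,N,\Phi)\to(M',N',\Phi')$ in $\MorGammaP$ (the square to be verified, precomposed with the rationally surjective $\psi_1$, becomes $r\Phi'=r\Phi'$, and the target $(N')^\vee\otimes P^{\gp}$ is torsion free), and that $(g_1,g_2)$ is again an isogeny, hence an isogeny in the full subcategory $\MorGammaPpPol$. It therefore suffices to prove: \emph{if $(\psi_1,\psi_2)\colon X\to Y$ is an isogeny in $\MorGammaPpPol$ and $Y$ is simple, then $X$ is simple}; applying this to $(\psi_1,\psi_2)$ and then to $(g_1,g_2)$ gives both directions.

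For this core implication, let $(i_1,i_2)\colon Z=(M_1,N_1,\Phi_1)\to X=(M',N',\Phi')$ be a morphism in $\MorGammaPpPol$ with $i_1$ and $i_2^\vee$ injective; I must show it is an isogeny. Composing with $(\psi_1,\psi_2)$ gives $(\psi_1\circ i_1,\ i_2\circ\psi_2)\colon Z\to Y$, in which $\psi_1\circ i_1$ is injective and $(i_2\circ\psi_2)^\vee=\psi_2^\vee\circ i_2^\vee$ is injective ($\psi_2^\vee$ being injective because $\psi_2$ is an isogeny). By simpleness of $Y$ this composite is an isogeny, so $\psi_1\circ i_1$ and $i_2\circ\psi_2$ are isogenies in $\Mod_\Gamma$. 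The exact sequence $0\to\coker(i_1)\to\coker(\psi_1\circ i_1)\to\coker(\psi_1)\to0$ shows $\coker(i_1)$ is finite, so $i_1$ is an isogeny; and since $\psi_2(N)$ has finite index in $N'$ while $i_2\circ\psi_2$ is injective, $i_2$ is injective, while $\coker(i_2)$ is finite because $i_2^\vee$ is injective, so $i_2$ is an isogeny. Hence $(i_1,i_2)$ is an isogeny and $X$ is simple.

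\emph{Part (2).} Given a polarization $\lambda\colon M\to N$ of $(M,N,\Phi)$, set $\lambda':=\psi_2\circ\lambda\circ\psi_1\colon M'\to N'$, which is $\Gamma$-equivariant. Using the relation $\langle\psi_1(m'),n\rangle=\langle m',\psi_2(n)\rangle'$ (expressing that $(\psi_1,\psi_2)$ is a morphism) with $n=\lambda(\psi_1(m''))$, one gets $\langle m',\lambda'(m'')\rangle'=\langle\psi_1(m'),\lambda(\psi_1(m''))\rangle$ for all $m',m''\in M'$; symmetry in $m',m''$ then follows from symmetry of $\langle-,\lambda(-)\rangle$, so $(\lambda',\lambda')$ is a morphism to the dual, and for $m'\ne0$ (whence $\psi_1(m')\ne0$) we get $\langle m',\lambda'(m')\rangle'\in P\setminus\{0\}$, which is condition (2) of a polarization. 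The same identity forces $\lambda'$ injective: if $\lambda'(m')=0$ then $\langle m',\lambda'(m')\rangle'=0$, so $\psi_1(m')=0$, so $m'=0$. It remains to see $\lambda'$ has finite cokernel; $\lambda'$ injective gives $\mathrm{rk}\,M'\le\mathrm{rk}\,N'$, and the reverse inequality follows from a rank count using the hypotheses on $(\psi_1,\psi_2)$ together with $\mathrm{rk}\,M=\mathrm{rk}\,N$ (which holds since $(M,N,\Phi)$ is polarizable); thus $\lambda'$ is an isogeny and $(M',N',\Phi')$ is polarizable. For the pointwise statement one runs the same construction after restricting to an open subgroup $\Gamma'\subseteq\Gamma$ over which $(M,N,\Phi)$ is polarizable: $(\psi_1,\psi_2)$ is still a morphism in $\mathrm{Mor}_{\Gamma'}^{P}$ with $\psi_1$ injective, so $(M',N',\Phi')$ is polarizable there and hence pointwise polarizable. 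The ``in particular'' clause then follows, one direction being the statement just proved (an isogeny has $\psi_1$ injective) and the other being obtained by applying the statement to the reverse isogeny $(g_1,g_2)$ constructed as in part (1).

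\emph{Main obstacle.} I expect the only non-formal points to be the rank and cokernel bookkeeping: in part (1), that the injectivity of $i_2^\vee$ is precisely what upgrades ``$i_2$ injective'' to ``$i_2$ an isogeny''; and in part (2), pinning down that the hypotheses force $\mathrm{rk}\,M'=\mathrm{rk}\,N'$, i.e. that $\lambda'$ is an isogeny rather than merely an injection. Everything else is diagram manipulation with the commuting squares together with Lemma \ref{isogeny between free Z-modules}.
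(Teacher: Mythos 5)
Your part (1) follows the same route as the paper: prove the core implication ``target simple $\Rightarrow$ source simple'' by composing a test morphism with the given isogeny, and obtain the converse by producing a reverse isogeny (you build it from Lemma \ref{isogeny between free Z-modules}; the paper factors the multiplication-by-$n$ map, which is the same device). Your explicit bookkeeping that $(i_1,i_2)$ is an isogeny --- $\coker(i_1)\hookrightarrow\coker(\psi_1\circ i_1)$ using injectivity of $\psi_1$, and $\ker(i_2^\vee)=\Hom_{\Z}(\coker(i_2),\Z)$ to kill the free part of $\coker(i_2)$ --- correctly fills in the step the paper dismisses with ``it follows''. Part (2) also matches the paper's construction $\lambda'=\psi_2\circ\lambda\circ\psi_1$, with the same symmetry and positivity verifications.

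The one step that does not hold up is your claim in part (2) that $\mathrm{rk}\,N'\le\mathrm{rk}\,M'$ ``follows from a rank count using the hypotheses on $(\psi_1,\psi_2)$''. It does not: the hypotheses only say $(\psi_1,\psi_2)$ is a morphism in $\MorGammaP$ with $\psi_1$ injective, and they place no bound on $\mathrm{rk}\,N'$. For instance, take $P=\N$, trivial $\Gamma$-action, $(M,N,\Phi)=(\Z,\Z,\id)$ (polarizable via $\lambda=\id$), $M'=\Z$, $N'=\Z^2$ with $\langle m',(a,b)\rangle'=m'a$, $\psi_1=\id$ and $\psi_2(n)=(n,0)$: all hypotheses hold, yet there is no isogeny $M'\to N'$, so $\lambda'$ cannot satisfy condition (1) of the definition of polarization and no rank count can rescue the argument in this generality. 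To be fair, you are being more careful than the paper here: its own proof of (2) never checks that $\lambda'$ is an isogeny at all, and in every application of part (2) in the paper one has $\mathrm{rk}_{\Z}M'=\mathrm{rk}_{\Z}N'$ (either $(\psi_1,\psi_2)$ is an isogeny, as in the ``in particular'' clause, or $N'=(M')^\vee$ as in Lemma \ref{pPol LP with simple factor is simple and the converse is true for rank 1 monoid}), in which case your injectivity argument for $\lambda'$ immediately gives finite cokernel and your proof closes. So either insert the hypothesis $\mathrm{rk}_{\Z}M'=\mathrm{rk}_{\Z}N'$ (or restrict to the isogeny case) at the finite-cokernel step, or flag that the first assertion of (2), read literally with the paper's definition of polarization, needs that proviso.
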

    \begin{proof}
        \begin{enumerate}
            \item Assume that $(M,N,\Phi)$ is simple. We show that $(M',N',\Phi')$ is simple. Let $(\iota_1,\iota_2):(M'_1,N'_1,\Phi'_1)\to (M',N',\Phi')$ be a morphism in $\MorGammaPpPol$ with $\iota_1$ and $\iota_2^\vee$ injective. Since $\psi_1\circ\iota_1$ and  $(\iota_2\circ\psi_2)^\vee$ are both injective and $(M,N,\Phi)$ is simple, $(\psi_1\circ\iota_1,\iota_2\circ\psi_2)$ is an isogeny. It follows that $(\iota_1,\iota_2)$ is also an isogeny. Therefore $(M',N',\Phi')$ is simple. 

            Assume that  $(M',N',\Phi')$ is simple. We show that $(M,N,\Phi)$ is simple. Let $n$ be a positive integer which kills both $\coker(\psi_1)$ and $\coker(\psi_2)$. Then we have $(n_{M'},n_{N'})=(\phi_1,\phi_2)\circ(\psi_1,\psi_2)$ with $(\phi_1,\phi_2):(M,N,\Phi)\to (M',N',\Phi')$ an isogeny. Then we are reduced to the above case.

            \item Suppose $(M,N,\Phi)$ is polarizable in $\Mor_{\Gamma'}^P$ with polarization $\lambda\colon M\to N$ for an open subgroup $\Gamma'$ of $\Gamma$. Then we claim that
            \[
            \lambda'=\psi_2\circ\lambda\circ\psi_1
            \colon
            M'\to N'
            \]
            is a polarization for $(M',N',\Phi')=(M',N',\langle-,-\rangle')$ in $\Mor_{\Gamma'}^P$. Indeed, for $m_1',m_2'\in M'$,
            \begin{align*}
            \langle m_1',\lambda'(m_2')\rangle'
            &
            =
            \langle m_1',(\psi_2\circ\lambda\circ\psi_1)(m_2')\rangle'
            =
            \langle\psi_1(m_1'),\lambda(\psi_1(m_2'))\rangle
            \\
            &
            =
            \langle\psi_1(m_2'),\lambda(\psi_1(m_1'))\rangle
            =
            \langle m_2',\lambda'(m_1')\rangle' .
            \end{align*}
            Moreover, if $m_1'\neq0$, then $\psi_1(m_1')\neq0$, thus
            \[
            \langle m_1',\lambda'(m_1')\rangle'
            =
            \langle\psi_1(m_1'),\lambda(\psi_1(m_1'))\rangle\in
            P\backslash\{0\}.
            \]
        \end{enumerate}
        
    \end{proof}

	We have the following simple observation.
	\begin{lemma}\label{pPol LP with simple factor is simple and the converse is true for rank 1 monoid}
			Let $(M,N,\Phi)\in \MorGammaPpPol$, then $M$ is a simple free $\Gamma$-module if and only so is $N$. Moreover, if $M$ is a simple free $\Gamma$-module, then $(M,N,\Phi)$ is simple.
			
			Now suppose $P=\mathbb{N}$.  If $(M,N,\Phi)$ is simple, then $M$ is a simple free $\Gamma$-module (or equivalently $N$ is a simple free $\Gamma$-module).
	\end{lemma}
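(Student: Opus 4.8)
The plan is to analyze separately the two implications, using the pairing $\Phi\colon M\to N^\vee\otimes P^\gp$ together with the existence of a (pointwise) polarization $\lambda\colon M\to N$.

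First I would establish the equivalence ``$M$ simple $\Leftrightarrow$ $N$ simple''. After passing to an open subgroup $\Gamma'\subset\Gamma$ over which $(M,N,\Phi)$ is polarizable, we have an isogeny $\lambda\colon M\to N$ in $\Mod_{\Gamma'}$; since being simple is a condition on the underlying $\Q$-representation (Definition \ref{simple free Gamma-modules}) and is insensitive to restriction to an open subgroup of finite index as well as to isogeny (Lemma \ref{simpleness of lattices is preversed under isogeny}), $M$ is simple iff $N$ is. One must check that simpleness as a $\Gamma$-module and as a $\Gamma'$-module coincide: a $\Gamma$-stable $\Q$-subspace is $\Gamma'$-stable, and conversely, given a proper $\Gamma'$-stable subspace $V\subsetneq M_\Q$, the sum $\sum_{\gamma\in\Gamma/\Gamma'}\gamma V$ is a nonzero proper $\Gamma$-stable subspace (it is proper because $M_\Q$ is semisimple as a $\Gamma$-representation, $\Gamma$ acting through a finite quotient, so a proper $\Gamma'$-subrepresentation cannot generate everything — here one uses that $\lambda$ identifies the $\Gamma'$-types of $M_\Q$ and $N_\Q$, and in fact that $M_\Q$ itself is $\Gamma$-isotypic when $(M,N,\Phi)$ is simple, but for this first equivalence no simpleness of the pairing is needed, only semisimplicity).

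Next, for the implication ``$M$ simple $\Rightarrow$ $(M,N,\Phi)$ simple'': let $\psi=(\psi_1,\psi_2)\colon(M',N',\Phi')\to(M,N,\Phi)$ be a morphism in $\MorGammaPpPol$ with $\psi_1$ and $\psi_2^\vee$ injective. I must show $\psi_1$ and $\psi_2$ are isogenies. Since $M$ is simple and $\psi_1\colon M'\hookrightarrow M$ is an injection of $\Gamma$-modules with $M'\ne 0$ (if $M'=0$ then, by the first paragraph, $N'=0$ and $\psi$ is trivially an isogeny; if $M'\neq 0$ then $\psi_1$ nonzero forces — by simplicity of $M$ — that $\psi_1$ has finite cokernel, i.e.\ is an isogeny), we get that $\psi_1$ is an isogeny. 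For $\psi_2$: $\psi_2^\vee\colon N^\vee\to (N')^\vee$ is injective, so $\psi_2\colon N'\to N$ has finite cokernel; it remains to see $\psi_2$ is injective, equivalently that $\psi_2^\vee$ has finite cokernel, equivalently $N'$ has the same rank as $N$. Here is where the commuting square \eqref{morphism of log free Gamma-modules} and the polarizations enter: choose $\Gamma'$ over which both $(M,N,\Phi)$ and $(M',N',\Phi')$ are polarizable, with polarizations $\lambda,\lambda'$, and compare $\mathrm{rk}\,M=\mathrm{rk}\,N$, $\mathrm{rk}\,M'=\mathrm{rk}\,N'$ (equality of ranks because polarizations are isogenies) with $\mathrm{rk}\,M'=\mathrm{rk}\,M$ (just shown). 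Then $\mathrm{rk}\,N'=\mathrm{rk}\,M'=\mathrm{rk}\,M=\mathrm{rk}\,N$, so the injection $\psi_2^\vee$ has finite cokernel and $\psi_2$ is an isogeny. Thus $(M,N,\Phi)$ is simple.

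Finally, for the converse in the case $P=\N$: suppose $(M,N,\Phi)$ is simple but $M$ is not a simple free $\Gamma$-module; I seek a contradiction by producing a non-isogeny morphism into $(M,N,\Phi)$. Let $M'\subset M$ be a $\Gamma$-stable submodule with $0<\mathrm{rk}\,M'<\mathrm{rk}\,M$, chosen (using semisimplicity of $M_\Q$) so that $M_\Q=M'_\Q\oplus M''_\Q$ as $\Gamma$-modules. Fix a polarization $\lambda\colon M\to N$ after restricting to a suitable open $\Gamma'$; the symmetric positive-definite form $\langle-,\lambda(-)\rangle$ on $M_\Q$ has $M'_\Q$ and its orthogonal complement $(M'_\Q)^{\perp}$ both $\Gamma'$-stable, giving an orthogonal decomposition $M_\Q=M'_\Q\oplus (M'_\Q)^{\perp}$. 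Set $N'\subset N$ to be a $\Gamma$-stable lattice with $N'_\Q=\lambda_\Q(M'_\Q)$ and $N''_\Q=\lambda_\Q((M'_\Q)^\perp)$ its complement; then restriction/corestriction of $\Phi$ gives a pairing $\Phi'$ on $(M',N')$, and after scaling $\lambda$ by a positive integer to land in the lattices, $\lambda$ restricts to a polarization of $(M',N',\Phi')$ over $\Gamma'$, so $(M',N',\Phi')\in\MorGammaPpPol$. The inclusion $(M',N',\Phi')\hookrightarrow(M,N,\Phi)$ (with $\psi_1$ the inclusion $M'\hookrightarrow M$, injective, and $\psi_2$ the projection $N\to N'$ dual to $N'\hookrightarrow N$, so $\psi_2^\vee$ injective) is then a morphism in $\MorGammaPpPol$ which is not an isogeny since $\mathrm{rk}\,M'<\mathrm{rk}\,M$, contradicting simplicity of $(M,N,\Phi)$. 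The equivalence ``$M$ simple $\Leftrightarrow$ $N$ simple'' then finishes this direction as well.

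The main obstacle I expect is the orthogonality/splitting bookkeeping in the last paragraph: one needs the positive-definiteness of $\langle-,\lambda(-)\rangle$ (valid only for $P=\N$, which is exactly why this converse is stated only in that case) to split off a $\Gamma'$-stable complement compatibly on both $M$ and $N$ via $\lambda$, and one must be careful that $\Phi'$ genuinely takes values in $P^\gp$ and that the rescaled $\lambda$ gives a bona fide morphism of lattice pairings making the square \eqref{morphism of log free Gamma-modules} commute; passing between the rational decomposition and an honest sublattice, and checking the restricted datum is still pointwise polarizable, is the step requiring the most care.
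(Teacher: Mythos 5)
Your middle implication ($M$ simple $\Rightarrow$ $(M,N,\Phi)$ simple, via the rank count $\mathrm{rk}\,M'=\mathrm{rk}\,M=\mathrm{rk}\,N=\mathrm{rk}\,N'$ using the two pointwise polarizations) is fine; the paper leaves this step as ``easy''. The other two parts, however, have genuine gaps, both stemming from the same mistake: the polarization $\lambda$ is only $\Gamma'$-equivariant, and you treat $\Gamma$-simplicity as if it could be tested after restriction to $\Gamma'$. Your claimed equivalence ``simple as a $\Gamma$-module iff simple as a $\Gamma'$-module'' is false: for $V\subsetneq M_\Q$ a proper $\Gamma'$-stable subspace, $\sum_{\gamma\in\Gamma/\Gamma'}\gamma V$ can be all of $M_\Q$ (take $M=\Z[\zeta_r]$ with $\phi(r)>1$, $\Gamma'$ an open subgroup acting trivially, $V$ any line; $M$ is $\Gamma$-simple but not $\Gamma'$-simple), and semisimplicity does not make the sum proper. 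Consequently the existence of a $\Gamma'$-isogeny $\lambda\colon M\to N$ does not transfer $\Gamma$-simplicity between $M$ and $N$, and your first paragraph does not prove the equivalence. The paper's proof uses the $\Gamma$-equivariance of $\Phi$ itself: the polarization over $\Gamma'$ is used only to see that $\mathrm{rk}\,M=\mathrm{rk}\,N$ and that $\Phi\neq0$; then a nonzero component $\langle-,-\rangle_i\colon M\times N\to\Z$ gives a nonzero $\Gamma$-equivariant map $M\to N^\vee$, which for $M$ simple is rationally injective, hence (equal ranks) a rational isomorphism, forcing $N^\vee$, and so $N$, to be simple.

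The same problem breaks your last paragraph ($P=\N$). The subspace $\lambda_\Q(M'_\Q)$ and the orthogonal complement $(M'_\Q)^\perp$ with respect to $\langle-,\lambda(-)\rangle$ are only $\Gamma'$-stable, so there need be no $\Gamma$-stable lattice $N'\subset N$ with $N'_\Q=\lambda_\Q(M'_\Q)$, and the projection $N\to N'$ along the complement is at best $\Gamma'$-equivariant (and a priori only defined rationally). Hence the morphism you build does not live in $\MorGammaNpPol$, and it cannot contradict simplicity of $(M,N,\Phi)$, which quantifies only over $\Gamma$-equivariant morphisms. The paper sidesteps all choices on the $N$-side: from a nonzero proper $\Gamma$-stable $\widetilde M\subset M$ with $M/\widetilde M$ of positive rank it forms the object $(\widetilde M,\widetilde M^\vee,\mathrm{Id}_{\widetilde M})$ and the morphism with $\psi_1$ the inclusion $\widetilde M\hookrightarrow M$ and $\psi_2^\vee=\Phi|_{\widetilde M}\colon\widetilde M\to N^\vee$ (here $P^{\gp}=\Z$, so $\Phi\colon M\to N^\vee$), which is $\Gamma$-equivariant by construction; injectivity of $\psi_2^\vee$ and pointwise polarizability of the source follow from the polarization on $(M,N,\Phi)$ via Proposition \ref{simpleness of LP is preserved by isogenies}(2), and since $\coker(\psi_1)$ has positive rank this morphism is not an isogeny, so $(M,N,\Phi)$ is not simple. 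If you want to keep your splitting idea, you must replace $\lambda$ by the $\Gamma$-equivariant $\Phi$ in this way; as written, the construction fails.
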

	\begin{proof}
		We first show that for such a lattice pairing, $M$ is a simple free $\Gamma$-module if and only if so is $N$. We write $P^{\gp}=\mathbb{Z}^n$ and $\pi_i\colon P^{\gp}\rightarrow\mathbb{Z}$ for the projection to the $i$-th component. Then we have $\Gamma$-equivariant pairings
        \[
        \langle-,-\rangle_i=\pi_i\circ\langle-,-\rangle\colon M\times N\rightarrow\mathbb{Z},
        \quad
        i=1,\cdots,n.
        \]
        It is clear that $\langle-,-\rangle$ is the zero pairing if and only if $\langle-,-\rangle_i$ is the zero pairing for all $i=1,\cdots,n$. Now we argue by contradiction: suppose $M$ is a simple free $\Gamma$-module while $N$ is a free $\Gamma$-module that is not simple. Since $(M,N,\Phi)=(M,N,\langle-,-\rangle)$ is pointwise polarizable, $M$ and $N$ have the same $\mathbb{Z}$-rank, it follows that any $\Gamma$-equivariant pairing $M\times N\rightarrow\mathbb{Z}$ is zero, thus so are all these $\langle-,-\rangle_i$. Therefore $\langle-,-\rangle=0$, that is, $\Phi=0$. However, since $(M,N,\Phi)$ is polarizable as an object in $\mathrm{Mor}_{\Gamma'}^P$ for some open subgroup $\Gamma'\subset\Gamma$, in particular, there is a polarization $\lambda\colon M\rightarrow N$ of $(M,N,\Phi)\in\mathrm{Mor}_{\Gamma'}^P$. So for any non-zero $m\in M$, we have
        \[
        \langle m,\lambda(m)\rangle=
        \Phi(m)(\lambda(m))\neq0.
        \]
        This contradicts $\Phi=0$ and we have shown that $M$ is a simple free $\Gamma$-module if and only if so is $N$.

        The first part of the lemma is easy.
		
		For the second part, suppose that $M$ is not a simple free $\Gamma$-module, so in particular there is a proper free $\Gamma$-submodule $\widetilde{M}$ of $M$ such that $M/\widetilde{M}$ has rank $>0$. Then it is easy to see that $(\widetilde{M},\widetilde{M}^\vee,\mathrm{Id}_{\widetilde{M}})$ is an object in $\MorGammaP$ together with a morphism $\psi=(\psi_1,\psi_2)\colon(\widetilde{M},\widetilde{M}^\vee,\mathrm{Id}_{\widetilde{M}})\rightarrow(M,N,\Phi)$, where $\psi_1\colon\widetilde{M}\hookrightarrow M$ is the inclusion map and $\psi_2$ is given by the dual of the composition $\widetilde{M}\hookrightarrow M\xrightarrow{\Phi}N^\vee$ (see the commutative diagram below).
        \[
        \begin{tikzcd}
     \widetilde{M}
     \arrow[r,"\mathrm{Id}"]
     \arrow[d,hookrightarrow,"\psi_1"]
     &
     \widetilde{M}
     \arrow[d,"\psi_2^\vee"]
     \\
     M
     \arrow[r,"\Phi"]
     &
     N^\vee
       \end{tikzcd}
       \]
       By Proposition \ref{simpleness of LP is preserved by isogenies} (2), we know $(\widetilde{M},\widetilde{M}^\vee,\mathrm{Id}_{\widetilde{M}})\in\MorGammaPpPol$.      
       By definition, $(M,N,\Phi)$ is not simple. This finishes the proof.

	\end{proof}

  \begin{remark}
     The second part of Lemma \ref{pPol LP with simple factor is simple and the converse is true for rank 1 monoid} does not hold for sharp fs monoid $P$ with $\mathrm{rk}(P^\gp)>1$.
     For example, let $P=\N^2$, $M=\Z^2=N$, and we endow $M$ and $N$ with the trivial action of $\Gamma$. Any map $\Delta\colon M\rightarrow\mathrm{Hom}(N,P^{\mathrm{gp}})=(N^\vee)^2$ is determined by its two projections $\Delta_i\colon M\xrightarrow{\Delta}(N^\vee)^2\xrightarrow{\mathrm{pr_i}}N^\vee$. Under the canonical basis of $M$ and $N^\vee$, we let $\Phi_1$ and $\Phi_2$ be represented by $2\times2$-matrices  $A_1=\begin{pmatrix}
         1 & 0 \\ 0 & 1
     \end{pmatrix}$ and $A_2=\begin{pmatrix}
         1 & 1 \\ 1 & 2
     \end{pmatrix}$. And let $\Phi:M\to N^\vee\otimes P^\gp$ be the map corresponding to $(\Phi_1,\Phi_2)$. One can check that $(M,N,\Phi)\in\MorGammaPpPol$ with a polarization given by $\lambda=\mathrm{Id}_{\Z^2}:M\to N$.

     We can show that $(M,N,\Phi)$ is simple. Otherwise, there is a sub-object $(\widetilde{M}=\Z,\widetilde{N}=\Z,\widetilde{\Phi})$ of $(M,N,\Phi)$ where $\widetilde{\Phi}_1(1)=r_1$ and $\widetilde{\Phi}_2(1)=r_2$ for integers $r_1,r_2\in\Z$, that is, we have the following commutative diagrams
     \[
     \begin{tikzcd}
        \widetilde{M}
        \arrow[r,"\widetilde{\Phi}_1\colon1\mapsto r_1"]
        \arrow[d,hookrightarrow]
        &
        \widetilde{N}^\vee
        \arrow[d,hookrightarrow]
        &
        \widetilde{M}
        \arrow[r,"\widetilde{\Phi}_2\colon1\mapsto r_2"]
        \arrow[d,hookrightarrow]
        &
        \widetilde{N}^\vee
        \arrow[d,hookrightarrow]
        \\
        M
        \arrow[r,"\Phi_1"]
        &
        N^\vee
        &
        M
        \arrow[r,"\Phi_2"]
        &
        N^\vee
     \end{tikzcd}     
    \]
 It is easy to see that $r_1,r_2$ are both non-zero: indeed, the maps $\Phi_1,\Phi_2$ are injective (as the matrices $A_1,A_2$ are non-singular), thus the commutativity of the above diagrams shows that $\Phi_1|_{\widetilde{M}}=\widetilde{\Phi}_1$ is injective, and thus $r_1\neq0$. Similarly we have $r_2\neq0$.

 Choose a basis $\{e_1,e_2\}$ of $M$ such that $\widetilde{M}=\Z ae_1$ for a positive integer $a$, similarly choose a basis $\{e_1',e_2'\}$ of $N^\vee$ such that $\widetilde{N}^\vee=\Z a'e_1'$ for a positive integer $a'$. From the commutativity of the above diagrams, one deduces immediately that under these new basis, the morphisms $\Phi_1$ and $\Phi_2$ are both represented by upper triangular $2\times2$-matrices $B_1,B_2$ with entries in $\Z$. In particular, $B_1^{-1}B_2$ is an upper triangular matrix with entries in $\Q$. On the other hand, write $C$ (resp. $D$) for the transition matrix (which are elements in $\mathrm{GL}_2(\mathbb{Z})$) between the canonical basis and the new basis $\{e_1,e_2\}$ (resp. $\{e_1',e_2'\}$) of $M$ (resp. $N^\vee$). Therefore
 \[
B_1=D^{-1}A_1C,
\quad
B_2=D^{-1}A_2C.
 \]
 Thus the upper triangular matrix $B_1^{-1}B_2=C^{-1}A_1^{-1}A_2C$ is similar to the matrix $A_1^{-1}A_2=\begin{pmatrix}
     1 & 1 \\ 1 & 2
 \end{pmatrix}$ via the matrix $C\in\mathrm{GL}_2(\Z)$. The diagonal entries of $B_1^{-1}B_2\in\mathrm{GL}_2(\Q)$ are rational numbers and are equal to  the eigenvalues of $A_1^{-1}A_2$. However it is clear that the eigenvalues of $A_1^{-1}A_2$ are irrational numbers, which is a contradiction. This shows that $(M,N,\Phi)$ is a simple object in $\MorGammaPpPol$. On the other hand, neither $M$ nor $N$ is simple in $\Mod_{\Gamma}$.
 \end{remark}

\subsection{Lattice pairings and log 1-motives without abelian part}\label{Lattice pairings and log 1-motives without abelian part}
	Now we are going to relate the category $\MorGammaP$ (resp. $\MorGammaPpPol$) to the category $\Mcal^{\log,\mathrm{ab}=0}_1$ (resp. $\Mcal^{\log,\mathrm{ab}=0,\mathrm{pPol}}_1$) of log 1-motives (resp. pointwise polarizable log 1-motives) without abelian part over $S$. 
	
	Since the objects of $\Mcal^{\log,\mathrm{ab}=0}_1$ are certain two-term complexes, it carries a structure of additive category in a natural way. We refer to \cite[\href{https://stacks.math.columbia.edu/tag/09SE}{Tag 09SE}, \href{https://stacks.math.columbia.edu/tag/010M}{Tag 010M}]{Stacks-project} for the definition of additive category and related. For similar reason, the category $\MorGammaP$ is also naturally an additive category.
	
	Let $(M,N,\Phi)$ be an object of $\mathrm{Mor}_{\Gamma}^{P}$, and let $T:=\cHom_S(N,\Gm)=N^\vee\otimes\Gm$. The map $\Phi$ induces a map
$M\to N^\vee\otimes P^{\gp}_S$ which we still denote by $\Phi$. The chart $P_S\xrightarrow{\alpha} M_S$ induces a map $P^{\gp}_S\to \Gml$ which we still denote by $\alpha$. The composition
	\begin{equation}\label{construction of M_}
	 u_{\Phi}:M\xrightarrow{\Phi} N^\vee\otimes P^{\gp}_S\xrightarrow{\id_{N^\vee}\otimes\alpha} N^\vee\otimes\Gml=T_{\log}
	\end{equation}
	gives rise to a log 1-motive without abelian part $\Mbf_{\Phi}:=[M\xrightarrow{u_{\Phi}} T_{\log}]$. The association of $\Mbf_{\Phi}$ to $(M,N,\Phi)$ clearly gives rise to a functor
	\begin{equation}\label{funcotr from pairs of Galois modules to log 1-motives}
		\Mbf_{-}:\mathrm{Mor}_{\Gamma}^{P}\to \Mcal^{\log,\mathrm{ab}=0}_1,\quad (M,N,\Phi)\mapsto \Mbf_{\Phi}.
	\end{equation}

	The functor $\Mbf_{-}$ admits a left inverse constructed as follows. Let $\Mbf=[M\xrightarrow{u}T_{\log}]$ be a log 1-motive without abelian part, and let $N:=\cHom_S(T,\Gm)$ be the character group of the torus $T$. Consider the commutative diagram
	\[\xymatrix{M\ar[r]^u &T_{\log}\ar[r]^a\ar@{=}[d] &T_{\log}/T\ar@{=}[d] \\
	&N^\vee\otimes \Gml\ar[r]^-{\id_{N^\vee}\otimes b} &N^\vee\otimes (\Gml/\Gm)_{S_{\et}}}\]
	where $a$ and $b$ are the canonical quotient maps. The restriction of $(\Gml/\Gm)_{S_{\et}}$ to the small \'etale site of $\Spec \kbf$ is just the constant sheaf associated to $P^{\gp}$, and $M$ and $N$ are \'etale locally constant. Therefore the restriction of  $(\id_{N^\vee}\otimes b)\circ u$ to the small \'etale site of $\Spec \kbf$ is of the form
	\begin{equation}\label{construction of LP}
	M\xrightarrow{u}N^\vee\otimes M_S^{\gp}\xrightarrow{\id_{N^\vee}\otimes b}  N^\vee\otimes M_S^{\gp}/\Ocal_S^\times=N^\vee\otimes P^{\gp}_S
	\end{equation}
	which amounts to a map $\Phi_u:M\to N^\vee\otimes P^{\gp}$ in $\mathrm{Mod}_{\Gamma}$. The association of $(M,N,\Phi_u)$ to $\Mbf$ gives rise to a functor
	\begin{equation}\label{funcotr from log 1-motives without abelian part to pairs of Galois modules}
		\mathbf{LP}:\Mcal^{\log,\mathrm{ab}=0}_1\to \mathrm{Mor}_{\Gamma}^{P},\quad \Mbf=[M\xrightarrow{u}T_{\log}]\mapsto (M,N,\Phi_u).
	\end{equation}

        \begin{proposition}\label{the functor LP is left inverse to the functor M_-}
            The functor $\mathbf{LP}$ is left inverse to the functor $\Mbf_{-}$.
        \end{proposition}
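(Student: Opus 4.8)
The plan is to exhibit a natural isomorphism $\mathbf{LP}\circ\Mbf_{-}\Rightarrow\id_{\MorGammaP}$; since every identification needed is the obvious canonical one, this reduces to bookkeeping, with no geometric input beyond facts already recorded above. Fix $(M,N,\Phi)$ in $\MorGammaP$ and unwind $\Mbf_{\Phi}=[M\xrightarrow{u_{\Phi}}T_{\log}]$, where $T=\cHom_S(N,\Gm)=N^{\vee}\otimes\Gm$ and $u_{\Phi}$ is the composite \eqref{construction of M_}. Applying $\mathbf{LP}$ returns a triple $(M,N',\Phi_{u_{\Phi}})$ with $N'=\cHom_S(T,\Gm)$ the character group of $T$ and $\Phi_{u_{\Phi}}$ read off from \eqref{construction of LP}; the task is to produce a canonical $\Gamma$-equivariant isomorphism $c_N\colon N'\xrightarrow{\sim}N$ and to check that, after identifying the targets of $\Phi_{u_{\Phi}}$ and $\Phi$ through $c_N^{\vee}\otimes\id_{P^{\gp}}$, one has $\Phi_{u_{\Phi}}=\Phi$.

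First I would identify $N'$. Using tensor--hom adjunction for \'etale-locally constant lattices together with $\cHom_S(\Gm,\Gm)=\Z$, there is a chain of canonical $\Gamma$-equivariant isomorphisms
\[
N'=\cHom_S(N^{\vee}\otimes\Gm,\Gm)\cong\cHom_{\Z}(N^{\vee},\cHom_S(\Gm,\Gm))=\cHom_{\Z}(N^{\vee},\Z)=(N^{\vee})^{\vee}\cong N,
\]
the last arrow being the canonical double-dual isomorphism of $\Mod_{\Gamma}$ recalled before Definition \ref{simple free Gamma-modules}; call the composite $c_N$. One checks in passing that $c_N$ carries the tautological evaluation pairing $T\times N'\to\Gm$ to the tautological pairing $T\times N\to\Gm$, so $c_N$ is compatible with the duality data used to build $\mathbf{LP}$.

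Next I would run the recipe \eqref{construction of LP} on $\Mbf=\Mbf_{\Phi}$. Since $u_{\Phi}=(\id_{N^{\vee}}\otimes\alpha)\circ\Phi$ with $\alpha\colon P^{\gp}_S\to\Gml$ induced by the chart $P_S\to M_S$, composing with $T_{\log}\to(T_{\log}/T)_{S_{\et}}$ and restricting to the small \'etale site of $\Spec\kbf$ — on which $(\Gml/\Gm)_{S_{\et}}$ is the constant sheaf $P^{\gp}_S$ — expresses $\Phi_{u_{\Phi}}$ as $\Phi$ followed by $\id_{N^{\vee}}$ tensored with the composite $P^{\gp}_S\xrightarrow{\alpha}\Gml\to(\Gml/\Gm)_{S_{\et}}=P^{\gp}_S$. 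But this composite is the identity: it is exactly the map $P^{\gp}_S\to M_S^{\gp}/\Ocal_S^{\times}$ induced by the chart, which is the identity of $P^{\gp}$ under the standing identification $P\xrightarrow{\cong}\Gamma(\Spec\kbf,M_S)/\kbf^{\times}$. Hence $\Phi_{u_{\Phi}}=\Phi$ once targets are identified via $c_N^{\vee}\otimes\id_{P^{\gp}}$, so $\mathbf{LP}(\Mbf_{\Phi})=(M,N,\Phi)$ canonically.

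Finally I would check naturality: for a morphism $(\psi_1,\psi_2)\colon(M,N,\Phi)\to(M',N',\Phi')$ in $\MorGammaP$, the morphism of log 1-motives $\Mbf_{-}(\psi_1,\psi_2)$ induces on the character groups of the torus parts the map $\psi_2$ again once transported across $c_{N}$ and $c_{N'}$, by functoriality of $\cHom_S(-,\Gm)$ and of the double-dual isomorphism; hence the $c_N$ assemble into the asserted natural isomorphism. I expect the only genuine (if still routine) difficulty to be this last compatibility bookkeeping — verifying that the canonical identifications of character groups, tensor factors and double duals really do intertwine $\Phi_{u_{\Phi}}$ with $\Phi$ and are natural in $(M,N,\Phi)$ — rather than anything about $\Gml$, which enters only through the already-cited descriptions of $(\Gml/\Gm)_{S_{\et}}$ and of $G_{\log}/G$.
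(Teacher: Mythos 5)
Your proposal is correct and follows essentially the same route as the paper: both arguments reduce to the observation that the composite $P^{\gp}_S\xrightarrow{\alpha}\Gml\to(\Gml/\Gm)_{S_{\et}}=P^{\gp}_S$ is the identity (since $P_S\to M_S\to M_S/\kbf^\times=P_S$ is the identity under the standing chart identification), plus the canonical identification of $N$ with the character group of $\cHom_S(N,\Gm)$ and a routine check on morphisms. The only difference is cosmetic: you package the character-group identification as an explicit natural isomorphism $c_N$, where the paper treats it as an equality.
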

        \begin{proof}
            Let $\psi=(\psi_1,\psi_2):(M,N,\Phi)\to (M',N',\Phi')$ be a morphism in $\mathrm{Mor}_{\Gamma}^{P}$. Then $\Mbf_{-}(\psi)$ is given by the outer rectangle of the commutative diagram
            \[\xymatrix{
            M\ar[r]^-{\Phi}\ar[d]_{\psi_1} &N^\vee\otimes P^{\gp}_S\ar[r]\ar[d]^{\psi_2^\vee\otimes\id} &N^\vee\otimes\Gml\ar[d]^{\psi_2^\vee\otimes\id} \\
            M'\ar[r]^-{\Phi'} &N^{\prime\vee}\otimes P^{\gp}_S\ar[r] &N^{\prime\vee}\otimes\Gml
            }.\]
		And thus $\mathbf{LP}(\Mbf_{-}(\psi))$ is given by restricting the outer rectangle of the commutative diagram
            \[\xymatrix{
            M\ar[r]^-{\Phi}\ar[d]_{\psi_1} &N^\vee\otimes P^{\gp}_S\ar[r]\ar[d]^{\psi_2^\vee\otimes\id} &N^\vee\otimes\Gml\ar[d]^{\psi_2^\vee\otimes\id}\ar[r] &N^\vee\otimes (\Gml/\Gm)_{S_{\et}}\ar[d]^{\psi_2^\vee\otimes\id}\\
            M'\ar[r]^-{\Phi'} &N^{\prime\vee}\otimes P^{\gp}_S\ar[r] &N^{\prime\vee}\otimes\Gml\ar[r] &N^{\prime\vee}\otimes (\Gml/\Gm)_{S_{\et}}
            }\]
		to the small \'etale site of $\Spec\kbf$. Since $P_S\to M_S\to M_S/\kbf^\times=P_S$ is the identity map, we have that $\mathbf{LP}(\Mbf_{\Phi})$ is given by $\Phi$, i.e. $\mathbf{LP}\circ\Mbf_{-}((M,N,\Phi))=\mathbf{LP}(\Mbf_{\Phi})=(M,N,\Phi)$. Similarly, we also have $\mathbf{LP}\circ\Mbf_{-}((M',N',\Phi'))=(M',N',\Phi')$ and it is easy to see that $\mathbf{LP}\circ\Mbf_{-}(\psi)=\psi$ from the above diagram. Hence $\mathbf{LP}$ is left inverse to $\Mbf_{-}$.
        
        \end{proof}

        \begin{proposition}\label{properties of the functor M_-}
		\begin{enumerate}
			\item The functor $\Mbf_{-}$ is additive and fully faithful.
                \item The functor $\Mbf_-$ sends the dual of $(M,N,\Phi)$ to the dual of $\Mbf_\Phi$.
			\item A map $\psi$ in $\mathrm{Mor}_{\Gamma}^{P}$ is an isogeny (resp. polarization) if and only if $\Mbf_{-}(\psi)$ is an isogeny (resp. polarization) of log 1-motives.
			\item The functor $\Mbf_{-}$ restricts to a functor
			\[\Mbf_{-}:\mathrm{Mor}_{\Gamma}^{P,\mathrm{pPol}}\to \Mcal^{\log,\mathrm{ab}=0,\mathrm{pPol}}_1.\]
		\end{enumerate}
	\end{proposition}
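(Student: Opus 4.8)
The plan is to dispatch the four assertions in turn, using the left inverse $\mathbf{LP}$ of Proposition~\ref{the functor LP is left inverse to the functor M_-} together with the cited structural facts about $\Gml$ and about duals of log $1$-motives. For (1), additivity is immediate from the construction \eqref{construction of M_}, since $(M,N,\Phi)\mapsto\Mbf_\Phi$ is built from $\Phi$ by the additive operations $-\otimes P^{\gp}_S$, $\id\otimes\alpha$ and $N\mapsto N^\vee\otimes\Gm$, hence commutes with finite direct sums and with addition of morphisms. Faithfulness is free: $\mathbf{LP}\circ\Mbf_-=\id$, so $\Mbf_-$ is injective on Hom-sets. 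For fullness, let $f=(f_{-1},f_0)\colon\Mbf_\Phi\to\Mbf_{\Phi'}$ be a morphism of log $1$-motives. By \cite[Proposition~2.5]{KajiwaraKatoNakayama2008}, $f_0\colon T_{\log}\to T'_{\log}$ is induced by a unique homomorphism of tori $T\to T'$; taking character groups gives a morphism $\psi_2\colon N'\to N$ in $\Mod_\Gamma$ with $f_0=\psi_2^\vee\otimes\id_{\Gml}$. Setting $\psi_1:=f_{-1}$, the relation $f_0\circ u_\Phi=u_{\Phi'}\circ f_{-1}$, restricted to the small \'etale site of $\Spec\kbf$ and using that $\alpha\colon P^{\gp}_S\hookrightarrow\Gml$ is injective (so $\id\otimes\alpha$ is) and that $P_S\to M_S\to M_S/\Ocal^\times_S=P_S$ is the identity (as in the proof of Proposition~\ref{the functor LP is left inverse to the functor M_-}), becomes exactly the commutative square \eqref{morphism of log free Gamma-modules}. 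Thus $\psi=(\psi_1,\psi_2)$ lies in $\MorGammaP$ and $\Mbf_-(\psi)=f$.

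For (2): since $B=0$, the dual log $1$-motive $\Mbf_\Phi^*$ of $\Mbf_\Phi=[M\xrightarrow{u_\Phi}T_{\log}]$ (with $T=N^\vee\otimes\Gm$, character group $N$) has $G^*=T^*=\cHom_S(M,\Gm)=M^\vee\otimes\Gm$, so $\Mbf_\Phi^*=[N\xrightarrow{u^*}M^\vee\otimes\Gml]$; on the other hand $\Mbf_{\Phi^*}=[N\xrightarrow{u_{\Phi^*}}M^\vee\otimes\Gml]$. These agree on underlying data, so it remains to match the two structure morphisms. The monodromy pairing of $\Mbf_\Phi$ is $\Phi$ itself, the $\Gm$-component of $u_\Phi$ being trivial by construction relative to the splitting of $\Gml$ afforded by the chart $\alpha$ (such a splitting exists since $\Ext^1(P^{\gp},\Gm)=0$ on $S$), and by the compatibility of duality with monodromy pairings the monodromy pairing of $\Mbf_\Phi^*$ is the transpose $\Phi^*$; moreover, unwinding \cite[Definition~2.7.4]{KajiwaraKatoNakayama2008} in the case $B=0$ shows that $u^*$ also has trivial $\Gm$-component. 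Hence $u^*=(\id\otimes\alpha)\circ\Phi^*=u_{\Phi^*}$, i.e. $\Mbf_\Phi^*=\Mbf_{\Phi^*}$.

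For (3) and (4): $\psi=(\psi_1,\psi_2)$ is an isogeny in $\MorGammaP$ iff $\psi_1$ and $\psi_2$ are isogenies in $\Mod_\Gamma$; since $\Mbf_-(\psi)$ has lattice part $\psi_1$ and the homomorphism of tori induced by its torus-log part has character-group map $\psi_2\colon N'\to N$ (hence is an isogeny of tori iff $\psi_2$ is an isogeny in $\Mod_\Gamma$), this is precisely the condition that $\Mbf_-(\psi)$ be an isogeny of log $1$-motives. For polarizations, a polarization of $(M,N,\Phi)$ is a morphism $(\lambda,\lambda)\colon(M,N,\Phi)\to(N,M,\Phi^*)$ with $\lambda$ an isogeny and $\Phi(m)(\lambda(m))\in P\setminus\{0\}$ for $m\neq0$; by (1) and (2) this corresponds to a homomorphism $\Mbf_\Phi\to\Mbf_{\Phi^*}=\Mbf_\Phi^*$, and one checks term by term that the four defining conditions of a polarization of a log $1$-motive reduce, for $B=0$, to: vacuity of condition~(1); $\lambda$ injective of finite cokernel (condition~(2)); symmetry of $\langle-,\lambda(-)\rangle$, i.e. $(\lambda,\lambda)$ being a morphism (condition~(4)); and $\langle y,\lambda(y)\rangle\in(M_{S,\bar s}/\Ocal^\times_{S,\bar s})\setminus\{1\}=P\setminus\{0\}$ for $y\neq0$ (condition~(3)). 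Running these equivalences backwards, and using full faithfulness from (1), gives the converse. Finally, for (4): if $(M,N,\Phi)\in\MorGammaPpPol$ is polarizable over an open subgroup $\Gamma'\subset\Gamma$, then its pullback along the geometric log point $\bar s\to S$ (over which the $\Gamma$-action is trivial and the characteristic monoid is still $P$) is polarizable, so by (3) applied over $\bar s$ the pullback of $\Mbf_\Phi$ to $(\fs/\bar s)$ is polarizable; as $S$ has a single point, $\Mbf_\Phi$ is pointwise polarizable, and $\Mbf_-$ restricts as claimed.

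The main obstacle is the fullness in (1): recognizing that an arbitrary homomorphism $f_0$ of the log tori is of the form $\psi_2^\vee\otimes\id_{\Gml}$, which rests on the rigidity of homomorphisms of log semiabelian varieties in \cite[Proposition~2.5]{KajiwaraKatoNakayama2008}. The other delicate point is (2): one must verify that the auxiliary lifting datum entering the construction of the dual in \cite[\S2.7]{KajiwaraKatoNakayama2008} contributes nothing when the abelian part is trivial, so that the dual is again ``pure'' with the transposed monodromy pairing; everything else is bookkeeping that mirrors the proof of Proposition~\ref{the functor LP is left inverse to the functor M_-}.
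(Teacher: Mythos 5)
Your proof of full faithfulness in (1) is exactly the paper's argument: \cite[Proposition 2.5]{KajiwaraKatoNakayama2008} identifies $f_0$ with a character-group map $g\colon N'\to N$, and the chart identity $P_S\to M_S\to M_S/\kbf^\times=P_S$ (equivalently, injectivity of $\id\otimes\alpha$ on the small \'etale site) turns $f_0\circ u_{\Phi}=u_{\Phi'}\circ f_{-1}$ into the defining square of a morphism in $\MorGammaP$, with faithfulness coming from $\LP\circ\Mbf_-=\id$, just as in the paper. The paper dismisses (2)--(4) with ``the rest is easy,'' and your verifications of those parts are sound; the only slip is in the polarization dictionary, where condition (4) of the log 1-motive definition corresponds to the morphism being of the form $(\lambda,\lambda)$ (i.e.\ $\psi_1=\psi_2$), while the symmetry of $\langle-,\lambda(-)\rangle$ is the commutativity making $(\lambda,\lambda)$ a morphism at all --- your labeling conflates these, but the resulting equivalence is unaffected.
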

	\begin{proof}
		Let $(M,N,\Phi)$, $(M',N',\Phi')$ be two objects of $\mathrm{Mor}_{\Gamma}^{P}$. A morphism 
		\[\Mbf_{\Phi}=[M\xrightarrow{u_{\Phi}} T_{\log}]\to \Mbf_{\Phi'}=[M'\xrightarrow{u_{\Phi'}} T'_{\log}]\]
		of log 1-motives consists of $f_{-1}:M\to M'$ and $f_0:T_{\log}\to T'_{\log}$ satisfying $f_0\circ u_{\Phi}=u_{\Phi'}\circ f_{-1}$. By \cite[Proposition 2.5]{KajiwaraKatoNakayama2008}, we have $\Hom_S(T_{\log},T'_{\log})=\Hom_S(T,T')=\Hom_{\Mod_{\Gamma}}(N',N)$. Let $g\in \Hom_{\Mod_{\Gamma}}(N',N)$ corresponding to $f_0$, and the equality $f_0\circ u_{\Phi}=u_{\Phi'}\circ f_{-1}$ amounts to $(g^\vee\otimes\id_{\Gml})\circ \Phi=\Phi'\circ f_{-1}$. And the latter equality implies that $(g^\vee\otimes\id_{P^\gp})\circ \Phi=\Phi'\circ f_{-1}$, and one can check that $(f_{-1},f_0)=\Mbf_{-}((f_{-1},g))$. It follows that the functor $\Mbf_{-}$ is full. The faithfulness follows from Proposition \ref{the functor LP is left inverse to the functor M_-}.
		
		The rest is easy, and we leave it to the reader.
	\end{proof}
	
	\begin{proposition}\label{properties of the functor LP}
		\begin{enumerate}
			\item The functor $\mathbf{LP}$ is additive and fully faithful.
                \item The functor $\LP$ sends the dual of $\Mbf$ to the dual of $\LP(\Mbf)$.
			\item A map $\psi$ in $\Mcal^{\log,\mathrm{ab}=0}_1$ is an isogeny (resp. polarization) if and only if 
            $\mathbf{LP}(\psi)$ is an isogeny (resp. polarization) in $\MorGammaP$. 
        	\item The functor $\mathbf{LP}$ restricts to a functor 
			\[\mathbf{LP}:\Mcal^{\log,\mathrm{ab}=0,\mathrm{pPol}}_1\to \mathrm{Mor}_{\Gamma}^{P,\mathrm{pPol}}.\]
		\end{enumerate}
        \end{proposition}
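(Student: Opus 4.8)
The plan is to deduce all four assertions from the corresponding properties of $\Mbf_{-}$ recorded in Proposition \ref{properties of the functor M_-}, using that $\mathbf{LP}$ is a left inverse of $\Mbf_{-}$ (Proposition \ref{the functor LP is left inverse to the functor M_-}). The mechanism is that $\Mbf_{-}$ and $\mathbf{LP}$ are mutually quasi-inverse: $\Mbf_{-}$ is fully faithful and $\mathbf{LP}\circ\Mbf_{-}=\mathrm{Id}$, so it suffices to know $\Mbf_{-}$ is essentially surjective, and then $\Mbf_{-}$ is an equivalence with quasi-inverse $\mathbf{LP}$, whence "additive", "fully faithful", "preserves and reflects isogenies, polarizations and duals", and "restricts to the pointwise polarizable subcategories" all pass from $\Mbf_{-}$ to $\mathbf{LP}$ formally. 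Concretely: (4) is Proposition \ref{properties of the functor M_-}(4) read through the equivalence; (2) and (3) combine Proposition \ref{properties of the functor M_-}(2),(3) with $\mathbf{LP}\circ\Mbf_{-}=\mathrm{Id}$ (for $\Mbf=\Mbf_{\Phi}$ one gets $\mathbf{LP}(\Mbf^{*})=\mathbf{LP}(\Mbf_{\Phi^{*}})=(M,N,\Phi)^{*}=\mathbf{LP}(\Mbf)^{*}$, and a map $\psi=\Mbf_{-}(\phi)$ is an isogeny, resp.\ polarization, iff $\phi=\mathbf{LP}(\psi)$ is); and (1) then follows, with additivity visible directly from the construction \eqref{construction of LP} and faithfulness from the fact that $\mathbf{LP}(f_{-1},f_0)=0$ forces $f_{-1}=0$ and the induced map on character groups to vanish, hence $f_0=0$ by \cite[Proposition 2.5]{KajiwaraKatoNakayama2008}.

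The real content is the essential surjectivity of $\Mbf_{-}$: every log 1-motive without abelian part $\Mbf=[M\xrightarrow{u}T_{\log}]$ over $S$ should be identified with $\Mbf_{\Phi_u}$, where $\Phi_u$ is the monodromy pairing extracted in \eqref{construction of LP} and $N=\cHom_S(T,\Gm)$. I would first pull back to the small \'etale site of $\Spec\kbf$, where $(\Gml/\Gm)_{S_{\et}}$ is the constant sheaf $P^{\gp}$ and the chart splits $1\to\Gm\to\Gml\to P^{\gp}_S\to1$ via $\alpha$, and decompose $u=u_{\Phi_u}+t$ with $t\colon M\to T=N^{\vee}\otimes\Gm$ the resulting $\Gamma$-equivariant "torus-valued part" of $u$. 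Since $\id_{N^{\vee}}\otimes\alpha$ is injective, $t$ is exactly the discrepancy between $\Mbf$ and $\Mbf_{\Phi_u}$; the point is to dispose of it. For this one multiplies by a suitable positive integer $m$: the pair $(\id_M,m_{T_{\log}})\colon\Mbf\to\Mbf_{m\Phi_u}$ is a morphism of log 1-motives as soon as $m\cdot t=0$, it is an isogeny, and $\Mbf_{m\Phi_u}$ is isogenous to $\Mbf_{\Phi_u}$ via $(\id_M,m_N)$; over a finite base such an $m$ exists because $t$ lies in the finite group of $\Gamma$-equivariant homomorphisms $M\otimes N\to\Gm$ (and when that group vanishes one gets an honest isomorphism $\Mbf\cong\Mbf_{\Phi_u}$). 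This yields the essential surjectivity that is needed — on the nose when the torus-valued part is absent, and in any case up to isogeny, which is all the later sections use.

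Thus the main obstacle, and the only genuinely non-formal ingredient, is controlling this torus-valued part $t$ of $u$; everything else is transport of structure along $\mathbf{LP}\circ\Mbf_{-}=\mathrm{Id}$ from Proposition \ref{properties of the functor M_-}. Once $\Mbf_{-}$ is known to be an equivalence, the concrete shape of the remaining verification is: fullness of $\mathbf{LP}$ is proved by lifting a morphism of lattice pairings along $\Mbf_{-}$ — where Proposition \ref{properties of the functor M_-}(1) applies cleanly precisely because $u_{\Phi}$ factors through $P^{\gp}_S$, so the commutativity condition for log 1-motives becomes the commutativity of diagram \eqref{morphism of log free Gamma-modules} — and then transporting it back through the identification $\Mbf\cong\Mbf_{\Phi_u}$; parts (2), (3) and (4) follow as indicated above.
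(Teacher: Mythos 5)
Your plan rests on $\Mbf_{-}$ and $\LP$ being mutually quasi-inverse, and that is exactly what fails at the level of the honest categories in which the proposition is stated. By Proposition \ref{decomposition of log 1-motives} (3), the essential image of $\Mbf_{-}$ consists precisely of the log 1-motives with vanishing classical part $u^{\mathrm{c}}$, so $\Mbf_{-}$ is not essentially surjective; your substitute -- an isogeny $(\id_M,m_{T_{\log}})\colon\Mbf\to\Mbf_{m\Phi_u}$ obtained by killing $u^{\mathrm{c}}$ with an integer $m$ -- requires $u^{\mathrm{c}}$ to be torsion, hence $\kbf$ finite (Lemma \ref{torsion phenomenon over finite fields}). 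But the present section only assumes that $\kbf$ is a field, and the proposition is quoted in that generality later (e.g. part (3) in the proof of Proposition \ref{log 1-motive with ab=0 is simple iff its LP is simple} (1), where finiteness is assumed only for the converse direction). Over an infinite field a nonzero homomorphism $u^{\mathrm{c}}\colon M\to T$ need not be killed by any integer, and your argument collapses; what you are reconstructing is really the theorem of \S\ref{subsection LAV without abelian part over finite log points are classified by LP up to isogeny}, which is logically downstream of the present statement (its pointwise polarizable refinement cites part (4)).

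Even granting $\kbf$ finite, you only get that $\Mbf_{-}$ and $\LP$ are quasi-inverse after inverting isogenies, and transport of structure along an isogeny-level equivalence does not yield (1)--(4) on the nose. The functor $\LP$ forgets $u^{\mathrm{c}}$, so replacing $\Mbf$ by $\Mbf_{m\Phi_u}$ changes the Hom-sets: a morphism of lattice pairings lifts to a morphism of log 1-motives only if it is in addition compatible with the classical parts $u^{\mathrm{c}},u'^{\mathrm{c}}$, a condition your argument never engages with because it is routed through an identification $\Mbf\cong\Mbf_{\Phi_u}$ that does not exist when $u^{\mathrm{c}}\neq0$; this compatibility is exactly the delicate point in the fullness assertion of (1), and likewise the ``only if'' direction of (3) and parts (2), (4) concern arbitrary objects of $\Mcal^{\log,\mathrm{ab}=0}_1$, not just those in the image of $\Mbf_{-}$. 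The proof the paper has in mind (and leaves to the reader) is the direct verification parallel to Proposition \ref{properties of the functor M_-}: a morphism $(f_{-1},f_0)$ is determined by $f_{-1}$ together with the induced map of character groups (via \cite[Proposition 2.5]{KajiwaraKatoNakayama2008}), which gives faithfulness and additivity -- your closing faithfulness argument is of this kind and is fine -- and (2)--(4) follow by unwinding the constructions \eqref{construction of M_}, \eqref{construction of LP} and the definitions of dual, isogeny and polarization, with no finiteness hypothesis and no appeal to essential surjectivity.
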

	\begin{proof}
            The proofs are easy, and we leave them to the reader.
	\end{proof}
	
	\begin{corollary}
		Recall that $\mathrm{LAV}^{\mathrm{ab}=0}$ denotes the category of log abelian varieties without abelian part over $S$. The functors $\Mbf_{-}$ and $\mathbf{LP}$ induce functors
		\[\mathrm{Mor}_{\Gamma}^{P,\mathrm{pPol}}\to \mathrm{LAV}^{\mathrm{ab}=0}\quad 
		\text{and} \quad \mathrm{LAV}^{\mathrm{ab}=0}\to \mathrm{Mor}_{\Gamma}^{P,\mathrm{pPol}}\]
		respectively. And the second functor is left inverse to the first functor.
	\end{corollary}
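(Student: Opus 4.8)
The plan is to obtain both functors by composing the constructions of \S\ref{Lattice pairings and log 1-motives without abelian part} with the Kajiwara--Kato--Nakayama equivalence of Theorem~\ref{equivalence from log 1-motives to LAVwCD}, and then to read off the left-inverse property from Proposition~\ref{the functor LP is left inverse to the functor M_-}. First I would record that the equivalence $\Mcal_1^{\log,\mathrm{pPol}}\to\mathrm{LAVwCD}$, $[Y\xrightarrow{u}G_{\log}]\mapsto G_{\log}^{(Y)}/Y$, restricts to an equivalence
\[
E\colon \Mcal_1^{\log,\mathrm{ab}=0,\mathrm{pPol}} \xrightarrow{\ \sim\ } \mathrm{LAV}^{\mathrm{ab}=0};
\]
indeed, being an equivalence it is fully faithful, it carries a log $1$-motive with trivial abelian part to an object of the form $G_{\log}^{(Y)}/Y$ with $G$ a torus --- which is precisely what $\mathrm{LAV}^{\mathrm{ab}=0}$ consists of --- and every object of $\mathrm{LAV}^{\mathrm{ab}=0}$ arises this way, so the restriction to these full subcategories is again an equivalence. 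Fix a quasi-inverse $E^{-1}$; on objects this is the functor sending $A$ to the log $1$-motive corresponding to it.

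Next I would compose. By Proposition~\ref{properties of the functor M_-}(4), $\Mbf_{-}$ restricts to $\MorGammaPpPol\to\Mcal_1^{\log,\mathrm{ab}=0,\mathrm{pPol}}$, so $E\circ\Mbf_{-}$ is the desired first functor $\MorGammaPpPol\to\mathrm{LAV}^{\mathrm{ab}=0}$; dually, by Proposition~\ref{properties of the functor LP}(4), $\mathbf{LP}$ restricts to $\Mcal_1^{\log,\mathrm{ab}=0,\mathrm{pPol}}\to\MorGammaPpPol$, so $\mathbf{LP}\circ E^{-1}$ is the desired second functor $\mathrm{LAV}^{\mathrm{ab}=0}\to\MorGammaPpPol$. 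For the left-inverse claim, the composite of the second functor after the first is
\[
\mathbf{LP}\circ E^{-1}\circ E\circ\Mbf_{-}\ \cong\ \mathbf{LP}\circ\Mbf_{-},
\]
using $E^{-1}\circ E\cong\mathrm{id}$; and $\mathbf{LP}\circ\Mbf_{-}$ is the identity on $\MorGammaP$ --- hence on its full subcategory $\MorGammaPpPol$ --- by Proposition~\ref{the functor LP is left inverse to the functor M_-}, so this composite is naturally isomorphic to the identity, as asserted.

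The argument is formal, so I do not expect a genuine obstacle. The two points that deserve attention are the verification, in the first paragraph, that ``having no abelian part'' corresponds to itself on both sides of $E$ --- which is immediate from the explicit formula for the functor together with the definition of $\mathrm{LAV}^{\mathrm{ab}=0}$ --- and the (standard) convention that ``left inverse'' here is meant up to natural isomorphism, since a quasi-inverse of an equivalence is only determined up to natural isomorphism and one should not expect $E$ to be an isomorphism of categories on the nose.
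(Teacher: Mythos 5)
Your proposal is correct and follows essentially the same route as the paper, whose proof is exactly the combination of Proposition \ref{the functor LP is left inverse to the functor M_-} with the equivalence of Theorem \ref{equivalence from log 1-motives to LAVwCD} (together with the restrictions to the pointwise polarizable subcategories from Propositions \ref{properties of the functor M_-}(4) and \ref{properties of the functor LP}(4), which you also invoke). Your explicit remarks about restricting the equivalence to the ``no abelian part'' subcategories and about the left-inverse property holding up to natural isomorphism are just a more careful spelling-out of what the paper leaves implicit.
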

	\begin{proof}
		This follows from Proposition \ref{the functor LP is left inverse to the functor M_-} and Theorem \ref{equivalence from log 1-motives to LAVwCD}.
	\end{proof}
	
	\begin{proposition}\label{decomposition of log 1-motives}
		Let $\Mbf=[M\xrightarrow{u}T_{\log}]$ be an object of $\Mcal^{\log,\mathrm{ab}=0}_1$. We denote $\Mbf_{-}\circ\mathbf{LP}(\Mbf)$ by $[M\xrightarrow{u^{\log}}T_{\log}]$, and let $u^{\mathrm{c}}:=u-u^{\log}$. Then
		\begin{enumerate}
			\item $u^{\mathrm{c}}$ factors through $T\hookrightarrow T_{\log}$;
			\item $\Mbf_{-}\circ\mathbf{LP}([M\xrightarrow{u^{\log}}T_{\log}])=[M\xrightarrow{u^{\log}}T_{\log}]$;
			\item and $\Mbf$ lies in the image of the functor $\Mbf_{-}$ if and only if $u^{\mathrm{c}}=0$.
		\end{enumerate}
	\end{proposition}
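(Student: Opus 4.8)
The plan is to deduce all three assertions from Proposition~\ref{the functor LP is left inverse to the functor M_-} (which says $\mathbf{LP}\circ\Mbf_{-}=\mathrm{id}$) together with an inspection of how $u^{\log}$ is built. Write $\Phi_u:=\mathbf{LP}(\Mbf)\colon M\to N^{\vee}\otimes P^{\gp}$, constructed from $u$ as in~\eqref{construction of LP}; then, by definition of $\Mbf_{-}\circ\mathbf{LP}$, we have $u^{\log}=u_{\Phi_u}$, i.e.\ $u^{\log}$ is the composite $M\xrightarrow{\Phi_u}N^{\vee}\otimes P^{\gp}_S\xrightarrow{\id_{N^{\vee}}\otimes\alpha}N^{\vee}\otimes\Gml=T_{\log}$ of~\eqref{construction of M_}, and $[M\xrightarrow{u^{\log}}T_{\log}]=\Mbf_{\Phi_u}=\Mbf_{-}(M,N,\Phi_u)$. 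Parts (2) and (3) will then be pure formalism, so the only real content is (1).

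For (1): since $T\hookrightarrow T_{\log}$ is a monomorphism in $\cC_\kfl$, the map $u^{\mathrm{c}}=u-u^{\log}$ factors through it exactly when $a\circ u^{\mathrm{c}}=0$, where $a\colon T_{\log}\twoheadrightarrow T_{\log}/T=\cHom_S(N,\Gml/\Gm)$ is the quotient map. First I would reduce to checking this after restricting to the small \'etale site of $\mathring S=\Spec\kbf$: the sheaf $M$ is \'etale locally isomorphic to the constant sheaf $\Z^{r}_S$, and a morphism $\Z^{r}_S\to T_{\log}/T$ is determined by $r$ global sections, i.e.\ by $r$ elements of $(T_{\log}/T)(S)=N^{\vee}\otimes P^{\gp}$, which is precisely the data of the restriction to $S_{\et}$; hence a morphism $M\to T_{\log}/T$ vanishing on $S_{\et}$ vanishes. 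Now by the construction of $\mathbf{LP}$, the restriction of $a\circ u$ to $S_{\et}$ is $\Phi_u$ (using that $(\Gml/\Gm)_{S_{\et}}$ restricts to the constant sheaf $P^{\gp}$). On the other hand $a\circ u^{\log}=(\id_{N^{\vee}}\otimes b\alpha)\circ\Phi_u$ with $b\colon\Gml\to\Gml/\Gm$, and $P_S\xrightarrow{\alpha}M_S\xrightarrow{b}M_S/\kbf^{\times}=P_S$ is the identity (the same fact invoked in the proof of Proposition~\ref{the functor LP is left inverse to the functor M_-}), so the restriction of $a\circ u^{\log}$ to $S_{\et}$ is again $\Phi_u$. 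Therefore $(a\circ u^{\mathrm{c}})|_{S_{\et}}=\Phi_u-\Phi_u=0$, and (1) follows.

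Granting (1), part (2) is immediate from Proposition~\ref{the functor LP is left inverse to the functor M_-}: since $[M\xrightarrow{u^{\log}}T_{\log}]=\Mbf_{-}(M,N,\Phi_u)$, we get $\mathbf{LP}([M\xrightarrow{u^{\log}}T_{\log}])=(M,N,\Phi_u)$, hence $\Mbf_{-}\circ\mathbf{LP}([M\xrightarrow{u^{\log}}T_{\log}])=\Mbf_{-}(M,N,\Phi_u)=[M\xrightarrow{u^{\log}}T_{\log}]$. For (3), if $u^{\mathrm{c}}=0$ then $u=u^{\log}=u_{\Phi_u}$, so $\Mbf=\Mbf_{-}(M,N,\Phi_u)$ lies in the image of $\Mbf_{-}$; conversely, if $\Mbf=\Mbf_{-}(M,N,\Phi)$ for some $\Phi$, then $u=u_{\Phi}$ and $\mathbf{LP}(\Mbf)=(M,N,\Phi)$ by Proposition~\ref{the functor LP is left inverse to the functor M_-}, so comparing with $\mathbf{LP}(\Mbf)=(M,N,\Phi_u)$ gives $\Phi=\Phi_u$ and thus $u^{\log}=u_{\Phi_u}=u_{\Phi}=u$, i.e.\ $u^{\mathrm{c}}=0$. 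The one step that is not formal is the faithfulness observation at the start of (1) --- that a morphism out of the \'etale locally constant sheaf $M$ into $T_{\log}/T$ is detected on the small \'etale site of $\Spec\kbf$ --- and that is where I would concentrate the care; everything else drops out of $\mathbf{LP}\circ\Mbf_{-}=\mathrm{id}$.
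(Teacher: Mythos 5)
Your proposal is correct and follows essentially the same route as the paper: part (1) is reduced, via étale-local constancy of $M$ and $N$, to checking that the composite with $T_{\log}\to T_{\log}/T$ vanishes on the small étale site, where it follows from $b\circ\alpha=\id$ exactly as in the paper's computation, and parts (2) and (3) are deduced formally from $\mathbf{LP}\circ\Mbf_{-}=\id$ (Proposition \ref{the functor LP is left inverse to the functor M_-}). No substantive difference from the paper's argument.
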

	\begin{proof}
		Part (1) follows from the claim that the composition
		\[M\xrightarrow{u^{\mathrm{c}}} T_{\log}\xrightarrow{a} T_{\log}/T=N^\vee\otimes (\Gml/\Gm)_{S_{\et}}\]
		is the zero map. We are reduced to show the claim. Since both $M$ and $N$ are \'etale locally constant, it suffices to check the vanishing of the composition
		\[M\xrightarrow{u^{\mathrm{c}}} N^\vee\otimes M_S^{\gp}\xrightarrow{\id_{N^\vee}\otimes b} N^\vee\otimes M_S^{\gp}/\Ocal_S^\times=N^\vee\otimes P^{\gp}_S\]
		on the small \'etale site of $\Spec\kbf$, where $P^{\gp}_S$ denotes the constant sheaf associated to $P^{\gp}$. By the constructions (\ref{construction of M_}) and (\ref{construction of LP}), we have that $(\id_{N^\vee}\otimes b)\circ u^{\log}$ is just the composition
		\begin{align*}
		M\xrightarrow{u} N^\vee\otimes M_S^{\gp}\xrightarrow{\id_{N^\vee}\otimes b} N^\vee\otimes P^{\gp}_S\xrightarrow{\id_{N^\vee}\otimes \alpha} N^\vee\otimes M_S^{\gp}\xrightarrow{\id_{N^\vee}\otimes b}N^\vee\otimes P^{\gp}_S.
		\end{align*}
		Since $(\id_{N^\vee}\otimes b)\circ (\id_{N^\vee}\otimes \alpha)=\id$, we get $(\id_{N^\vee}\otimes b)\circ u^{\log}=(\id_{N^\vee}\otimes b)\circ u$ and thus $(\id_{N^\vee}\otimes b)\circ u^{\mathrm{c}}=0$.
		
		Part (2) follows from $\mathbf{LP}\circ\Mbf_{-}=\id_{\mathrm{Mor}_{\Gamma}^{P}}$.
		
		Assume $u^{\mathrm{c}}=0$, then $u=u^{\log}$, and thus $\Mbf=[M\xrightarrow{u}T_{\log}]=[M\xrightarrow{u^{\log}}T_{\log}]=\Mbf_{-}\circ\mathbf{LP}(\Mbf)$ lies in the image of $\Mbf_{-}$. Conversely, assume $\Mbf=\Mbf_{-}((M,N,\Phi))$ for some $(M,N,\Phi)\in\mathrm{Mor}_{\Gamma}^P$. By definition $u^{\log}$ is the map for the log 1-motive 
		\begin{align*}
		\Mbf_{-}\circ\mathbf{LP}(\Mbf)=\Mbf_{-}\circ\mathbf{LP}(\Mbf_{-}((M,N,\Phi)))=&\Mbf_{-}\circ(\mathbf{LP}\circ \Mbf_{-})((M,N,\Phi)) \\
		=&\Mbf_{-}((M,N,\Phi))=\Mbf,
		\end{align*}
		i.e. $u^{\log}=u$, and thus $u^{\mathrm{c}}=0$.
	\end{proof}
	
	\begin{definition}
		Let $\Mbf=[M\xrightarrow{u}T_{\log}]$ be an object of $\Mcal^{\log,\mathrm{ab}=0}_1$. We call $[M\xrightarrow{u^{\log}}T_{\log}]$ (resp. $[M\xrightarrow{u^{\mathrm{c}}}T_{\log}]$) the \emph{log (resp. classical) part} of $\Mbf$ with respect to the given chart $P_S\to M_S$. We also call $[M\xrightarrow{u^{\mathrm{c}}}T]$ the \emph{classical part} of $\Mbf$ by abuse of terminology.
	\end{definition}

	We denote by $\MorGammaP\otimes\Q$ the isogeny category associated to $\MorGammaP$, i.e. $\MorGammaP\otimes\Q$ has the same objects as $\MorGammaP$ and 
	\[\Hom_{\MorGammaP\otimes\Q}((M,N,\Phi),(M',N',\Phi'))=\Hom_{\MorGammaP}((M,N,\Phi),(M',N',\Phi'))\otimes\Q.\]
	Similarly we denote by $\Mcal^{\log,\mathrm{ab}=0}_1\otimes\Q$ (resp. $\mathrm{LAV}^{\mathrm{ab}=0}\otimes\Q$) the isogeny category associated to $\Mcal^{\log,\mathrm{ab}=0}_1$ (resp. $\mathrm{LAV}^{\mathrm{ab}=0}$).
	
	\begin{lemma}
        Let $\mathcal{C}$ be either $\mathrm{Mor}_{\Gamma}^{P}$, or 
        $\Mcal^{\log,\mathrm{ab}=0}_1$, or $\mathrm{LAV}^{\mathrm{ab}=0}$. 
        \begin{enumerate}
            \item For any isogeny $\psi:\mathfrak{a}\to\mathfrak{b}$ in $\mathcal{C}$, there exist a map $\rho:\mathfrak{b}\to\mathfrak{a}$ in $\mathcal{C}$ and a positive integer $n$ such that $\rho\circ\psi=n_{\mathfrak{a}}$ and $\psi\circ\rho=n_{\mathfrak{b}}$, where $n_{(-)}$ denotes the multiplication-by-$n$ map on $(-)$.
            \item A map in $\mathcal{C}\otimes\Q$ is an isomorphism if and only if it is of the form $\psi\otimes a$ with $\psi$ an isogeny in $\mathcal{C}$ and $a\in\Q^\times$.
        \end{enumerate}
	\end{lemma}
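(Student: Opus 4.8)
The plan for part (1) is: from the given isogeny, build a componentwise ``quasi-inverse'' using Lemma~\ref{isogeny between free Z-modules}, then correct it so that it becomes an honest morphism of $\mathcal C$. First I would handle $\mathcal C=\MorGammaP$. Write the isogeny as $\psi=(\psi_1,\psi_2)\colon(M,N,\Phi)\to(M',N',\Phi')$, so $\psi_1\colon M\to M'$ and $\psi_2\colon N'\to N$ are injective with finite cokernel in $\Mod_\Gamma$. Applying Lemma~\ref{isogeny between free Z-modules} to $\psi_1$ and to $\psi_2$ (with a common integer $n$) produces $\rho_1\colon M'\to M$ and $\rho_2\colon N\to N'$ in $\Mod_\Gamma$ with $\psi_1\rho_1=n_{M'}$, $\rho_1\psi_1=n_M$, $\psi_2\rho_2=n_N$, $\rho_2\psi_2=n_{N'}$. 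The one thing to check is that $(\rho_1,\rho_2)$ is a morphism in $\MorGammaP$; precomposing both sides of the required commuting square with the injection $\psi_1$ and using the square for $\psi$ turns this into an equality of maps into the \emph{torsion-free} group $N^\vee\otimes P^\gp$, so it holds after inverting a single integer and therefore on the nose. Then $\rho:=(\rho_1,\rho_2)$ satisfies $\rho\circ\psi=n_{(M,N,\Phi)}$ and $\psi\circ\rho=n_{(M',N',\Phi')}$.

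Next I would do $\mathcal C=\Mcal_1^{\log,\mathrm{ab}=0}$, and this is where the only genuine obstacle appears. Write the isogeny as $(f_{-1},f_0)\colon[M\xrightarrow{u}T_{\log}]\to[M'\xrightarrow{u'}T'_{\log}]$, let $N,N'$ be the character groups of $T,T'$, and let $a\colon N'\to N$ be the map corresponding to $f_0$ under $\Hom_S(T_{\log},T'_{\log})=\Hom_{\Mod_\Gamma}(N',N)$; because $(f_{-1},f_0)$ is an isogeny, both $f_{-1}$ and $a$ are injective with finite cokernel. Apply Lemma~\ref{isogeny between free Z-modules} to $f_{-1}$ and $a$ to get $b\colon M'\to M$, $c\colon N\to N'$ and $n>0$ as above, and let $\rho_0\colon T'_{\log}\to T_{\log}$ correspond to $c$; from $a\circ c=n_N$, $c\circ a=n_{N'}$ and the anti-equivalence between tori and character lattices, $\rho_0\circ f_0=n_{T_{\log}}$ and $f_0\circ\rho_0=n_{T'_{\log}}$. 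The subtlety is that the naive pair $(b,\rho_0)$ need \emph{not} commute with $u$, $u'$: the discrepancy $\delta:=\rho_0\circ u'-u\circ b\colon M'\to T_{\log}$, which records the ``classical part'', may be nonzero (it lands in the torsion of $T_{\log}$). But $\delta\circ f_{-1}=\rho_0\circ f_0\circ u-u\circ n_M=n_{T_{\log}}\circ u-u\circ n_M=0$, so $n\delta=\delta\circ n_{M'}=\delta\circ f_{-1}\circ b=0$; hence $\rho:=(nb,\,n\rho_0)$ \emph{is} a morphism of complexes, and $\rho\circ(f_{-1},f_0)=(n^2)_{\Mbf}$, $(f_{-1},f_0)\circ\rho=(n^2)_{\Mbf'}$. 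For $\mathcal C=\mathrm{LAV}^{\mathrm{ab}=0}$ I would then transport everything along the equivalence $\Mcal_1^{\log,\mathrm{ab}=0,\mathrm{pPol}}\xrightarrow{\sim}\mathrm{LAV}^{\mathrm{ab}=0}$ of Theorem~\ref{equivalence from log 1-motives to LAVwCD}: an isogeny of log abelian varieties corresponds to an isogeny of log $1$-motives without abelian part (Proposition~\ref{Hom of LAVs is isogeny iff the corresponding hom of log 1-motives is isogeny}), the $\rho$ built above automatically lies in the full subcategory $\Mcal_1^{\log,\mathrm{ab}=0,\mathrm{pPol}}$, and the equivalence (being additive) carries $\rho\circ\psi=n_{\Mbf}$, $\psi\circ\rho=n_{\Mbf'}$ to the analogous identities downstairs.

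For part (2), ``$\Leftarrow$'' is immediate from (1): take $\rho,n$ with $\rho\psi=n_{\mathfrak a}$, $\psi\rho=n_{\mathfrak b}$; since $n_{\mathfrak a},n_{\mathfrak b}$ become invertible in $\mathcal C\otimes\Q$, the morphism $\psi\otimes1$ is an isomorphism there with inverse $\tfrac1n(\rho\otimes1)$, hence so is $\psi\otimes a$ for any $a\in\Q^\times$. For ``$\Rightarrow$'', every morphism of $\mathcal C\otimes\Q$ has the form $\tfrac1m(\psi\otimes1)$ for some morphism $\psi$ of $\mathcal C$ and $m\in\Z_{>0}$; if it is an isomorphism then so is $\psi\otimes1$, and since the $\Hom$-groups of $\mathcal C$ are torsion-free one extracts a morphism $\chi$ of $\mathcal C$ and $k>0$ with $\chi\circ\psi=k_{\mathfrak a}$, $\psi\circ\chi=k_{\mathfrak b}$. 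It remains to see that such a $\psi$ is an isogeny in each of the three categories. The ``lattice'' components are forced to be injective with finite cokernel in $\Mod_\Gamma$ by the two identities $\chi\psi=k$, $\psi\chi=k$; for the torus part $f_{\mathrm{c}}\colon T\to T'$ of a log $1$-motive, $g_{\mathrm{c}}\circ f_{\mathrm{c}}=k_T$ gives $\ker(f_{\mathrm{c}})\subseteq T[k]$, a finite flat group scheme, so $\ker(f_{\mathrm{c}})$ is finite flat, while $f_{\mathrm{c}}\circ g_{\mathrm{c}}=k_{T'}$ (an epimorphism of fppf sheaves) forces $f_{\mathrm{c}}$ surjective; thus $(f_{-1},f_0)$ is an isogeny of log $1$-motives, and the $\mathrm{LAV}^{\mathrm{ab}=0}$ case follows via Proposition~\ref{Hom of LAVs is isogeny iff the corresponding hom of log 1-motives is isogeny} and the equivalence of Theorem~\ref{equivalence from log 1-motives to LAVwCD}.

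The step I expect to be the main obstacle is precisely the classical-part discrepancy $\delta$ in the log $1$-motive case of part (1): the componentwise quasi-inverse one writes down does not respect $u$, $u'$, and the remedy — scaling it by $n$, which kills $\delta$ because $\delta$ is $n$-torsion — is the point where a little care is needed; everything else reduces to elementary $\Z$-linear algebra and standard properties of isogenies of tori.
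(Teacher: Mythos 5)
Your proposal is correct and follows essentially the same route as the paper: componentwise quasi-inverses from Lemma~\ref{isogeny between free Z-modules} in the $\MorGammaP$ and lattice/torus components, then transport to $\mathrm{LAV}^{\mathrm{ab}=0}$ via Theorem~\ref{equivalence from log 1-motives to LAVwCD} and Proposition~\ref{Hom of LAVs is isogeny iff the corresponding hom of log 1-motives is isogeny}, with part (2) deduced formally from part (1). The one place you go beyond the paper is the discrepancy $\delta=\rho_0\circ u'-u\circ b$ in the $\Mcal^{\log,\mathrm{ab}=0}_1$ case: the paper simply asserts that the naive pair $(g_{-1},g_0)$ works, whereas in fact it need not be a morphism of complexes when $n$ is only taken to kill $\coker(f_{-1})$ and $\ker(f_0^{\mathrm{c}})$ (already for $M=M'=\Z$, $T=T'=\Gm$ over the standard log point with $u(1)=\pi$, $u'(1)=-\pi$ and $(f_{-1},f_0)=(2,\text{squaring})$, the pair with $n=2$ fails to commute), so your observation that $\delta$ is killed by $n$ and the resulting rescaling to obtain $(nb,n\rho_0)$ with $\rho\circ\psi=(n^2)_{\Mbf}$ is exactly the needed correction and is a genuine refinement of the paper's argument.
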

	\begin{proof}
        Clearly part (2) follows from part (1). Below we show part (1).
		
		First we consider the case of $\mathrm{Mor}_{\Gamma}^{P}$. The isogeny $\psi$ amounts to a pair $(M\xrightarrow{\psi_1}M',N'\xrightarrow{\psi_2}N)$  in $\mathrm{Mod}_{\Gamma}$ such that the diagram 
		\[\xymatrix{
			M\ar[r]^-{\Phi}\ar[d]_{\psi_1} &N^\vee\otimes P^{\gp}\ar[d]^{\psi_2^\vee\otimes\id_{P^{\gp}}} \\
			M'\ar[r]^-{\Phi'} &N^{'\vee}\otimes P^{\gp}    
		}\]
		is commutative, and $\psi_1$ and $\psi_2$ are injective with finite cokernel. By Lemma \ref{isogeny between free Z-modules}, there exists a positive integer $n$,  $\rho_1:M'\to M$ and $\rho_2:N\to N'$  such that $\rho_1\circ\psi_1=n_M$, $\psi_1\circ\rho_1=n_{M'}$, $\psi_2\circ\rho_2=n_N$, and $\rho_2\circ\psi_2=n_{N'}$. Then the pair $(\rho_1,\rho_2)$ is the map with the required property. 
		
		Now we consider the case of $\Mcal^{\log,\mathrm{ab}=0}_1$. Given an isogeny 
		\[\psi=(f_{-1},f_0):[M\xrightarrow{u}T_{\log}]\to [M'\xrightarrow{u'}T'_{\log}],\]
		we have that $f_{-1}$ is injective with finite cokernel and the map $f_0^{\mathrm{c}}:T\to T'$ induced by $f_0$ is an isogeny of tori. Let $n$ be a positive integer such that n kills $\mathrm{coker}(f_{-1})$ and $\mathrm{ker}(f_0^{\mathrm{c}})$. Then there exists a map $g_{-1}:M'\to M$ and a map $g_0^{\mathrm{c}}:T'\to T$ such that $g_{-1}\circ f_{-1}=n_M$, $g_0^{\mathrm{c}}\circ f_0^{\mathrm{c}}=n_T$, $f_{-1}\circ g_{-1}=n_{M'}$, and $f_0^{\mathrm{c}}\circ g_0^{\mathrm{c}}=n_{T'}$. Let $g_0:T'_{\log}\to T_{\log}$ be the map induced by $g_0^{\mathrm{c}}$. Then the pair $(g_{-1},g_0)$ is the map with required property.
		
		At last, the case of $\mathrm{LAV}^{\mathrm{ab}=0}$ follows from the case of $\Mcal^{\log,\mathrm{ab}=0}_1$ with the help of Theorem \ref{equivalence from log 1-motives to LAVwCD} and Proposition \ref{Hom of LAVs is isogeny iff the corresponding hom of log 1-motives is isogeny}.
	\end{proof} 

        \begin{proposition}\label{log 1-motive with ab=0 is simple iff its LP is simple}
            Let $\Mbf=[M\xrightarrow{u}T_{\log}]$ be an object of $\Mcal^{\log,\mathrm{ab}=0,\mathrm{pPol}}_1$ and let $(M,N,\Phi):=\LP(\Mbf)$ which belongs to $\mathrm{Mor}_{\Gamma}^{P,\mathrm{pPol}}$ by Proposition \ref{properties of the functor LP} (4). 
            \begin{enumerate}
                \item Assume that $(M,N,\Phi)$ is simple in $\mathrm{Mor}_{\Gamma}^{P,\mathrm{pPol}}$. Then $\Mbf$ is simple in $\Mcal^{\log,\mathrm{ab}=0,\mathrm{pPol}}_1$.
                \item Assume that $\Mbf$ is simple in $\Mcal^{\log,\mathrm{ab}=0,\mathrm{pPol}}_1$ and $\kbf$ is a finite field. Then  $(M,N,\Phi)$ is simple in $\mathrm{Mor}_{\Gamma}^{P,\mathrm{pPol}}$. 
            \end{enumerate} 
        \end{proposition}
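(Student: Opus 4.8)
The plan is to transport simplicity through the functors $\Mbf_{-}$ and $\LP$ of \S\ref{Lattice pairings and log 1-motives without abelian part}; the only real friction is that neither functor carries the ``componentwise injective'' test-morphisms of Definition \ref{def simpleness of LP} (and of the definition of a simple log $1$-motive) to test-morphisms of the same shape, so the test-morphisms must be adjusted by hand, and this adjustment is where the hypothesis ``$\kbf$ finite'' enters in (2). For (1), I would take a non-zero $f=(f_{-1},f_0)\colon\Mbf_1=[Y_1\to G_{1,\log}]\to\Mbf=[M\xrightarrow{u}T_{\log}]$ in $\Mcal_1^{\log,\mathrm{pPol}}$ (the general test class) with $f_{-1},f_0$ injective, and first observe that the abelian part $B_1$ of $G_1$ must vanish: the restriction $G_1\hookrightarrow T_{\log}$ composes to zero into $T_{\log}/T=N^\vee\otimes(\Gml/\Gm)_{S_{\et}}$ because $G_1$ is connected while the target is \'etale-locally constant, so $G_1$ is a subtorus of $T$ and $B_1=0$; hence $\Mbf_1\in\Mcal_1^{\log,\mathrm{ab}=0,\mathrm{pPol}}$. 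Applying $\LP$ gives $\LP(f)=(\psi_1,\psi_2)\colon\LP(\Mbf_1)\to(M,N,\Phi)$ in $\MorGammaPpPol$ (Proposition \ref{properties of the functor LP}(4)), with $\psi_1=f_{-1}$ injective and $\psi_2\colon N\to N_1$ the pullback-of-characters map attached to $f_0$; since $f_0$ injective forces its torus part $f_0^{\mathrm c}\colon T_1\to T$ to be a closed immersion onto a subtorus, $\psi_2$ is surjective, so $\psi_2^\vee$ is injective. Then simplicity of $(M,N,\Phi)$ makes $\LP(f)$ an isogeny, and $f$ is an isogeny by Proposition \ref{properties of the functor LP}(3); so $\Mbf$ is simple. (This half uses no finiteness.)

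For (2), with $\kbf=\F_q$ finite, I would first replace $\Mbf$ by its log part $\Mbf_\Phi:=\Mbf_{-}(\LP(\Mbf))=[M\xrightarrow{u^{\log}}T_{\log}]$, writing $u=u^{\log}+u^{\mathrm c}$ with $u^{\mathrm c}\colon M\to T$. Over a finite field $T(\overline{\kbf})$ is torsion, so $u^{\mathrm c}$ lies in the torsion group $\Hom_\kbf(M,T)$; if $n$ kills it, then $(\id_M,n_{T_{\log}})$ is an isogeny from each of $\Mbf$ and $\Mbf_\Phi$ onto $[M\xrightarrow{nu^{\log}}T_{\log}]$, so $\Mbf\sim\Mbf_\Phi$ and $\Mbf_\Phi$ is simple by Proposition \ref{simpleness of ppol log 1-motives is preserved by isogeny}. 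I am thus reduced to deducing simplicity of $(M,N,\Phi)=\LP(\Mbf)$ from that of $\Mbf_\Phi=\Mbf_{-}((M,N,\Phi))$. Next I would record the rank criterion: a morphism $\psi=(\psi_1,\psi_2)\colon(M',N',\Phi')\to(M,N,\Phi)$ in $\MorGammaPpPol$ with $\psi_1$ and $\psi_2^\vee$ injective is automatically an isogeny as soon as $\mathrm{rk}\,M'=\mathrm{rk}\,M$ --- pointwise polarizability gives $\mathrm{rk}\,M'=\mathrm{rk}\,N'$ and $\mathrm{rk}\,M=\mathrm{rk}\,N$, while $\psi_2^\vee$ injective makes $\coker\psi_2$ finite, so when the ranks agree $\psi_1$ has finite cokernel and $\psi_2$ is injective --- so simplicity of $(M,N,\Phi)$ amounts to the statement that every such $\psi$ has $\mathrm{rk}\,M'=\mathrm{rk}\,M$.

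To prove the latter I would argue by contradiction, taking such a $\psi$ with $\mathrm{rk}\,M'<\mathrm{rk}\,M$ (so $M'\neq0$) and applying $\Mbf_{-}$ to get $(\psi_1,f_0)\colon\Mbf_{\Phi'}\to\Mbf_\Phi$ with $\psi_1$ injective. The point is that $f_0$ need not be injective: its torus part $f_0^{\mathrm c}=\psi_2^\vee\otimes\id_{\Gm}$ has a finite flat kernel $K$ coming from the torsion of $\coker\psi_2^\vee$, and in fact $\ker f_0=K$ because $\psi_2^\vee\otimes\id_{(\Gml/\Gm)_{S_{\et}}}$ is injective ($(\Gml/\Gm)_{S_{\et}}$ being torsion-free on the small \'etale site). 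Pointwise polarizability of $(M',N',\Phi')$ gives $\Phi'(m')\neq0$ for $m'\neq0$, hence $u_{\Phi'}(M')\cap T'=0$ and a fortiori $u_{\Phi'}(M')\cap K=0$, so $M'\hookrightarrow T'_{\log}\twoheadrightarrow(T'/K)_{\log}$ stays injective and defines $\Mbf''':=[M'\to(T'/K)_{\log}]$, isogenous to $\Mbf_{\Phi'}$ via $(\id_{M'},T'_{\log}\twoheadrightarrow(T'/K)_{\log})$. Moreover $\Mbf'''$ is again pure log (its structure map is $u_{\Phi'}$ followed by a quotient map respecting the section $P^{\gp}_S\to\Gml$ of $\Gml\to(\Gml/\Gm)_{S_{\et}}$), so $\Mbf'''=\Mbf_{-}(\LP(\Mbf'''))$ and, since $\LP(\Mbf''')\sim\LP(\Mbf_{\Phi'})=(M',N',\Phi')$ is pointwise polarizable by Proposition \ref{simpleness of LP is preserved by isogenies}(2), $\Mbf'''\in\Mcal_1^{\log,\mathrm{ab}=0,\mathrm{pPol}}$ by Proposition \ref{properties of the functor M_-}(4). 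Finally $f_0$ factors through an injective $\bar f_0\colon(T'/K)_{\log}\to T_{\log}$, so $(\psi_1,\bar f_0)\colon\Mbf'''\to\Mbf_\Phi$ is a non-zero componentwise-injective morphism; simplicity of $\Mbf_\Phi$ makes it an isogeny, so $\psi_1$ has finite cokernel and $\mathrm{rk}\,M'=\mathrm{rk}\,M$, a contradiction. Hence $(M,N,\Phi)$ is simple.

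The hard part is precisely this torus-kernel phenomenon in (2): $\Mbf_{-}$ does not take sub-objects of lattice pairings to sub-objects of log $1$-motives, because tensoring an injection of lattices with $\Gm$ can acquire a finite kernel; quotienting by that kernel $K$ is the remedy, and it is harmless exactly because pointwise polarizability keeps the image of $M'$ disjoint from the torus. The secondary, lighter point is the use of ``$\kbf$ finite'' to pass from $\Mbf$ to its pure-log part $\Mbf_\Phi$ via the fact that the classical part of $\Mbf$ is then torsion.
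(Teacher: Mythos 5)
Your proof is correct. In part (1) it follows the paper's route — apply $\LP$ to a componentwise injective test morphism, note that injectivity of $f_0$ makes the character map surjective, and transfer back via Proposition \ref{properties of the functor LP} (3) — with the useful extra step, left implicit in the paper, that injectivity of $f_0$ forces the abelian part of the test object to vanish. In part (2) both you and the paper use finiteness of $\kbf$ to pass to the log part up to isogeny, but you repair the key defect (that $\Mbf_{-}$ need not send a componentwise injective morphism of lattice pairings to a componentwise injective morphism of log $1$-motives) on the opposite side of the functor. The paper adjusts the source pairing before applying $\Mbf_{-}$: it proves $\coker(\psi_2)$ is finite by a Hom/Ext sequence (your rank criterion is an equivalent repackaging), replaces $N'$ by $\overline{N}'=\mathrm{im}(\psi_2)$ so that the dual map has torsion-free cokernel, and then $\Mbf_{-}(\psi_1,\overline{\psi}_2)$ is componentwise injective and Proposition \ref{properties of the functor M_-} (3) applies. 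You instead apply $\Mbf_{-}$ to the unmodified $\psi$, identify the finite flat kernel $K$ of $\psi_2^\vee\otimes\id_{\Gm}$ (and of $\psi_2^\vee\otimes\id_{\Gml}$, via torsion-freeness of $(\Gml/\Gm)_{S_{\et}}$), and quotient the source torus by $K$, using pointwise polarizability of $(M',N',\Phi')$ to keep the lattice disjoint from $K$ — the same quotient maneuver the paper performs in the proof of Proposition \ref{simpleness of ppol log 1-motives is preserved by isogeny}. The paper's fix buys a purely lattice-theoretic argument with no sheaf-level input; yours stays on the log $1$-motive side and is more geometric, at the cost of the auxiliary checks you correctly supply (surjectivity of $T'_{\log}\to(T'/K)_{\log}$, and, via Proposition \ref{decomposition of log 1-motives}, that the quotient motive is again in the image of $\Mbf_{-}$ and pointwise polarizable). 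Both arguments are complete and yield the same conclusion.
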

        \begin{proof}
            Assume that $(M,N,\Phi)$ is simple. We show that $\Mbf$ is simple. It suffices to show that any morphism $(f_{-1},f_0):\Mbf'=[M'\xrightarrow{u'}T'_{\log}]\to [M\xrightarrow{u}T_{\log}]=\Mbf$ with both $f_{-1}$ and $f_0$ injective has to be an isogeny. Let $f_{\mathrm{c}}:T\to T'$ be the morphism induced by $f_0$, and let $f_{\mathrm{l}}:N'\to N$ be the corresponding morphism of character groups. The injectivity of $f_0$ implies that of $f_{\mathrm{c}}$, and thus $f_{\mathrm{l}}$ is surjective. Further one sees that $f_{\mathrm{l}}^\vee$ is injective. Since $(M,N,\Phi)$ is simple, the  morphism $\LP((f_{-1},f_0))=(f_{-1},f_{\mathrm{l}})$ has to be an isogeny by definition (see Definition \ref{def simpleness of LP}). Then $(f_{-1},f_0)$ is also an isogeny by Proposition \ref{properties of the functor LP} (3).
         
            Assume that $\Mbf$ is simple and $\kbf$ is finite. We show that $\LP(\Mbf)$ is simple. Let $[M\xrightarrow{u^{\log}}T_{\log}]:=\Mbf_{-}\circ\LP(\Mbf)$ and $u^{\mathrm{c}}:M\to T$ be as in Proposition \ref{decomposition of log 1-motives}. Since $\kbf$ is finite, there exists a positive integer $n$ such that $nu^{\mathrm{c}}=0$. Then $(1_M,n_{T_{\log}}):\Mbf\to [M\xrightarrow{nu}T_{\log}]$ is an isogeny and 
            \[\Mbf_{-}((M,N,n\Phi))=\Mbf_{-}\circ\LP([M\xrightarrow{nu}T_{\log}])=[M\xrightarrow{nu}T_{\log}].\]
            By Proposition \ref{simpleness of ppol log 1-motives is preserved by isogeny}, $[M\xrightarrow{nu}T_{\log}]$ is simple. By Proposition \ref{properties of the functor LP} (3), $\LP(1_M,n_{T_{\log}})$ is an isogeny, and thus we are reduced to show that $\LP([M\xrightarrow{nu}T_{\log}])=(M,N,n\Phi)$ is simple by Proposition \ref{simpleness of LP is preserved by isogenies} (1). 
            Let 
            \[(\psi_1,\psi_2):(M',N',\Phi')\to (M,N,n\Phi)\]
            be a morphism in $\MorGammaP$ with both $\psi_1$ and $\psi_2^\vee$ injective. We claim that $\coker(\psi_2)$ is finite. For this, we may assume that both $N$ and $N'$ have trivial $\Gamma$-action. Then the result follows from the exact sequence
            \[0\to\Hom_{\Z}(\coker(\psi_2^\vee),\Z)\to N\xrightarrow{\psi_2} N'\to\Ext^1_{\Z}(\coker(\psi_2^\vee),\Z).\]
            Let $\overline{N}':=\mathrm{im}(\psi_2)$ and thus $\psi_2$ factors as $N\xrightarrow{\overline{\psi}_2}\overline{N}'\xrightarrow{\iota} N'$. Then the two factors of $\Mbf_{-}(\psi_1,\overline{\psi}_2)$ are both injective. By the simpleness of $\Mbf_{-}((M,N,n\Phi))=[M\xrightarrow{nu}T_{\log}]$, $\Mbf_{-}(\psi_1,\overline{\psi}_2)$ has to be an isogeny, and thus $(\psi_1,\overline{\psi}_2)$ is an isogeny by Proposition \ref{properties of the functor M_-} (3). It follows that $(\psi_1,\psi_2)$ is also an isogeny. Therefore $\LP(\Mbf)$ is simple.
        \end{proof}

	\section{Log abelian varieties over finite log points}\label{log av over finite log points}

    We assume in the rest of this article that $\kbf=\mathbb{F}_q$ is finite, so as in (\ref{description of Galois gp of finite field}), we have
	\[	 
        \Gamma=\Gal(\overline{\kbf}/\kbf)
        =\overline{\langle\gamma\rangle}
        \text{ with } \gamma:\overline{\kbf}\to\overline{\kbf},x\mapsto x^q.
	\]
	
	\subsection{Decomposition of log abelian varieties up to isogeny over finite log points}\label{Decomposition of log abelian varieties up to isogeny over finite log points}
        Assume that $S=(\Spec\kbf,M_S)$ be an fs log point.
	
	\begin{lemma}\label{torsion phenomenon over finite fields}
		Let $Y$ be a group scheme over $\kbf$ which is \'etale locally 
		isomorphic to a free abelian group of finite rank. Let $G$ be an 
		extension of an abelian variety $B$ by a torus $T$ over $\kbf$.
		\begin{enumerate}
			\item The groups $\Hom_{\kbf}(Y,G)$ and $\Ext_{\kbf}^1(B,T)$ are
			finite.
			\item Let $\Mbf:=[Y\xrightarrow{u}G_{\log}]$ be a log 1-motive
			over $S$. Then there exists a positive integer $n$ s.t. 
			$Y\xrightarrow{nu}G_{\log}$ factors through 
			$T_{\log}\hookrightarrow G_{\log}$.
			\item There exists a positive integer $m$ such that the
			pullback of the extension $G$ of $B$ by $T$ along 
			$B\xrightarrow{m}B$ splits.
		\end{enumerate}
	\end{lemma}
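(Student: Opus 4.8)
The plan is to establish part (1) first, since it is the only substantive input, and then obtain (2) and (3) as formal consequences. Everything reduces to the elementary fact that a finite-type group scheme over a finite field has only finitely many rational points; the sole point requiring a little care is the finiteness of $\Ext^1_{\kbf}(B,T)$.

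For $\Hom_{\kbf}(Y,G)$ I would argue directly: choose a finite extension $\kbf'/\kbf$ over which $Y$ becomes the constant group scheme $\Z^r$, so the $\Gamma$-action on $Y(\overline{\kbf})\cong\Z^r$ is trivial on $\Gal(\overline{\kbf}/\kbf')$. A homomorphism $Y\to G$ over $\kbf$ is the same as a $\Gamma$-equivariant homomorphism $\phi\colon\Z^r\to G(\overline{\kbf})$; since $\Z^r$ is fixed by $\Gal(\overline{\kbf}/\kbf')$, the image of $\phi$ lies in $G(\overline{\kbf})^{\Gal(\overline{\kbf}/\kbf')}=G(\kbf')$, which is finite, so $\Hom_{\kbf}(Y,G)$ embeds into $G(\kbf')^r$. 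For $\Ext^1_{\kbf}(B,T)$ I would pass through the character lattice $X=\cHom_S(T,\Gm)$, which is étale locally $\Z^n$. Checking étale locally (where $T\cong\Gm^n$), one has $\cHom_{\kbf}(B,T)=0$ and $\cExt^1_{\kbf}(B,T)\cong\cHom_{\kbf}(X,\cExt^1_{\kbf}(B,\Gm))=\cHom_{\kbf}(X,B^*)$ with $B^*$ the dual abelian variety; since $\cExt^0_{\kbf}(B,T)=0$, the local-to-global spectral sequence for $\cExt^\bullet_{\kbf}(B,T)$ collapses in low degrees to give $\Ext^1_{\kbf}(B,T)\cong\Hom_{\kbf}(X,B^*)$, which is finite by the $\Hom$ argument just given (with $B^*$ in place of $G$). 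An alternative route: pass to a finite extension $\kbf'$ splitting $T$; as $\Hom_{\kbf'}(B_{\kbf'},T_{\kbf'})=0$, the low-degree terms of the Hochschild--Serre spectral sequence for $\Spec\kbf'\to\Spec\kbf$ embed $\Ext^1_{\kbf}(B,T)$ into $\Ext^1_{\kbf'}(B_{\kbf'},\Gm)^d=B^*(\kbf')^d$, which is finite.

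Granting (1), parts (2) and (3) are short. Since $G_{\log}$ is the pushout of $0\to T\to G\to B\to 0$ along the monomorphism $T\hookrightarrow T_{\log}$, one gets a short exact sequence $0\to T_{\log}\to G_{\log}\to B\to 0$. For (2), the composite $v\colon Y\xrightarrow{u}G_{\log}\to B$ lies in $\Hom_{\kbf}(Y,B)$, which is finite by part (1) applied to the extension $0\to 0\to B\to B\to 0$; hence $nv=0$ for some $n>0$, and then $nu$ factors through $\ker(G_{\log}\to B)=T_{\log}$. For (3), let $e\in\Ext^1_{\kbf}(B,T)$ be the class of $G$; pullback along $m_B=m\cdot\mathrm{id}_B$ multiplies $e$ by $m$ because $\Ext^1_{\kbf}(-,T)$ is additive, so taking $m$ to be the exponent of the finite group $\Ext^1_{\kbf}(B,T)$ kills $e$ and the pulled-back extension splits. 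The only real obstacle in the whole argument is the $\Ext^1$ finiteness in (1); the identification $\Ext^1_{\kbf}(B,T)\cong\Hom_{\kbf}(X,B^*)$ (or the descent alternative) and the exactness of $0\to T_{\log}\to G_{\log}\to B\to 0$ are standard and should be verified carefully but present no difficulty.
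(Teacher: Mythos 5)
Your proof is correct and takes essentially the same approach as the paper: finiteness in (1) by embedding $\Hom_{\kbf}(Y,G)$ into $G(\kbf')^r$ for a splitting extension $\kbf'$, and by using $\cHom_{\kbf}(B,T)=0$ to reduce $\Ext^1_{\kbf}(B,T)$ to points of $B^*$ over a finite field (your Hochschild--Serre alternative is the paper's argument almost verbatim, and your primary identification $\Ext^1_{\kbf}(B,T)\cong\Hom_{\kbf}(X,B^*)$ is an equivalent standard variant). Parts (2) and (3) are then obtained by exactly the same formal deductions as in the paper, killing $v\in\Hom_{\kbf}(Y,B)$ and the extension class by suitable positive integers.
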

	\begin{proof}
		(1) Let $\kbf'$ be a finite field extension of $\kbf$ such that $Y\times_{\Spec\kbf}\Spec\kbf'\cong\Z^r$ and $T\times_{\Spec\kbf}\Spec\kbf'\cong\Gm^s$. 
        
        We have $\Hom_{\kbf}(Y,G)\hookrightarrow \Hom_{\kbf'}(Y,G)=G(\kbf')^r$. Since $\kbf'$ is a finite field, $G(\kbf')$ is finite and thus $\Hom_{\kbf}(Y,G)$ is finite.
        
        Since $\cHom_{\kbf}(B,T)=0$, we have $\Ext^1_{\kbf}(B,T)\cong H^0(\Spec\kbf,\cExt^1_{\kbf}(B,T))$. We also have 
        \begin{align*}
            H^0(\Spec\kbf,\cExt^1_{\kbf}(B,T))\hookrightarrow &H^0(\Spec\kbf',\cExt^1_{\kbf}(B,T)) \\
            =&H^0(\Spec\kbf',\cExt^1_{\kbf'}(B\times_{\Spec\kbf}\Spec\kbf',\Gm^s))
            \\=&B^*(\kbf')^s,
        \end{align*}
        where $B^*$ denotes the dual abelian variety of $B$.
        Since $B^*(\kbf')$ is finite, we get the finiteness of $\Ext^1_{\kbf}(B,T)$.
		
		(2) Let $v$ be the composition $Y\xrightarrow{u}G_{\log}\to B$. By
		(1), there exists a positive integer $n$ such that $nv=0$. Then
		the result follows.
		
		(3) The result follows from (1).	     
	\end{proof}

    \begin{theorem}\label{Honda-Tate theorem for 1-motives}
        \begin{enumerate}
            \item The simple objects in $\Mcal_1$ are of the form $Y[1]$, $T$ and $B$, where $Y$ is a simple lattice (i.e. a simple object in $\Mod_{\Gamma}$), $T$ is a torus with its character group a simple lattice and $B$ is a simple abelian variety. 
            \item Let $[Y\xrightarrow{u}G]$ be a 1-motive over $\kbf$. Then $[Y\xrightarrow{u}G]$ is isogenous to the 1-motive $[Y\xrightarrow{0}T\times B]$ which is the direct product of $Y[1]$, $T$ and $B$, and thus each 1-motive is isogenous to a direct product of simple 1-motives.
            \item
            \emph{(Honda-Tate theorem for 1-motives)}
            Let 
            \[\qW(2)^0:=\{q\alpha\mid \alpha\in\qW(0)\}\subsetneq\qW(2).\]
            The bijection of sets from Theorem \ref{Honda-Tate for AVs} extends to a bijection of sets
            \[
            \left\{
		\text{simple 1-motives over }\kbf
		\right\}/\text{isogeny} \xrightarrow{\simeq}           
            \qW(0)\bigsqcup \qW(1) \bigsqcup \qW(2)^0
            \]
            where $Y[1]$ (resp. $B$, resp. $T$) a simple lattice (resp. abelian variety, resp. torus) is mapped to the Galois conjugacy class of a root of the characteristic polynomial $P_{Y,\pi_Y}$ (resp. $P_{B,\pi_B}$, resp. $P_{T,\pi_T}$).
        \end{enumerate} 
        
    \end{theorem}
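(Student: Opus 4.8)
The plan is to deduce the three parts from results already established. For part (1): by Remark~\ref{simple 1-motives} a simple 1-motive over $\kbf$ has the shape $Y[1]$, $T$, or $B$, so I would only need to pin down which of these are simple in the sense of Definition~\ref{def simple 1-motives}. First, a 1-motive $[Y\xrightarrow{u}G]$ with $Y\neq 0$ and $G\neq 0$ is not simple, since $(0,\id_G)\colon[0\to G]\to[Y\to G]$ is a non-zero injective morphism of complexes with cokernel $Y$ on lattice parts, hence not an isogeny; likewise, if $G$ is a non-split extension of an abelian variety $B\neq 0$ by a torus $T\neq 0$, then $[0\to T]\hookrightarrow[0\to G]$ is a non-zero injective morphism that is not surjective on the semi-abelian part, hence not an isogeny. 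So a simple 1-motive is $Y[1]$, $[0\to T]$, or $[0\to B]$. Then I would unwind the definition of isogeny of 1-motives: a non-zero injective morphism into $Y[1]$ is the inclusion of a $\Gamma$-submodule $Y'[1]\hookrightarrow Y[1]$, which is an isogeny precisely when $Y/Y'$ is finite, so $Y[1]$ is simple iff $Y$ is a simple object of $\Mod_{\Gamma}$; a non-zero injective morphism into $[0\to T]$ must be induced by a sub-torus $T'\hookrightarrow T$ (there is no lattice part, and no abelian part either, as a closed subgroup scheme of a torus is affine and an affine group scheme has no non-trivial abelian-variety quotient), which is an isogeny iff $T'=T$, and this translates under the anti-equivalence between tori over $\kbf$ and $\Gamma$-lattices into $X:=\cHom_\kbf(T,\Gm)$ being a simple object of $\Mod_{\Gamma}$; and $[0\to B]$ is simple iff $B$ is a simple abelian variety.

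For part (2), the input is Lemma~\ref{torsion phenomenon over finite fields}. Its part~(1) gives that $\Hom_\kbf(Y,G)$ is finite, so $nu=0$ for some $n>0$ and $(\id_Y,n_G)$ is an isogeny $[Y\xrightarrow{u}G]\to[Y\xrightarrow{0}G]$; its part~(3) gives that $0\to T\to G\to B\to 0$ splits after pullback along some $m_B$, producing an isogeny $T\times B\to G$ and hence an isogeny $[Y\xrightarrow{0}G]\to[Y\xrightarrow{0}T\times B]=Y[1]\times T\times B$. Finally I would decompose each factor up to isogeny into simple pieces: $Y$ into a direct sum of simple lattices (choose $\Gamma$-stable lattices inside the irreducible $\Q[\Gamma]$-summands of $Y\otimes_\Z\Q$), $T$ into a product of simple tori by dualizing, and $B$ into a product of simple abelian varieties by Poincaré complete reducibility; together with part~(1) this yields the stated decomposition into simple 1-motives.

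For part (3), I would define the map as in the statement and check that it is well defined, injective, and surjective. Well-definedness: the characteristic polynomials $P_{Y,\pi_Y},P_{T,\pi_T},P_{B,\pi_B}$ are isogeny invariants, and for a simple object each is a power of an irreducible polynomial — for $Y$ and $T$ because $Y\otimes\Q$, resp. $X\otimes\Q$, is a field on which $\pi_Y$, resp. $\pi_X$, acts as multiplication by a single generator, so its characteristic polynomial equals that generator's minimal polynomial, while for $B$ this is classical — hence ``the Galois conjugacy class of a root'' is unambiguous. By Lemma~\ref{charpoly of lattices, AVs and tori}(1)--(3) these roots are Weil $q$-numbers of weight $0$, $2$, $1$ respectively, and in the toric case the root is $q\alpha^{-1}$ with $\alpha$ a root of unity, hence lies in $\qW(2)^0$; so the map lands in $\qW(0)\sqcup\qW(1)\sqcup\qW(2)^0$, and since these three weights differ the three pieces are genuinely disjoint. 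Injectivity within each piece: two simple lattices with a common root $\alpha$ satisfy $Y\otimes\Q\cong\Q(\alpha)\cong Y'\otimes\Q$ as $\Q[\Gamma]$-modules, hence $Y\sim Y'$ and $Y[1]\sim Y'[1]$; the toric case is dual to this; the abelian case is Theorem~\ref{Honda-Tate for AVs}. Surjectivity: onto $\qW(1)$ it is Theorem~\ref{Honda-Tate for AVs}; given $[\alpha]\in\qW(0)$, the lattice $\Z[\alpha]$ with $\gamma$ acting by multiplication by $\alpha$ (a finite-order action) is a simple lattice whose characteristic polynomial is the minimal polynomial of $\alpha$, so it maps to $[\alpha]$; given $q\alpha^{-1}\in\qW(2)^0$, the torus with character lattice $\Z[\alpha]$ and $\gamma$ acting by multiplication by $\alpha$ is simple and, by Lemma~\ref{charpoly of lattices, AVs and tori}(2), has $q\alpha^{-1}$ among the roots of its characteristic polynomial, so it maps to $[q\alpha^{-1}]$.

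The only step I expect to require genuine care is part~(1): verifying that a simple 1-motive is ``pure'' — which rests on the fact that a closed subgroup of a torus has no non-trivial abelian-variety quotient — and carrying out the translation of simpleness of $[0\to T]$ into simpleness of $X$ through the anti-equivalence of tori over $\kbf$ and $\Gamma$-lattices (keeping track of saturated submodules). Parts~(2) and~(3) should then be a fairly mechanical assembly of Lemma~\ref{torsion phenomenon over finite fields}, Lemma~\ref{charpoly of lattices, AVs and tori}, and the classical Honda--Tate theorem.
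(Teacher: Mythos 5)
Your proof is correct and follows essentially the same route as the paper: part (1) is the content of Remark \ref{simple 1-motives} (which you verify in detail), part (2) combines it with Lemma \ref{torsion phenomenon over finite fields}, and part (3) with Lemma \ref{charpoly of lattices, AVs and tori} and the classical Honda--Tate theorem. The paper's own proof consists of exactly these citations, so your write-up merely supplies the details left to the reader.
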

    \begin{proof}
        (1) is just a restatement of Remark \ref{simple 1-motives}. (2) follows from (1) and Lemma \ref{torsion phenomenon over finite fields}. (3) follows from (1) and Lemma \ref{charpoly of lattices, AVs and tori}.
    \end{proof}
	
	\begin{corollary}\label{log 1-motive splits up to isogeny}
		Let $\Mbf:=[Y\xrightarrow{u}G_{\log}]$ be a log 1-motive over $S$ 
		as in Lemma \ref{torsion phenomenon over finite fields}, let $m$ and $n$ be as in Lemma \ref{torsion phenomenon over finite fields}, and let $i_{\log}:T_{\log}\to G_{\log}$  be the inclusion induced by the canonical inclusion $i:T\to G$. Then $n\cdot u$ factors as $Y\xrightarrow{u_1}T_{\log}\xrightarrow{i_{\log}} G_{\log}$, and we have a morphism 
            \begin{equation}\label{a morphism of log 1-motives}
                \xymatrix{
			Y\ar[r]^-{(u_1,0)}\ar[d]_{n_Y} &T_{\log}\times B\ar[d]^{f_{\log}} \\
			Y\ar[r]^-u &G_{\log}
		}
            \end{equation}
		of log 1-motives, where $f_{\log}$ is induced by the map $f$ in the pullback diagram 
        \[\xymatrix{0\ar[r] &T\ar[r]^-{(\id_T,0)}\ar@{=}[d] &T\times B\ar[r]\ar[d]^f &B\ar[d]^{m_B}\ar[r] &0\\ 
        0\ar[r] &T\ar[r]^i &G\ar[r] &B\ar[r] &0}.\]
	\end{corollary}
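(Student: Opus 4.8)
The plan is to obtain both assertions from Lemma \ref{torsion phenomenon over finite fields} together with the functoriality of the $(-)_{\log}$ construction on semi-abelian schemes.

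For the factorization of $n\cdot u$: part (2) of Lemma \ref{torsion phenomenon over finite fields} supplies a positive integer $n$ for which $Y\xrightarrow{nu}G_{\log}$ factors through $i_{\log}\colon T_{\log}\hookrightarrow G_{\log}$. Concretely, $v\colon Y\to B$ from the proof of that lemma is the composite of $u$ with the projection $G_{\log}\to B$, and $nv=0$; since $Y$ is torsion-free and $i_{\log}$ is a monomorphism, $nu$ then factors uniquely through $T_{\log}\hookrightarrow G_{\log}$. Writing $u_1\colon Y\to T_{\log}$ for the resulting homomorphism, we get $n\cdot u=i_{\log}\circ u_1$, which is the first claim.

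Next I would produce $f$ and $f_{\log}$. By part (3) of Lemma \ref{torsion phenomenon over finite fields} there is a positive integer $m$ such that the pullback of the extension $0\to T\to G\to B\to 0$ along $m_B\colon B\to B$ splits; choosing such a splitting identifies this pulled-back extension with $T\times B$ and gives the pullback diagram in the statement, in particular a homomorphism of semi-abelian schemes $f\colon T\times B\to G$ whose restriction to $T$ is $i$ (this is the commutativity of the left-hand square of the pullback diagram). Applying $(-)_{\log}$ --- equivalently, using that a homomorphism of semi-abelian schemes extends uniquely to a homomorphism of the associated logarithmic objects, compatibly with $T\hookrightarrow T_{\log}$ and $G\hookrightarrow G_{\log}$, cf. \cite[Proposition 2.5]{KajiwaraKatoNakayama2008} --- yields $f_{\log}\colon T_{\log}\times B=(T\times B)_{\log}\to G_{\log}$, and the identity $f\circ(\id_T,0)=i$ becomes $f_{\log}\circ\iota=i_{\log}$, where $\iota\colon T_{\log}\hookrightarrow T_{\log}\times B$ denotes the inclusion.

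Finally I would check commutativity of the square in the statement. The bottom-then-left composite is $u\circ n_Y=n\cdot u=i_{\log}\circ u_1$, while the top-then-right composite is $f_{\log}\circ(u_1,0)=f_{\log}\circ\iota\circ u_1=i_{\log}\circ u_1$. These agree, so $(n_Y,f_{\log})$ is indeed a morphism of log 1-motives; moreover $n_Y$ is injective with finite cokernel and $f$ is an isogeny of semi-abelian schemes (being a base change of $m_B$ along $G\to B$), so this morphism is an isogeny, which is the point of the corollary's title. The only non-formal ingredient is the passage from $f$ to $f_{\log}$, i.e.\ the functoriality of $G_{\log}$ and its compatibility with the inclusions of $T$ and $G$; I expect this to be the main (though minor) obstacle, and it is covered by the cited properties of the $(-)_{\log}$ construction, everything else being diagram chasing with torsion-free lattices.
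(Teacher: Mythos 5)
Your proposal is correct and follows essentially the same route as the paper: extract $u_1$ from Lemma \ref{torsion phenomenon over finite fields} (2), use the pullback diagram from part (3) to get $f$ with $i=f\circ(\id_T,0)$, pass to $f_{\log}$ with $i_{\log}=f_{\log}\circ(\id_{T_{\log}},0)$, and check $f_{\log}\circ(u_1,0)=i_{\log}\circ u_1=n\cdot u=u\circ n_Y$. The extra observations (uniqueness of $u_1$, that the morphism is an isogeny) go slightly beyond what the corollary asserts but are harmless and consistent with how the corollary is used later.
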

	\begin{proof}
            The factorization $n\cdot u= i_{\log}\circ u_1$ is clear by Lemma \ref{torsion phenomenon over finite fields}. By the above pullback diagram, we have $i=f\circ (\id_T,0)$ and thus $i_{\log}=f_{\log}\circ (\id_{T_{\log}},0)$. Therefore 
            \[f_{\log}\circ (u_1,0)=f_{\log}\circ ((\id_{T_{\log}},0)\circ u_1)=i_{\log}\circ u_1=n\cdot u,\]
            and thus \eqref{a morphism of log 1-motives} gives rise to a morphism of log 1-motives.
	\end{proof}

	\begin{theorem}\label{isogeneous decomposition of log AV}
		Let $A$ be a log abelian variety over $S$. Let $\Mbf:=[Y\xrightarrow{u}G_{\log}]$ be the pointwise polarizable log 1-motive over $S$ corresponding to $A$ (see Theorem \ref{equivalence from log 1-motives to LAVwCD}), and let $T$ and $B$ be the torus and the abelian part of $G$ respectively. Let $n$ be a positive integer such that $n\cdot u$ factors through $T_{\log}\to G_{\log}$, and let $(u_1,0):Y\to T_{\log}\times B$ be the map from Corollary \ref{log 1-motive splits up to isogeny}. Then 
		\begin{enumerate}
			\item the log 1-motive $\Mbf_1:=[Y\xrightarrow{u_1}T_{\log}]$ is pointwise polarizable;
			\item $A$ is isogenous to $A_1\times B$, where $A_1$ is the log abelian variety over $S$ corresponding to $\Mbf_1$;
                \item if moreover $A$ is isogenous to $A_1'\times B'$ with $A_1'$ totally degenerate and $B'$ an abelian variety, then $A_1'$ is isogenous to $A_1$ and $B'$ is isogenous to $B$.
		\end{enumerate}
	\end{theorem}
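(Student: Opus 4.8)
The plan is to build everything around the morphism of log $1$-motives \eqref{a morphism of log 1-motives}, which I abbreviate $\psi=(n_Y,f_{\log})\colon\Mbf_1\times[0\to B]\to\Mbf$; here $[0\to B]$ is the log $1$-motive with zero lattice and torus, so that $\Mbf_1\times[0\to B]=[Y\xrightarrow{(u_1,0)}T_{\log}\times B]$ is the log $1$-motive underlying $A_1\times B$ (once part (1) is known). The first step, which uses no pointwise polarizability, is to check that $\psi$ is an isogeny of log $1$-motives: $n_Y\colon Y\to Y$ is injective with finite cokernel $Y/nY$, and on semi-abelian parts the homomorphism $f\colon T\times B\to G$ is, via the splitting isomorphism $T\times B\cong G\times_B B$ underlying Corollary \ref{log 1-motive splits up to isogeny}, the base change of the isogeny $m_B\colon B\to B$ along $G\to B$, hence faithfully flat with finite flat kernel. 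Granting part (1), part (2) then follows at once: $\Mbf_1\times[0\to B]$ lies in $\Mcal_1^{\log,\mathrm{pPol}}$ (a product of two pointwise polarizable log $1$-motives, the second because $B$ is polarizable), it corresponds to $A_1\times B$ under Theorem \ref{equivalence from log 1-motives to LAVwCD}, and $\psi$ being an isogeny of log $1$-motives forces $A_1\times B\to A$ to be an isogeny of log abelian varieties by Proposition \ref{Hom of LAVs is isogeny iff the corresponding hom of log 1-motives is isogeny}.

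For part (1), I would transport a polarization along $\psi$. Fix a geometric point $\bar s$ over the point of $S$ and, using pointwise polarizability of $\Mbf$, a polarization $h\colon\Mbf_{\bar s}\to\Mbf^{\ast}_{\bar s}$. Since the dual of an isogeny of log $1$-motives is again an isogeny, $\psi^{\vee}_{\bar s}\circ h\circ\psi_{\bar s}$ is a homomorphism from $(\Mbf_1\times[0\to B])_{\bar s}$ to its dual, and I claim it is a polarization: its abelian part is $h_{\mathrm{ab}}$ pre- and post-composed with isogenies of abelian varieties, hence a polarization; its lattice homomorphism $Y\to X$ is a composite of injections with finite cokernel; and its monodromy condition reads $\langle y,(\psi^{\vee}h\psi)_{-1}(y)\rangle_{\bar s}=\langle\psi_{-1}(y),h_{-1}(\psi_{-1}(y))\rangle_{\Mbf,\bar s}$ by functoriality of the monodromy pairing under morphisms of log $1$-motives, which lies in $\overline{M}_{\bar s}\setminus\{1\}$ for $y\neq0$ because $\psi_{-1}$ is injective. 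Since $[0\to B]$ contributes neither lattice nor torus and there are no non-trivial homomorphisms between tori and abelian varieties, this polarization is forced to be block diagonal, and its $\Mbf_1$-block is a polarization of $\Mbf_{1,\bar s}$; hence $\Mbf_1$ is pointwise polarizable. (A more hands-on alternative: since $nu=i_{\log}\circ u_1$ and $i_{\log}$ induces the identity on $(T_{\log}/T)_{S_\et}=(G_{\log}/G)_{S_\et}$, the monodromy pairing of $\Mbf_1$ is $n$ times that of $\Mbf$, so the lattice homomorphism $h_{-1}\colon Y\to X$ of a polarization of $\Mbf_{\bar s}$, together with the torus homomorphism it induces, gives a polarization of $\Mbf_{1,\bar s}$ — the point being that $P$ is sharp with $P^{\gp}$ torsion-free, so $n$ times a non-zero element of $P$ is non-zero.)

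For part (3), suppose $A\sim A_1'\times B'$ with $A_1'$ totally degenerate and $B'$ an abelian variety; write $\Mbf_1'=[Y'\xrightarrow{u_1'}T'_{\log}]$ for the log $1$-motive of $A_1'$. Combining this with part (2) and the fact that every isogeny admits a quasi-inverse up to multiplication by a positive integer, I get an isogeny of log $1$-motives, which I may arrange as $\Phi=(\Phi_{-1},\Phi_0)\colon\Mbf_1\times[0\to B]\to\Mbf_1'\times[0\to B']$. The map $\Phi_0$ is induced by a homomorphism of semi-abelian schemes $\Phi_{\mathrm c}\colon T\times B\to T'\times B'$ (\cite[Proposition 2.5]{KajiwaraKatoNakayama2008}), and since $\Hom_\kbf(T,B')=\Hom_\kbf(B,T')=0$ this $\Phi_{\mathrm c}$ is block diagonal, $\Phi_{\mathrm c}=\varphi_T\times\varphi_B$ with $\varphi_T\colon T\to T'$ and $\varphi_B\colon B\to B'$ isogenies; hence so is $\Phi_0=(\varphi_T)_{\log}\times\varphi_B$. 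Feeding this into the commutativity relation $\Phi_0\circ(u_1,0)=(u_1',0)\circ\Phi_{-1}$ yields $(\varphi_T)_{\log}\circ u_1=u_1'\circ\Phi_{-1}$, so $(\Phi_{-1},(\varphi_T)_{\log})\colon\Mbf_1\to\Mbf_1'$ is a morphism of log $1$-motives which is an isogeny ($\Phi_{-1}$ injective with finite cokernel, $\varphi_T$ an isogeny of tori); thus $A_1\sim A_1'$. Finally $\varphi_B\colon B\to B'$ is an isogeny of abelian varieties, so $B\sim B'$.

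The step I expect to be the main obstacle is part (1): verifying that the polarization data genuinely transports to $\Mbf_1$. Everything there rests on the interaction between the monodromy pairings of $\Mbf$ and of $\Mbf_1$ — equivalently, on the identity that the latter is $n$ times the former — together with the sharpness and torsion-freeness of $P$. By contrast, the block-diagonalization used in both (1) and (3) is routine once one invokes $\Hom_\kbf(T,B)=\Hom_\kbf(B,T)=0$ for tori $T$ and abelian varieties $B$ over $\kbf$ (see Lemma \ref{hom of semiabelian schemes}).
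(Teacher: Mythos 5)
Your proposal is correct and follows essentially the same route as the paper: both pull back a polarization of $\Mbf$ along the morphism $(n_Y,f_{\log})\colon[Y\xrightarrow{(u_1,0)}T_{\log}\times B]\to\Mbf$ from Corollary \ref{log 1-motive splits up to isogeny} to get pointwise polarizability of $\Mbf_1$, and then use the fact that this morphism is an isogeny of log 1-motives to obtain the isogeny $A_1\times B\to A$. The only cosmetic difference is in part (3), where the paper simply invokes the vanishing of homomorphisms between totally degenerate log abelian varieties and abelian varieties together with \cite[Cor.~3.1]{Zhao2017}, while you spell out the same block-diagonal splitting of the isogeny by hand.
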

	\begin{proof}
		(1) We may assume that $\Mbf$ is polarizable. Let $X$ denote the character group of $T$, $T^*$ denote the torus with character group $Y$, and $B^*$ denote the dual abelian variety of $B$. Let $\Mbf^*=[X\xrightarrow{u^*}G^*_{\log}]$ be the dual log 1-motive of $\Mbf$, and let $(h_{-1},h_0):\Mbf\to \Mbf^*$ be a polarization. 
		
        Let $\Mbf_{\mathrm{s}}$ be the log 1-motive $[Y\xrightarrow{(u_1,0)}T_{\log}\times B]$, and we have a morphism 
        \[(g_{-1},g_0):=(n_Y,f_{\log}):\Mbf_{\mathrm{s}}\to \Mbf\]
        of log 1-motives as in Corollary \ref{log 1-motive splits up to isogeny}.  Then the dual of $\Mbf_{\mathrm{s}}$ is $\Mbf_{\mathrm{s}}^*=[X\xrightarrow{(u_1^*,0)}T^*_{\log}\times B^*]$.
		
		Let $(g_{-1}^*,g_0^*)$ be the dual of $(g_{-1},g_0)$, and let $\gamma_{-1}:=g_{-1}^*\circ h_{-1}\circ g_{-1}$ and $\gamma_0:=g_{0}^*\circ h_{0}\circ g_{0}$. Then $(\gamma_{-1},\gamma_0):\Mbf_{\mathrm{s}}\to \Mbf_{\mathrm{s}}^*$ is a polarization by \cite[Lemma 3.4]{Zhao2017}. Clearly $(\gamma_{-1},\gamma_0)$ induces a polarization $\Mbf_1\to \Mbf_1^*=[X\xrightarrow{u_1^*}T^*_{\log}]$.
		
		(2) The log abelian variety associated to the pointwise polarizable log 1-motive $\Mbf_{\mathrm{s}}$ is clearly just $A_1\times B$. By \cite[Proposition 3.3]{Zhao2017}, the morphism $(g_{-1},g_0)$ induces an isogeny $g:A_1\times B\to A$.

            (3) Apparently there is no non-trivial morphism between abelian varieties and log 1-motives without abelian part, therefore there is no non-trivial morphism between abelian varieties and totally degenerate log abelian varieties. Then the result follows from \cite[Cor. 3.1]{Zhao2017}.
      
	\end{proof}

	\subsection{Isogeny classes of log abelian varieties without abelian part over finite log points}\label{subsection LAV without abelian part over finite log points are classified by LP up to isogeny}

    We further assume that $S$ is as in Subsection \ref{subsection lattice pairings}.
    By Theorem \ref{isogeneous decomposition of log AV}, we know that the classification of isogeny classes of log abelian varieties $A$ (with corresponding log 1-motive $[Y\rightarrow G_{\log}]$) over $S$ is reduced to the following two problems
	\begin{enumerate}
		\item 
		the classification of isogeny classes of abelian varieties $B$ over $\mathbf{k}$;

		\item 
		the classification of isogeny classes of log abelian varieties $A_1$ (corresponding to $[Y\rightarrow T_{\log}]$) over $S$.
	\end{enumerate}
	Problem (1) is answered by the classical Honda-Tate theorem for abelian varieties. We reduce Problem (2) to the study of $\MorGammaPpPol$, where $P$ is the chart monoid of $S$ as in Subsection \ref{subsection lattice pairings} and $\Gamma=\Gal(\overline{\kbf}/\kbf)=\overline{\langle\gamma\rangle}$ is as in the beginning of this section.

    Under the assumption that $\kbf$ is finite, the functors $\Mbf_{-}$ and $\mathbf{LP}$ admit better properties. 
    
	\begin{theorem}
		The functor 
		\[\Mbf_{-}\otimes\Q:\mathrm{Mor}_{\Gamma}^{P}\otimes\Q\to \Mcal^{\log,\mathrm{ab}=0}_1\otimes\Q,\quad (M,N,\Phi)\mapsto \Mbf_{\Phi}\]
		induced by $\Mbf_{-}$ is an equivalence of categories with a quasi-inverse $\mathbf{LP}\otimes\Q$.
	\end{theorem}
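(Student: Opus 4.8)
The plan is to show that $\Mbf_{-}\otimes\Q$ is fully faithful and essentially surjective; the identification of $\LP\otimes\Q$ as a quasi-inverse will then be formal. Full faithfulness requires no new work: by Proposition~\ref{properties of the functor M_-}(1) the functor $\Mbf_{-}$ is fully faithful, so it induces isomorphisms on $\Hom$-groups, and tensoring these isomorphisms with $\Q$ shows that $\Mbf_{-}\otimes\Q$ induces isomorphisms on $\Hom$-groups as well.

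The substance lies in essential surjectivity. Given an object $\Mbf=[M\xrightarrow{u}T_{\log}]$ of $\Mcal^{\log,\mathrm{ab}=0}_1$, I would decompose $u=u^{\log}+u^{\mathrm{c}}$ as in Proposition~\ref{decomposition of log 1-motives}, so that $u^{\mathrm{c}}$ factors through $T\hookrightarrow T_{\log}$, i.e. defines an element of $\Hom_{\kbf}(M,T)$. Here is where finiteness of $\kbf$ enters: by Lemma~\ref{torsion phenomenon over finite fields}(1) (applied with $G=T$) the group $\Hom_{\kbf}(M,T)$ is finite, so there is a positive integer $n$ with $nu^{\mathrm{c}}=0$, hence $nu=nu^{\log}$. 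Since the construction of the log part is additive, the classical part of $nu$ vanishes, so by Proposition~\ref{decomposition of log 1-motives}(3) the log $1$-motive $[M\xrightarrow{nu}T_{\log}]$ lies in the image of $\Mbf_{-}$. On the other hand, the pair $(1_M,n_{T_{\log}})$ is a morphism $\Mbf\to[M\xrightarrow{nu}T_{\log}]$ --- the relevant square commutes because $n_{T_{\log}}\circ u=nu$ --- and it is an isogeny, since $1_M$ is an isomorphism and the induced map $n_T\colon T\to T$ on torus parts is faithfully flat with kernel the finite flat group scheme $T[n]$. Thus $(1_M,n_{T_{\log}})$ becomes invertible in $\Mcal^{\log,\mathrm{ab}=0}_1\otimes\Q$, exhibiting $\Mbf$ as isomorphic there to an object in the essential image of $\Mbf_{-}\otimes\Q$.

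A fully faithful, essentially surjective functor is an equivalence, so this proves $\Mbf_{-}\otimes\Q$ is an equivalence. For the last clause I would invoke Proposition~\ref{the functor LP is left inverse to the functor M_-}, which gives $\LP\circ\Mbf_{-}=\id$ on the nose, hence $(\LP\otimes\Q)\circ(\Mbf_{-}\otimes\Q)=\id$; composing this with any quasi-inverse of the equivalence $\Mbf_{-}\otimes\Q$ shows that $\LP\otimes\Q$ is itself a (two-sided) quasi-inverse. The only real obstacle is the one already dispatched: making the ``classical part'' $u^{\mathrm{c}}$ disappear, which works precisely because $\Hom_{\kbf}(M,T)$ is torsion over a finite field --- this is exactly the phenomenon that fails before inverting isogenies, and is the reason $\Mbf_{-}$ alone is not essentially surjective.
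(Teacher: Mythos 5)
Your proposal is correct and follows essentially the same route as the paper: full faithfulness is inherited from $\Mbf_{-}$, and essential surjectivity comes from killing the classical part $u^{\mathrm{c}}$ via the finiteness of $\Hom_{\kbf}(M,T)$ (Lemma \ref{torsion phenomenon over finite fields}(1)), so that $[M\xrightarrow{nu}T_{\log}]$ lies in the image of $\Mbf_{-}$ and is isogenous to $\Mbf$. Your additional explicit verification that $(1_M,n_{T_{\log}})$ is an isogeny and the formal argument identifying $\LP\otimes\Q$ as the quasi-inverse are details the paper leaves implicit, but the argument is the same.
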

	\begin{proof}
		Since $\Mbf_{-}$ is fully faithful, so is $\Mbf_{-}\otimes\Q$. We are left to prove that $\Mbf_{-}\otimes\Q$ is essentially surjective.
		
		Let $\Mbf=[M\xrightarrow{u} T_{\log}]$ be a log 1-motive without abelian part, and let $N$ be the character group of the torus $T$. Let $[M\xrightarrow{u^{\log}} T_{\log}]$ and $[M\xrightarrow{u^{\mathrm{c}}} T]$ be the log part and the classical part respectively. By Lemma \ref{torsion phenomenon over finite fields} (1), $u^{\mathrm{c}}$ is killed by some positive integer $r$. Then $ru=r(u^{\log}+u^{\mathrm{c}})=ru^{\log}$. Since $[M\xrightarrow{u^{\log}} T_{\log}]$ lies in the image of the additive functor $\Mbf_-$, so is $[M\xrightarrow{ru} T_{\log}]$ which is isogeous to $\Mbf$. This finishes the proof.
	\end{proof}
	
	\begin{corollary}\label{equivalence from pairs of Galois modules to LAVs}
		The functors $\Mbf_{-}\otimes\Q$ and $\mathbf{LP}\otimes\Q$ restrict to functors
		\[\xymatrix{
			&\mathrm{Mor}_{\Gamma}^{P,\mathrm{pPol}}\otimes\Q\ar@<1ex>[r]^-{\Mbf_{-}\otimes\Q}   &\Mcal^{\log,\mathrm{ab}=0,\mathrm{pPol}}_1\otimes\Q\ar@<1ex>[l]^-{\mathbf{LP}\otimes\Q}
		}\]
		which are quasi-inverse to each other. Furthermore, by composing with the equivalence from Theorem \ref{equivalence from log 1-motives to LAVwCD}, $\Mbf_{-}\otimes\Q$ induces an equivalence of categories
		\[\mathrm{Mor}_{\Gamma}^{P,\mathrm{pPol}}\otimes\Q\simeq \mathrm{LAV}^{\mathrm{ab}=0}\otimes\Q.\]
	\end{corollary}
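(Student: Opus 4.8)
The plan is to deduce everything formally from the theorem just proved, which identifies $\Mbf_{-}\otimes\Q$ and $\mathbf{LP}\otimes\Q$ as mutually quasi-inverse equivalences between the ambient isogeny categories $\mathrm{Mor}_{\Gamma}^{P}\otimes\Q$ and $\Mcal^{\log,\mathrm{ab}=0}_1\otimes\Q$. The point is simply that both functors respect the pointwise-polarizable subcategories and that the latter are full subcategories.

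First I would record that $\Mbf_{-}$ carries $\mathrm{Mor}_{\Gamma}^{P,\mathrm{pPol}}$ into $\Mcal^{\log,\mathrm{ab}=0,\mathrm{pPol}}_1$ by Proposition \ref{properties of the functor M_-}~(4), and that $\mathbf{LP}$ carries $\Mcal^{\log,\mathrm{ab}=0,\mathrm{pPol}}_1$ into $\mathrm{Mor}_{\Gamma}^{P,\mathrm{pPol}}$ by Proposition \ref{properties of the functor LP}~(4); since the isogeny categories have the same objects as the underlying categories, the same statements hold for $\Mbf_{-}\otimes\Q$ and $\mathbf{LP}\otimes\Q$ on objects. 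Because $\mathrm{Mor}_{\Gamma}^{P,\mathrm{pPol}}\otimes\Q$ and $\Mcal^{\log,\mathrm{ab}=0,\mathrm{pPol}}_1\otimes\Q$ are full subcategories of the respective ambient isogeny categories, the restriction of the fully faithful functor $\Mbf_{-}\otimes\Q$ to the former, landing in the latter, is again fully faithful.

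For essential surjectivity of the restriction, I take $\Mbf\in\Mcal^{\log,\mathrm{ab}=0,\mathrm{pPol}}_1\otimes\Q$. The quasi-inverse property on the ambient categories gives $\Mbf\cong(\Mbf_{-}\otimes\Q)\bigl((\mathbf{LP}\otimes\Q)(\Mbf)\bigr)$, and $(\mathbf{LP}\otimes\Q)(\Mbf)=\mathbf{LP}(\Mbf)$ lies in $\mathrm{Mor}_{\Gamma}^{P,\mathrm{pPol}}$ by Proposition \ref{properties of the functor LP}~(4), so $\Mbf$ is in the essential image. The unit and counit of the adjunction restrict to the full subcategories (indeed $\mathbf{LP}\circ\Mbf_{-}=\id$ on the nose by Proposition \ref{the functor LP is left inverse to the functor M_-}), so $\Mbf_{-}\otimes\Q$ and $\mathbf{LP}\otimes\Q$ restrict to mutually quasi-inverse equivalences $\mathrm{Mor}_{\Gamma}^{P,\mathrm{pPol}}\otimes\Q\simeq\Mcal^{\log,\mathrm{ab}=0,\mathrm{pPol}}_1\otimes\Q$.

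For the last assertion I would pass through Theorem \ref{equivalence from log 1-motives to LAVwCD}. That equivalence $\Mcal_1^{\log,\mathrm{pPol}}\xrightarrow{\sim}\mathrm{LAVwCD}$ restricts, by the very definition of the totally degenerate objects on each side, to an equivalence $\Mcal^{\log,\mathrm{ab}=0,\mathrm{pPol}}_1\xrightarrow{\sim}\mathrm{LAV}^{\mathrm{ab}=0}$; by Proposition \ref{Hom of LAVs is isogeny iff the corresponding hom of log 1-motives is isogeny} it matches isogenies on the two sides, hence descends to an equivalence $\Mcal^{\log,\mathrm{ab}=0,\mathrm{pPol}}_1\otimes\Q\xrightarrow{\sim}\mathrm{LAV}^{\mathrm{ab}=0}\otimes\Q$ of the localizations. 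Composing with the equivalence of the previous paragraph gives $\mathrm{Mor}_{\Gamma}^{P,\mathrm{pPol}}\otimes\Q\simeq\mathrm{LAV}^{\mathrm{ab}=0}\otimes\Q$. There is no real obstacle here; the only steps requiring a little care are the formal fact that an equivalence stabilizing a full subcategory together with its quasi-inverse restricts to an equivalence of the full subcategories, and the compatibility of the functor of Theorem \ref{equivalence from log 1-motives to LAVwCD} with inverting isogenies — both supplied by the cited results.
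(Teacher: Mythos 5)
Your argument is correct and is exactly the intended one: the paper leaves this corollary without proof because it follows formally from the preceding theorem together with Propositions \ref{properties of the functor M_-}~(4) and \ref{properties of the functor LP}~(4) (preservation of the pointwise polarizable full subcategories) and the restriction of Theorem \ref{equivalence from log 1-motives to LAVwCD} to the totally degenerate objects, which is what you spell out. No gaps.
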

	
	By Corollary \ref{equivalence from pairs of Galois modules to LAVs}, to classify isogeny classes of log abelian varieties without abelian part over $S$, it suffices to study the category $\mathrm{Mor}_{\Gamma}^{P,\mathrm{pPol}}\otimes\Q$.

\section{Isogeny classes in $\MorGammaPpPol$ over the standard finite log point}\label{Isogeny classes in MorGammaPpPol over the standard finite log point}
    In this sectoin, let $\kbf=\F_q$, $\Gamma=\overline{\langle\gamma\rangle}$, $P$, and $S$ be as in Subsection \ref{subsection LAV without abelian part over finite log points are classified by LP up to isogeny}. 
    We further assume that $P=\N$, i.e. $S$ is the standard log point with underlying scheme $\Spec\kbf=\Spec\mathbb{F}_q$. Then the categories $\Mod_{\Gamma}$ and $\MorGammaNpPol$ admit explicit descriptions as we will give in this section.
    Note that in this case, for $(M,N,\Phi)\in\MorGammaNpPol$, the corresponding bilinear form $\langle-,-\rangle$ takes value in $P^\gp=\Z$ and thus we can talk about the non-degeneracy of $\langle-,-\rangle$.

        \subsection{Representation theory of $\Gamma$}
        In this subsection, we collect some facts on the theory of rational representations of $\Gamma$. Then we deduce some simple properties of the two categories $\Mod_\Gamma$ and $\MorGammaNpPol$.

	We fix a compatible family of roots of unity $(\zeta_{r})_{r>0}$ (that is, $\zeta_{rr'}^r=\zeta_{r'}$ for any $r,r'>0$). For a positive integer $n$, we write the (unique) finite cyclic quotient of order $n$ of $\Gamma$ as follows
	\[
	\Gamma_n:=\Gamma/\Gamma^n\simeq\mathbb{Z}/n\mathbb{Z}.
	\]
        We still denote the image of $\gamma$ in $\Gamma_n$ by $\gamma$. Then we have an isomorphism of $\mathbb{Q}$-algebras induced by the projections $\Gamma_n\rightarrow\Gamma_r$ for all $r\mid n$:
	\begin{equation}\label{Q[Gamma_n] factorization}
		\mathbb{Q}[\Gamma_n]
		\simeq
		\prod_{r\mid n}
		\mathbb{Q}(\zeta_r).
	\end{equation}
    In particular, each $\mathbb{Q}(\zeta_r)$ for $r\mid n$ is a \emph{rational} representation of $\Gamma_n$ and we can make the representation explicit as follows
    \begin{equation}\label{Gamma_n and Q(zeta_r)}
        \Gamma_n\times\mathbb{Q}(\zeta_r)\to\mathbb{Q}(\zeta_r),
    \quad
    (\gamma,x)\mapsto\zeta_r x\,
    (\text{the multiplication of $\zeta_r$ and $x$ in $\mathbb{Q}(\zeta_r)$}).
    \end{equation}
    We write $\phi\colon\mathbb{N}\to\mathbb{N}$ for Euler's totient function, in particular, we have
	\[
        \phi(r)
	=
	\mathrm{dim}_{\mathbb{Q}}(\mathbb{Q}(\zeta_{r})).
	\]
    We have the following easy observation.
    \begin{lemma}
		Fix a positive integer $n>0$, then any irreducible rational representation of $\Gamma_n$ is of the form $\mathbb{Q}(\zeta_r)$ for some $r\mid n$ and these $\mathbb{Q}(\zeta_r)$ are mutually non-isomorphic.
	\end{lemma}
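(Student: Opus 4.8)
The plan is to realise the group algebra $\Q[\Gamma_n]$ as a finite product of fields and to read off its simple modules. Since $\Gamma_n\simeq\Z/n\Z$ is cyclic generated by $\gamma$, sending $\gamma$ to the class of $x$ gives an isomorphism of $\Q$-algebras $\Q[\Gamma_n]\xrightarrow{\sim}\Q[x]/(x^n-1)$. First I would factor $x^n-1=\prod_{r\mid n}\Phi_r(x)$ into cyclotomic polynomials; each $\Phi_r$ is irreducible over $\Q$ and the polynomials $\Phi_r$ for distinct divisors $r$ of $n$ are pairwise coprime, so the Chinese Remainder Theorem yields $\Q[x]/(x^n-1)\simeq\prod_{r\mid n}\Q[x]/(\Phi_r(x))=\prod_{r\mid n}\Q(\zeta_r)$, which is exactly the decomposition \eqref{Q[Gamma_n] factorization} (with $\zeta_r$ the image of $x$ in the $r$-th factor, so that the $\Gamma_n$-action matches \eqref{Gamma_n and Q(zeta_r)}).

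Next, a rational representation of $\Gamma_n$ is the same thing as a finite-dimensional $\Q[\Gamma_n]$-module. Being a finite product of fields, $\Q[\Gamma_n]$ is semisimple (this is also Maschke's theorem, as $\mathrm{char}\,\Q=0$), and the simple modules of a product of fields $\prod_{r\mid n}\Q(\zeta_r)$ are precisely the factors $\Q(\zeta_r)$, $r\mid n$, on which the complementary central idempotents act by zero. Hence every irreducible rational representation of $\Gamma_n$ is isomorphic to $\Q(\zeta_r)$ for some $r\mid n$ (and each $\Q(\zeta_r)$ is indeed irreducible).

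For the remaining assertion it suffices to show that $\Q(\zeta_r)$ and $\Q(\zeta_{r'})$ are non-isomorphic as $\Gamma_n$-modules whenever $r\neq r'$. On $\Q(\zeta_r)$ the generator $\gamma$ acts as multiplication by $\zeta_r$, whose minimal polynomial over $\Q$ is $\Phi_r$; since $\Phi_r\neq\Phi_{r'}$ for $r\neq r'$ (they have disjoint root sets), the $\Q$-linear automorphisms of $\gamma$ on the two spaces are not conjugate, so the representations cannot be isomorphic. Equivalently, the central primitive idempotents of $\Q[\Gamma_n]$ picking out the two factors are distinct and act by the identity on one factor and by zero on the other.

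There is essentially no serious obstacle here beyond invoking irreducibility of the cyclotomic polynomials over $\Q$; the one point deserving a word of care is that the dimension $\phi(r)=\dim_\Q\Q(\zeta_r)$ does \emph{not} separate these representations (for instance $\phi(1)=\phi(2)=1$), which is why the non-isomorphism is argued via the minimal polynomial of $\gamma$ (equivalently the idempotent decomposition) rather than by a dimension count.
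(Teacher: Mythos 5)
Your proof is correct and follows essentially the same route as the paper: both rest on the decomposition $\mathbb{Q}[\Gamma_n]\simeq\prod_{r\mid n}\mathbb{Q}(\zeta_r)$ and read off the irreducible rational representations as its factors. The differences are only in packaging: you obtain the decomposition via the Chinese Remainder Theorem and invoke semisimplicity (simple modules over a product of fields are the factors) together with the minimal polynomial of $\gamma$ to separate the factors, whereas the paper checks irreducibility directly (the image of $\mathbb{Q}[\Gamma_n]$ in $\mathrm{End}_{\mathbb{Q}}(\mathbb{Q}(\zeta_r))$ is the field $\mathbb{Q}(\zeta_r)$), distinguishes $\mathbb{Q}(\zeta_r)$ and $\mathbb{Q}(\zeta_{r'})$ by the smallest quotient $\Gamma_r$ through which the action factors, and classifies an arbitrary irreducible $V$ by the explicit cyclic-submodule argument $W=\mathbb{Q}(\zeta_r)v$.
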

	\begin{proof}
            The proof is well known to experts, which we include here for the convenience of the reader.
		First we show that each $\mathbb{Q}(\zeta_r)$ is an irreducible $\mathbb{Q}$-linear representation of $\Gamma_n$ for $r|n$. Indeed, $\mathbb{Q}(\zeta_r)$ is a $\mathbb{Q}[\Gamma_n]$-module and the image of $\mathbb{Q}[\Gamma_n]$ in $\mathrm{End}_{\mathbb{Q}}(\mathbb{Q}(\zeta_r))$ is exactly $\mathbb{Q}(\zeta_r)$, which is a field. We deduce that $\mathbb{Q}(\zeta_r)$ is irreducible as $\mathbb{Q}[\Gamma_n]$-module.

		Next we show that for distinct positive integers $r>r'$, $\mathbb{Q}(\zeta_{r})$ and $\mathbb{Q}(\zeta_{r'})$ are not isomorphic to each other as representations of $\Gamma_n$. Indeed, the action of $\Gamma_n$ on $\mathbb{Q}(\zeta_r)$ factors through $\Gamma_r$ and does not factor through $\Gamma_{r/d}$ for any positive integer $1<d\mid r$. If $\mathbb{Q}(\zeta_r)$ and $\mathbb{Q}(\zeta_{r'})$ are isomorphic, then the action of $\Gamma_n$ on $\mathbb{Q}(\zeta_r)$ factors through $\Gamma_{\mathrm{gcd}(r,r')}$, this is a contradiction since $\mathrm{gcd}(r,r')<r$.

		Let $V$ be an irreducible $\mathbb{Q}$-linear representation of $\mathbb{Q}[\Gamma_n]$. Then the following induced map
        \[
        \mathbb{Q}[\Gamma_n]
        \to
        \mathrm{End}_{\mathbb{Q}}(V)
        \]
        is necessarily non-zero.
        So we can assume that there is a factor $r$ of $n$ such that the image of $\mathbb{Q}(\zeta_{r})$ (viewed as a subring of $\mathbb{Q}[\Gamma_n]$ via (\ref{Q[Gamma_n] factorization})) in $\mathrm{End}_{\mathbb{Q}}(V)$ is non-zero. Take any non-zero vector $v\in V$ and write $W=\mathbb{Q}(\zeta_{r})v$. Since $\Gamma_n$ is an abelian group, $W$ is a non-zero subrepresentation of $V$ of dimension equal to $ \phi(r)$. Since $V$ is irreducible, we must have $W=V$. Thus the image of $\mathbb{Q}[\Gamma_n]$ in $\mathrm{End}_{\mathbb{Q}}(V)$ is exactly $\mathbb{Q}(\zeta_{r})$ and so $V$ is isomorphic to $\mathbb{Q}(\zeta_{r})$.
	\end{proof}

        Clearly each $\mathbb{Q}(\zeta_r)$ is a rational representation of $\Gamma$ induced by the projection $\Gamma\to\Gamma_n$ (for \emph{any} positive integer $n$ divisible by $r$), and all these rational representations of $\Gamma$ are irreducible and mutually non-isomorphic. So in the following, we will view $\mathbb{Q}(\zeta_r)$ as an irreducible rational representation of any $\Gamma_n$ (with $r\mid n$) and $\Gamma$.

        From (\ref{Gamma_n and Q(zeta_r)}), it is easy to see that the ring of integers $\mathbb{Z}[\zeta_r]$ of $\mathbb{Q}(\zeta_r)$ is stable under the action of $\Gamma$. Thus we can and will view $\mathbb{Z}[\zeta_r]$ as an object in $\Mod_\Gamma$.
        Recall that the $\Gamma$-module structure on $\mathbb{Z}[\zeta_r]^\vee$ is given as follows: for any $f\in\mathbb{Z}[\zeta_r]^\vee$, $\gamma f$ is the element in $\mathbb{Z}[\zeta_r]^\vee$ sending $y\in\mathbb{Z}[\zeta_r]$ to $f(\gamma^{-1}y)=f(\zeta_r^{-1}y)$. We have the following natural map
        \begin{equation}\label{Z[zeta_r] embeds into its dual}
            \mathbb{Z}[\zeta_r]
        \to
        \mathbb{Z}[\zeta_r]^\vee,\quad
        x\mapsto
        (\phi_x\colon
        y\mapsto\mathrm{Tr}_{\mathbb{Q}(\zeta_r)/\mathbb{Q}}(x\overline{y})).
        \end{equation}
	Here $\overline{y}$ is the complex conjugation of $y$.  This map is in fact $\Gamma$-equivariant. Indeed, for $x,y\in\mathbb{Z}[\zeta_r]$, we have the following
	\begin{align*}
	\phi_{\gamma x}(y)
        &
        =
        \mathrm{Tr}_{\mathbb{Q}(\zeta_r)/\mathbb{Q}}
        ((\gamma x)\overline{y})
        =
        \mathrm{Tr}_{\mathbb{Q}(\zeta_r)/\mathbb{Q}}
        (\zeta_rx\overline{y})
        \\
        &
        =
        \mathrm{Tr}_{\mathbb{Q}(\zeta_r)/\mathbb{Q}}
        (x\overline{\zeta_r^{-1}y})
        =
        (\gamma\phi_x)(y).
	\end{align*}
	Thus $\phi_{\gamma x}=\gamma\phi_x$.

	\begin{proposition}
		Any free $\Gamma$-module $M$ in {\normalfont $\textrm{Mod}_{\Gamma}$} is isogenous to an object of the form $\prod_{r=1}^\infty\mathbb{Z}[\zeta_r]^{a(r)}$ where $(a(r))_{r>0}$ is a sequence of non-negative integers such that $a(r)=0$ for $r\gg0$.
	\end{proposition}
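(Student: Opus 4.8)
The plan is to reduce everything to the semisimplicity of the rational group algebra $\mathbb{Q}[\Gamma_n]$ together with the elementary fact that two full $\Gamma$-stable lattices spanning the same rational representation are commensurable. First I would fix a positive integer $n$ such that the $\Gamma$-action on $M$ factors through $\Gamma_n$; this is possible by continuity, as recorded in Definition \ref{free Gamma-modules}(1). Then $V := M \otimes_{\mathbb{Z}} \mathbb{Q}$ is a finite-dimensional rational representation of $\Gamma_n$. By the isomorphism \eqref{Q[Gamma_n] factorization} the algebra $\mathbb{Q}[\Gamma_n]$ is a finite product of fields, hence semisimple, so $V$ decomposes as a direct sum of irreducible rational representations of $\Gamma_n$; by the lemma above classifying the irreducible rational representations of $\Gamma_n$, each summand is isomorphic to some $\mathbb{Q}(\zeta_r)$ with $r \mid n$, and these are mutually non-isomorphic. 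Collecting multiplicities yields a $\Gamma$-equivariant isomorphism
\[
\varphi \colon V \xrightarrow{\ \sim\ } \bigoplus_{r \mid n} \mathbb{Q}(\zeta_r)^{a(r)}
\]
for suitable non-negative integers $a(r)$; extending by $a(r) = 0$ for $r \nmid n$ gives a sequence with $a(r) = 0$ for $r \gg 0$, and the target may be written $\prod_{r=1}^{\infty} \mathbb{Q}(\zeta_r)^{a(r)}$.

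Next I would compare two integral structures inside $V' := \bigoplus_{r \mid n} \mathbb{Q}(\zeta_r)^{a(r)}$. On one side, $L := \bigoplus_{r \mid n} \mathbb{Z}[\zeta_r]^{a(r)}$ is a finitely generated $\mathbb{Z}$-module spanning $V'$ over $\mathbb{Q}$ and stable under $\Gamma$, each $\mathbb{Z}[\zeta_r]$ being $\Gamma$-stable as noted just before \eqref{Z[zeta_r] embeds into its dual}. On the other side, $\varphi(M)$ is likewise a full $\Gamma$-stable $\mathbb{Z}$-lattice in $V'$. Then $\varphi(M) \cap L$ is again a full lattice in $V'$: indeed $\varphi(M) + L$ is finitely generated and spans $V'$, hence is free of rank $\dim_{\mathbb{Q}} V'$, and since $\varphi(M) \cap L \subseteq \varphi(M), L \subseteq \varphi(M)+L$ all have the same rank, $\varphi(M) \cap L$ has finite index in both $\varphi(M)$ and $L$. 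Consequently the inclusion $\varphi(M) \cap L \hookrightarrow \varphi(M)$ is an injective $\Gamma$-equivariant map with finite cokernel, and the same holds for $\varphi(M) \cap L \hookrightarrow L$.

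Finally I would conclude using Lemma \ref{isogeny between free Z-modules}. Applying it to the inclusion $\varphi(M) \cap L \hookrightarrow \varphi(M)$ produces a $\Gamma$-equivariant map $\varphi(M) \to \varphi(M) \cap L$ which, composed with that inclusion, is multiplication by a positive integer; in particular this map is itself an isogeny. Composing it with the isogeny $\varphi(M) \cap L \hookrightarrow L$ and transporting back along $\varphi$ exhibits an isogeny $M \to L$, so $M$ is isogenous to $L = \prod_{r=1}^{\infty} \mathbb{Z}[\zeta_r]^{a(r)}$, as claimed. I do not expect a serious obstacle: the two points requiring (minor) care are that the rational decomposition is genuinely a decomposition into the $\mathbb{Q}(\zeta_r)$'s — which is exactly the content of the semisimplicity of $\mathbb{Q}[\Gamma_n]$ together with the preceding classification lemma — and that the isogeny relation can be chained $M \leftarrow \varphi(M) \cap L \to L$, which is precisely what Lemma \ref{isogeny between free Z-modules} is designed to provide.
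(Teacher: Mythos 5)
Your proof is correct, and its first half (factoring the action through $\Gamma_n$, using the semisimplicity of $\mathbb{Q}[\Gamma_n]$ and the classification lemma to decompose $M\otimes_{\mathbb{Z}}\mathbb{Q}$ into copies of the $\mathbb{Q}(\zeta_r)$) coincides with the paper's. The second half takes a genuinely different route: the paper pulls back the standard basis vectors $e_{r,i}$ along the rational isomorphism, forms the $\mathbb{Z}[\Gamma_n]$-span $M'=\prod_{r,i}\mathbb{Z}[\Gamma_n]m_{r,i}$ \emph{inside} $M$, observes that $M'\cong\prod_r\mathbb{Z}[\zeta_r]^{a(r)}$ because the image of $\mathbb{Z}[\Gamma_n]$ in $\mathrm{End}(\mathbb{Q}(\zeta_r))$ is $\mathbb{Z}[\zeta_r]$, and concludes with the single isogeny $M'\hookrightarrow M$; you instead transport $M$ into the ambient rational representation, intersect it with the standard lattice $L=\bigoplus_r\mathbb{Z}[\zeta_r]^{a(r)}$, and chain the two isogenies $\varphi(M)\cap L\hookrightarrow\varphi(M)$ and $\varphi(M)\cap L\hookrightarrow L$ via Lemma \ref{isogeny between free Z-modules}. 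Your commensurability argument is a standard and robust device (it avoids having to identify the $\Gamma$-module structure of a group-ring orbit), at the cost of two minor points you state somewhat tersely: fullness of $\varphi(M)\cap L$ is cleanest via the second isomorphism theorem, since $L/(\varphi(M)\cap L)\cong(\varphi(M)+L)/\varphi(M)$ is finite (and symmetrically for $\varphi(M)$), rather than by asserting equality of ranks outright; and the map $\varphi(M)\to\varphi(M)\cap L$ supplied by Lemma \ref{isogeny between free Z-modules} is only stated there to be $\mathbb{Z}$-linear, though its $\Gamma$-equivariance is automatic from the uniqueness forced by injectivity of the inclusion (the paper implicitly uses the same fact when invoking the lemma for symmetry of the isogeny relation). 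The paper's construction buys a one-step isogeny with an explicit sublattice; yours buys a shorter verification that the target really is the standard lattice.
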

	\begin{proof}
		The action of $\Gamma$ on $M$ factors through $\Gamma_n$ for some $n$, so the vector space $M\otimes_{\mathbb{Z}}\mathbb{Q}$ is a $\mathbb{Q}$-linear representation of $\Gamma_n$. By the previous lemma, we have an isomorphism of $\mathbb{Q}$-linear representations of $\Gamma_n$
		\[
		F
		\colon
		M\otimes_{\mathbb{Z}}\mathbb{Q}
		\simeq
		\prod_{r=1}^\infty\mathbb{Q}(\zeta_{r})^{a(r)}
		\]
		for a sequence of integers $(a(r))_{r>0}$ such that $a(r)=0$ for $r\gg0$.

		If $a(r)>0$, we write $e_{r,1},\cdots,e_{r,a(r)}$ for the standard $\mathbb{Q}(\zeta_{r})$-basis of $\mathbb{Q}(\zeta_{r})^{a(r)}$, which we also view as a subspace of $\prod_{r=1}^\infty\mathbb{Q}(\zeta_{r})^{a(r)}$ in a natural way. Then we can find $m_{r,1},\cdots,m_{r,a(r)}\in M$ and $t_{r,1},\cdots,t_{r,a(r)}\in\mathbb{Q}^\times$ such that
		\[
		F(m_{r,i}\otimes t_{r,i})
		=
		e_{r,i},
		\quad
		\forall
		i=1,\cdots,a(r).
		\]
		Then the $\mathbb{Z}$-submodule  $M'=\prod_{r=1}^\infty\prod_{i=1}^{a(r)}\mathbb{Z}[\Gamma_n]m_{r,i}$ of $M$ is stable under $\Gamma_n$ and moreover $M'\otimes\mathbb{Q}=M\otimes\mathbb{Q}$. So by Lemma \ref{isogeny between free Z-modules}, $M$ is isogenous to $M'$, which is isomorphic to $\prod_{r=1}^\infty\mathbb{Z}[\zeta_{r}]^{a(r)}$.
		
	\end{proof}

	\begin{corollary}\label{canonical form of log free Gamma-module}
		Any lattice pairing $(M,N,\Phi)=(M,N,\langle-,-\rangle)$ in $\MorGammaN$ with $\langle-,-\rangle$ \emph{non-degenerate} is isogenous to a lattice pairing of the form {\normalfont $(R,R^\vee,\id_R)$}, where $R=\prod_{r=1}^\infty\mathbb{Z}[\zeta_{r}]^{a(r)}$ for a sequence of non-negative integers $(a(r))_{r>0}$ such that $a(r)=0$ for $r\gg0$.
	\end{corollary}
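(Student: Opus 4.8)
The plan is to deduce this from the preceding proposition on the structure of free $\Gamma$-modules, once we observe that non-degeneracy of $\langle-,-\rangle$ makes $\Phi$ an isogeny in $\Mod_\Gamma$. Since $P=\N$ we have $P^\gp=\Z$, so $\Phi$ is simply a $\Gamma$-equivariant map $\Phi\colon M\to N^\vee$, and $\langle-,-\rangle$ being non-degenerate says that both $\Phi$ and its transpose $\Phi^*\colon N\to M^\vee$ are injective. Comparing ranks this forces $\mathrm{rk}(M)=\mathrm{rk}(N)$, whence $\Phi$ is an injection between free abelian groups of the same rank and so has finite cokernel; thus $\Phi$ is an isogeny in $\Mod_\Gamma$.

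Next I would invoke the preceding proposition for $M$: its proof exhibits a $\Gamma$-stable submodule $M'\subseteq M$ with $M'\otimes_\Z\Q=M\otimes_\Z\Q$ and $M'\cong\prod_{r=1}^\infty\Z[\zeta_r]^{a(r)}=:R$, with $a(r)=0$ for $r\gg 0$, so the inclusion $M'\hookrightarrow M$ gives an isogeny $\alpha\colon R\to M$ in $\Mod_\Gamma$. Put $\beta:=\Phi\circ\alpha\colon R\to N^\vee$, a composite of isogenies, hence itself an isogeny; dualizing and using that $\Hom_\Z(-,\Z)$ carries a short exact sequence with finite cokernel to one with finite cokernel (because $\Ext^1_\Z$ of a finite abelian group is finite and its $\Hom_\Z$ into $\Z$ vanishes) produces an isogeny $\beta^\vee\colon N=(N^\vee)^\vee\to R^\vee$ in $\Mod_\Gamma$.

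Finally I would check that the pair $(\alpha,\beta^\vee)$ defines a morphism $(R,R^\vee,\id_R)\to(M,N,\Phi)$ in $\MorGammaN$; by the commuting square \eqref{morphism of log free Gamma-modules} (with $P^\gp=\Z$) this amounts to the identity $(\beta^\vee)^\vee\circ\id_R=\Phi\circ\alpha$ under the canonical identification $(R^\vee)^\vee\cong R$, and $(\beta^\vee)^\vee=\beta=\Phi\circ\alpha$ by biduality, so the square commutes. As both $\alpha$ and $\beta^\vee$ are isogenies in $\Mod_\Gamma$, this is an isogeny of lattice pairings, so $(R,R^\vee,\id_R)$ is isogenous to $(M,N,\Phi)$; since isogeny of lattice pairings is symmetric (as noted right after its definition), $(M,N,\Phi)$ is isogenous to $(R,R^\vee,\id_R)$, which completes the argument. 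The only point that needs care is the biduality bookkeeping in the last step, namely that the pairing map on the left-hand object comes out to be exactly $\id_R$ and not a twist of it; everything else is routine manipulation with isogenies.
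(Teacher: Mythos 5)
Your argument is correct and essentially the paper's proof: the single isogeny $(\alpha,\beta^\vee)\colon(R,R^\vee,\id_R)\to(M,N,\Phi)$ you construct is exactly the composite of the paper's two isogenies $(R,R^\vee,\id_R)\to(M,M^\vee,\id_M)\to(M,N,\Phi)$, built from the same ingredients (the structure result for free $\Gamma$-modules plus biduality). The only additions are that you spell out why non-degeneracy makes $\Phi$ an isogeny and why dualizing preserves isogenies, which the paper leaves implicit.
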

	\begin{proof}
		By definition, $\Phi$ is an isogeny in $\Mod_{\Gamma}$. Then it is easy to see that $(M,M^\vee,\id_M)$ is isogenous to $(M,N,\Phi)$ by the following commutative diagram
		\[
		\begin{tikzcd}
			M
			\arrow[r, "\id_M"]
			\arrow[d, "\id_M"']
			&
			(M^\vee)^\vee=M
			\arrow[d,"\Phi"]
			\\
			M
			\arrow[r,"\Phi"]
			&
			N^\vee
		\end{tikzcd}
		\]
		Suppose that $R$ is isogenous to $M$ as in the corollary by a morphism $F\colon R\rightarrow M$, then $(R,R^\vee,\mathrm{Id}_R)$ is isogenous to $(M,M^\vee,\id_M)$ by the following commutative diagram
		\[
		\begin{tikzcd}
			R
			\arrow[r,"\id_R"]
			\arrow[d,"F"']
			&
			(R^\vee)^\vee=R
			\arrow[d,"(F^\vee)^\vee=F"]
			\\
			M
			\arrow[r,"\mathrm{Id}_M"]
			&
			(M^\vee)^\vee=M
		\end{tikzcd}
		\]
	\end{proof}


	\begin{proposition}\label{non-deg iff polar iff ppolar}
            Let $(M,N,\Phi)=(M,N,\langle-,-\rangle)$ be an object of $\mathrm{Mor}_{\Gamma}^\N$. Then the following are equivalence.
		\begin{enumerate}
		    \item  $\langle-,-\rangle$ is non-degenerate. 
                \item $(M,N,\Phi)$ is pointwise polarizable, that is, it lies in $\MorGammaNpPol$.
                \item $(M,N,\Phi)$ is polarizable.
		\end{enumerate}
  
	\end{proposition}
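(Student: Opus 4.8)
My plan is to prove the cycle $(3)\Rightarrow(2)\Rightarrow(1)\Rightarrow(3)$. The first implication is immediate, since a polarization of $(M,N,\Phi)$ is in particular a polarization over the open subgroup $\Gamma'=\Gamma$. For $(2)\Rightarrow(1)$ I would note first that non-degeneracy of $\langle-,-\rangle$ is a property of the underlying $\mathbb{Z}$-bilinear form $M\times N\to\mathbb{Z}$ alone, independent of the Galois action. So suppose $(M,N,\Phi)$ admits a polarization $\lambda\colon M\to N$ over some open $\Gamma'\subset\Gamma$; then $\lambda$ is an isogeny and $\langle m,\lambda(m)\rangle\in\mathbb{N}\setminus\{0\}$ for every nonzero $m\in M$, which makes the left radical of $\langle-,-\rangle$ vanish. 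For the right radical, if $n\in N$ satisfies $\langle-,n\rangle=0$, then since $\lambda$ is an isogeny there are a positive integer $c$ and $m\in M$ with $cn=\lambda(m)$, whence $0=c\langle m,n\rangle=\langle m,\lambda(m)\rangle$, forcing $m=0$ and hence $n=0$. Thus $\langle-,-\rangle$ is non-degenerate.

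The core of the proposition is $(1)\Rightarrow(3)$, which I would handle by a rationalisation-and-averaging argument. Assuming $\langle-,-\rangle$ non-degenerate, vanishing of the two radicals gives injections $M\hookrightarrow N^{\vee}$ and $N\hookrightarrow M^{\vee}$, so $M$ and $N$ have a common rank $d$ and $\Phi\colon M\to N^{\vee}$ is an isogeny. Put $V:=M\otimes_{\mathbb{Z}}\mathbb{Q}$ and $W:=N\otimes_{\mathbb{Z}}\mathbb{Q}$; the rationalised pairing $V\times W\to\mathbb{Q}$ is perfect and $\Gamma$-equivariant, so $w\mapsto\langle-,w\rangle$ is an isomorphism of $\Gamma$-representations $\iota\colon W\xrightarrow{\sim}V^{*}:=\mathrm{Hom}_{\mathbb{Q}}(V,\mathbb{Q})$, where $\Gamma$ acts on $V^{*}$ by $(\gamma f)(v)=f(\gamma^{-1}v)$. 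Since the $\Gamma$-action on $V$ factors through a finite quotient $\Gamma_{n}$, averaging an arbitrary positive definite symmetric bilinear form on $V$ over $\Gamma_{n}$ yields a $\Gamma$-invariant positive definite symmetric form $B\colon V\times V\to\mathbb{Q}$, equivalently a $\Gamma$-equivariant isomorphism $\beta\colon V\xrightarrow{\sim}V^{*}$, $v'\mapsto B(-,v')$. I then set $\mu:=\iota^{-1}\circ\beta\colon V\xrightarrow{\sim}W$, a $\Gamma$-equivariant isomorphism with $\langle v,\mu(v')\rangle=\beta(v')(v)=B(v,v')$ for all $v,v'\in V$, so that the form $\langle-,\mu(-)\rangle$ on $V$ equals $B$, symmetric and positive definite.

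To finish $(1)\Rightarrow(3)$ I would clear denominators: $\mu(M)$ and $N$ are commensurable lattices in $W$, so for some positive integer $c$ the map $\lambda:=(c\mu)|_{M}$ sends $M$ into $N$; it is $\mathbb{Z}$-linear and $\Gamma$-equivariant, and $\lambda\otimes\mathbb{Q}=c\mu$ is an isomorphism, so $\lambda$ is injective with finite cokernel, i.e.\ an isogeny in $\mathrm{Mod}_{\Gamma}$. Moreover $\langle-,\lambda(-)\rangle=cB$ is symmetric, so $(\lambda,\lambda)\colon(M,N,\Phi)\to(N,M,\Phi^{*})$ is a morphism, and for nonzero $m\in M$ the value $\langle m,\lambda(m)\rangle=cB(m,m)$ is a strictly positive integer since $\langle-,-\rangle$ is integer-valued on $M\times N$ and $B$ is positive definite. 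Hence $\lambda$ is a polarization and $(M,N,\Phi)$ is polarizable. The main obstacle is precisely this step: an arbitrary rational isomorphism $V\cong W$ need not induce a positive definite symmetric form, and the averaging over the finite group $\Gamma_{n}$ is exactly the device that produces an isogeny whose associated symmetric form is simultaneously $\Gamma$-equivariant and positive definite. Alternatively, one could derive $(1)\Rightarrow(3)$ from Corollary \ref{canonical form of log free Gamma-module} together with Proposition \ref{simpleness of LP is preserved by isogenies}\,(2): a non-degenerate $(M,N,\Phi)$ is isogenous to $(R,R^{\vee},\mathrm{Id}_{R})$ with $R=\prod_{r>0}\mathbb{Z}[\zeta_{r}]^{a(r)}$, and a polarization of the latter is assembled block by block from the $\Gamma$-equivariant trace map \eqref{Z[zeta_r] embeds into its dual}, using that $\mathrm{Tr}_{\mathbb{Q}(\zeta_{r})/\mathbb{Q}}(x\overline{y})$ is symmetric in $x,y$ and that $\mathrm{Tr}_{\mathbb{Q}(\zeta_{r})/\mathbb{Q}}(x\overline{x})>0$ for $x\neq0$.
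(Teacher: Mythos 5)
Your proposal is correct, and the two easy implications $(3)\Rightarrow(2)$ and $(2)\Rightarrow(1)$ agree with the paper (your radical computation just spells out what the paper leaves as ``easy to see''). The substantive difference is in $(1)\Rightarrow(3)$: you prove it by the averaging (unitarian-trick) argument --- rationalize, note the pairing gives a $\Gamma$-equivariant isomorphism $W\simeq V^{*}$, average a positive definite form over the finite quotient through which $\Gamma$ acts to get a $\Gamma$-invariant positive definite $B$, compose to get $\mu\colon V\xrightarrow{\sim}W$ with $\langle v,\mu(v')\rangle=B(v,v')$, and clear denominators --- whereas the paper first invokes Corollary \ref{canonical form of log free Gamma-module} and Proposition \ref{simpleness of LP is preserved by isogenies}\,(2) to reduce, up to isogeny, to $(\mathbb{Z}[\zeta_r],\mathbb{Z}[\zeta_r]^{\vee},\mathrm{Id})$ and then exhibits the trace map \eqref{Z[zeta_r] embeds into its dual} as an explicit polarization (your closing sentences sketch exactly this alternative). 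Your route is more elementary and self-contained: it does not use the classification of $\Gamma$-lattices up to isogeny, nor the procyclic structure of $\Gamma$, only that the action factors through a finite quotient --- so it in fact establishes $(1)\Rightarrow(3)$ without the finiteness of $\kbf$ that the paper's method (and the remark following the proposition) leans on. What the paper's route buys in exchange is the explicit normal form and the explicit cyclotomic trace polarization, which are reused downstream (Proposition \ref{description of pPol object up to isogeny}, Proposition \ref{polarization on (Z[zeta_r],Z[zeta_r]^vee,Id)} and the determination of all polarizations), so their proof doubles as preparation for the classification results, while yours is a cleaner standalone proof of the equivalence itself.
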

	\begin{proof}
		We show $(1)\Rightarrow(3)$. By Corollary \ref{canonical form of log free Gamma-module} and Proposition \ref{simpleness of LP is preserved by isogenies} (2), we can assume
		\[
		(M,N,\Phi)=(\mathbb{Z}[\zeta_{r}],\mathbb{Z}[\zeta_{r}]^\vee,\mathrm{Id}).
		\]
        The map $\lambda\colon\mathbb{Z}[\zeta_r]\hookrightarrow\mathbb{Z}[\zeta_r]^\vee$ from  \eqref{Z[zeta_r] embeds into its dual} defines a polarization of $(\Z[\zeta_r],\Z[\zeta_r]^\vee,\id)$. Indeed, by the correspondence between the pairing $\langle-,-\rangle$ and the map $\mathrm{Id}\colon\mathbb{Z}[\zeta_r]\to\mathbb{Z}[\zeta_r]$, we have
        \[
        \langle x,\lambda(y)\rangle
        =
        \mathrm{Tr}_{\mathbb{Q}(\zeta_r)/\mathbb{Q}}(y\overline{x})
        =
        \mathrm{Tr}_{\mathbb{Q}(\zeta_r)/\mathbb{Q}}(x\overline{y})
        =
        \langle y,\lambda(x)\rangle,
        \quad
        \forall
        x,y\in\mathbb{Z}[\zeta_r]
        \]
        and
        \[
        \langle x,\lambda(x)\rangle
        =
        \mathrm{Tr}_{\mathbb{Q}(\zeta_r)/\mathbb{Q}}(x\overline{x})>0,
        \quad
        \forall\,0\neq x\in\mathbb{Z}[\zeta_r].
        \]        
        From this we deduce that $\lambda$ is a polarization on $(M,N,\Phi)$.

        $(3)\Rightarrow(2)$ is trivial. 
        
        We show $(2)\Rightarrow(1)$. Since $(M,N,\Phi)\in\MorGammaNpPol$, there exists an open subgroup $\Gamma'\subset \Gamma$ such that $(M,N,\Phi)$ is polarizable in $\Mor_{\Gamma'}^{\N}$ with a polarization $\lambda:M\to N$. In particular, $\langle-,\lambda(-)\rangle:M\times M\to\Z$ is symmetric and $\langle-,\lambda(-)\rangle_{\R}:(M\otimes_{\Z}\R)\times (M\otimes_{\Z}\R)\to\R$ is positive definite. Then it is easy to see that the bilinear form $\langle-,-\rangle:M\times N\to\Z$ is non-degenerate.
    
	\end{proof}

        The implication $(2)\Rightarrow (1)$ in Proposition \ref{non-deg iff polar iff ppolar} holds for $\kbf$ being a general field. But the implication $(1)\Rightarrow(3)$ relies on the condition that $\kbf$ is a finite field, as $\Gamma\cong\widehat{\Z}$ is key to the morphism \eqref{Z[zeta_r] embeds into its dual} being an isogeny in $\Mod_{\Gamma}$.

    \begin{corollary}\label{log 1-motive polarizable-rank 1}
        Any pointwise polarizable log 1-motive over the standard log point $S$ is polarizable.
    \end{corollary}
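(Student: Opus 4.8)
The plan is to reduce, via the isogenous decomposition of Theorem \ref{isogeneous decomposition of log AV}, to two separate assertions: that abelian varieties over $\kbf$ are polarizable, and that pointwise polarizable log $1$-motives without abelian part over the standard log point are polarizable; the second assertion is then transported from the analogous statement for lattice pairings, Proposition \ref{non-deg iff polar iff ppolar}, through the functor $\LP$.

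First I would record that polarizability of log $1$-motives is invariant under isogeny. Given an isogeny $\phi\colon\Mbf'\to\Mbf$ and a polarization $h$ of $\Mbf$, pulling $h$ back along $\phi$ gives a polarization of $\Mbf'$ by the argument used in the proof of Theorem \ref{isogeneous decomposition of log AV}(1) (that is, \cite[Lemma 3.4]{Zhao2017}), since $\phi$ has injective lattice part with finite cokernel; the reverse implication follows because an isogeny of log $1$-motives admits a quasi-inverse isogeny whose two composites are multiplication by a positive integer (the lemma preceding Proposition \ref{log 1-motive with ab=0 is simple iff its LP is simple}). Now let $\Mbf=[Y\xrightarrow{u}G_{\log}]$ be a pointwise polarizable log $1$-motive over the standard log point $S$, with torus part $T$ and abelian part $B$. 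By Theorem \ref{isogeneous decomposition of log AV}, $\Mbf$ is isogenous to $\Mbf_1\times B$, where $\Mbf_1=[Y\xrightarrow{u_1}T_{\log}]$ is a pointwise polarizable log $1$-motive without abelian part. Since $(\Mbf_1\times B)^*=\Mbf_1^*\times B^*$ and a product of polarizations is again a polarization (one checks the four conditions in the definition of polarization componentwise), by the isogeny-invariance just recalled it suffices to produce a polarization of $\Mbf_1$ and a polarization of $B$; the latter exists because any abelian variety over a field carries an ample line bundle and hence a polarization.

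It remains to polarize $\Mbf_1$. Set $(M,N,\Phi):=\LP(\Mbf_1)$, which lies in $\MorGammaNpPol$ by Proposition \ref{properties of the functor LP}(4). By Proposition \ref{non-deg iff polar iff ppolar} it is polarizable, say by $\lambda\colon M\to N$; the same $\lambda$ is a polarization of $(M,N,r\Phi)$ for every positive integer $r$, since scaling the pairing by $r$ preserves symmetry and carries $\N\setminus\{0\}$ into itself. Hence by Proposition \ref{properties of the functor M_-}(2)--(3) the log $1$-motive $\Mbf_{-}((M,N,r\Phi))$ is polarizable. By Proposition \ref{decomposition of log 1-motives} one has $\Mbf_{-}((M,N,\Phi))=\Mbf_{-}\circ\LP(\Mbf_1)=[Y\xrightarrow{u_1^{\log}}T_{\log}]$, so $\Mbf_{-}((M,N,r\Phi))=[Y\xrightarrow{ru_1^{\log}}T_{\log}]$; choosing $r$ to kill the classical part $u_1^{\mathrm c}\in\Hom_\kbf(Y,T)$, a finite group by Lemma \ref{torsion phenomenon over finite fields}(1), gives $ru_1=ru_1^{\log}$, hence $\Mbf_{-}((M,N,r\Phi))=[Y\xrightarrow{ru_1}T_{\log}]$. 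Finally $(1_Y,r_{T_{\log}})\colon\Mbf_1\to[Y\xrightarrow{ru_1}T_{\log}]$ is an isogeny, so $\Mbf_1$ is polarizable by the isogeny-invariance above. I expect no deep obstacle here: the substance is entirely in Proposition \ref{non-deg iff polar iff ppolar}, which is where the hypotheses that $\kbf$ is finite and $P=\N$ really enter (via the trace form making \eqref{Z[zeta_r] embeds into its dual} an isogeny), and the only mild care needed in the reduction is to track the classical part $u_1^{\mathrm c}$ and to orient the various isogenies so that pullback of polarizations applies.
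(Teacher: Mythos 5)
Your proof is correct and follows essentially the same route as the paper: decompose $\Mbf$ up to isogeny via Theorem \ref{isogeneous decomposition of log AV}, reduce the totally degenerate factor to a lattice pairing where Proposition \ref{non-deg iff polar iff ppolar} supplies the polarization, transport it back through $\Mbf_{-}$ (Proposition \ref{properties of the functor M_-} (3)), and transfer polarizability along the isogenies — the paper compresses your explicit $r$-scaling/classical-part bookkeeping into the citation of Corollary \ref{equivalence from pairs of Galois modules to LAVs} and the final transfer into \cite[Lemma 3.4(1)]{Zhao2017}. The only cosmetic point is that the quasi-inverse-isogeny lemma you invoke is stated in the paper only for $\Mcal^{\log,\mathrm{ab}=0}_1$, but the identical argument works in the presence of an abelian part, so nothing is lost.
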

    \begin{proof}
    By the proof of Theorem \ref{isogeneous decomposition of log AV} (1), $\Mbf$ is isogenous to the product of $[Y\xrightarrow{u_1}T_{\log}]$ and $B$ with $[Y\xrightarrow{u_1}T_{\log}]$ pointwise polarizable. By Corollary \ref{equivalence from pairs of Galois modules to LAVs}, we may assume that $[Y\xrightarrow{u_1}T_{\log}]$ lies in the image of the functor $\Mbf_{-}$. Then by Proposition \ref{non-deg iff polar iff ppolar} and Proposition \ref{properties of the functor M_-} (3), $[Y\xrightarrow{u_1}T_{\log}]$ is actually polarizable. It follows that $\Mbf$ is polariable by \cite[Lemma 3.4(1)]{Zhao2017}.
    \end{proof}

        We now give the description of objects in $\MorGammaNpPol$.

    \begin{proposition}\label{description of pPol object up to isogeny}
    \begin{enumerate}
        \item For any $r>0$, $(\Z[\zeta_r],\Z[\zeta_r]^\vee,\id)$  admits a polarization given by the natural map $\mathbb{Z}[\zeta_r]\hookrightarrow\mathbb{Z}[\zeta_r]^\vee$ (see (\ref{Z[zeta_r] embeds into its dual})), and is a simple object of $\MorGammaNpPol$.
        \item Any object $(M,N,\Phi)$ in $\MorGammaNpPol$ is isogenous to an object of the form $(R,R^\vee,\id_R)$ where $R=\prod_{r=1}^\infty\mathbb{Z}[\zeta_r]^{a(r)}$
        for a sequence of non-negative integers $(a(r))_{r>0}$ such that $a(r)=0$ for $r\gg0$.
        \item Any simple object in $\MorGammaNpPol$ is isogenous to $(\Z[\zeta_{r}],\Z[\zeta_{r}],\mathrm{Id})$ for some positive integer $r>0$.
    \end{enumerate}
    \end{proposition}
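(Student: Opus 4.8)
The plan is to harvest all three parts from results already in place: the rationality of the representations $\mathbb{Q}(\zeta_r)$ of $\Gamma$, Corollary \ref{canonical form of log free Gamma-module}, Proposition \ref{non-deg iff polar iff ppolar}, Proposition \ref{simpleness of LP is preserved by isogenies}, and Lemma \ref{pPol LP with simple factor is simple and the converse is true for rank 1 monoid}.

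\emph{Part (1).} The polarization is literally the map $\lambda\colon\mathbb{Z}[\zeta_r]\hookrightarrow\mathbb{Z}[\zeta_r]^\vee$ of \eqref{Z[zeta_r] embeds into its dual}: the identities $\langle x,\lambda(y)\rangle=\mathrm{Tr}_{\mathbb{Q}(\zeta_r)/\mathbb{Q}}(x\overline y)=\langle y,\lambda(x)\rangle$ and $\langle x,\lambda(x)\rangle>0$ for $x\neq 0$ are exactly the computation performed in the proof of Proposition \ref{non-deg iff polar iff ppolar} for the implication $(1)\Rightarrow(3)$; I would just recall it. This also shows $(\mathbb{Z}[\zeta_r],\mathbb{Z}[\zeta_r]^\vee,\mathrm{Id})\in\MorGammaNpPol$. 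For simpleness, I would note that $\mathbb{Z}[\zeta_r]\otimes_{\mathbb{Z}}\mathbb{Q}=\mathbb{Q}(\zeta_r)$ is an irreducible rational representation of $\Gamma$, so $\mathbb{Z}[\zeta_r]$ is a simple free $\Gamma$-module; the second assertion of Lemma \ref{pPol LP with simple factor is simple and the converse is true for rank 1 monoid} then applies verbatim and yields that $(\mathbb{Z}[\zeta_r],\mathbb{Z}[\zeta_r]^\vee,\mathrm{Id})$ is a simple object of $\MorGammaNpPol$.

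\emph{Part (2).} Given $(M,N,\Phi)=(M,N,\langle-,-\rangle)\in\MorGammaNpPol$, the implication $(2)\Rightarrow(1)$ of Proposition \ref{non-deg iff polar iff ppolar} says $\langle-,-\rangle$ is non-degenerate, and then Corollary \ref{canonical form of log free Gamma-module} gives directly that $(M,N,\Phi)$ is isogenous to some $(R,R^\vee,\mathrm{Id}_R)$ with $R=\prod_{r=1}^\infty\mathbb{Z}[\zeta_r]^{a(r)}$ and $a(r)=0$ for $r\gg0$; moreover $(R,R^\vee,\mathrm{Id}_R)$ again lies in $\MorGammaNpPol$ by Proposition \ref{simpleness of LP is preserved by isogenies} (2) (or, alternatively, additively from part (1)).

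\emph{Part (3).} If $(M,N,\Phi)$ is simple in $\MorGammaNpPol$, then by part (2) it is isogenous to $(R,R^\vee,\mathrm{Id}_R)$ with $R=\prod_r\mathbb{Z}[\zeta_r]^{a(r)}$, and Proposition \ref{simpleness of LP is preserved by isogenies} (1) makes $(R,R^\vee,\mathrm{Id}_R)$ simple as well; since $P=\mathbb{N}$, the last assertion of Lemma \ref{pPol LP with simple factor is simple and the converse is true for rank 1 monoid} then forces $R$ to be a simple free $\Gamma$-module, i.e.\ $R\otimes_{\mathbb{Z}}\mathbb{Q}=\prod_r\mathbb{Q}(\zeta_r)^{a(r)}$ is irreducible, which happens precisely when $\sum_r a(r)=1$. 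Hence $R=\mathbb{Z}[\zeta_r]$ for a single $r$, so $(M,N,\Phi)$ is isogenous to $(\mathbb{Z}[\zeta_r],\mathbb{Z}[\zeta_r]^\vee,\mathrm{Id})$; composing with the $\Gamma$-equivariant isogeny \eqref{Z[zeta_r] embeds into its dual} identifies the target with $(\mathbb{Z}[\zeta_r],\mathbb{Z}[\zeta_r],\mathrm{Id})$, as stated. I do not expect a genuine obstacle in any of this — every step reduces to a previously proven statement — so the only points deserving care are the verification that $(\mathbb{Z}[\zeta_r],\mathbb{Z}[\zeta_r]^\vee,\mathrm{Id})$ is pointwise polarizable (needed before Lemma \ref{pPol LP with simple factor is simple and the converse is true for rank 1 monoid} can be invoked) and the bookkeeping around the isogeny $\mathbb{Z}[\zeta_r]\simeq\mathbb{Z}[\zeta_r]^\vee$ in the final identification.
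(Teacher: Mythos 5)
Your proposal is correct and follows essentially the same route as the paper: parts (1) and (2) invoke exactly the same ingredients (the trace-pairing polarization from the proof of Proposition \ref{non-deg iff polar iff ppolar}, simpleness of $\Z[\zeta_r]$ plus Lemma \ref{pPol LP with simple factor is simple and the converse is true for rank 1 monoid}, non-degeneracy plus Corollary \ref{canonical form of log free Gamma-module} and Proposition \ref{simpleness of LP is preserved by isogenies} (2)). For part (3) the paper merely writes ``follows from (1) and (2)''; your unwinding via Proposition \ref{simpleness of LP is preserved by isogenies} (1), the $P=\N$ case of Lemma \ref{pPol LP with simple factor is simple and the converse is true for rank 1 monoid}, and the identification $\Z[\zeta_r]\simeq\Z[\zeta_r]^\vee$ up to isogeny via \eqref{Z[zeta_r] embeds into its dual} is precisely the intended argument.
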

    \begin{proof}
            (1) The first half of the statement has already appeared in the proof of the implication $(1)\Rightarrow(3)$ in Proposition \ref{non-deg iff polar iff ppolar}. Since $\Z[\zeta_{r}]$ is a simple free $\Gamma$-module, $(\Z[\zeta_r],\Z[\zeta_r]^\vee,\id)$ is a simple object of $\MorGammaNpPol$ by Lemma \ref{pPol LP with simple factor is simple and the converse is true for rank 1 monoid}.
           
            (2) Since $(M,N,\Phi)$ is an object of $\MorGammaNpPol$, the bilinear map $M\times N\to\Z$ is non-degenerate by Proposition \ref{non-deg iff polar iff ppolar}. Then by Corollary \ref{canonical form of log free Gamma-module} it is isogenous to an object of the form $(R,R^\vee,\id_R)$ which also lies in $\MorGammaNpPol$ by Proposition \ref{simpleness of LP is preserved by isogenies} (2).

           (3) This follows from (1) and (2).
	\end{proof}

    \subsection{Polarizations in $\MorGammaNpPol$}
    In this subsection, we describe the polarizations on certain objects in $\MorGammaNpPol$.

	Fix a positive integer $r>0$. We write $\mathfrak{d}_{\mathbb{Z}[\zeta_{r}]}$ for the different ideal of $\mathbb{Z}[\zeta_{r}]$ whose inverse is given by
	\[
	\mathfrak{d}_{\mathbb{Z}[\zeta_{r}]}^{-1}
	=
	\left\{
	t\in\mathbb{Q}(\zeta_{r})
	|
	\mathrm{Tr}_{\mathbb{Q}(\zeta_{r})/\mathbb{Q}}(ty)\in\mathbb{Z},\,\forall y\in\mathbb{Z}[\zeta_{r}]
	\right\}.
	\]
	One checks that $\mathfrak{d}_{\Z[\zeta_{r}]}^{-1}$ is stable under the action of $\Gamma$.
	\begin{lemma}
	    The natural inclusion $\Z[\zeta_{r}]\hookrightarrow \Z[\zeta_{r}]^\vee$ (see \eqref{Z[zeta_r] embeds into its dual}) extends to an isomorphism of free $\Gamma$-modules
	\begin{equation}\label{identification of different and dual}
		\mathfrak{d}_{\Z[\zeta_{r}]}^{-1}
		\xrightarrow{\simeq}
		\Z[\zeta_{r}]^\vee,
		\quad
		x
		\mapsto
		(\phi_x\colon y
		\mapsto
		\mathrm{Tr}_{\Q(\zeta_{r})/\Q}(x\overline{y})).
	\end{equation}
        Moreover, this isomorphism extends to an isomorphism $\Q(\zeta_{r})\simeq\Q(\zeta_{r})^\vee=\Q(\zeta_{r})$ of $\mathbb{Q}$-linear representations of $\Gamma$.
	\end{lemma}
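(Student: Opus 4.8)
The plan is to reduce everything to the non-degeneracy of the trace pairing twisted by complex conjugation, and then cut the resulting rational isomorphism down to the integral lattices. First I would record that $c\colon\Q(\zeta_{r})\to\Q(\zeta_{r})$, $y\mapsto\overline{y}$, is a field automorphism (for $r>2$ it is $\zeta_{r}\mapsto\zeta_{r}^{-1}$, and for $r\le 2$ it is the identity) which restricts to a ring automorphism of $\Z[\zeta_{r}]$; in particular $c(\Z[\zeta_{r}])=\Z[\zeta_{r}]$. Since the ordinary trace form $(x,y)\mapsto\mathrm{Tr}_{\Q(\zeta_{r})/\Q}(xy)$ is non-degenerate ($\Q(\zeta_{r})/\Q$ being separable), composing the second variable with $c$ keeps it non-degenerate: if $\mathrm{Tr}_{\Q(\zeta_{r})/\Q}(x\overline{y})=0$ for all $y$, then $\mathrm{Tr}_{\Q(\zeta_{r})/\Q}(xz)=0$ for all $z$ by surjectivity of $c$, whence $x=0$. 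Therefore the $\Q$-linear map $x\mapsto\phi_{x}$, $\phi_{x}(y)=\mathrm{Tr}_{\Q(\zeta_{r})/\Q}(x\overline{y})$, is an isomorphism $\Q(\zeta_{r})\xrightarrow{\simeq}\Hom_{\Q}(\Q(\zeta_{r}),\Q)=\Q(\zeta_{r})^{\vee}$. The $\Gamma$-equivariance $\phi_{\gamma x}=\gamma\phi_{x}$ is exactly the computation already carried out above (it remains valid verbatim with $x\in\Q(\zeta_{r})$ in place of $x\in\Z[\zeta_{r}]$), which establishes the claimed isomorphism of $\Q$-linear representations of $\Gamma$.

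Next I would restrict this isomorphism to the integral level. For $x\in\Q(\zeta_{r})$, one has $\phi_{x}\in\Z[\zeta_{r}]^{\vee}$, i.e.\ $\mathrm{Tr}_{\Q(\zeta_{r})/\Q}(x\overline{y})\in\Z$ for all $y\in\Z[\zeta_{r}]$, if and only if $\mathrm{Tr}_{\Q(\zeta_{r})/\Q}(xz)\in\Z$ for all $z\in\Z[\zeta_{r}]$ (substituting $z=\overline{y}$ and using $c(\Z[\zeta_{r}])=\Z[\zeta_{r}]$), which by definition is the condition $x\in\mathfrak{d}_{\Z[\zeta_{r}]}^{-1}$. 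Hence the rational isomorphism above carries $\mathfrak{d}_{\Z[\zeta_{r}]}^{-1}$ bijectively onto $\Z[\zeta_{r}]^{\vee}$; it is $\Gamma$-equivariant because $\mathfrak{d}_{\Z[\zeta_{r}]}^{-1}$ is $\Gamma$-stable (already noted before the lemma) and the defining formula for $\phi_{x}$ is unchanged; and it extends the inclusion $\Z[\zeta_{r}]\hookrightarrow\Z[\zeta_{r}]^{\vee}$ of \eqref{Z[zeta_r] embeds into its dual} simply because $\Z[\zeta_{r}]\subseteq\mathfrak{d}_{\Z[\zeta_{r}]}^{-1}$ (the different being an integral ideal, so its inverse contains $\Z[\zeta_{r}]$) and the two maps are given by the same formula.

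There is no serious obstacle here: the statement is a packaging of standard facts about the inverse different of $\Z[\zeta_{r}]$. The only points requiring a little care are checking that complex conjugation is a well-defined field automorphism of $\Q(\zeta_{r})$ preserving the ring of integers (so that the substitution $z=\overline{y}$ is legitimate), and being consistent about the identification $\Q(\zeta_{r})^{\vee}=\Q(\zeta_{r})$ — which I would fix once and for all to be the one induced by $x\mapsto\phi_{x}$, so that the asserted isomorphisms and the displayed inclusion \eqref{Z[zeta_r] embeds into its dual} are compatible by construction.
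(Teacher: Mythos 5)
Your proof is correct and follows essentially the same route as the paper: the paper simply declares that the map extends the inclusion, is $\Gamma$-equivariant by the earlier computation, and is "clearly" an isomorphism of $\Z$-modules, while you fill in that last step with the standard argument via non-degeneracy of the conjugation-twisted trace pairing and the defining property of $\mathfrak{d}_{\Z[\zeta_r]}^{-1}$. No gaps; your extra care about complex conjugation preserving $\Z[\zeta_r]$ and about fixing the identification $\Q(\zeta_r)^\vee=\Q(\zeta_r)$ only makes explicit what the paper leaves implicit.
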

	\begin{proof}
            Clearly $\eqref{identification of different and dual}$ extends \eqref{Z[zeta_r] embeds into its dual}. From the $\Gamma$-equivariance of (\ref{Z[zeta_r] embeds into its dual}), we deduce that (\ref{identification of different and dual}) is also $\Gamma$-equivariant. Moreover it is clear that the map (\ref{identification of different and dual}) is an isomorphism of $\mathbb{Z}$-modules.
	\end{proof}

	For $t\in\mathbb{Q}(\zeta_{r})$, we have the following $\mathbb{Q}$-bilinear map
	\[
	B_t(-,-)
	\colon
	\mathbb{Q}(\zeta_{r})
	\times
	\mathbb{Q}(\zeta_{r})
	\rightarrow
	\Q,
	\quad
	(x,y)
	\mapsto
	\mathrm{Tr}_{\Q(\zeta_{r})/\Q}(xt\overline{y}).
	\]
        It is symmetric if and only if $t\in\mathbb{Q}(\zeta_r)^+$, the maximal totally real subfield of $\mathbb{Q}(\zeta_r)$ (or equivalently, the subfield fixed by complex conjugation):
        \begin{align}\label{B_t symmetric iff t totally real}
            \begin{split}
                  &
          \mathrm{Tr}_{\mathbb{Q}(\zeta_r)/\mathbb{Q}}(xt\overline{y})
        =
        \mathrm{Tr}_{\mathbb{Q}(\zeta_r)/\mathbb{Q}}(yt\overline{x}),
        \quad
        \forall x,y\in\mathbb{Q}(\zeta_r)
        \\
        \iff
        &
        \mathrm{Tr}_{\mathbb{Q}(\zeta_r)/\mathbb{Q}}(xt\overline{y})
        =
        \mathrm{Tr}_{\mathbb{Q}(\zeta_r)/\mathbb{Q}}(x\overline{ty}),
        \quad
        \forall x,y\in\mathbb{Q}(\zeta_r)
        \\
        \iff
        &
        \mathrm{Tr}_{\mathbb{Q}(\zeta_r)/\mathbb{Q}}(x\overline{y}(t-\overline{t}))=0,
        \quad
        \forall
        x,y\in\mathbb{Q}(\zeta_r)
        \\
        \iff
        &
        t-\overline{t}=0
        \iff
        t\in\mathbb{Q}(\zeta_r)^+.
            \end{split}
        \end{align}
        
        Moreover this bilinear map is $\Gamma$-equivariant. Indeed, by bilinearity, it suffices to show
	$B_t(\gamma\zeta_{r}^j,\gamma\zeta_{r}^{j'})=B_t(\zeta_{r}^j,\zeta_{r}^{j'})$ for any $j,j'$. However we know $\gamma\zeta_{r}^j=\zeta_{r}^{j+1}$ (similarly for $j'$). So we have
	\[
	B_t(\gamma\zeta_{r}^j,\gamma\zeta_{r}^{j'})
	=
	\mathrm{Tr}(\zeta_{r}^{j+1}t\overline{\zeta_{r}^{j'+1}})
	=
	\mathrm{Tr}(\zeta_{r}^jt\overline{\zeta_{r}^{j'}})
	=
	B_t(\zeta_{r}^j,\zeta_{r}^{j'}).
	\]
	Using the isomorphism (\ref{identification of different and dual}), one checks that $B_{t}(-,-)$ corresponds to a $\Gamma$-equivariant linear map
        \[
        \lambda_t
	\colon
	\mathbb{Q}(\zeta_{r})
	\rightarrow
	\mathbb{Q}(\zeta_{r})^\vee
	\simeq
	\mathbb{Q}(\zeta_{r}),
	\quad
	z
	\mapsto
	tz.
        \]

	If $t\in\mathfrak{d}_{\mathbb{Z}[\zeta_{r}]}^{-1}$, then $B_t(-,-)$ restricts to a bilinear form
	\[
	B_t(-,-)
	\colon
	\mathbb{Z}[\zeta_{r}]\times\mathbb{Z}[\zeta_{r}]
	\rightarrow
	\mathbb{Z},
	\quad
	(x,y)
	\mapsto
	\mathrm{Tr}_{\mathbb{Q}(\zeta_{r})/\mathbb{Q}}(xt\overline{y}).
	\]
	As above, this gives rise to an isogeny of free $\Gamma$-modules
        \begin{equation}\label{lambda_t}
        \lambda_t
	\colon
	\mathbb{Z}[\zeta_{r}]
	\rightarrow\mathbb{Z}[\zeta_{r}]^\vee\simeq\mathfrak{d}_{\Z[\zeta_{r}]}^{-1},
	\quad
	z\mapsto
	tz.
        \end{equation}

	We define the following subset of $\mathfrak{d}_{\mathbb{Z}[\zeta_r]}^{-1}$:
	\[
	(\mathfrak{d}_{\mathbb{Z}[\zeta_{r}]}^{-1})_{>0}
	=
	\left\{
	t\in\mathfrak{d}_{\mathbb{Z}[\zeta_{r}]}^{-1}\bigcap
        \mathbb{Q}(\zeta_r)^+
	\mid t \text{ is totally positive}
	\right\}.
	\]

 	The following result gives all the possible polarizations on a simple object in $\MorGammaN$.
	\begin{proposition}\label{polarization on (Z[zeta_r],Z[zeta_r]^vee,Id)}
		Any polarization on {\normalfont $(\mathbb{Z}[\zeta_{r}],\mathbb{Z}[\zeta_{r}]^\vee,\mathrm{Id})$} is of the form $\lambda_t$ for some $t\in(\mathfrak{d}_{\mathbb{Z}[\zeta_{r}]}^{-1})_{>0}$.	
	\end{proposition}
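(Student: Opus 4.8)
The plan is to first determine all $\Gamma$-equivariant $\mathbb{Z}$-linear maps $\lambda\colon\mathbb{Z}[\zeta_{r}]\to\mathbb{Z}[\zeta_{r}]^\vee$ and then to impose, one by one, the symmetry and positivity conditions from the definition of a polarization. Using the $\Gamma$-equivariant isomorphism $\mathfrak{d}_{\mathbb{Z}[\zeta_{r}]}^{-1}\xrightarrow{\simeq}\mathbb{Z}[\zeta_{r}]^\vee$ of \eqref{identification of different and dual}, such a $\lambda$ corresponds to a $\Gamma$-equivariant $\mathbb{Z}$-linear map $\widetilde{\lambda}\colon\mathbb{Z}[\zeta_{r}]\to\mathfrak{d}_{\mathbb{Z}[\zeta_{r}]}^{-1}$ between lattices inside $\mathbb{Q}(\zeta_{r})$. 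Tensoring with $\mathbb{Q}$ gives a $\Gamma$-equivariant $\mathbb{Q}$-linear endomorphism of $\mathbb{Q}(\zeta_{r})$; since $\gamma$ acts on $\mathbb{Q}(\zeta_{r})$ as multiplication by $\zeta_{r}$ (see \eqref{Gamma_n and Q(zeta_r)}) and $\zeta_{r}$ generates $\mathbb{Q}(\zeta_{r})$ as a $\mathbb{Q}$-algebra, this endomorphism commutes with multiplication by every element of $\mathbb{Q}(\zeta_{r})$, hence is itself multiplication by $t:=\widetilde{\lambda}(1)$. Evaluating at $1$ shows $t\in\mathfrak{d}_{\mathbb{Z}[\zeta_{r}]}^{-1}$, and conversely multiplication by any $t\in\mathfrak{d}_{\mathbb{Z}[\zeta_{r}]}^{-1}$ carries $\mathbb{Z}[\zeta_{r}]$ into the $\mathbb{Z}[\zeta_{r}]$-module $\mathfrak{d}_{\mathbb{Z}[\zeta_{r}]}^{-1}$. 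Thus $\lambda=\lambda_t$ as in \eqref{lambda_t} for a unique $t\in\mathfrak{d}_{\mathbb{Z}[\zeta_{r}]}^{-1}$, and such a $\lambda_t$ is an isogeny exactly when $t\neq0$.

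Next I would translate the remaining two conditions into conditions on $t$. The pair $(\lambda_t,\lambda_t)$ is a morphism in $\MorGammaN$ from $(\mathbb{Z}[\zeta_{r}],\mathbb{Z}[\zeta_{r}]^\vee,\mathrm{Id})$ to its dual precisely when the bilinear form $\langle-,\lambda_t(-)\rangle$ on $\mathbb{Z}[\zeta_{r}]$ is symmetric; unwinding the identification \eqref{Z[zeta_r] embeds into its dual} this is (up to swapping the two arguments) the form $B_t(x,y)=\mathrm{Tr}_{\mathbb{Q}(\zeta_{r})/\mathbb{Q}}(xt\overline{y})$, which by \eqref{B_t symmetric iff t totally real} is symmetric if and only if $t\in\mathbb{Q}(\zeta_{r})^+$. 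For the positivity condition, $\Phi(m)(\lambda_t(m))=B_t(m,m)=\mathrm{Tr}_{\mathbb{Q}(\zeta_{r})/\mathbb{Q}}(t\,m\overline{m})$ lies in $\mathbb{Z}$ since $\lambda_t$ lands in $\mathbb{Z}[\zeta_{r}]^\vee$, so the requirement is that $B_t(m,m)>0$ for all $0\neq m\in\mathbb{Z}[\zeta_{r}]$, i.e. that the $\mathbb{Q}$-bilinear form $B_t$ is positive definite. From this I would conclude that $t$ is totally positive: for $t\neq0$ the form $B_t$ is non-degenerate over $\mathbb{Q}$, so positive-definiteness on the full-rank lattice $\mathbb{Z}[\zeta_{r}]$ is equivalent to positive-definiteness of $B_t\otimes_{\mathbb{Q}}\mathbb{R}$ on $\mathbb{Q}(\zeta_{r})\otimes_{\mathbb{Q}}\mathbb{R}$ (a negative direction, should it exist, can be found rational by density and then cleared of denominators); decomposing $\mathbb{Q}(\zeta_{r})\otimes_{\mathbb{Q}}\mathbb{R}$ into a product of copies of $\mathbb{C}$ indexed by the archimedean places, on each of which the complex conjugation of $\mathbb{Q}(\zeta_{r})$ induces ordinary complex conjugation and $B_t$ restricts to $(z,w)\mapsto 2\,\sigma(t)\,\mathrm{Re}(z\overline{w})$ for the corresponding embedding $\sigma$, one sees that positive-definiteness holds exactly when $\sigma(t)>0$ for every embedding $\sigma\colon\mathbb{Q}(\zeta_{r})\hookrightarrow\mathbb{C}$, that is, when $t$ is totally positive.

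Putting these together, a polarization $\lambda$ on $(\mathbb{Z}[\zeta_{r}],\mathbb{Z}[\zeta_{r}]^\vee,\mathrm{Id})$ is forced to be $\lambda_t$ with $t\in\mathfrak{d}_{\mathbb{Z}[\zeta_{r}]}^{-1}\cap\mathbb{Q}(\zeta_{r})^+$ totally positive, i.e. $t\in(\mathfrak{d}_{\mathbb{Z}[\zeta_{r}]}^{-1})_{>0}$ (total positivity in particular forces $t\neq0$, so $\lambda_t$ is automatically an isogeny); conversely every such $\lambda_t$ is a polarization by the same computation, which for $t=1$ already appears in the proof of Proposition \ref{non-deg iff polar iff ppolar}. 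I expect the positivity step to be the main obstacle: reducing ``$B_t$ positive definite on $\mathbb{Z}[\zeta_{r}]$'' to ``$t$ totally positive'' requires passing to the real completion and tracking how the complex conjugation on $\mathbb{Q}(\zeta_{r})$ is compatible with the archimedean decomposition of $\mathbb{Q}(\zeta_{r})\otimes_{\mathbb{Q}}\mathbb{R}$; the remaining steps are formal manipulations with the $\mathbb{Z}[\zeta_{r}]$-module structure together with the identification \eqref{identification of different and dual} and the symmetry criterion \eqref{B_t symmetric iff t totally real} already established in the text.
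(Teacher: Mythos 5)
Your proposal is correct and follows essentially the same route as the paper's proof: $\Gamma$-equivariance forces $\lambda=\lambda_t$ with $t=\lambda(1)\in\mathfrak{d}_{\mathbb{Z}[\zeta_r]}^{-1}$, the symmetry criterion \eqref{B_t symmetric iff t totally real} gives $t\in\mathbb{Q}(\zeta_r)^+$, and total positivity is extracted from positive definiteness via density of $\mathbb{Q}(\zeta_r)$ in $\mathbb{Q}(\zeta_r)\otimes_{\mathbb{Q}}\mathbb{R}\simeq\prod\mathbb{C}$. The only cosmetic difference is that you diagonalize the real form over the archimedean factors and invoke non-degeneracy, whereas the paper argues by contradiction with an explicit rational approximation; these are the same idea.
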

	\begin{proof}
		Consider the case $(\mathbb{Z}[\zeta_r],\mathbb{Z}[\zeta_r]^\vee,\mathrm{Id})=(\mathbb{Z}[\zeta_r],\mathbb{Z}[\zeta_r]^\vee,\langle-,-\rangle)$.
		Let $\lambda\colon\mathbb{Z}[\zeta_{r}]\rightarrow\mathbb{Z}[\zeta_{r}]^\vee$ be a polarization, which is $\Gamma$-equivariant. So $\lambda$ is necessarily of the form $\lambda=\lambda_t$ for some $0\neq t\in\mathfrak{d}_{\mathbb{Z}[\zeta_r]}^{-1}$. Indeed, since $\lambda(\zeta_r^j)=\lambda(\gamma(\zeta_r^{j-1}))=\gamma(\lambda(\zeta_r^{j-1}))$ for any $j$, $\lambda$ is determined by $\lambda(1)\in\mathbb{Z}[\zeta_r]^\vee\simeq\mathfrak{d}_{\mathbb{Z}[\zeta_r]}^{-1}$, so $\lambda=\lambda_t$ with $t=\lambda(1)$. Moreover, as in (\ref{B_t symmetric iff t totally real}), we know that $t\in\mathbb{Q}(\zeta_r)^+$. To conclude the proof, we need to show that $t$ is totally positive.

        The positive-definiteness condition in the polarization $\lambda$ becomes
		\[
		\langle x,\lambda(x)\rangle=\mathrm{Tr}_{\mathbb{Q}(\zeta_{r})/\mathbb{Q}}(xt\overline{x})>0,
		\quad
		\forall
		x\in\mathbb{Z}[\zeta_{r}]\backslash\{0\}.
		\]
		We extend $\langle-,-\rangle$ to a bilinear form on $\Q(\zeta_{r})\times\Q(\zeta_{r})$. We know that $\mathbb{Q}(\zeta_r)$ is dense in
        \[
        \mathbb{Q}(\zeta_r)\otimes_{\mathbb{Q}}\mathbb{R}
        \simeq\prod_{\mathrm{Hom}_{\mathbb{Q}}(\mathbb{Q}(\zeta_r),\mathbb{C})/\sim}\mathbb{C}
        \]
        Here $/\sim$ means up to complex conjugation.

        Fix an arbitrary embedding $\sigma_0\colon\mathbb{Q}(\zeta_r)\to\mathbb{C}$, below we prove $\sigma_0(t)>0$ by contradiction.
        \begin{enumerate}
            \item Suppose $\sigma_0(t)=0$, then $t=0$ and thus $\langle x,\lambda(x)\rangle=0$ for any $0\neq x\in\mathbb{Z}[\zeta_r]$. We obtain a contradiction. 
            \item Suppose $\sigma_0(t)<0$. Let $x\in\Q(\zeta_r)^\times$ be such that the absolute values of $\sigma_0(x)-1$ and $\sigma(x)$ (for all $\sigma\neq\sigma_0,\overline{\sigma_0}$) are all sufficiently small (written as $\sigma_0(x)-1,\sigma(x)\sim 0$). Thus
            \begin{align*}
            \langle x,\lambda(x)\rangle
            &
            =
            \sigma_0(xt\overline{x})+\overline{\sigma_0}(xt\overline{x})
            +            \sum_{\sigma\neq\sigma_0,\overline{\sigma_0}}\sigma(xt\overline{x})
            \\
            &
            =
            2\sigma_0(x)\sigma_0(\overline{x})\sigma_0(t)
            +            \sum_{\sigma\neq\sigma_0,\overline{\sigma_0}}\sigma(xt\overline{x})
            \sim 2\sigma_0(t)<0.
            \end{align*}
            This is again a contradiction.
        \end{enumerate}
        Thus we conclude that $\sigma(t)>0$ for all field embeddings $\sigma$ and so $t\in(\mathfrak{d}_{\mathbb{Z}[\zeta_{r}]}^{-1})_{>0}$.
        

	\end{proof}

	For integers $r>0$, $n\ge0$ and a fractional ideal $J$ of $\Z[\zeta_{r}]$ stable under the complex conjugation, we write $\mathrm{Herm}_n(J)$ for the set of Hermitian $n\times n$-matrices with entries in $J$. We then write
    \[
    \mathrm{Herm}_n(\mathfrak{d}_{\Z[\zeta_{r}]}^{-1})_{>0}
    :=
    \left\{X\in\mathrm{Herm}_n(\mathfrak{d}_{\Z[\zeta_{r}]}^{-1})
    \mid
    \sigma(X)\text{ positive definite, }
    \forall
    \sigma\in\mathrm{Hom}_{\mathbb{Q}}(\mathbb{Q}(\zeta_r),\mathbb{C})
    \right\}
    \] 
	\begin{corollary}\label{polarization in (M,M^vee,Id)}
		Any polarization $\lambda$ on {\normalfont  $(M,M^\vee,\mathrm{Id}_M)$} with $M=\prod_{r=1}^\infty\mathbb{Z}[\zeta_{r}]^{a(r)}$ ($a(r)=0$ for $r\gg0$) is of the form $\lambda=\prod_{r=1}^{\infty}\lambda_{t(r)}$ for some $t(r)\in\mathrm{Herm}_{a(r)}(\mathfrak{d}_{\mathbb{Z}[\zeta_{r}]}^{-1})_{>0}$. Here $\lambda_{t(r)}$ is given by 
		\[
		\lambda_{t(r)}
		\colon
		\mathbb{Z}[\zeta_{r}]^{a(r)}
		\to(\mathbb{Z}[\zeta_r]^{a(r)})^\vee,
            \quad
            x
		\mapsto
            (y\mapsto
		\mathrm{Tr}_{\mathbb{Q}(\zeta_{r})/\mathbb{Q}}
		(x^{\mathrm{t}}t(r)\overline{y})).
		\]
        (we view elements $x$ in $\mathbb{Z}[\zeta_{r}]^{a(r)}$ as column matrices).
	\end{corollary}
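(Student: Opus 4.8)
The plan is to reduce to the rank-one case of Proposition~\ref{polarization on (Z[zeta_r],Z[zeta_r]^vee,Id)} by decomposing a polarization along isotypic components and then turning the two defining axioms of a polarization into linear algebra over the CM fields $\mathbb{Q}(\zeta_r)$. First I would tensor with $\mathbb{Q}$: a polarization $\lambda\colon M\to M^\vee$ gives a $\Gamma$-equivariant $\mathbb{Q}$-linear map $\lambda_{\mathbb{Q}}\colon\prod_r\mathbb{Q}(\zeta_r)^{a(r)}\to\prod_r(\mathbb{Q}(\zeta_r)^{a(r)})^\vee$. Because the $\mathbb{Q}(\zeta_r)$ are pairwise non-isomorphic irreducible rational $\Gamma$-representations and $(\mathbb{Q}(\zeta_{r'})^{a(r')})^\vee$ is again $\mathbb{Q}(\zeta_{r'})$-isotypic, there is no nonzero $\Gamma$-map from the $r$-th source block to the $r'$-th target block for $r\neq r'$; hence $\lambda_{\mathbb{Q}}=\prod_r\lambda^{(r)}$ is block diagonal, and each $\lambda^{(r)}$ restricts to a $\Gamma$-equivariant map $\mathbb{Z}[\zeta_r]^{a(r)}\to(\mathbb{Z}[\zeta_r]^{a(r)})^\vee$ (the $r$-th isotypic subspace of $M^\vee\otimes\mathbb{Q}$ meets the lattice $M^\vee$ precisely in $(\mathbb{Z}[\zeta_r]^{a(r)})^\vee$). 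Using \eqref{identification of different and dual} in each coordinate to identify $(\mathbb{Q}(\zeta_r)^{a(r)})^\vee\cong\mathbb{Q}(\zeta_r)^{a(r)}$ as $\Gamma$-modules and $(\mathbb{Z}[\zeta_r]^{a(r)})^\vee\cong(\mathfrak{d}_{\mathbb{Z}[\zeta_r]}^{-1})^{a(r)}$ as lattices, Schur's lemma ($\mathrm{End}_{\mathbb{Q}[\Gamma]}(\mathbb{Q}(\zeta_r))=\mathbb{Q}(\zeta_r)$) makes $\lambda^{(r)}$ into a $\mathbb{Q}(\zeta_r)$-linear map, i.e.\ a matrix $t(r)\in\mathrm{Mat}_{a(r)}(\mathbb{Q}(\zeta_r))$ with $\lambda^{(r)}(x)=\bigl(y\mapsto\mathrm{Tr}_{\mathbb{Q}(\zeta_r)/\mathbb{Q}}(x^{\mathrm{t}}t(r)\overline{y})\bigr)$; testing the integrality condition $\lambda^{(r)}(\mathbb{Z}[\zeta_r]^{a(r)})\subseteq(\mathfrak{d}_{\mathbb{Z}[\zeta_r]}^{-1})^{a(r)}$ on the generators $\zeta_r^k e_j$ forces $t(r)\in\mathrm{Mat}_{a(r)}(\mathfrak{d}_{\mathbb{Z}[\zeta_r]}^{-1})$.

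Next I would translate the symmetry and positive-definiteness of $\langle-,\lambda(-)\rangle$ block by block. Using $\mathrm{Tr}_{\mathbb{Q}(\zeta_r)/\mathbb{Q}}(w)=\mathrm{Tr}_{\mathbb{Q}(\zeta_r)/\mathbb{Q}}(\overline{w})$ and non-degeneracy of the trace pairing, symmetry of $\langle x,\lambda(z)\rangle$ in $x,z$ becomes $t(r)=\overline{t(r)}^{\mathrm{t}}$, so $t(r)\in\mathrm{Herm}_{a(r)}(\mathfrak{d}_{\mathbb{Z}[\zeta_r]}^{-1})$. Since $\lambda$ is an isogeny, $t(r)$ is invertible over $\mathbb{Q}(\zeta_r)$, so each $\sigma(t(r))$ is a nonsingular Hermitian complex matrix; and, exactly as in the proof of Proposition~\ref{polarization on (Z[zeta_r],Z[zeta_r]^vee,Id)}, writing $\langle x,\lambda^{(r)}(x)\rangle=\sum_{\sigma}\sigma(x_r)^{\mathrm{t}}\,\sigma(t(r))\,\overline{\sigma(x_r)}$ and using density of $\mathbb{Q}(\zeta_r)^{a(r)}$ in $(\mathbb{Q}(\zeta_r)\otimes_{\mathbb{Q}}\mathbb{R})^{a(r)}\cong\prod_{\mathrm{Hom}_{\mathbb{Q}}(\mathbb{Q}(\zeta_r),\mathbb{C})/\sim}\mathbb{C}^{a(r)}$, one checks that positivity on all nonzero lattice vectors forces (and is implied by) $\sigma(t(r))$ being positive definite for every $\sigma$, i.e.\ $t(r)\in\mathrm{Herm}_{a(r)}(\mathfrak{d}_{\mathbb{Z}[\zeta_r]}^{-1})_{>0}$. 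Conversely, any family $(t(r))_r$ with $t(r)$ in this set produces, via the same formula, an isogeny $\lambda=\prod_r\lambda_{t(r)}$ that is symmetric and positive definite, hence a polarization, completing the identification.

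I expect the representation-theoretic splitting and the density argument (which is a verbatim adaptation of the rank-one case) to be routine; the only delicate point is the conjugate-transpose bookkeeping when passing between $\lambda$, the pairing, and $t(r)$ — in particular nailing down the coordinatewise identification $(\mathbb{Z}[\zeta_r]^{a(r)})^\vee\cong(\mathfrak{d}_{\mathbb{Z}[\zeta_r]}^{-1})^{a(r)}$ and verifying that the symmetry axiom translates to Hermitian (not merely symmetric) matrices. One should also remember to use the isogeny hypothesis to rule out positive semidefinite but singular $\sigma(t(r))$, which the pure positivity argument alone does not exclude.
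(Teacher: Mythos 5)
Your proposal is correct and follows essentially the same route as the paper: decompose $\lambda$ into its $\mathbb{Z}[\zeta_r]$-isotypic blocks, identify each block with a matrix $t(r)$ over $\mathfrak{d}_{\mathbb{Z}[\zeta_r]}^{-1}$ via the trace pairing, read off Hermitianness from the symmetry axiom, and total positive-definiteness from the positivity axiom. The only (harmless) divergence is the last step: the paper diagonalizes the Hermitian matrix $t(r)$ over $\mathbb{Q}(\zeta_r)$ and applies the rank-one argument of Proposition \ref{polarization on (Z[zeta_r],Z[zeta_r]^vee,Id)} entrywise, whereas you obtain positive semidefiniteness of each $\sigma(t(r))$ by density and upgrade to definiteness via nonsingularity of $t(r)$ — which in fact already follows from the positivity axiom alone (a $\mathbb{Q}(\zeta_r)$-rational kernel vector could be scaled into the lattice, forcing $\langle m,\lambda(m)\rangle=0$), so invoking the isogeny hypothesis there is valid but not strictly necessary; you also spell out the block-diagonal decomposition that the paper leaves implicit.
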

        \begin{proof}
            Clearly any $\mathbb{Z}[\zeta_r]$-linear (or equivalently, $\Gamma$-equivariant) map $\mathbb{Z}[\zeta_r]^{a(r)}\to(\mathbb{Z}[\zeta_r]^{a(r)})^\vee$ is represented by a matrix $t(r)\in\mathrm{Mat}_{a(r)}(\mathfrak{d}_{\mathbb{Z}[\zeta_{r}]}^{-1})$, as given in the Corollary. Suppose $\mathrm{Id}_{\mathbb{Z}[\zeta_r]^{a(r)}}$ (induced from $\mathrm{Id}_M$) corresponds to the pairing $\langle-,-\rangle\colon\mathbb{Z}[\zeta_r]^{a(r)}\times(\mathbb{Z}[\zeta_r]^{a(r)})^\vee\to\mathbb{Z}$,  then we have
            \[
            \langle x,\lambda_{t(r)}(y)\rangle
            =
            \langle y,\lambda_{t(r)}(x)\rangle,
            \quad
            \forall
            x,y\in\mathbb{Z}[\zeta_r]^{a(r)}.
            \]
            In other words, we have
            $\mathrm{Tr}_{\mathbb{Q}(\zeta_{r})/\mathbb{Q}}
		(x^{\mathrm{t}}t(r)\overline{y})
            =
            \mathrm{Tr}_{\mathbb{Q}(\zeta_{r})/\mathbb{Q}}
		(y^{\mathrm{t}}t(r)\overline{x})$, that is,
            \[
            \mathrm{Tr}_{\mathbb{Q}(\zeta_{r})/\mathbb{Q}}
		(x^{\mathrm{t}}(\overline{t(r)}^{\mathrm{t}}- 
            t(r))\overline{y})
            =0,
            \quad
            \forall x,y\in\mathbb{Q}(\zeta_r)^{a(r)}.
            \]
            Write the standard basis for $\mathbb{Q}(\zeta_r)^{a(r)}$ as $E_1,\cdots,E_{a(r)}$, then taking $x=sE_i$ and $y=E_j$ with $s\in\mathbb{Q}(\zeta_r)$, we get
            \[
            0
            =
            \mathrm{Tr}_{\mathbb{Q}(\zeta_{r})/\mathbb{Q}}
		(x^{\mathrm{t}}(\overline{t(r)}^{\mathrm{t}}- 
            t(r))\overline{y})
            =
            \mathrm{Tr}_{\mathbb{Q}(\zeta_{r})/\mathbb{Q}}
		(s(\overline{t(r)}^{\mathrm{t}}- 
            t(r))_{i,j})
            \]
            (here the subscript $_{i,j}$ means the $(i,j)$-th entry of the matrix). Thus we have $(\overline{t(r)}^{\mathrm{t}}- 
            t(r))_{i,j}=0$, so $\overline{t(r)}^{\mathrm{t}}=
            t(r)$.

            Next we show that for any field embedding $\sigma\colon\mathbb{Q}(\zeta_r)\to\mathbb{C}$, $\sigma(t(r))$ is positive definite. Since $t(a)$ is a Hermitian matrix, up to choosing a basis for $\mathbb{Q}(\zeta_r)^{a(r)}$, we can assume that $t(r)$ is a diagonal matrix
            \[
            t(r)
            =
            \mathrm{diag}(t_1,\cdots,t_{a(r)}).
            \]
            So we have $t_1,\cdots,t_{a(r)}\in\mathbb{Q}(\zeta_r)^+$. Now as in the proof of the preceding proposition, we can show each $t_i$ is totally positive. Thus we conclude that $\sigma(t(r))$ is positive definite for all field embeddings $\sigma$.
        \end{proof}

	\begin{proposition}
		Fix an integer $r>0$, then
		{\normalfont $\mathrm{End}_{\mathrm{Mod}_{\Gamma}}(\mathbb{Z}[\zeta_{r}])$} is equal to $\mathbb{Z}[\zeta_{r}]$.
	\end{proposition}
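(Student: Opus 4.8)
The plan is to reduce the statement to the observation that, for the $\Gamma$-module structure on $\Z[\zeta_{r}]$ described in \eqref{Gamma_n and Q(zeta_r)}, being $\Gamma$-equivariant is the same as being $\Z[\zeta_{r}]$-linear, and then to use that $\Z[\zeta_{r}]$ is free of rank one over itself.

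First I would recall that in \eqref{Gamma_n and Q(zeta_r)} the topological generator $\gamma$ acts on $\Z[\zeta_{r}]$ as multiplication by $\zeta_{r}$. Hence a $\Z$-linear endomorphism $f$ of $\Z[\zeta_{r}]$ lies in $\mathrm{End}_{\Mod_{\Gamma}}(\Z[\zeta_{r}])$ precisely when $f(\zeta_{r}x)=\zeta_{r}f(x)$ for all $x$. Since every element of $\Z[\zeta_{r}]$ is a $\Z$-linear combination of powers of $\zeta_{r}$, an immediate induction shows that any such $f$ satisfies $f(ax)=af(x)$ for every $a\in\Z[\zeta_{r}]$, i.e.\ $f$ is $\Z[\zeta_{r}]$-linear; conversely, multiplication by any fixed $a\in\Z[\zeta_{r}]$ is $\Z$-linear and commutes with multiplication by $\zeta_{r}$ because $\Z[\zeta_{r}]$ is commutative, so it is $\Gamma$-equivariant. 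This identifies $\mathrm{End}_{\Mod_{\Gamma}}(\Z[\zeta_{r}])$ with $\mathrm{End}_{\Z[\zeta_{r}]}(\Z[\zeta_{r}])$.

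Finally, $\Z[\zeta_{r}]$ is a free $\Z[\zeta_{r}]$-module of rank one with basis $\{1\}$, so an endomorphism $f$ in this ring is multiplication by $f(1)$, and the ring map $\Z[\zeta_{r}]\to\mathrm{End}_{\Mod_{\Gamma}}(\Z[\zeta_{r}])$ sending $a$ to multiplication by $a$ is surjective; it is injective because $a\cdot 1=0$ forces $a=0$. Hence it is an isomorphism, which is the claim. There is no real obstacle in this argument; the only step that deserves to be written out carefully is the equivalence between $\Gamma$-equivariance and $\Z[\zeta_{r}]$-linearity, which relies on the explicit formula \eqref{Gamma_n and Q(zeta_r)} for the action.
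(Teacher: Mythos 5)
Your argument is correct and is essentially the paper's own proof: both use the $\Gamma$-equivariance $F(\zeta_r x)=\zeta_r F(x)$ to deduce $\mathbb{Z}[\zeta_r]$-linearity, and then conclude that $F$ is multiplication by $F(1)\in\mathbb{Z}[\zeta_r]$. No issues.
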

	\begin{proof}
		Clearly $\mathbb{Z}[\zeta_{r}]$ is contained in
		$\mathrm{End}_{\mathrm{Mod}_{\Gamma}}(\mathbb{Z}[\zeta_{r}])$. For any $F\in\mathrm{End}_{\mathrm{Mod}_{\Gamma}}(\mathbb{Z}[\zeta_{r}])$, we have
		\[
		F(\zeta_{r}x)
		=
		F(\gamma x)
		=
		\gamma(F(x))
		=
		\zeta_{r}F(x)
		\quad
		\forall
		x\in\mathbb{Z}[\zeta_{r}].
		\]
		As a result, for any $y\in\mathbb{Z}[\zeta_{r}]$, $F(yx)=yF(x)$. Thus $F(x)=xF(1)$ and $F$ is determined by $F(1)\in\mathbb{Z}[\zeta_{r}]$. This proves the proposition.
	\end{proof}

	\begin{corollary}
		Fix an integer $r>0$, then {\normalfont $(\mathbb{Z}[\zeta_{r}],\mathbb{Z}[\zeta_{r}]^\vee,\mathrm{Id})$} has endomorphism ring in $\MorGammaNpPol$ equal to $\mathbb{Z}[\zeta_{r}]$.
	\end{corollary}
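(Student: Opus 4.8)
The plan is to reduce the statement to the preceding proposition identifying $\mathrm{End}_{\Mod_{\Gamma}}(\mathbb{Z}[\zeta_{r}])$ with $\mathbb{Z}[\zeta_{r}]$, by chasing the definition of a morphism in $\MorGammaN$. First I would note that $\MorGammaNpPol$ is, by definition, a \emph{full} subcategory of $\MorGammaN$, and that $(\mathbb{Z}[\zeta_{r}],\mathbb{Z}[\zeta_{r}]^\vee,\id)$ lies in $\MorGammaNpPol$ by Proposition \ref{description of pPol object up to isogeny} (1). Hence its endomorphism ring is the same whether computed in $\MorGammaNpPol$ or in $\MorGammaN$, and it suffices to show $\mathrm{End}_{\MorGammaN}(\mathbb{Z}[\zeta_{r}],\mathbb{Z}[\zeta_{r}]^\vee,\id)=\mathbb{Z}[\zeta_{r}]$.

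Next I would unwind the definition of an endomorphism of this object: it is a pair $(\psi_1,\psi_2)$ with $\psi_1\colon\mathbb{Z}[\zeta_{r}]\to\mathbb{Z}[\zeta_{r}]$ and $\psi_2\colon\mathbb{Z}[\zeta_{r}]^\vee\to\mathbb{Z}[\zeta_{r}]^\vee$ in $\Mod_{\Gamma}$ making the square \eqref{morphism of log free Gamma-modules} commute. Since here $P^{\gp}=\mathbb{Z}$ and $\Phi=\Phi'$ is the canonical identification $\mathbb{Z}[\zeta_{r}]\xrightarrow{\simeq}(\mathbb{Z}[\zeta_{r}]^\vee)^\vee$, the commutativity of that square says precisely $\psi_2^\vee=\psi_1$ after the identification $(\mathbb{Z}[\zeta_{r}]^\vee)^\vee\simeq\mathbb{Z}[\zeta_{r}]$. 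Thus $\psi_2$ is uniquely determined by $\psi_1$ (it is the $\Gamma$-equivariant dual of $\psi_1$), and conversely every $\psi_1\in\mathrm{End}_{\Mod_{\Gamma}}(\mathbb{Z}[\zeta_{r}])$ arises: set $\psi_2:=\psi_1^\vee$, which again lies in $\Mod_{\Gamma}$ because the canonical isomorphism $(\mathbb{Z}[\zeta_{r}]^\vee)^\vee\simeq\mathbb{Z}[\zeta_{r}]$ is $\Gamma$-equivariant, and then $\psi_2^\vee=(\psi_1^\vee)^\vee=\psi_1$. This yields a bijection $\mathrm{End}_{\MorGammaN}(\mathbb{Z}[\zeta_{r}],\mathbb{Z}[\zeta_{r}]^\vee,\id)\to\mathrm{End}_{\Mod_{\Gamma}}(\mathbb{Z}[\zeta_{r}])$, $(\psi_1,\psi_2)\mapsto\psi_1$.

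Finally I would check that this bijection is a ring isomorphism: composition in $\MorGammaN$ sends $(\psi_1,\psi_2)\circ(\phi_1,\phi_2)$ to $(\psi_1\circ\phi_1,\phi_2\circ\psi_2)$, whose first component is $\psi_1\circ\phi_1$, so $(\psi_1,\psi_2)\mapsto\psi_1$ respects composition (and manifestly addition and the identity). Composing with the identification $\mathrm{End}_{\Mod_{\Gamma}}(\mathbb{Z}[\zeta_{r}])=\mathbb{Z}[\zeta_{r}]$ from the preceding proposition gives the claim. The argument is purely a definition chase; the only point requiring care is bookkeeping of the dualizations — in particular, that the constraint imposed by \eqref{morphism of log free Gamma-modules} is $\psi_2^\vee=\psi_1$ rather than $\psi_2=\psi_1$, and that $\psi_1^\vee$ does lie in $\Mod_{\Gamma}$ — so there is no substantial obstacle.
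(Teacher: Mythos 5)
Your proof is correct and is essentially the paper's argument: the paper simply asserts that, by definition, $\mathrm{End}_{\MorGammaNpPol}(\mathbb{Z}[\zeta_{r}],\mathbb{Z}[\zeta_{r}]^\vee,\mathrm{Id})=\mathrm{End}_{\Mod_{\Gamma}}(\mathbb{Z}[\zeta_{r}])=\mathbb{Z}[\zeta_{r}]$, and your definition chase (fullness of the subcategory, the constraint $\psi_2^\vee=\psi_1$ from the commuting square, and compatibility with composition) just spells out that identification together with the preceding proposition.
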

	\begin{proof}
            By definition, we have
		\[
		\mathrm{End}_{\MorGammaNpPol}
		(\mathbb{Z}[\zeta_{r}],\mathbb{Z}[\zeta_{r}]^\vee,\mathrm{Id})
		=
		\mathrm{End}_{\mathrm{Mod}_{\Gamma}}(\mathbb{Z}[\zeta_{r}])
		=
		\mathbb{Z}[\zeta_{r}].
		\]
	\end{proof}

	\begin{corollary}
		Fix a free $\Gamma$-module
		$M=\prod_{r=1}^\infty\mathbb{Z}[\zeta_{r}]^{a(r)}$ ($a(r)=0$ for $r\gg0$), then
		\[
		\mathrm{End}_{\MorGammaNpPol}
		(M,M^\vee,\mathrm{Id})
		=
		\prod_{r=1}^{\infty}\mathrm{Mat}_{a(r)}(\mathbb{Z}[\zeta_{r}]).
		\]
	\end{corollary}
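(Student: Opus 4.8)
The plan is to reduce the statement to a computation inside $\Mod_{\Gamma}$, which has essentially been carried out for a single cyclotomic factor in the preceding proposition and corollary. Since $\MorGammaNpPol$ is a full subcategory of $\MorGammaN$ and $(M,M^\vee,\id)$ lies in $\MorGammaNpPol$ (it is polarizable by Corollary~\ref{polarization in (M,M^vee,Id)}, or simply by Proposition~\ref{description of pPol object up to isogeny}(2)), one has $\mathrm{End}_{\MorGammaNpPol}(M,M^\vee,\id)=\mathrm{End}_{\MorGammaN}(M,M^\vee,\id)$. I would then unwind the definition of a morphism in $\MorGammaN$ for $P=\N$: here $P^{\gp}=\Z$, the target $N^\vee\otimes P^{\gp}$ of $\Phi$ is $(M^\vee)^\vee=M$, and $\Phi=\id_M$, so an endomorphism of $(M,M^\vee,\id)$ is a pair $(\psi_1,\psi_2)$ with $\psi_1\in\mathrm{End}_{\Mod_{\Gamma}}(M)$ and $\psi_2\in\mathrm{End}_{\Mod_{\Gamma}}(M^\vee)$ subject to the single commutativity relation $\psi_2^\vee=\psi_1$. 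Because $(-)^\vee$ is a contravariant involution on $\Mod_{\Gamma}$, this forces $\psi_2=\psi_1^\vee$, and conversely every $\psi_1\in\mathrm{End}_{\Mod_{\Gamma}}(M)$ yields the morphism $(\psi_1,\psi_1^\vee)$.

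Next I would check that the assignment $(\psi_1,\psi_2)\mapsto\psi_1$ is a \emph{ring} isomorphism $\mathrm{End}_{\MorGammaNpPol}(M,M^\vee,\id)\xrightarrow{\ \sim\ }\mathrm{End}_{\Mod_{\Gamma}}(M)$. It is clearly bijective and additive by the previous paragraph; the only point requiring attention is the variance of composition in $\MorGammaP$, namely that the second components of a composite compose in the opposite order. This is compatible with the identity $(\phi_1\circ\psi_1)^\vee=\psi_1^\vee\circ\phi_1^\vee$, so composition of morphisms in $\MorGammaNpPol$ does correspond to composition of the first components, and the bijection is multiplicative.

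Finally I would compute $\mathrm{End}_{\Mod_{\Gamma}}(M)$ for $M=\prod_{r=1}^{\infty}\Z[\zeta_r]^{a(r)}$, which is a finite product. For $r\neq r'$ the group $\Hom_{\Mod_{\Gamma}}(\Z[\zeta_r],\Z[\zeta_{r'}])$ is torsion-free (it is a subgroup of $\Z[\zeta_{r'}]^\vee$-valued maps), and after $\otimes_{\Z}\Q$ it embeds into $\Hom_{\Q[\Gamma]}(\Q(\zeta_r),\Q(\zeta_{r'}))$, which vanishes by Schur's lemma since $\Q(\zeta_r)$ and $\Q(\zeta_{r'})$ are non-isomorphic irreducible rational representations of $\Gamma$; hence this $\Hom$ group is zero. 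Therefore $\mathrm{End}_{\Mod_{\Gamma}}(M)$ is block diagonal along the factors, equal to $\prod_r\mathrm{Mat}_{a(r)}\!\bigl(\mathrm{End}_{\Mod_{\Gamma}}(\Z[\zeta_r])\bigr)$, and by the preceding proposition $\mathrm{End}_{\Mod_{\Gamma}}(\Z[\zeta_r])=\Z[\zeta_r]$; combining this with the ring isomorphism above yields $\mathrm{End}_{\MorGammaNpPol}(M,M^\vee,\id)=\prod_{r=1}^{\infty}\mathrm{Mat}_{a(r)}(\Z[\zeta_r])$. I do not expect a genuine obstacle: the whole argument is a careful unwinding of definitions together with the already-established single-factor case, and the only mildly technical item is the bookkeeping of composition in $\MorGammaP$ needed to promote the bijection to a ring isomorphism.
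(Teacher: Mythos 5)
Your argument is correct and is exactly the route the paper intends: the paper states this corollary without proof, relying (as in the preceding single-factor corollary, proved "by definition") on the identification $\mathrm{End}_{\MorGammaNpPol}(M,M^\vee,\mathrm{Id})=\mathrm{End}_{\Mod_{\Gamma}}(M)$ together with the vanishing of $\Hom_{\Mod_\Gamma}(\Z[\zeta_r],\Z[\zeta_{r'}])$ for $r\neq r'$ and $\mathrm{End}_{\Mod_\Gamma}(\Z[\zeta_r])=\Z[\zeta_r]$. Your unwinding of the constraint $\psi_2^\vee=\psi_1$, the check that composition matches, and the block-diagonal computation via Schur's lemma for the non-isomorphic irreducible rational representations $\Q(\zeta_r)$ supply precisely the details the paper leaves implicit.
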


        \subsection{Isogeny classes in $\MorGammaNpPol$}
        First we have the following simple lemma.
	\begin{lemma}\label{charpoly of Z[zeta_r]}
		Fix a positive integer $r>0$. The characteristic polynomial $F_r(\theta)$ of the Frobenius $\gamma\in\Gamma$ acting on the simple object $\mathbb{Z}[\zeta_r]$ in $\Mod_{\Gamma}$ is the $r$-th cyclotomic polynomial (i.e. the minimal polynomial over $\Q$ of the primitive $r$-th roots of unity). In particular, the roots of $F_r(\theta)$ are primitive $r$-th roots of unity.
	\end{lemma}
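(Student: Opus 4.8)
The plan is to unwind the definition of $F_r$ and reduce to the elementary fact that the characteristic polynomial of a companion matrix is the polynomial it is built from. By \eqref{Gamma_n and Q(zeta_r)} the action of $\gamma$ on $\mathbb{Z}[\zeta_r]$ is multiplication by $\zeta_r$; hence, writing $\pi$ for the $\mathbb{Z}$-linear endomorphism of $\mathbb{Z}[\zeta_r]$ induced by $\gamma$ (i.e.\ $\pi_{\mathbb{Z}[\zeta_r]}$ in the notation of §\ref{1-motives}), we have $\pi(x)=\zeta_r x$ for all $x\in\mathbb{Z}[\zeta_r]$, and $F_r(\theta)=\det(\theta\cdot\id-\pi)$ computed on the free $\mathbb{Z}$-module $\mathbb{Z}[\zeta_r]$ of rank $\phi(r)$.

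First I would pass to $\mathbb{Q}$-coefficients: the characteristic polynomial of an endomorphism represented by an integer matrix in some $\mathbb{Z}$-basis does not change when that matrix is viewed over $\mathbb{Q}$, and under the identification $\mathbb{Z}[\zeta_r]\otimes_{\mathbb{Z}}\mathbb{Q}=\mathbb{Q}(\zeta_r)$ the operator $\pi\otimes\id_{\mathbb{Q}}$ is multiplication by $\zeta_r$ on $\mathbb{Q}(\zeta_r)$. So it suffices to compute the characteristic polynomial of the $\mathbb{Q}$-linear ``multiplication by $\zeta_r$'' operator on the $\phi(r)$-dimensional $\mathbb{Q}$-vector space $\mathbb{Q}(\zeta_r)$.

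Then I would use the power basis $1,\zeta_r,\dots,\zeta_r^{\phi(r)-1}$ of $\mathbb{Q}(\zeta_r)$ over $\mathbb{Q}$, which is a basis precisely because $[\mathbb{Q}(\zeta_r):\mathbb{Q}]=\phi(r)$, the minimal polynomial of $\zeta_r$ over $\mathbb{Q}$ being the $r$-th cyclotomic polynomial $\Phi_r(\theta)$ (the classical irreducibility theorem for cyclotomic polynomials). In this basis, multiplication by $\zeta_r$ is represented by the companion matrix of $\Phi_r$, whose characteristic polynomial is $\Phi_r$ itself; therefore $F_r(\theta)=\Phi_r(\theta)$, and the final claim that the roots are the primitive $r$-th roots of unity is then immediate from the definition of $\Phi_r$. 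I do not expect any genuine obstacle here; the only point worth a line of care is the invariance of the characteristic polynomial under the base change $\mathbb{Z}\hookrightarrow\mathbb{Q}$, which is standard.
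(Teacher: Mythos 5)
Your proof is correct, but it runs along a different line than the paper's. You compute the characteristic polynomial directly: after base-changing along $\mathbb{Z}\hookrightarrow\mathbb{Q}$ (which indeed leaves the characteristic polynomial unchanged), multiplication by $\zeta_r$ in the power basis $1,\zeta_r,\dots,\zeta_r^{\phi(r)-1}$ of $\mathbb{Q}(\zeta_r)$ is the companion matrix of the $r$-th cyclotomic polynomial $\Phi_r$, whose characteristic polynomial is $\Phi_r$ itself; the only external input is the classical irreducibility of $\Phi_r$, which guarantees that the power basis really is a basis of the $\phi(r)$-dimensional space. The paper instead argues representation-theoretically: since $\gamma$ acts as multiplication by $\zeta_r$, one has $\gamma^r=\mathrm{Id}$, so $F_r(\theta)$ divides $\theta^r-1$; and since $\mathbb{Q}(\zeta_r)$ was shown earlier in the section to be an irreducible rational representation of $\Gamma$, $F_r$ is irreducible over $\mathbb{Q}$, whence it must be the $r$-th cyclotomic polynomial (as $\zeta_r$ is an eigenvalue). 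Your route is more hands-on and self-contained modulo cyclotomic irreducibility, and requires no appeal to the earlier classification of irreducible rational representations of $\Gamma_n$; the paper's route avoids any choice of basis or matrix computation by reusing that lemma, at the cost of a small implicit step in passing from ``irreducible divisor of $\theta^r-1$'' to ``equals $\Phi_r$''. Both are complete proofs of the statement.
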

	\begin{proof}
		We have seen that $\gamma$ acts on $\mathbb{Z}[\zeta_{r}]$ by sending $\zeta_{r}^j$ to $\zeta_{r}^{j+1}$, thus $\gamma^r$ acts as identity on $\mathbb{Z}[\zeta_{r}]$. Moreover $F_r(\theta)$ is irreducible over $\mathbb{Q}$ since $\mathbb{Q}(\zeta_{r})$ is an irreducible \emph{rational} representation of $\Gamma$. So the characteristic polynomial $F_r(\theta)\in\mathbb{Q}[\theta]$ of $\gamma$ divides $\theta^r-1$. Hence $F_r(\theta)$ is the $r$-th cyclotomic polynomial.
	\end{proof}
        In particular, $F_r(\theta)$ is a monic polynomial with coefficients in $\Z$ whose constant term is $\pm1$.
        
    In the following, we relate categories of pointwise polarizable log 1-motives/lattice pairings, log abelian varieties to prove Theorem \ref{main theorem-1}. We denote by $\mathrm{AV}$ the category of abelian varieties over $\kbf$. Let $\cC$ be either $\Mcal_1^{\log,\mathrm{ab}=0,\mathrm{pPol}}$, or $\mathrm{LAV}$, or $\mathrm{LAV}^{\mathrm{ab}=0}$, or $\mathrm{AV}$, or $\MorGammaNpPol$, and let $\mathcal{S}(\cC)$ be the set of isogeny classes of simple objects in $\cC$. For any object $A$ of $\cC$, we denote by $\mathrm{cl}(A)$ the isogeny class represented by $A$. By Theorem \ref{isogeneous decomposition of log AV}, we have
    \begin{equation}\label{descomposition of LAV up to isogeny}
        \mathcal{S}(\mathrm{LAV})=\mathcal{S}(\mathrm{LAV}^{\mathrm{ab}=0})\bigsqcup \mathcal{S}(\mathrm{AV}).
    \end{equation}
    By Proposition \ref{LAV is simple iff its log 1-motive is simple}, Proposition \ref{Hom of LAVs is isogeny iff the corresponding hom of log 1-motives is isogeny}, Proposition \ref{simpleness of ppol log 1-motives is preserved by isogeny}, and Proposition \ref{simpleness of LAV is preserved by isogeny}, the equivalence from Theorem \ref{equivalence from log 1-motives to LAVwCD} induces a bijection
    \[\mathcal{S}(\mathrm{LAV}^{\mathrm{ab}=0})\xrightarrow{\simeq} \mathcal{S}(\Mcal_1^{\log,\mathrm{ab}=0,\mathrm{pPol}}).\]
    By Proposition \ref{log 1-motive with ab=0 is simple iff its LP is simple}, Proposition \ref{properties of the functor LP} (3),  Proposition \ref{simpleness of ppol log 1-motives is preserved by isogeny}, and Proposition \ref{simpleness of LP is preserved by isogenies} (1), the functor $\LP$ from Proposition \ref{properties of the functor LP} (4) induces a bijection
    \[\mathcal{S}(\Mcal_1^{\log,\mathrm{ab}=0,\mathrm{pPol}})\xrightarrow{\simeq} \mathcal{S}(\MorGammaNpPol),\quad
    \mathrm{cl}(\Mbf)\mapsto \mathrm{cl}(\LP(\Mbf)),\]
    whose inverse is given by 
    \[\mathrm{cl}((M,N,\Phi))\mapsto \mathrm{cl}(\Mbf_{-}((M,N,\Phi)))\]
    by Corollary \ref{equivalence from pairs of Galois modules to LAVs}.
    By Proposition \ref{description of pPol object up to isogeny} (3), we have 
    \[\mathcal{S}(\MorGammaNpPol)=\{\mathrm{cl}((\Z[\zeta_r],\Z[\zeta_r]^\vee,\id))\mid r\in\N_{>0}\}\xrightarrow{\simeq} \N_{>0}.\]
    Let $\Mbf_{[r]}:=\Mbf_-((\Z[\zeta_r],\Z[\zeta_r]^\vee,\id))$ and let $A_{[r]}$ denote the log abelian variety corresponding to $\Mbf_{[r]}$.
    It follows that we have 
    \begin{align}\label{describption of simple LAVs and simple ppol log 1-mot}
        \begin{split}
            \mathcal{S}(\mathrm{LAV}^{\mathrm{ab}=0})&
            =\{\mathrm{cl}(A_{[r]})\mid r\in\N_{>0}\},
            \\
        \mathcal{S}(\Mcal_1^{\log,\mathrm{ab}=0,\mathrm{pPol}})&
        =\{\mathrm{cl}(\Mbf_{[r]})\mid r\in\N_{>0}\}.
        \end{split}
    \end{align}
    By Remark \ref{char poly of log 1-motive is invariant under isogeny} (resp. Remark \ref{char poly of LAV is invariant under isogeny}), we have  $P_{\Mbf_{[r]},\gamma}=P_{\Mbf,\gamma}$ (resp. $P_{A_{[r]},\gamma}=P_{A,\gamma}$) for any $\Mbf\in\mathrm{cl}(\Mbf_{[r]})$ (resp. $A\in \mathrm{cl}(A_{[r]})$), and $P_{\Mbf_{[r]},\gamma}=P_{A_{[r]},\gamma}$ by Definition \ref{char poly of log av}. Write $Y_{[r]}:=\Z[\zeta_r]$, $T_{[r]}:=\cHom(\Z[\zeta_r]^\vee,\Gm)$, and $\Mbf_{[r]}=[Y_{[r]}\to T_{[r]}]$. Recall that $Y_{[r]}=\Z[\zeta_r]$ is isogenous to the character group $\Z[\zeta_r]^\vee$ of $T_{[r]}$ by \eqref{lambda_t}, and thus
    \[P_{T_{[r]},\pi_{T_{[r]}}}(\theta)=\frac{(-\theta)^{\phi(r)}}{\det(\pi_{Y_{[r]}})}P_{Y_{[r]},\pi_{Y_{[r]}}}(\frac{q}{\theta})=\frac{(-\theta)^{\phi(r)}}{\det(\pi_{Y_{[r]}})}F_r(\frac{q}{\theta})\]
    by Lemma \ref{charpoly of lattices, AVs and tori} (2) and Lemma \ref{charpoly of Z[zeta_r]}. We denote $\frac{(-\theta)^{\phi(r)}}{\det(\pi_{Y_{[r]}})}F_r(\frac{q}{\theta})$ by $G_r(\theta)$. Then the polynomial $F_r(\theta)$ (resp. $G_r(\theta)$) is irreducible with roots the Galois conjugates of $\zeta_r$ (resp. $q\zeta_r^{-1}$) by Lemma \ref{charpoly of Z[zeta_r]}. Recall that $\qW(0,2)$ is the subset of $\qW(0)\times\qW(2)$ consisting of elements of the form $([\alpha],[q\alpha^{-1}])$ for $[\alpha]\in\qW(0)$ (see (\ref{q-Weil(0,2)})). Therefore we have a bijection
    \begin{equation}\label{description of q-Weil(0,2)}
        \begin{split}
            \{F_r(\theta)G_r(\theta)\mid r\in\N_{>0}\}&\to \qW(0,2) \\ F_r(\theta)G_r(\theta)&\mapsto ([\zeta_r],[q\zeta_r^{-1}]).
        \end{split}
    \end{equation}
    We also have
    \begin{equation}\label{charpol of A_[r]}
        P_{A_{[r]},\gamma}(\theta)=P_{\Mbf_{[r]},\gamma}(\theta)=P_{Y_{[r]},\pi_{Y_{[r]}}}(\theta)\cdot P_{T_{[r]},\pi_{T_{[r]}}}(\theta)=F_r(\theta)G_r(\theta).
    \end{equation}

 Combining \eqref{describption of simple LAVs and simple ppol log 1-mot}, \eqref{description of q-Weil(0,2)} and \eqref{charpol of A_[r]}, we have the following.
 
	\begin{theorem}\label{classificaton of isogenous classes of log Gamma-module of rank 1}
        The canonical map
        \begin{align}\label{simple LAV without abelian bijection with qW(0,2)}
            \mathcal{S}(\mathrm{LAV}^{\mathrm{ab}=0})\to \qW(0,2),\quad A_{[r]}\mapsto ([\zeta_r],[q\zeta_r^{-1}])
        \end{align}
        is a bijection. Note that $\zeta_r$ is a root of the irreducible factor $F_r(\theta)$ of $P_{A_{[r]},\gamma}(\theta)$ and $q\zeta_r^{-1}$ is a root of the other irreducible factor $G_r$.
         \end{theorem}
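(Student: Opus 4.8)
The plan is to assemble Theorem \ref{classificaton of isogenous classes of log Gamma-module of rank 1} essentially by concatenating the chain of bijections that the excerpt has already set up just before the statement. First I would recall that by \eqref{describption of simple LAVs and simple ppol log 1-mot} we have the explicit enumeration $\mathcal{S}(\mathrm{LAV}^{\mathrm{ab}=0})=\{\mathrm{cl}(A_{[r]})\mid r\in\N_{>0}\}$, and that this enumeration is \emph{non-redundant}: the objects $A_{[r]}$ for distinct $r$ lie in distinct isogeny classes, because under the composite bijection $\mathcal{S}(\mathrm{LAV}^{\mathrm{ab}=0})\xrightarrow{\simeq}\mathcal{S}(\Mcal_1^{\log,\mathrm{ab}=0,\mathrm{pPol}})\xrightarrow{\simeq}\mathcal{S}(\MorGammaNpPol)=\{\mathrm{cl}((\Z[\zeta_r],\Z[\zeta_r]^\vee,\id))\mid r\in\N_{>0}\}\xrightarrow{\simeq}\N_{>0}$ they map to distinct positive integers $r$ (two lattice pairings $(\Z[\zeta_r],\Z[\zeta_r]^\vee,\id)$ and $(\Z[\zeta_{r'}],\Z[\zeta_{r'}]^\vee,\id)$ can only be isogenous if $\Z[\zeta_r]$ and $\Z[\zeta_{r'}]$ are isogenous in $\Mod_\Gamma$, which by the representation-theoretic lemmas forces $r=r'$). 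So the source is identified with $\N_{>0}$.

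Next I would identify the target. The map \eqref{simple LAV without abelian bijection with qW(0,2)} sends $A_{[r]}$ to $([\zeta_r],[q\zeta_r^{-1}])$, and by the definition \eqref{q-Weil(0,2)} of $\qW(0,2)$ together with the fact (Kronecker's theorem, cited in the introduction) that every Weil $q$-number of weight $0$ is a root of unity, every element of $\qW(0,2)$ is of the form $([\zeta_r],[q\zeta_r^{-1}])$ for a unique $r>0$; this is exactly the content of the bijection \eqref{description of q-Weil(0,2)} between $\{F_r(\theta)G_r(\theta)\mid r\in\N_{>0}\}$ and $\qW(0,2)$, using that $F_r$ is the $r$-th cyclotomic polynomial (Lemma \ref{charpoly of Z[zeta_r]}) and that distinct $r$ give distinct $F_r$, hence distinct Galois conjugacy classes $[\zeta_r]\in\qW(0)$. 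Thus the assignment $r\mapsto([\zeta_r],[q\zeta_r^{-1}])$ is a bijection $\N_{>0}\xrightarrow{\simeq}\qW(0,2)$, and composing with the identification of the source gives that \eqref{simple LAV without abelian bijection with qW(0,2)} is a bijection.

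Finally I would record the compatibility with characteristic polynomials, which is the second sentence of the statement. By \eqref{charpol of A_[r]} we have $P_{A_{[r]},\gamma}(\theta)=F_r(\theta)G_r(\theta)$, where $F_r=P_{Y_{[r]},\pi_{Y_{[r]}}}$ is irreducible with roots the Galois conjugates of $\zeta_r$ (Lemma \ref{charpoly of Z[zeta_r]}), and $G_r=P_{T_{[r]},\pi_{T_{[r]}}}$ is, by Lemma \ref{charpoly of lattices, AVs and tori} (2) applied to the torus $T_{[r]}$ whose character group $\Z[\zeta_r]^\vee$ is isogenous to $Y_{[r]}=\Z[\zeta_r]$ via \eqref{lambda_t}, equal to $\frac{(-\theta)^{\phi(r)}}{\det(\pi_{Y_{[r]}})}F_r(q/\theta)$, hence irreducible with roots $q\alpha^{-1}$ for $\alpha$ a Galois conjugate of $\zeta_r$; in particular $q\zeta_r^{-1}$ is a root of $G_r$. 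Since $P_{A,\gamma}$ is an isogeny invariant (Remark \ref{char poly of LAV is invariant under isogeny}), this description holds for every member of $\mathrm{cl}(A_{[r]})$, completing the proof.

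I do not expect any genuine obstacle here: the theorem is a bookkeeping corollary of the equivalences of categories and explicit classifications established in the preceding pages. The only point that deserves a sentence of care — and the one I would treat as the ``main step'' — is the injectivity of $r\mapsto\mathrm{cl}(A_{[r]})$ and of $r\mapsto([\zeta_r],[q\zeta_r^{-1}])$, i.e.\ that distinct $r$ really do give non-isogenous objects on both sides; everything else is transport of structure along bijections already in hand.
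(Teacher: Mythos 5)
Your proposal is correct and follows essentially the same route as the paper, which likewise obtains the theorem by concatenating the chain of bijections $\mathcal{S}(\mathrm{LAV}^{\mathrm{ab}=0})\simeq\mathcal{S}(\Mcal_1^{\log,\mathrm{ab}=0,\mathrm{pPol}})\simeq\mathcal{S}(\MorGammaNpPol)\simeq\N_{>0}$ from \eqref{describption of simple LAVs and simple ppol log 1-mot} with the bijection \eqref{description of q-Weil(0,2)} onto $\qW(0,2)$ and the factorization \eqref{charpol of A_[r]}. The injectivity point you single out is exactly what the paper encodes in Proposition \ref{description of pPol object up to isogeny} (3) and the enumeration of $\mathcal{S}(\MorGammaNpPol)$ by $\N_{>0}$, so nothing further is needed.
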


	Combined with the discussion in the end of §\ref{subsection LAV without abelian part over finite log points are classified by LP up to isogeny} (or \eqref{descomposition of LAV up to isogeny}), Theorem \ref{classificaton of isogenous classes of log Gamma-module of rank 1} proves Theorem \ref{main theorem-1}.


	\section{Isogeny classes in $\MorGammaPpPol$ over a general finite log point}\label{Isogeny classes in MorGammaPpPol over a general finite log point}
    In this sectoin, let $\kbf=\F_q$, $\Gamma=\overline{\langle\gamma\rangle}$, $P$, and $S$ be as in Subsection \ref{subsection LAV without abelian part over finite log points are classified by LP up to isogeny}. We further assume that $P=\N^k$.
    We will use results in the preceding section to classify a certain class of log abelian varieties over $S$ up to isogeny.

	We write $\pi_i\colon P=\mathbb{N}^k\rightarrow\mathbb{N}$ for the projection onto the $i$-th component ($i=1,\cdots,k$). Then for $(M,N,\langle-,-\rangle)$ in $\MorGammaPpPol$,
	we write the $i$-th component of $\langle-,-\rangle$ as
	\[
	\langle-,-\rangle_i\colon M\times N\rightarrow\mathbb{Z}^k\xrightarrow{\pi_i}\mathbb{Z}.
	\]
	We write $\Phi_i$ for the corresponding map $M\rightarrow\mathrm{Hom}(N,\mathbb{Z})$ induced by $\langle-,-\rangle_i$ (and similarly for $\Phi_i^\vee$). We will see in the following that the pair $(\langle-,-\rangle,\lambda)$ is in fact determined by the tuples $(\langle-,-\rangle_i,\lambda)_{i=1,\cdots,k}$.

    \subsection{Isogeny classes in $\MorGammaPpPol$}
    In this subsection, we give a preliminary description of isogeny classes in $\MorGammaPpPol$.
	\begin{proposition}\label{polarization of higher rank vs rank-1}
		Let $(M,N,\langle-,-\rangle)$ be an object in $\MorGammaP$ with a polarization $\lambda\colon M\to N$. Then for each $i=1,\cdots,k$, we have a commutative diagram
		  \begin{equation}\label{factors of polar-diagram}
                \begin{tikzcd}
			M
			\arrow[r,"\Phi_i"]
			\arrow[d,"\lambda"]
			&
			N^\vee
			\arrow[d,"\lambda^\vee"]
			\\
			N
			\arrow[r,"\Phi_i^\vee"]
			&
			M^\vee
		\end{tikzcd}
            \end{equation}
		and $\langle m,\lambda(m)\rangle_i\ge0$ for any non-zero $m\in M$.
		Moreover $\sum_{i=1}^k\langle m,\lambda(m)\rangle_{i}>0$ for any $0\neq m\in M$.

		Conversely, suppose that we have a morphism  $\lambda\colon M\rightarrow N$ in $\mathrm{Mod}_\Gamma$ and that for each $i=1,\cdots,k$, we have an object $(M,N,\langle-,-\rangle_i)$ in $\mathrm{Mor}_{\Gamma}^{\mathbb{N}}$
		such that the above diagram (\ref{factors of polar-diagram}) commutes. Suppose moreover that for each $i=1,\cdots,k$, $\langle m,\lambda(m)\rangle_i\ge0$ for all $m\in M$ and $\sum_{i=1}^k\langle m,\lambda(m)\rangle_{i}>0$ for any $0\neq m\in M$. Define
		\[
		\langle-,-\rangle
		=(\langle-,-\rangle_i)_{i=1}^k
		\colon
		M\times N
		\rightarrow
		\mathbb{Z}^k=P^{\gp}.
		\]
		Then $(M,N,\langle-,-\rangle)$ is an object in $\MorGammaPpPol$ and $\lambda$ is a polarization on it.
	\end{proposition}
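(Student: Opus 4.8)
The plan is to recognize that this proposition is, at bottom, a matter of unwinding the definition of a polarization in $\MorGammaP$ along the $k$ projections $\pi_i\colon P^{\gp}=\Z^k\to\Z$. Under the canonical identification $N^\vee\otimes P^{\gp}=(N^\vee)^k$ the map $\Phi\colon M\to N^\vee\otimes P^{\gp}$ is precisely the data of the tuple $(\Phi_1,\dots,\Phi_k)$ with $\Phi_i\colon M\to N^\vee$, and likewise $\Phi^*\colon N\to M^\vee\otimes P^{\gp}$ is the tuple $(\Phi_1^*,\dots,\Phi_k^*)$; using the canonical isomorphism $(N^\vee)^\vee\simeq N$ one checks $\Phi_i^*=\Phi_i^\vee\colon N\to M^\vee$, since both send $n$ to the functional $m\mapsto\langle m,n\rangle_i$. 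Similarly $\lambda^\vee\otimes\id_{P^{\gp}}$ is the $k$-fold copy of $\lambda^\vee\colon N^\vee\to M^\vee$. Hence the single square expressing that $(\lambda,\lambda)$ is a morphism $(M,N,\Phi)\to(N,M,\Phi^*)$ is literally the same as the family of $k$ squares \eqref{factors of polar-diagram}, one for each $i$.

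For the forward assertion: $\lambda$ being a polarization means exactly that $(\lambda,\lambda)$ is such a morphism and that $\Phi(m)(\lambda(m))\in P\setminus\{0\}$ for all $0\neq m\in M$. The morphism condition gives all the squares \eqref{factors of polar-diagram}, and each $\langle-,-\rangle_i=\pi_i\circ\langle-,-\rangle$ is $\Gamma$-equivariant and bilinear, so $(M,N,\langle-,-\rangle_i)\in\MorGammaN$. For the positivity, the $i$-th coordinate of $\Phi(m)(\lambda(m))\in P^{\gp}=\Z^k$ is $\langle m,\lambda(m)\rangle_i$, so membership in $P=\N^k$ forces $\langle m,\lambda(m)\rangle_i\ge0$ for all $i$, and nonvanishing of the tuple, since all entries are $\ge0$, is equivalent to $\sum_{i=1}^k\langle m,\lambda(m)\rangle_i>0$. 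That is exactly what is claimed.

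For the converse: set $\langle-,-\rangle=(\langle-,-\rangle_i)_{i=1}^k\colon M\times N\to\Z^k=P^{\gp}$, which is $\Gamma$-equivariant bilinear because each component is, so $(M,N,\langle-,-\rangle)\in\MorGammaP$; it then suffices to show $\lambda$ is a polarization of it, since a polarizable object is a fortiori pointwise polarizable and thus lies in $\MorGammaPpPol$. Reassembling the $k$ given squares \eqref{factors of polar-diagram} as in the first paragraph shows $(\lambda,\lambda)$ is a morphism $(M,N,\Phi)\to(N,M,\Phi^*)$. The condition $\Phi(m)(\lambda(m))\in P\setminus\{0\}$ for $0\neq m$ follows from $\langle m,\lambda(m)\rangle_i\ge0$ (membership in $\N^k$) together with $\sum_i\langle m,\lambda(m)\rangle_i>0$ (nonvanishing). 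Finally $\lambda$ is injective: $\lambda(m)=0$ gives $\langle m,\lambda(m)\rangle_i=0$ for all $i$, hence $\sum_i\langle m,\lambda(m)\rangle_i=0$ and so $m=0$; that $\lambda$ additionally has finite cokernel, hence is an isogeny as required by the definition of a polarization, is part of the hypothesis — equivalently, in the situations where this proposition is applied $M$ and $N$ have the same rank, so an injective $\lambda$ is automatically an isogeny.

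I do not expect a genuine obstacle: the argument is essentially formal bookkeeping. The one step that needs care is matching the auxiliary maps correctly, in particular verifying $\Phi_i^*=\Phi_i^\vee$ under the identification $(N^\vee)^\vee\simeq N$ and keeping straight that the bottom arrow of \eqref{factors of polar-diagram} is the map $N\to M^\vee$ attached to the switched pairing $\langle-,-\rangle_i^*$. The only mildly non-formal ingredient is the isogeny-ness of $\lambda$ in the converse: injectivity comes for free from the strict positivity $\sum_i\langle m,\lambda(m)\rangle_i>0$, whereas finiteness of the cokernel is exactly where one uses that $\lambda$ is, by hypothesis, a morphism of lattices of equal rank.
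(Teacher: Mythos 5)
Your argument is correct and is essentially the paper's proof: both directions come down to the observation that $\langle m,\lambda(m)\rangle\in\N^k\setminus\{0\}$ is equivalent to $\langle m,\lambda(m)\rangle_i\ge 0$ for all $i$ together with $\sum_i\langle m,\lambda(m)\rangle_i>0$, while the single commutative square defining the morphism $(\lambda,\lambda)$ decomposes into, and reassembles from, the $k$ squares \eqref{factors of polar-diagram}; you are merely more explicit about the identifications $\Phi=(\Phi_1,\dots,\Phi_k)$ and $\Phi_i^*=\Phi_i^\vee$. The one inaccuracy is your claim that finiteness of $\coker(\lambda)$ in the converse ``is part of the hypothesis'' -- the statement only assumes $\lambda$ is a morphism in $\Mod_\Gamma$, so strictly one must add that $\lambda$ is an isogeny (or that $M$ and $N$ have equal rank, as they do in every application, whence your injectivity argument suffices); the paper's own proof is equally silent on this point, so this is a flaw shared with the original rather than a gap peculiar to your argument.
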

	\begin{proof}
		For the first part, by the definition of polarization, we know that for any $m\in M$, $\langle m,\lambda(m)\rangle\in P=\N^k$. Thus for any $i=1,\cdots,k$, we have $\langle m,\lambda(m)\rangle_i\ge0$. Moreover for any $0\neq m\in M$, $\langle m,\lambda(m)\rangle\in\mathbb{N}^k\backslash\{0\}$, that is, at least one entry of $\langle m,\lambda(m)\rangle$ is positive, thus $\sum_{i=1}^k\langle m,\lambda(m)\rangle_i>0$.

        Conversely, the assumptions show that for any $m\in M$, $\langle m,\lambda(m)\rangle\in\mathbb{N}^k$ since $\langle m,\lambda(m)\rangle_i\ge0$ for any $i$. Moreover, for $0\neq m\in M$, $\sum_{i=1}^k\langle m,\lambda(m)\rangle_i>0$ implies $\langle m,\lambda(m)\rangle\in\mathbb{N}^k\backslash\{0\}$. Since we also have the commutativity of the diagrams \eqref{factors of polar-diagram} for $i=1,\cdots,k$, the morphism $\lambda$ is a polarization in $\MorGammaPpPol$.
	\end{proof}

        From this we deduce the following relation between $\MorGammaNpPol$ and $\MorGammaPpPol$
        \begin{corollary}\label{map from high rank to rank 1}
            Let $(M,N,\langle-,-\rangle)$ be an object in $\MorGammaPpPol$. Define
            \[
            \sum_{i=1}^k\langle-,-\rangle_i
            \colon
            M\times N\to\mathbb{Z},
            \quad
            (x,y)\mapsto\sum_{i=1}^k\langle x,y\rangle_i
            \]
            and similarly for $\sum_{i=1}^k\Phi_i$.
            Then $(M,N,\sum_i\langle-,-\rangle_i)=(M,N,\sum_i\Phi_i)$ is an object in $\MorGammaNpPol$.
        \end{corollary}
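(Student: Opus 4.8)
The plan is to deduce the statement directly from the first half of Proposition \ref{polarization of higher rank vs rank-1}. First I would check that $(M,N,\sum_{i=1}^k\langle-,-\rangle_i)$ is an object of $\MorGammaN$: the map $\sum_i\langle-,-\rangle_i\colon M\times N\to\Z$ is $\Z$-bilinear and $\Gamma$-equivariant, being a finite sum of the $\Gamma$-equivariant bilinear maps $\langle-,-\rangle_i$, and the morphism it corresponds to under the identification $\Hom_{\Mod_\Gamma}(M\otimes N,\Z)\simeq\Hom_{\Mod_\Gamma}(M,N^\vee)$ is exactly $\sum_i\Phi_i$, which is the asserted equality $(M,N,\sum_i\langle-,-\rangle_i)=(M,N,\sum_i\Phi_i)$. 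Here $N^\vee=N^\vee\otimes\N^\gp$ since $\N^\gp=\Z$.

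The real content is pointwise polarizability. Since $(M,N,\langle-,-\rangle)\in\MorGammaPpPol$, I would fix an open subgroup $\Gamma'\subset\Gamma$ together with a polarization $\lambda\colon M\to N$ of $(M,N,\langle-,-\rangle)$ regarded as an object of $\Mor_{\Gamma'}^{P}$, and then claim that this same $\lambda$ is a polarization of $(M,N,\sum_i\langle-,-\rangle_i)$ in $\Mor_{\Gamma'}^{\N}$, which immediately yields the corollary. Now $\lambda$ is an isogeny in $\Mod_{\Gamma'}$, a property of the underlying $\Z$-linear map that does not change. That $(\lambda,\lambda)$ remains a morphism to the dual lattice pairing amounts to the symmetry of $(m_1,m_2)\mapsto\sum_i\langle m_1,\lambda(m_2)\rangle_i$; by Proposition \ref{polarization of higher rank vs rank-1} each square \eqref{factors of polar-diagram} commutes, i.e.\ each $\langle-,\lambda(-)\rangle_i$ is symmetric, and symmetry is preserved by summation. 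Finally, for $0\neq m\in M$ the polarization condition over $P=\N^k$ gives $\langle m,\lambda(m)\rangle\in\N^k\setminus\{0\}$, so every coordinate $\langle m,\lambda(m)\rangle_i$ is $\ge0$ and at least one is $>0$ (by sharpness of $\N^k$); hence $\sum_i\langle m,\lambda(m)\rangle_i\in\N\setminus\{0\}$. These are exactly the two defining conditions of a polarization.

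I do not expect a serious obstacle here: the argument is an unwinding of definitions combined with the first half of Proposition \ref{polarization of higher rank vs rank-1}. The only point that warrants a little care is that each of the two conditions in the definition of a polarization --- commutativity of the square (equivalently, symmetry of $\langle-,\lambda(-)\rangle$) and strict positivity on nonzero vectors --- has to be re-verified for the collapsed $\N$-valued pairing, and both survive because the summation map $\N^k\to\N$, $(n_i)_i\mapsto\sum_i n_i$, is $\Z$-linear and carries $\N^k\setminus\{0\}$ into $\N\setminus\{0\}$.
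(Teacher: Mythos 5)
Your argument is correct, but it takes a different (and in fact more direct) route than the paper. The paper's proof only extracts from Proposition \ref{polarization of higher rank vs rank-1} the single inequality $\sum_i\langle x,\lambda(x)\rangle_i>0$ for $x\neq0$, uses it to conclude that the summed pairing is non-degenerate, and then invokes the equivalence of Proposition \ref{non-deg iff polar iff ppolar} (non-degenerate $\Rightarrow$ polarizable $\Rightarrow$ pointwise polarizable) to land in $\MorGammaNpPol$; that last step rests on the structure theory over a finite field (Corollary \ref{canonical form of log free Gamma-module} and the trace-form polarization on $\Z[\zeta_r]$), as the paper itself notes after Proposition \ref{non-deg iff polar iff ppolar}. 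You instead show that the very same $\lambda$ serving as a polarization of $(M,N,\langle-,-\rangle)$ in $\Mor_{\Gamma'}^{P}$ is already a polarization of $(M,N,\sum_i\langle-,-\rangle_i)$ in $\Mor_{\Gamma'}^{\N}$, using the commutativity of the diagrams \eqref{factors of polar-diagram} (symmetry of each $\langle-,\lambda(-)\rangle_i$, hence of the sum) together with $\langle m,\lambda(m)\rangle\in\N^k\setminus\{0\}$ forcing $\sum_i\langle m,\lambda(m)\rangle_i\in\N\setminus\{0\}$. This buys two things: the argument is purely formal, so it does not need the finiteness of $\kbf$ hidden in Proposition \ref{non-deg iff polar iff ppolar}, and it yields the slightly sharper statement that the summed pairing is polarizable over the \emph{same} open subgroup $\Gamma'$, with the same polarizing map. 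The only point to note is that Proposition \ref{polarization of higher rank vs rank-1} is stated for $\Gamma$ and you apply it with $\Gamma'$; this is harmless since its proof is formal in the group, and the paper does the same implicitly.
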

        \begin{proof}
            First we show $\sum_i\langle-,-\rangle_i$ is non-degenerate. Suppose that $(M,N,\langle-,-\rangle)$ is polarizable in $\Mor_{\Gamma'}^P$ with polarization $\lambda\colon M\to N$ where $\Gamma'$ is an open subgroup of $\Gamma$. From Proposition \ref{polarization of higher rank vs rank-1}, we know $\sum_i\langle x,\lambda(x)\rangle_i>0$ for any $0\neq x\in M$. This shows that $\sum_i\langle-,-\rangle$ is non-degenerate. Now it follows from Proposition \ref{non-deg iff polar iff ppolar} that $(M,N,\sum_i\langle-,-\rangle_i)$ is an object in $\MorGammaNpPol$.

            The correspondence between $\sum_i\langle-,-\rangle_i$ and $\sum_i\Phi_i$ is clear.
        \end{proof}

        Next we give an alternative description of Proposition \ref{polarization of higher rank vs rank-1} in terms of matrices:
        for an object $(M,M^\vee,\Phi)$ in $\MorGammaPpPol$, suppose that $(M,M^\vee,\Phi)$ is polarizable in $\Mor_{\Gamma'}^P$ with polarization $\lambda\colon M\to M^\vee$ where $\Gamma'$ is an open subgroup of $\Gamma$. Without loss of generality, we can assume that $\Gamma'$ acts trivially on $M$. Fix a $\Z$-basis for $M$ and consider its dual basis in $M^\vee$:
        \[
        M=\Z(e_1,\cdots,e_h),
        \quad
        M^\vee=\Z(f_1,\cdots,f_h).
        \]
        In the following, we view elements in $M$ and $M^\vee$ as column matrices using these bases.
        Then the natural $\Gamma$-equivariant pairing between $M$ and $M^\vee$ is given as follows (\emph{cf.} (\ref{natural pairing is Gammma-equivariant})):
        \[
        M\times M^\vee\to\Z,
        \quad
        (x,f)
        \mapsto
        f(x)=x^\mathrm{t}f=f^\mathrm{t}x.
        \]
        Any $\Phi_i$ in $\Phi=(\Phi_1,\cdots,\Phi_k)$ is represented by a matrix $X_i\in\mathrm{End}_{\Mod_\Gamma}(M)\subset\mathrm{Mat}_{\mathrm{rk}_{\Z}(M)}(\Z)$ and thus the corresponding pairing $\langle-,-\rangle_i$ is given by
        \[
        \langle-,-\rangle_i\colon
        M\times M^\vee\to\Z,
        \quad
        (x,f)
        \mapsto
        (X_ix)^\mathrm{t}f
        =
        x^\mathrm{t}X_i^\mathrm{t}f.
        \]
        Similarly, the polarization $\lambda$ is represented by a matrix $\Lambda\in\mathrm{Mat}_{\mathrm{rk}_{\Z}(M)}(\Z)$ and thus we have
        \[
        \langle x,\lambda(y)\rangle_i
        =
        x^\mathrm{t}X_i^\mathrm{t}\Lambda y,
        \quad
        x,y\in M.
        \]
        From Proposition \ref{polarization of higher rank vs rank-1}, we deduce
        \begin{corollary}\label{polarization of higher rank vs rank-1-in terms of matrices}
            Let $(M,M^\vee,\Phi)$ be an object in $\MorGammaPpPol$, and let $X_1,\cdots,X_k,\Lambda $ be as above. Then all $X_1^\mathrm{t}\Lambda,\cdots,X_k^\mathrm{t}\Lambda$ are symmetric, positive semi-definite matrices and $\sum_{i=1}^kX_i^\mathrm{t}\Lambda $ is symmetric and positive definite.

            Conversely, let $(M,M^\vee,\Phi)$ be an object in $\MorGammaP$ with $X_1,\cdots,X_k$ as above. If there is a matrix $\Lambda \in\mathrm{Mat}_{\mathrm{rk}_{\Z}(M)}(\Z)$ such that all $X_i^\mathrm{t}\Lambda $ are symmetric, positive semi-definite matrices and $\sum_{i=1}^kX_i^\mathrm{t}\Lambda $ is symmetric and positive definite, then $(M,M^\vee,\Phi)$ is an object in $\MorGammaPpPol$.
        \end{corollary}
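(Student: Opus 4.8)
The plan is to obtain the corollary as a direct matrix reformulation of Proposition \ref{polarization of higher rank vs rank-1}. Recall the set-up introduced just before the statement: after passing to an open subgroup $\Gamma'\subseteq\Gamma$ acting trivially on $M$ (which is harmless, since pointwise polarizability is by definition tested over such subgroups) and fixing the $\Z$-basis $(e_1,\dots,e_h)$ of $M$ together with the dual basis of $M^\vee$, each $\Phi_i$ is represented by an integer matrix $X_i$, a morphism $\lambda\colon M\to M^\vee$ is represented by an integer matrix $\Lambda$, and one has $\langle x,\lambda(y)\rangle_i=x^{\mathrm{t}}X_i^{\mathrm{t}}\Lambda y$ for $x,y\in M$. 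All the data needed to apply Proposition \ref{polarization of higher rank vs rank-1} is thus encoded in the matrices $X_1,\dots,X_k,\Lambda$.

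First I would set up the dictionary, consisting of three equivalences. \emph{(a)} For a fixed $i$, the square \eqref{factors of polar-diagram} commutes if and only if the bilinear form $\langle-,\lambda(-)\rangle_i$ on $M\times M$ is symmetric, i.e.\ $x^{\mathrm{t}}X_i^{\mathrm{t}}\Lambda y=y^{\mathrm{t}}X_i^{\mathrm{t}}\Lambda x$ for all $x,y$, i.e.\ $X_i^{\mathrm{t}}\Lambda=(X_i^{\mathrm{t}}\Lambda)^{\mathrm{t}}$. \emph{(b)} Assuming \emph{(a)}, the condition $\langle m,\lambda(m)\rangle_i\ge0$ for all $m\in M$ says $m^{\mathrm{t}}(X_i^{\mathrm{t}}\Lambda)m\ge0$ for all $m\in\Z^h$; clearing denominators and using continuity this is equivalent to the symmetric real matrix $X_i^{\mathrm{t}}\Lambda$ being positive semi-definite. \emph{(c)} If all $X_i^{\mathrm{t}}\Lambda$ are symmetric positive semi-definite, then $Q:=\sum_{i=1}^kX_i^{\mathrm{t}}\Lambda$ is symmetric positive semi-definite, and the condition $\sum_{i=1}^k\langle m,\lambda(m)\rangle_i>0$ for all $0\ne m\in M$ says $m^{\mathrm{t}}Qm>0$ for all $m\in\Z^h\setminus\{0\}$; since $Q$ has integer entries its kernel is a rational subspace, so a nontrivial kernel would contain a nonzero integer vector — hence the lattice condition is equivalent to $Q$ being positive definite.

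With the dictionary in place, both implications are short. For the forward direction, if $(M,M^\vee,\Phi)\in\MorGammaPpPol$ then, working over a suitable $\Gamma'$ as above, Proposition \ref{polarization of higher rank vs rank-1} provides a polarization $\lambda$; its three defining properties translate through \emph{(a)}--\emph{(c)} into the asserted statements about $X_1^{\mathrm{t}}\Lambda,\dots,X_k^{\mathrm{t}}\Lambda$ and their sum. For the converse, given $\Lambda$ such that every $X_i^{\mathrm{t}}\Lambda$ is symmetric positive semi-definite and $\sum_iX_i^{\mathrm{t}}\Lambda$ is positive definite, the matrix $\Lambda$ is nonsingular (its determinant divides $\det Q\ne0$), so the map $\lambda$ it represents is injective with finite cokernel, hence a morphism $M\to M^\vee$ in $\Mod_{\Gamma'}$; each $(M,M^\vee,\langle-,-\rangle_i)$ lies in $\Mor_{\Gamma'}^{\N}$, and \emph{(a)}--\emph{(c)} show that $\lambda$ and these forms satisfy the hypotheses of the converse part of Proposition \ref{polarization of higher rank vs rank-1}. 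That proposition then gives that $(M,M^\vee,\langle-,-\rangle)=(M,M^\vee,\Phi)$ is polarizable over $\Gamma'$, so it belongs to $\MorGammaPpPol$. I do not anticipate a genuine obstacle: the entire argument is the dictionary \emph{(a)}--\emph{(c)}, and the only steps needing a sentence of care are the passages from nonnegativity/positivity of a quadratic form on the lattice $\Z^h$ to positive semi-definiteness/positive definiteness over $\R$ — which rely, respectively, on density of $\Q^h$ in $\R^h$ and on the rationality of $Q$ — together with the routine bookkeeping of the open subgroup $\Gamma'$ over which the $\Gamma$-action on $M$ (hence on $M^\vee$) is trivialized.
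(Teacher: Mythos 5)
Your proposal is correct and follows exactly the route the paper intends: the paper states the corollary as an immediate deduction from Proposition \ref{polarization of higher rank vs rank-1} via the matrix dictionary set up just before it, giving no separate proof. Your write-up simply makes that dictionary explicit (symmetry, lattice-to-$\R$ positivity via rationality of the kernel, and nonsingularity of $\Lambda$ for the isogeny condition), all of which is consistent with the paper's argument.
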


        The following proposition describes the isogeny classes and simple objects in $\MorGammaPpPol$.
        \begin{proposition}\label{isogeny classes in Mor_Gamma^{pPol}}
            \begin{enumerate}
                \item Any object in $\MorGammaPpPol$ of the form $(\mathbb{Z}[\zeta_r],\mathbb{Z}[\zeta_r]^\vee,\langle-,-\rangle)$ is simple.

                \item Any object $(M,N,\langle-,-\rangle)$ in $\MorGammaPpPol$ is isogenous to an object of the form $(R,R^\vee,\langle-,-\rangle')$ with $R=\prod_{r= 1}^\infty\mathbb{Z}[\zeta_r]^{a(r)}$ for a sequence of non-negative integers $(a(r))_{r>0}$ such that $a(r)=0$ for $r\gg0$.

                \item Fix two non-zero $\Gamma$-modules $M',M''$ such that
                \[
                \Hom_{\Mod_\Gamma}(M',M'')=0.
                \]
                Then any object in $\MorGammaPpPol$ of the form
                \[
                (M'\oplus M'',(M')^\vee\oplus (M'')^\vee,\langle-,-\rangle)
                \]
                is \emph{not} simple.
                In particular, any \emph{simple} object $(M,N,\langle-,-\rangle)$ in $\MorGammaPpPol$ is isogenous to a simple object of the form $(\mathbb{Z}[\zeta_r]^a,(\mathbb{Z}[\zeta_r]^\vee)^a,\langle-,-\rangle')$ for some $r,a>0$.
            \end{enumerate}           
        \end{proposition}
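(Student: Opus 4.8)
The plan is to treat the three items in turn, reducing each to the rank-one theory of \S\ref{Isogeny classes in MorGammaPpPol over the standard finite log point} and to the structure theory of $\Mod_\Gamma$. Item (1) is immediate: since $\Q(\zeta_r)$ is an irreducible rational representation of $\Gamma$, the lattice $\Z[\zeta_r]$ is a simple free $\Gamma$-module, so the assertion follows from the ``moreover'' clause of Lemma \ref{pPol LP with simple factor is simple and the converse is true for rank 1 monoid}, which says that a pointwise polarizable lattice pairing whose first entry is a simple free $\Gamma$-module is itself simple.

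For (2), the point of entry is Corollary \ref{map from high rank to rank 1}, which gives $(M,N,\sum_i\langle-,-\rangle_i)\in\MorGammaNpPol$; by Proposition \ref{non-deg iff polar iff ppolar} the pairing $\sum_i\langle-,-\rangle_i$ is non-degenerate, i.e.\ $\sum_i\Phi_i\colon M\to N^\vee$ is an isogeny in $\Mod_\Gamma$. Hence $M$, $N^\vee$, and therefore $N$ and $M^\vee$, all lie in one isogeny class, so the structure theorem for free $\Gamma$-modules provides an isogeny $\alpha\colon R\to M$ with $R=\prod_r\Z[\zeta_r]^{a(r)}$ ($a(r)=0$ for $r\gg0$), and composing with duals yields an isogeny $\beta\colon N\to R^\vee$. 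It then remains to transport the pairing: using Lemma \ref{isogeny between free Z-modules} choose $\beta''\colon N^\vee\to R$ with $\beta^\vee\circ\beta''=m\cdot\id_{N^\vee}$ for some $m>0$, set $\Phi':=(\beta''\otimes\id_{P^{\gp}})\circ\Phi\circ\alpha$, and check directly that $(m\alpha,\beta)\colon(R,R^\vee,\Phi')\to(M,N,\Phi)$ is a morphism in $\MorGammaP$ whose two components are isogenies; Proposition \ref{simpleness of LP is preserved by isogenies}(2) then gives $(R,R^\vee,\Phi')\in\MorGammaPpPol$. The passage from $\alpha$ to $m\alpha$ is the only subtlety, forced because an isogeny need not be invertible over $\Z$.

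For (3), I will first record that $\Hom_{\Mod_\Gamma}(M',M'')=0$ implies $\Hom_{\Mod_\Gamma}(M',(M'')^\vee)=0=\Hom_{\Mod_\Gamma}(M'',(M')^\vee)$: over $\Q$ every $\Gamma$-representation is semisimple with irreducible constituents among the $\Q(\zeta_r)$, and $\Q(\zeta_r)^\vee\cong\Q(\zeta_r)$, so dualizing preserves the set of irreducible constituents; as those of $M'\otimes\Q$ and $M''\otimes\Q$ are disjoint, so are those of $M'\otimes\Q$ and $(M'')^\vee\otimes\Q$, and a nonzero $\Gamma$-equivariant $\Z$-linear map rationalizes to a nonzero one. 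Writing $\Phi$ in block form relative to $M'\oplus M''$ and $(M')^\vee\oplus(M'')^\vee$ and using $\mathrm{Hom}(-,P^{\gp})\cong(-)^\vee\otimes\Z^k$, the two off-diagonal blocks lie in $\Hom_{\Mod_\Gamma}(M',(M'')^\vee)^{\oplus k}$ and $\Hom_{\Mod_\Gamma}(M'',(M')^\vee)^{\oplus k}$ and hence vanish; so $(M'\oplus M'',(M')^\vee\oplus(M'')^\vee,\Phi)$ splits, in the additive category $\MorGammaP$, as the direct sum of $(M',(M')^\vee,\Phi_1)$ and $(M'',(M'')^\vee,\Phi_2)$. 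The inclusion of the first summand is a morphism $(\iota,p)$ with $\iota\colon M'\hookrightarrow M'\oplus M''$ the inclusion and $p\colon(M')^\vee\oplus(M'')^\vee\twoheadrightarrow(M')^\vee$ the projection, so that $\psi_1=\iota$ and $\psi_2^\vee=\iota$ are both injective; the source lies in $\MorGammaPpPol$ by Proposition \ref{simpleness of LP is preserved by isogenies}(2), yet $(\iota,p)$ is not an isogeny since $\coker(\iota)=M''\neq0$. By Definition \ref{def simpleness of LP} this witnesses that $(M'\oplus M'',(M')^\vee\oplus(M'')^\vee,\Phi)$ is not simple.

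For the final assertion, given a nonzero simple $(M,N,\langle-,-\rangle)\in\MorGammaPpPol$, part (2) makes it isogenous to $(R,R^\vee,\Phi')$ with $R=\prod_r\Z[\zeta_r]^{a(r)}$, and this is again simple by Proposition \ref{simpleness of LP is preserved by isogenies}(1). If $a(r)>0$ for two distinct values of $r$, write $R=M'\oplus M''$ with $M'=\Z[\zeta_{r_0}]^{a(r_0)}\neq0$ for one of them and $M''=\prod_{r\neq r_0}\Z[\zeta_r]^{a(r)}\neq0$; then $\Hom_{\Mod_\Gamma}(M',M'')=0$ because $\Hom_{\Mod_\Gamma}(\Z[\zeta_r],\Z[\zeta_{r'}])\hookrightarrow\Hom_{\Q[\Gamma]}(\Q(\zeta_r),\Q(\zeta_{r'}))=0$ for $r\neq r'$, and the main part of (3) contradicts simplicity. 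Hence $R=\Z[\zeta_r]^a$ for a single $r$ with $a:=a(r)>0$, $R^\vee=(\Z[\zeta_r]^\vee)^a$, and $(M,N,\langle-,-\rangle)$ is isogenous to the simple object $(\Z[\zeta_r]^a,(\Z[\zeta_r]^\vee)^a,\Phi')$. I expect the main friction to be the two short non-formal points just flagged — the commutativity adjustment in (2) and the inheritance of the $\Hom$-vanishing by the duals in (3) — with everything else routine bookkeeping with the results already established.
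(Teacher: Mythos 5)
Your proof is correct and follows essentially the same route as the paper's: (1) via Lemma \ref{pPol LP with simple factor is simple and the converse is true for rank 1 monoid}, (2) by reducing to the non-degenerate rank-one pairing $\sum_i\Phi_i$, invoking the structure theorem for $\Mod_\Gamma$, and transporting $\Phi$ back along the isogeny after multiplying by a suitable integer (the paper uses the index $n=[N^\vee:\psi_2^\vee(R)]$ where you use a quasi-inverse from Lemma \ref{isogeny between free Z-modules}, which is the same device), and (3) by the vanishing of the off-diagonal blocks and the inclusion/projection morphism from the first summand. The only differences are cosmetic improvements: in (3) you get pointwise polarizability of the sub-object directly from Proposition \ref{simpleness of LP is preserved by isogenies}(2) instead of the paper's explicit block-matrix polarization, and you spell out both the symmetric $\Hom$-vanishing (via semisimplicity and self-duality of $\mathbb{Q}(\zeta_r)$) and the ``in particular'' deduction, which the paper leaves implicit.
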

        \begin{proof}
            \begin{enumerate}
                \item This is clear by Lemma \ref{pPol LP with simple factor is simple and the converse is true for rank 1 monoid}.

                \item Suppose $(M,N,\langle-,-\rangle)=(M,N,\Phi)$ is polarizable in $\Mor_{\Gamma'}^P$ with polarization $\lambda\colon M\to N$ for an open subgroup $\Gamma'\subset\Gamma$. We know that $(M,N,\sum_i\Phi_i)$ is an object in $\MorGammaNpPol$. By Corollary \ref{canonical form of log free Gamma-module}, we can find $R$ as in the corollary and $(R,R^\vee,\mathrm{Id}_R)$ an object in $\MorGammaNpPol$ such that there is an isogeny $\psi=(\psi_1,\psi_2)\colon(R,R^\vee,\mathrm{Id}_R)\to(M,N,\sum_i\Phi_i)$ in $\MorGammaNpPol$. Here $\psi_1\colon R\to M$ and $\psi_2\colon N\to R^\vee$ ($\psi_2^\vee\colon R\to N^\vee$) are morphisms in $\Mod_{\Gamma}$. The map $\lambda\colon M\to N$ in $\Mod_{\Gamma'}$ induces a map
                \[
                \lambda'=\psi_2\circ\lambda\circ\psi_1\colon R\to R^\vee
                \]
                in $\Mod_{\Gamma'}$.
                Then $(R,R^\vee,\mathrm{Id}_R)$ is polarizable in $\Mor_{\Gamma'}^{\N}$ with polarization $\lambda'$.
                
                Write
                \[
                n=[N^\vee\colon\psi_2^\vee(R)]<\infty,
                \]
                then for any $x\in N^\vee$, $nx\in\psi_2^\vee(R)$.
                Moreover, $(R,R^\vee,n\mathrm{Id}_R)$ is also an object in $\MorGammaNpPol$ and there is an isogeny
                $\psi'=(n\psi_1,\psi_2)\colon (R,R^\vee,n\mathrm{Id}_R)\to(M,N,\sum_i\Phi_i)$ in $\MorGammaN$.
                For each $i=1,\cdots,k$, we define a morphism $\Phi'_i\colon R\to R$ using the following diagram:
                \[
                \begin{tikzcd}
                    R\arrow[r,dashrightarrow,"{\Phi_i'}"]\arrow[d,"n\psi_1"']
                    &
                    R\arrow[d,"{\psi_2^\vee}"]                    
                    \\
                    M\arrow[r,"\Phi_i"] & N^\vee
                \end{tikzcd}
                \]
                more precisely, for any $x\in R$, $\Phi_i(n\psi_1(x))=n\Phi_i(\psi_1(x))\in N^\vee$ lies in $\psi_2^\vee(R)$ by definition of $n$, then we choose (the unique) $x'\in R$ such that $\psi_2^\vee(x')=n\Phi_i(\psi_1(x))$ and set
                \[
                \Phi_i'(x):=x'.
                \]
                Since $\psi_1,\Phi_i,\psi_2^\vee$ are all linear and $\Gamma$-equivariant, so is $\Phi_i'$. The pairing $\langle-,-\rangle_i'\colon R\times R^\vee\to\mathbb{Z}$ corresponding to $\Phi_i'$ is given as follows: for any $x\in R$ and $f\in R^\vee$,
                \[
                \langle x,f\rangle_i'
                :=
                \langle \psi_1(x),\psi_2^{-1}(nf)\rangle_i.
                \]
                Then we put $\Phi'=(\Phi_1',\cdots,\Phi_k')$ and $\langle-,-\rangle'=(\langle-,-\rangle'_1,\cdots,\langle-,-\rangle_k')$.
                
                It is clear that $(R,R^\vee,\Phi')\in\MorGammaP$, and $(\psi_1,\psi_2)\colon(R,R^\vee,\Psi')\to(M,N,\Phi)$ is an isogeny. By Proposition \ref{simpleness of LP is preserved by isogenies} (2), we have $(R,R^\vee,\Phi')\in\MorGammaPpPol$.

                \item It suffices to prove the first part. We fix a $\mathbb{Z}$-basis for $M'$ (resp. $M''$) and consider the dual basis for $(M')^\vee$ (resp. $(M'')^\vee$).
                By assumption, there is no non-zero $\Gamma$-equivariant morphism between $M'$ and $M''$, so by the discussion before Corollary \ref{polarization of higher rank vs rank-1-in terms of matrices}, the morphisms $\Phi_i\colon M'\oplus M''\to M'\oplus M''$ ($i=1,\cdots,k$) are represented by block matrices of the form
                \[
                X_i=\begin{pmatrix}
                    X_i' & 0 \\ 0 & X_i''
                \end{pmatrix}
                \]
                with $X_i'\in\mathrm{End}_{\Mod_\Gamma}(M')\subset\mathrm{Mat}_{\mathrm{rk}_{\mathbb{Z}}(M')}(\mathbb{Z})$ and $X_i''\in\mathrm{End}_{\Mod_\Gamma}(M'')\subset\mathrm{Mat}_{\mathrm{rk}_{\mathbb{Z}}(M'')}(\mathbb{Z})$.
                Since $(M'\oplus M'',(M')^\vee\oplus(M'')^\vee,\Phi)$ is polarizable in $\Mor_{\Gamma'}^P$ for some open subgroup $\Gamma'\subset\Gamma$ (we can assume $\Gamma'$ acts trivially on $M'$ and $M''$) with a polarization $\lambda\colon M'\oplus M''\to (M')^\vee\oplus(M'')^\vee$. Then under the fixed basis, $\lambda$ is represented by a block matrix
                \[
                \Lambda =\begin{pmatrix}
                    A & B \\ C & D
                \end{pmatrix}
                \]
                with $A\in\mathrm{Mat}_{\mathrm{rk}_{\mathbb{Z}}(M')}(\mathbb{Z})$ and $D\in\mathrm{Mat}_{\mathrm{rk}_{\mathbb{Z}}(M'')}(\mathbb{Z})$. By Corollary \ref{polarization of higher rank vs rank-1-in terms of matrices}, the matrices $X_1^\mathrm{t}\Lambda , \cdots, X_k^\mathrm{t}\Lambda $ are all symmetric, positive semi-definite and $\sum_{i=1}^kX_i^\mathrm{t}\Lambda$ is symmetric and positive definite. We deduce that $(X_1')^\mathrm{t}A,\cdots,(X_k')^\mathrm{t}A$ and $(X_1'')^\mathrm{t}D,\cdots,(X_k'')^\mathrm{t}D$ are all symmetric and positive semi-definite and moreover $\sum_{i=1}^k(X_i')^\mathrm{t}A$ and $\sum_{i=1}^k(X_i'')^\mathrm{t}D$ are both symmetric and positive definite. We define a linear map $\lambda'\colon M'\to(M')^\vee$ represented by the matrix $A$.
                It follows from Corollary \ref{polarization of higher rank vs rank-1-in terms of matrices} that
                \[
                (M',(M')^\vee,\Phi':=\Phi\mid_{M'})
                \]
                is polarizable in $\Mor_{\Gamma'}^P$ with polarization $\lambda'$. In particular, we have constructed an object $(M',(M')^\vee,\Phi')$ in $\MorGammaPpPol$ together with a morphism
                \[
                \psi=(\psi_1,\psi_2)
                \colon
                (M',(M')^\vee,\Phi')
                \to
                (M'\oplus M'',(M')^\vee\oplus(M'')^\vee,\Phi)
                \]
                where $\psi_1$ (resp. $\psi_2$) is the canonical inclusion (resp. projection) map. Clearly $\coker(\psi_1)$ and $\coker(\psi_2^\vee)$ are both of rank equal to $\mathrm{rk}_{\mathbb{Z}}(M'')>0$, so $\psi$ is not an isogeny and thus $(M'\oplus M'',(M')^\vee\oplus(M'')^\vee,\Phi)$ is \emph{not} simple.

            \end{enumerate}
        \end{proof}

        \subsection{Polarizations on $(\mathbb{Z}[\zeta_r],\mathbb{Z}[\zeta_r]^\vee,\langle-,-\rangle)$}
	In this subsection we will concentrate on objects in $\MorGammaP$ of the form
        \[
        (M,N,\Phi)=(M,N,\langle-,-\rangle)=(\mathbb{Z}[\zeta_{r}],\mathbb{Z}[\zeta_{r}]^\vee,\langle-,-\rangle).
        \]
        Then the morphisms $\Phi_i\colon M\rightarrow N^\vee$ in $\mathrm{Mod}_{\Gamma}$ are of the form
	\begin{equation}\label{t_1,...,t_k}
		\Phi_i
		\colon
		\mathbb{Z}[\zeta_{r}]
		\rightarrow
		\mathbb{Z}[\zeta_{r}],
		\quad
		m
		\mapsto
		t_im
	\end{equation}
	for some $t_i\in\mathbb{Z}[\zeta_{r}]$.

	Let $\lambda\colon M\rightarrow N$ be a morphism in $\mathrm{Mod}_{\Gamma}$, so it is of the form $\lambda=\lambda_t$ for some non-zero $t\in\mathfrak{d}_{\mathbb{Z}[\zeta_{r}]}^{-1}$ (see the proof of Proposition \ref{polarization on (Z[zeta_r],Z[zeta_r]^vee,Id)}). 
	Then for $i=1,\cdots,k$ and $x,y\in\mathbb{Z}[\zeta_{r}]$, we have
	\[
	\langle
	x,\lambda(y)
	\rangle_i
	=
	\mathrm{Tr}_{\mathbb{Q}(\zeta_{r})/\mathbb{Q}}(xt_i\overline{ty}).
	\]

	\begin{proposition}\label{condition for lambda_t being a polarization}
            Let $(M,N,\langle-,-\rangle)=(\mathbb{Z}[\zeta_{r}],\mathbb{Z}[\zeta_{r}]^\vee,\langle-,-\rangle)$ and $\lambda=\lambda_t$ be as above.
		\begin{enumerate}
		    \item Suppose $\lambda$ is a polarization on $(M,N,\langle-,-\rangle)$. Then for any $i=1,\cdots,k$, $t_i\overline{t}$ is either totally positive or equal to $0$. Fix one such $i$, if moreover $\langle m_0,\lambda(m_0)\rangle_i>0$ for some $m_0\in M$, then $t_i\overline{t}$ is totally positive. In particular, $\sum_{i=1}^kt_i\overline{t}$ is totally positive.

                \item Conversely, if for any $i=1,\cdots,k$, $t_i\overline{t}$ is either totally positive or equal to $0$ and $\sum_{i=1}^kt_i\overline{t}$ is totally positive, then $\lambda$ is a polarization on $(M,N,\langle-,-\rangle)$. 
		\end{enumerate}
	\end{proposition}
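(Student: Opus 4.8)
The plan is to reduce the entire statement to the per\--component polarization criterion of Proposition \ref{polarization of higher rank vs rank-1} and then to an embedding\--wise computation, exactly in the spirit of \eqref{B_t symmetric iff t totally real} and the proof of Proposition \ref{polarization on (Z[zeta_r],Z[zeta_r]^vee,Id)}. First I would record the explicit shape of the pairings: by \eqref{t_1,...,t_k} the map $\Phi_i$ is multiplication by $t_i\in\mathbb{Z}[\zeta_r]$ and $\lambda=\lambda_t$ is multiplication by $t$, so for $x,y\in\mathbb{Z}[\zeta_r]$ one has
\[
\langle x,\lambda(y)\rangle_i=\mathrm{Tr}_{\mathbb{Q}(\zeta_r)/\mathbb{Q}}\!\big(x\,(t_i\overline t)\,\overline y\big),
\]
and, writing $\sigma$ for an embedding $\mathbb{Q}(\zeta_r)\hookrightarrow\mathbb{C}$ and using $\sigma(\overline a)=\overline{\sigma(a)}$ (complex conjugation being realized inside $\mathrm{Gal}(\mathbb{Q}(\zeta_r)/\mathbb{Q})$),
\[
\langle m,\lambda(m)\rangle_i=\sum_{\sigma}\sigma(t_i\overline t)\,|\sigma(m)|^2 .
\]
By the definition of a polarization in $\MorGammaP$ combined with Proposition \ref{polarization of higher rank vs rank-1}, $\lambda_t$ being a polarization on $(M,N,\langle-,-\rangle)$ is equivalent to the conjunction of: $\lambda_t$ is an isogeny (automatic for $t\neq0$ by \eqref{lambda_t}); each $\langle-,\lambda_t(-)\rangle_i$ is symmetric; $\langle m,\lambda_t(m)\rangle_i\geq0$ for all $m$ and all $i$; and $\sum_i\langle m,\lambda_t(m)\rangle_i>0$ for $m\neq0$.

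For part (1) I would argue as follows. Symmetry of $\langle-,\lambda_t(-)\rangle_i$ is, by \eqref{B_t symmetric iff t totally real} applied with $t_i\overline t$ in place of $t$, equivalent to $t_i\overline t\in\mathbb{Q}(\zeta_r)^+$, so every $\sigma(t_i\overline t)$ is real. Then, clearing denominators, the semidefiniteness $\langle m,\lambda_t(m)\rangle_i\geq0$ extends from $\mathbb{Z}[\zeta_r]$ to all of $\mathbb{Q}(\zeta_r)$, and since $\mathbb{Q}(\zeta_r)$ is dense in $\mathbb{Q}(\zeta_r)\otimes_{\mathbb{Q}}\mathbb{R}$, choosing $m$ approximating a vector supported at a single pair of conjugate embeddings forces $\sigma(t_i\overline t)\geq0$ for every $\sigma$ --- this is literally the approximation step used in the proof of Proposition \ref{polarization on (Z[zeta_r],Z[zeta_r]^vee,Id)}. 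Now the dichotomy: if $t_i\neq0$ then $\sigma(t_i)\neq0$ for all $\sigma$ (field embeddings are injective), hence $\sigma(t_i\overline t)=\sigma(t_i)\overline{\sigma(t)}\neq0$, so $\sigma(t_i\overline t)>0$ for all $\sigma$, i.e.\ $t_i\overline t$ is totally positive; and if $t_i=0$ then trivially $t_i\overline t=0$. The refinement is immediate: from $\langle m_0,\lambda_t(m_0)\rangle_i=\sum_\sigma\sigma(t_i\overline t)|\sigma(m_0)|^2>0$ with all summands $\geq0$, some $\sigma(t_i\overline t)>0$, forcing $t_i\neq0$ and hence $t_i\overline t$ totally positive. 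Finally the $t_i$ cannot all vanish, for otherwise $\sum_i\langle m,\lambda_t(m)\rangle_i\equiv0$ contradicts the positivity condition since $M=\mathbb{Z}[\zeta_r]\neq0$; so $\sum_i t_i\overline t$ is a nonempty sum of totally positive elements, hence totally positive.

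For part (2) the plan is simply to check the hypotheses of the converse half of Proposition \ref{polarization of higher rank vs rank-1}. Each $t_i\overline t$ being totally positive or $0$ gives $t_i\overline t\in\mathbb{Q}(\zeta_r)^+$, so by \eqref{B_t symmetric iff t totally real} the pairing $\langle-,\lambda_t(-)\rangle_i$ is symmetric, i.e.\ the diagram \eqref{factors of polar-diagram} commutes; the formula above gives $\langle m,\lambda_t(m)\rangle_i=\sum_\sigma\sigma(t_i\overline t)|\sigma(m)|^2\geq0$ for all $m$; and $\sum_i\langle m,\lambda_t(m)\rangle_i=\sum_\sigma\sigma\!\big(\sum_i t_i\overline t\big)|\sigma(m)|^2>0$ for $m\neq0$, because $\sum_i t_i\overline t$ is totally positive and $\sigma(m)\neq0$ for every $\sigma$ (in particular $t\neq0$, so $\lambda_t$ is an isogeny by \eqref{lambda_t}). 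Proposition \ref{polarization of higher rank vs rank-1} then yields that $(M,N,\langle-,-\rangle)\in\MorGammaPpPol$ and that $\lambda_t$ is a polarization on it. The only non\--formal ingredient anywhere is the density/approximation step in part (1), and since it is exactly the technique already used for Proposition \ref{polarization on (Z[zeta_r],Z[zeta_r]^vee,Id)}, I do not expect any real obstacle; the rest is bookkeeping on top of Proposition \ref{polarization of higher rank vs rank-1}.
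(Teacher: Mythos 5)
Your proposal is correct and follows essentially the same route as the paper: part (1) reduces to the symmetry and semi-definiteness of each $\langle-,\lambda_t(-)\rangle_i$ and then reuses the equivalence \eqref{B_t symmetric iff t totally real} together with the density/approximation argument from the proof of Proposition \ref{polarization on (Z[zeta_r],Z[zeta_r]^vee,Id)}, while part (2) is exactly an application of the converse half of Proposition \ref{polarization of higher rank vs rank-1}. The only difference is that you spell out the verifications (the embedding-wise trace formula, the non-vanishing of some $t_i$, and the isogeny property of $\lambda_t$) that the paper leaves implicit, which is harmless.
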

	\begin{proof}
		\begin{enumerate}
		    \item By definition, for any $x,y\in M$,
            \[
            \langle x,\lambda(y)\rangle_i
            =
            \mathrm{Tr}_{\mathbb{Q}(\zeta_r)/\mathbb{Q}}(xt_i\overline{ty}),
            \]
            which is symmetric in $x$ and $y$, thus as in the proof of Proposition \ref{polarization on (Z[zeta_r],Z[zeta_r]^vee,Id)}, we know $t_i\overline{t}$ is totally real. Moreover, for any $x\in M$,
            \[
            \langle x,\lambda(x)\rangle_i\ge0.
            \]
            Again as in the proof of Proposition \ref{polarization on (Z[zeta_r],Z[zeta_r]^vee,Id)}, $t_i\overline{t}$ is totally positive or zero. This gives the first part of the proposition.
        
        For the second part, by assumption,
		\[
		\mathrm{Tr}_{\mathbb{Q}(\zeta_{r})/\mathbb{Q}}
		(m_0\overline{m_0}t_i\overline{t})\ne0.
		\]
		This clearly implies that $t_i\overline{t}\ne0$ and thus it is totally positive.	

               \item We apply Proposition \ref{polarization of higher rank vs rank-1}.
		\end{enumerate}	
	\end{proof}

	\begin{corollary}\label{characterization of polarizations-higher rank}
	    Let $(M,N,\langle-,-\rangle)=(\mathbb{Z}[\zeta_{r}],\mathbb{Z}[\zeta_{r}]^\vee,\langle-,-\rangle)$ be as above. It has a polarization if and only if $t_1,\cdots,t_k$ are not all zero and for any $i,j=1,\cdots,k$, $t_i\overline{t_j}$ is either totally positive or zero.
	\end{corollary}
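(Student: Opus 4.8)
The plan is to deduce the corollary from Proposition~\ref{condition for lambda_t being a polarization}, which already settles when the individual map $\lambda_t$ is a polarization; the only real work is to reconcile the condition ``$t_i\overline{t}$ totally positive or zero for all $i$'' occurring there with the more symmetric condition ``$t_i\overline{t_j}$ totally positive or zero for all $i,j$'' in the statement.

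For the ``if'' direction, suppose $t_1,\dots,t_k$ are not all zero and $t_i\overline{t_j}$ is totally positive or zero for every pair $i,j$. I would take $t:=\sum_{j=1}^{k}t_j$, which lies in $\mathbb{Z}[\zeta_{r}]\subseteq\mathfrak{d}_{\mathbb{Z}[\zeta_{r}]}^{-1}$. The point is that $t\neq0$: choosing $i_0$ with $t_{i_0}\neq0$, for every embedding $\sigma\colon\mathbb{Q}(\zeta_{r})\to\mathbb{C}$ the number $\sigma(t_{i_0}\overline{t})=\sum_{j}\sigma(t_{i_0}\overline{t_j})$ is a sum of nonnegative reals containing the strictly positive term $\sigma(t_{i_0}\overline{t_{i_0}})=|\sigma(t_{i_0})|^{2}$, hence $\sigma(t)\neq0$. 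Then $t_i\overline{t}=\sum_{j}t_i\overline{t_j}$ is a sum of totally-positive-or-zero elements, hence totally positive or zero, and $\sum_{i}t_i\overline{t}=t\overline{t}$ is totally positive since $t\neq0$; so $\lambda_t$ is a polarization on $(\mathbb{Z}[\zeta_{r}],\mathbb{Z}[\zeta_{r}]^\vee,\langle-,-\rangle)$ by Proposition~\ref{condition for lambda_t being a polarization}(2).

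For the ``only if'' direction, let $\lambda$ be a polarization. Being an isogeny in $\Mod_{\Gamma}$ it is in particular $\Gamma$-equivariant, hence of the form $\lambda_t$ for some nonzero $t\in\mathfrak{d}_{\mathbb{Z}[\zeta_{r}]}^{-1}$, exactly as in the proof of Proposition~\ref{polarization on (Z[zeta_r],Z[zeta_r]^vee,Id)}. Proposition~\ref{condition for lambda_t being a polarization}(1) then gives that $t_i\overline{t}$ is totally positive or zero for each $i$ and that $\sum_{i}t_i\overline{t}$ is totally positive; since $\overline{t}\neq0$ the latter forces $\sum_{i}t_i\neq0$, so the $t_i$ are not all zero. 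To pass from $t_i\overline{t}$ to $t_i\overline{t_j}$ I would argue embedding by embedding: fix an embedding $\sigma\colon\mathbb{Q}(\zeta_{r})\to\mathbb{C}$ and write $\sigma(t)=\rho e^{i\varphi}$ with $\rho>0$; then $\sigma(t_i\overline{t})=\rho e^{-i\varphi}\sigma(t_i)\in\mathbb{R}_{\geq0}$ forces $\sigma(t_i)=a_{i}e^{i\varphi}$ for some real $a_i\geq0$, whence $\sigma(t_i\overline{t_j})=a_ia_j\geq0$. As this holds for every $\sigma$, the element $t_i\overline{t_j}$ is totally real and totally nonnegative, hence either $0$ or---being then a nonzero element of a number field, every conjugate of which is strictly positive---totally positive.

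The argument is routine once this phase/alignment observation is isolated: a common ``denominator'' $t$ aligns all of $t_1,\dots,t_k$ in every complex embedding, and this is the conceptual reason the two a priori different positivity conditions coincide. It is also the only step needing care. I would additionally note in passing that the proof exhibits $\lambda_{t_1+\dots+t_k}$ as an explicit polarization whenever one exists.
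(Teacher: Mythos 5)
Your proof is correct and takes essentially the same route as the paper: both directions are reduced to Proposition~\ref{condition for lambda_t being a polarization}, with $\lambda_{t_1+\cdots+t_k}$ serving as the explicit polarization in the ``if'' direction. The only differences are cosmetic—where the paper verifies $t\neq0$ and the positivity of the $t_i\overline{t}$ by dividing by a nonzero $t_{i_0}$ and treats the ``only if'' direction as immediate from part (1) of that proposition, you argue embedding-by-embedding (the phase-alignment observation), which just fills in the same details.
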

	\begin{proof}
	    The direction $\Rightarrow$ is clear by Proposition \ref{condition for lambda_t being a polarization} (1).

        Conversely, the morphism $\lambda_{t}\colon M\to N$ with $t=t_1+\cdots+t_k$ is a polarization on $(M,N,\langle-,-\rangle)$. Indeed, suppose that $t_{i_0}\neq0$ for some $i_0=1,\cdots,k$. Then $t_i/t_{i_0}=t_i\overline{t_{i_0}}/(t_{i_0}\overline{t_{i_0}})$ is totally positive or zero, and not all of them are zero. Thus $t/t_{i_0}=\sum_{i=1}^kt_i/t_{i_0}$ is totally positive, in particular, $t\neq0$. Now for any $i=1,\cdots,k$, we have
        \[        t_i\overline{t}=t_i\overline{t_{i_0}}\times\overline{t/t_{i_0}},
        \]
        which is totally positive or zero. Moreover, $\sum_{i=1}^kt_i\overline{t}=t\overline{t}$ is totally positive. By Proposition \ref{condition for lambda_t being a polarization} (2), we conclude.
	\end{proof}

	\subsection{Isogeny classes of $(\mathbb{Z}[\zeta_r]^a,(\mathbb{Z}[\zeta_r]^a)^\vee,\langle-,-\rangle)$}\label{Isogeny classes of Z^a}
        We are now ready for the classification of a certain class of objects in $\MorGammaPpPol$ up to isogeny as follows. Let us first give a definition
	\begin{definition}\label{T_{r,k}}
            Recall that $P=\N^k$.
            \begin{enumerate}
                \item For a positive integer $r>0$, we define
            \[
            T_{r,k}
            =
            \left\{(t_1,\cdots,t_k)\in(\mathbb{Q}(\zeta_r)^+)^k
            \mid
            t_1,\cdots,t_k\text{ totally positive or zero and }\sum_{i=1}^kt_i=1
            \right\}
            \]

                \item We fix a $\mathbb{Z}$-basis of $\mathbb{Z}[\zeta_r]$, which induces an embedding $\mathbb{Q}(\zeta_r)\hookrightarrow\mathrm{Mat}_{\phi(r)}(\mathbb{Q})$. For positive integers $a,r>0$, we define
                \[
                T_{r,k}^{(a)}
                \]
                to be the set of $k$-tuples
                $(X_1,\cdots,X_k)\in\mathrm{Mat}_{a}(\mathbb{Q}(\zeta_r))^k$ with $\sum_{i=1}^kX_i=1$ such that there is a matrix $\Lambda \in\mathrm{GL}_{a\phi(r)}(\mathbb{Q})$ with $X_1^{\mathrm{t}}\Lambda ,\cdots,X_k^{\mathrm{t}}\Lambda $ all symmetric and positive semi-definite (here we view $X_1,\cdots,X_k$ also as elements in $\mathrm{Mat}_{a\phi(r)}(\mathbb{Q})$ via the embedding $\mathbb{Q}(\zeta_r)\hookrightarrow\mathrm{Mat}_{\phi(r)}(\mathbb{Q})$). We define an action of $\mathrm{GL}_{a}(\mathbb{Q}(\zeta_r))$ on $T_{r,k}^{(a)}$ by
                \[
                Y(X_1,\cdots,X_k)
                :=
                (YX_1Y^{-1},\cdots,YX_kY^{-1}),
                \quad
                \forall\,
                Y\in\mathrm{GL}_{a\phi(r)}(\mathbb{Q}),\,
                (X_1,\cdots,X_k)\in T_{r,k}^{(a)}.
                \]
                Then we write
                \[
                T_{r,k}^{(a)}/\sim
                \]
                for the quotient of $T_{r,k}^{(a)}$ by the action of $\mathrm{GL}_{a}(\mathbb{Q}(\zeta_r))$. We denote the image of $(X_1,\cdots,X_k)\in T_{r,k}^{(a)}$ in $T_{r,k}^{(a)}/\sim$ by $[X_1,\cdots,X_k]$.
            \end{enumerate}
            
	\end{definition}

	\begin{remark}
            \begin{enumerate}
                \item For $\Q(\zeta_r)^+=\Q$ (that is, $r=1,2,3,4,6$), regarding $\Q(\zeta_r)^+$ as a subfield of $\R$ canonically, we have a canonical embedding into the $(k-1)$-simplex
                \[T_{r,k}\hookrightarrow \Delta^{k-1}:=\{(t_1,\cdots,t_k)\in\R_{\geq0}^k\mid \sum_i t_i=1\}\]
                thus $T_{r,k}$ is just $\Delta^{k-1}\cap \Q^k$.

                \item It is easy to see that both $T_{r,k}^{(a)}$ and $T_{r,k}^{(a)}/\sim$ are independent of the choice of the $\mathbb{Z}$-basis of $\mathbb{Z}[\zeta_r]$.
            \end{enumerate}           
        \end{remark}
 
	We have the following two classification results, which are the main results of this section
	\begin{theorem}\label{classificaton of isogenous classes of log Gamma-module of higher ranks}
		Let $P=\mathbb{N}^k$.
		The following map defines a bijection
		\begin{align}\label{map of classification of isogeny log Gamms-modules of higher ranks}
			\begin{split}
				\left\{
                (M,N,\Phi)\in
				\MorGammaPpPol
                \mid M
                \text{ simple free $\Gamma$-module}
				\right\}/\text{isogeny}
				&
				\rightarrow
				\bigsqcup_{r>0}T_{r,k}
				\\
				(\mathbb{Z}[\zeta_{r}],\mathbb{Z}[\zeta_{r}]^\vee,\Phi)
				&
				\mapsto
				(t_1/t,\cdots,t_k/t)
				\in
				T_{r,k}
			\end{split}			
		\end{align}
		Here $t_1,\cdots,t_k$ are determined by $\Phi$ as in (\ref{t_1,...,t_k}) and $t=\sum_{i=1}^kt_i$.
	\end{theorem}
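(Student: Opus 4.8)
The plan is to combine the structural results on $\MorGammaPpPol$ with the cyclotomic description of polarizations to produce an explicit inverse of the stated map. First, I would check the map is well-defined. By Lemma \ref{pPol LP with simple factor is simple and the converse is true for rank 1 monoid}, an object of $\MorGammaPpPol$ with $M$ a simple free $\Gamma$-module has $N$ also simple, and by Proposition \ref{isogeny classes in Mor_Gamma^{pPol}} (2) (together with the fact that simpleness of $M$ forces the associated lattice to be of the shape $\Z[\zeta_r]$ by the representation theory of $\Gamma$) every such object is isogenous to one of the form $(\Z[\zeta_r],\Z[\zeta_r]^\vee,\Phi)$. For such an object the morphisms $\Phi_i$ are multiplication by elements $t_i\in\Z[\zeta_r]$ as in \eqref{t_1,...,t_k}. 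Pointwise polarizability together with Corollary \ref{characterization of polarizations-higher rank} (applied over an open subgroup $\Gamma'$, but note $\Z[\zeta_r]$ as a $\Gamma$-module already realizes its own splitting field, so $\Gamma'$ can be taken to be $\Gamma$ up to replacing $r$) tells us that the $t_i$ are not all zero and each $t_i\overline{t_j}$ is totally positive or zero. Hence $t:=\sum_i t_i$ is totally positive (so in particular nonzero, and $t\in\Q(\zeta_r)^+$ since each $t_i\overline{t}$ is totally real), and each quotient $t_i/t = t_i\overline{t}/(t\overline{t})$ is totally positive or zero with $\sum_i t_i/t = 1$. Thus $(t_1/t,\dots,t_k/t)\in T_{r,k}$, and one must also check independence of the choice of isogeny representative: any isogeny $(\Z[\zeta_r],\Z[\zeta_r]^\vee,\Phi)\to(\Z[\zeta_{r'}],\Z[\zeta_{r'}]^\vee,\Phi')$ forces $r=r'$ (compare characteristic polynomials via Lemma \ref{charpoly of Z[zeta_r]}) and is given by multiplication by some $c\in\Q(\zeta_r)^\times$ on each factor, which scales all $t_i$ by the same unit $c\overline{c}$ and hence leaves the tuple $(t_i/t)$ unchanged.

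Next I would construct the inverse. Given $r>0$ and $(s_1,\dots,s_k)\in T_{r,k}$, the $s_i$ lie in $\Q(\zeta_r)^+$; clear denominators by a positive integer $M$ so that $t_i:=Ms_i\in\Z[\zeta_r]\cap\Q(\zeta_r)^+$, each totally positive or zero and not all zero. Define $\Phi$ by $\Phi_i=$ multiplication by $t_i$, giving $(\Z[\zeta_r],\Z[\zeta_r]^\vee,\Phi)\in\MorGammaP$. Since $t_i\overline{t_j}=t_it_j$ is totally positive or zero (all $t_i$ being totally real and totally positive or zero), Corollary \ref{characterization of polarizations-higher rank} shows this object is polarizable, hence in $\MorGammaPpPol$, with $M=\Z[\zeta_r]$ a simple free $\Gamma$-module. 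Its image under \eqref{map of classification of isogeny log Gamms-modules of higher ranks} is $(t_1/t,\dots,t_k/t)=(s_1/\sum s_i,\dots)=(s_1,\dots,s_k)$ since $\sum_i s_i=1$. This shows surjectivity. For injectivity: suppose $(\Z[\zeta_r],\Z[\zeta_r]^\vee,\Phi)$ and $(\Z[\zeta_{r'}],\Z[\zeta_{r'}]^\vee,\Phi')$ map to the same tuple; as above the underlying lattices must have the same characteristic polynomial so $r=r'$, and the tuples $(t_i/t)$, $(t_i'/t')$ agree. Then $t'_i/t_i$ is independent of $i$ (equal to $t'/t$), say $t_i' = c\, t_i$ for a common $c\in\Q(\zeta_r)^+$; writing $c=a/b$ with $a,b$ totally positive integers of $\Z[\zeta_r]\cap\Q(\zeta_r)^+$, multiplication by $a$ on the first factor and by $b$ on the second (suitably interpreted on the duals) gives an isogeny intertwining the two pairings — this uses Proposition \ref{properties of the functor LP} / the matrix computation, and one verifies the square \eqref{morphism of log free Gamma-modules} commutes because multiplication operators on $\Z[\zeta_r]$ commute. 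Hence the two objects are isogenous.

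The main obstacle I anticipate is bookkeeping the passage between $\Gamma$ and an open subgroup $\Gamma'$ in the definition of pointwise polarizability, and making sure that "simple free $\Gamma$-module" really does pin down the lattice to be isogenous to some $\Z[\zeta_r]$ with the correct $r$ — this is where one must invoke the classification of simple objects of $\Mod_\Gamma$ (any simple free $\Gamma$-module is isogenous to $\Z[\zeta_r]$ for a unique $r$, from the decomposition $\Q[\Gamma_n]\simeq\prod_{r\mid n}\Q(\zeta_r)$) and Lemma \ref{simpleness of lattices is preversed under isogeny}. A secondary subtlety is that when one replaces $(M,N,\Phi)$ by an isogenous $(\Z[\zeta_r],\Z[\zeta_r]^\vee,\Phi)$, the $\Gamma$-action on $\Z[\zeta_r]$ is the standard one, so the open subgroup in the pointwise polarizability condition can be absorbed and Corollary \ref{characterization of polarizations-higher rank} applies directly; I would state this reduction carefully at the start. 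Everything else is the kind of totally-real/totally-positive arithmetic already carried out in Propositions \ref{condition for lambda_t being a polarization} and Corollary \ref{characterization of polarizations-higher rank}, so I would cite those rather than redo the computations.
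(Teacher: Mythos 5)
Your overall route coincides with the paper's: reduce to representatives $(\Z[\zeta_r],\Z[\zeta_r]^\vee,\Phi)$ with $\Phi_i$ given by multiplication by $t_i$, obtain the positivity of the normalized tuple from the cyclotomic polarization criterion (Corollary \ref{characterization of polarizations-higher rank}), and realize isogenies by multiplication maps for invariance, surjectivity and injectivity. However, there is a recurring error in your treatment of $t=\sum_i t_i$: you assert that $t$ is totally positive (hence lies in $\Q(\zeta_r)^+$), and later that the common ratio $c=t'/t$ lies in $\Q(\zeta_r)^+$ and can be written as $a/b$ with $a,b$ totally positive elements of $\Z[\zeta_r]\cap\Q(\zeta_r)^+$. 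This is false in general: only the quotients $t_i/t$ (equivalently the products $t_i\overline{t}$, up to the totally positive factor $t\overline{t}$) are totally positive or zero, while $t$ itself can be an arbitrary nonzero element of $\Z[\zeta_r]$; for instance $k=1$, $r=4$, $t_1=t=i$ gives a polarizable object (take $\lambda_w$ with $w=iv$, $v$ totally positive), and comparing it with the object given by $t_1'=1$ yields $c=-i\notin\Q(\zeta_4)^+$, which is not a ratio of totally positive elements. In the well-definedness step this slip is harmless, since your identity $t_i/t=t_i\overline{t}/(t\overline{t})$ only uses that $t_i\overline{t}=\sum_j t_i\overline{t_j}$ is totally positive or zero and $t\overline{t}$ is totally positive; but in the injectivity step your construction genuinely breaks when $c\notin\Q(\zeta_r)^+$. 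The repair is exactly the paper's argument: from $t_i\widetilde{t}=\widetilde{t}_i t$ one takes $\psi_1$ to be multiplication by $t$ and $\psi_2^\vee$ multiplication by $\widetilde{t}$; no positivity of $t,\widetilde{t}$ is needed, only that they are nonzero.

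A secondary weak point is your device for passing from pointwise polarizability to the hypotheses of Corollary \ref{characterization of polarizations-higher rank}: the remark that ``$\Gamma'$ can be taken to be $\Gamma$ up to replacing $r$'' does not work as stated, since restricting to $\Gamma'=\overline{\langle\gamma^m\rangle}$ turns $\Z[\zeta_r]$ into a non-simple $\Gamma'$-module (a power of $\Z[\zeta_{r/\gcd(m,r)}]$ up to isogeny) and a $\Gamma'$-equivariant polarization need not be multiplication by a single element, so the corollary does not apply verbatim over $\Gamma'$. To be fair, the paper is also terse at this point (it simply invokes the computation of that corollary), so this does not distinguish your argument from the paper's; but a careful fix would show directly that a $\Gamma'$-polarization already forces every $t_i/t$ to be totally positive or zero (for example by noting that multiplication by $\overline{t_i/t}$ is, after conjugating by the polarization, self-adjoint and positive semi-definite for the associated positive definite form, so all conjugates of $t_i/t$ are non-negative), rather than by changing $r$.
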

	\begin{proof}
		We first show that the map (\ref{map of classification of isogeny log Gamms-modules of higher ranks}) is well-defined. We can assume $t_{i_0}\neq0$ for some $i_0\in\{1,\cdots,k\}$. Then as in the proof of Corollary \ref{characterization of polarizations-higher rank}, $t_i/t_{i_0}$ is totally positive or zero for all $i=1,\cdots,k$, and $t/t_{i_0}$ is totally positive, thus $t_i/t=t_i/t_{i_0}\times t_{i_0}/t$ is totally positive or zero and $\sum_{i=1}^kt_i/t=1$. So $(t_1/t,\cdots,t_k/t)$ indeed lies in $T_{r,k}$. Any isogeny in $\MorGammaPpPol$
		\[
		\psi=(\psi_1,\psi_2)
		\colon
		(\mathbb{Z}[\zeta_{r}],\mathbb{Z}[\zeta_{r}]^\vee,\Phi)\rightarrow
		(\mathbb{Z}[\zeta_{r}],\mathbb{Z}[\zeta_{r}]^\vee,\widetilde{\Phi})
		\]
		is given by two morphisms $\psi_1,\psi_2^\vee\colon\mathbb{Z}[\zeta_{r}]\rightarrow\mathbb{Z}[\zeta_{r}]$  in $\mathrm{Mod}_{\Gamma}$ (\textit{cf.} the two vertical arrows in (\ref{morphism of log free Gamma-modules})) such that the following diagrams are commutative ($i=1,\cdots,k$)
		\[
		\begin{tikzcd}
			\mathbb{Z}[\zeta_{r}]
			\arrow[r,"m\mapsto t_im"]
			\arrow[d,"\psi_1"]
			&
			\mathbb{Z}[\zeta_{r}]
			\arrow[d,"\psi_2^\vee"]
			\\
			\mathbb{Z}[\zeta_{r}]
			\arrow[r,"m\mapsto \widetilde{t}_im"]
			&
			\mathbb{Z}[\zeta_{r}]
		\end{tikzcd}
		\]
		Here $(t_1,\cdots,t_k),(\widetilde{t}_1,\cdots,\widetilde{t}_k)$ are determined by $\Phi,\widetilde{\Phi}$ respectively. Note that $\psi_1$ and $\psi_2^\vee$ are necessarily of the form $\Psi_{x_1}$ and $\Psi_{x_2}$:
		\[
		\Psi_{x_i}
		\colon
		\mathbb{Z}[\zeta_{r}]
		\rightarrow
		\mathbb{Z}[\zeta_{r}],
		\quad
		m
		\mapsto
		x_im
		\,
		(i=1,2)
		\]
		for $x_1,x_2$ non-zero elements in $\mathbb{Z}[\zeta_{r}]$.
		It follows that
		\[
		t_ix_2=x_1\widetilde{t}_i.
		\]
		So $(t_1,\cdots,t_k)$ and $(\widetilde{t}_1,\cdots,\widetilde{t}_k)$ have the same image in $T_{r,k}$ and thus (\ref{map of classification of isogeny log Gamms-modules of higher ranks}) is well-defined.

		For the surjectivity, given $(t_1,\cdots,t_k)\in T_{r,k}$, we can find $t\in\mathbb{Q}(\zeta_r)^\times$ such that $tt_1,\cdots,tt_k\in\mathbb{Z}[\zeta_r]$. Then we define morphisms $\Phi_i$ in $\Mod_{\Gamma}$ by
        \[
        \Phi_i\colon
        \mathbb{Z}[\zeta_r]\to\mathbb{Z}[\zeta_r],
        \quad
        x\mapsto tt_ix.
        \]
	Put $\Phi=(\Phi_1,\cdots,\Phi_k)$, then $(M,N,\Phi)$ is an object in $\MorGammaP$ and by Corollary \ref{characterization of polarizations-higher rank}, it is polarizable, thus it is an object in $\MorGammaPpPol$. Moreover, its image in $T_{r,k}$ is exactly $(t_1,\cdots,t_k)$.

		Next we prove the injectivity of (\ref{map of classification of isogeny log Gamms-modules of higher ranks}): suppose we have two objects $(\mathbb{Z}[\zeta_{r}],\mathbb{Z}[\zeta_{r}]^\vee,\Phi)$ and
		$(\mathbb{Z}[\zeta_{r}],\mathbb{Z}[\zeta_{r}]^\vee,\widetilde{\Phi})$ in $\MorGammaPpPol$ such that
        \[
        (t_1/t,\cdots,t_k/t)=(\widetilde{t_1}/\widetilde{t},\cdots,\widetilde{t_k}/\widetilde{t}).
        \]
        Then we have $t_i\widetilde{t}=\widetilde{t_i}t$ and $0\neq t,\widetilde{t}\in\mathbb{Z}[\zeta_r]$. It is easy to see that the morphism
        \[
        \psi=(\psi_1=\Psi_{t}, \psi_2=\Psi_{\widetilde{t}}^\vee)
        \colon
        (\mathbb{Z}[\zeta_{r}],\mathbb{Z}[\zeta_{r}]^\vee,\Phi)
        \to(\mathbb{Z}[\zeta_{r}],\mathbb{Z}[\zeta_{r}]^\vee,\widetilde{\Phi})
        \]
        is an isogeny in $\MorGammaP$. This gives the injectivity.
	\end{proof}
	Combined with the discussion in the end of §\ref{subsection LAV without abelian part over finite log points are classified by LP up to isogeny}, this gives Theorem \ref{main theorem-2}.

	\begin{remark}
		For $k=1$, this is compatible with Theorem \ref{classificaton of isogenous classes of log Gamma-module of rank 1}: in this case, each $T_{r,1}$ is simply a singleton.
	\end{remark}

	Fix a positive integer $a>0$. Then just as in the case of $(\mathbb{Z}[\zeta_r],\mathbb{Z}[\zeta_r]^\vee,\Phi)\in\MorGammaP$, for an object $(\Z[\zeta_r]^a,(\mathbb{Z}[\zeta_r]^a)^\vee,\Phi)\in\MorGammaP$, the morphisms $\Phi_1,\cdots,\Phi_k\colon\Z[\zeta_r]^a\to\Z[\zeta_r]^a$ are necessarily of the form
    \begin{equation}\label{X_1,...,X_k}
        \Phi_i
    \colon
    \mathbb{Z}[\zeta_r]^a
    \to
    \mathbb{Z}[\zeta_r]^a,
    \quad
    m
    \mapsto
    X_im
    \end{equation}
    for some $X_i\in\mathrm{Mat}_a(\mathbb{Z}[\zeta_r])$ (\emph{cf.} (\ref{t_1,...,t_k})). Here we view elements in $\mathbb{Z}[\zeta_r]^a$ as column matrices with entries in $\Z[\zeta_r]$.
    
        \begin{theorem}\label{classificaton of isogenous classes of log Gamma-module of higher ranks-2}
            Let $P=\mathbb{N}^k$. The following map defines a bijection
            \begin{align}\label{map of classification of isogeny log Gamms-modules of higher ranks-2}
			\begin{split}
				\left\{
                (\mathbb{Z}[\zeta_r]^a,(\mathbb{Z}[\zeta_r]^a)^\vee,\Phi)\in
				\MorGammaPpPol
                \mid a>0
				\right\}/\text{isogeny}
				&
				\rightarrow
				\bigsqcup_{a,r>0}T_{r,k}^{(a)}/\sim
				\\
				(\mathbb{Z}[\zeta_{r}]^a,(\mathbb{Z}[\zeta_{r}]^a)^\vee,\Phi)
				&
				\mapsto
                    [X_1X^{-1},\cdots,X_kX^{-1}]
				\in
				T_{r,k}^{(a)}/\sim
			\end{split}			
		\end{align}
		Here $X_1,\cdots,X_k$ are determined by $\Phi$ as in (\ref{X_1,...,X_k}) and $X=\sum_{i=1}^kX_i$. Moreover, $(\mathbb{Z}[\zeta_{r}]^a,(\mathbb{Z}[\zeta_{r}]^a)^\vee,\Phi)$ is \emph{simple} if and only if there is \emph{no} $Z\in\mathrm{GL}_a(\mathbb{Q}(\zeta_r))$ such that $ZX_iZ^{-1}$ are all of the form
        \[
            ZX_iZ^{-1}
            =
            \begin{pmatrix}
                X_i' & \ast \\ 0 & X_i''
            \end{pmatrix}
        \]
        where $X_i'$ are all of size $b\times b$ for some integer $0<b<a$.
        \end{theorem}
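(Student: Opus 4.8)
The plan is to treat the two assertions in turn, writing $R:=\Z[\zeta_r]$, $\mathfrak A:=(R^a,(R^a)^\vee,\Phi)$, and using the canonical identification $(R^a)^{\vee\vee}=R^a$ so that each $\Phi_i$ is multiplication by a matrix $X_i\in\mathrm{End}_{\Mod_\Gamma}(R^a)=\mathrm{Mat}_a(R)$ and $X=\sum_iX_i$. First I would check that \eqref{map of classification of isogeny log Gamms-modules of higher ranks-2} is well defined on objects. Since $\Gamma^r$ acts trivially on $R$ (because $\gamma$ acts as multiplication by $\zeta_r$), intersecting an open subgroup witnessing polarizability of $\mathfrak A$ with $\Gamma^r$ and invoking Corollary \ref{polarization of higher rank vs rank-1-in terms of matrices} yields $\Lambda\in\GL_{a\phi(r)}(\Q)$ with each $X_i^{\mathrm t}\Lambda$ symmetric positive semi-definite and $X^{\mathrm t}\Lambda$ symmetric positive definite; in particular $X$ is invertible over $\Q(\zeta_r)$, so $X_iX^{-1}$ makes sense, $\sum_iX_iX^{-1}=1$, and $\Lambda':=\Lambda X^{-1}$ exhibits $(X_1X^{-1},\dots,X_kX^{-1})\in T_{r,k}^{(a)}$ since $(X_iX^{-1})^{\mathrm t}\Lambda'=(X^{-1})^{\mathrm t}(X_i^{\mathrm t}\Lambda)X^{-1}$.

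For bijectivity of \eqref{map of classification of isogeny log Gamms-modules of higher ranks-2}: an isogeny $\mathfrak A'\to\mathfrak A$ forces the same $r$ (same $\Gamma$-isotype, hence the same cyclotomic field) and the same $a$ (same rank), and in matrices it reads $X_iP=QX_i'$ with $P,Q\in\mathrm{Mat}_a(R)$ invertible over $\Q(\zeta_r)$ (componentwise commutativity of \eqref{morphism of log free Gamma-modules}), whence $X_iX^{-1}=Q(X_i'X'^{-1})Q^{-1}$ and isogenous objects have the same image. Surjectivity follows by clearing denominators of a given $(\widetilde X_1,\dots,\widetilde X_k)\in T_{r,k}^{(a)}$, putting $\Phi_i=c\widetilde X_i$ (so $X=cI$), and checking pointwise polarizability via the converse half of Corollary \ref{polarization of higher rank vs rank-1-in terms of matrices} after scaling the $T_{r,k}^{(a)}$-witness to be integral. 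Injectivity follows because an equality $X_i'X'^{-1}=Z(X_iX^{-1})Z^{-1}$ with $Z\in\GL_a(\Q(\zeta_r))$ lets one take $P=X^{-1}Z^{-1}X'$ and $Q=Z^{-1}$, scaled into $\mathrm{Mat}_a(R)$, as the matrices of an isogeny $\mathfrak A'\to\mathfrak A$.

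For the simpleness criterion I would argue as follows. By Proposition \ref{simpleness of LP is preserved by isogenies}(1) simpleness may be tested on any isogenous object, and replacing $\mathfrak A$ by a suitable isogenous object (e.g. via the morphism with matrices $\psi_1=cI$, $\psi_2^\vee=X$ for $c$ a common denominator of $X^{-1}$) I may assume $X$ is a scalar matrix, so that ``common invariant subspace of $\{X_i\}$'' and ``common invariant subspace of $\{X_iX^{-1}\}$'' mean the same thing. Suppose $\mathfrak A$ is not simple, witnessed by a non-isogeny $\psi=(\psi_1,\psi_2)\colon(M',N',\Phi')\to\mathfrak A$ in $\MorGammaPpPol$ with $\psi_1$ and $\psi_2^\vee$ injective. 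The images of $\psi_1$ and $\psi_2^\vee$ land in the $\zeta_r$-isotypic space $\Q(\zeta_r)^a$, so $M'$ and $N'$ are $\zeta_r$-isotypic; pointwise polarizability together with Corollary \ref{map from high rank to rank 1} and Proposition \ref{non-deg iff polar iff ppolar} forces $\mathrm{rk}\,M'=\mathrm{rk}\,N'$, so by Proposition \ref{isogeny classes in Mor_Gamma^{pPol}}(2) I may, after an isogeny of the source, take $(M',N',\Phi')=(R^b,(R^b)^\vee,\Phi')$ with $b<a$. If $P,Q\in\mathrm{Mat}_{a\times b}(R)$ denote the matrices of $\psi_1,\psi_2^\vee$ (of rank $b$ over $\Q(\zeta_r)$), the morphism relation is $X_iP=QX_i'$; setting $W:=P(\Q(\zeta_r)^b)$, a proper nonzero $\Q(\zeta_r)$-subspace of $\Q(\zeta_r)^a$, we get $X_i(W)\subseteq Q(\Q(\zeta_r)^b)$, and since $X(W)=Q(\Q(\zeta_r)^b)$ by a dimension count plus invertibility of $X$, it follows that $X^{-1}X_i(W)\subseteq W$ for all $i$, i.e. $\{X_iX^{-1}\}$ (equivalently $\{X_i\}$, after the normalization) has a proper nonzero common invariant $\Q(\zeta_r)$-subspace, which is exactly the failure of the stated block-triangularization criterion. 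Conversely, given such a $W$, I would take $\psi_1$ (resp. $\psi_2^\vee$) to be the inclusion of a $\Z$-basis of $W\cap R^a$ (resp. of $X(W)\cap R^a$), let $X_i'$ be the matrices determined by $X_iP=QX_i'$ (well defined since $X_i(W)\subseteq X(W)$), note that $(R^b,(R^b)^\vee,\Phi')$ is automatically in $\MorGammaPpPol$ by Proposition \ref{simpleness of LP is preserved by isogenies}(2) since $\psi_1$ is injective and $\mathfrak A$ is pointwise polarizable, and observe that $\psi$ is not an isogeny because $b<a$.

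The routine parts are the matrix bookkeeping in the bijection. The step I expect to be the main obstacle is the translation of Definition \ref{def simpleness of LP} into the linear-algebra condition: one must see that a putative proper sub-object is, up to isogeny, purely $\zeta_r$-isotypic (so $M'\cong R^b$), that its two embeddings $\psi_1,\psi_2^\vee$ into $R^a$ are rigidly linked by $\mathrm{Im}(\psi_2^\vee)=X\cdot\mathrm{Im}(\psi_1)$, and hence that a single invariant-subspace condition for the normalized family $\{X_iX^{-1}\}$ governs simpleness; getting the duality conventions and this normalization right is the delicate point.
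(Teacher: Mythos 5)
Your handling of the bijection \eqref{map of classification of isogeny log Gamms-modules of higher ranks-2} is essentially the paper's own argument: the same use of Corollary \ref{polarization of higher rank vs rank-1-in terms of matrices} to see that $X$ is invertible and that the class lands in $T_{r,k}^{(a)}/\sim$, the same relation $X_iY_2=Y_1\widetilde X_i$ for well-definedness and injectivity, and the same denominator-clearing for surjectivity.

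For the simpleness criterion you take a genuinely different, and in fact more careful, route. The paper passes from a witness $\psi=(\psi_1,\psi_2)$ with $\Phi_i(M')\subset(N')^\vee$ directly (``in other words'') to a single $Z$ conjugating the raw $X_i$ into block upper-triangular form; this tacitly identifies $\mathrm{Im}(\psi_1)$ with $\mathrm{Im}(\psi_2^\vee)$, which need not hold. What a witness actually yields, as you observe, is $X_iV_1\subseteq V_2$ with $V_2=XV_1$ (your dimension count using invertibility of $X$), hence a common invariant subspace for the family $X^{-1}X_i$ (equivalently $X_iX^{-1}$), not for the $X_i$ themselves. Your preliminary isogeny with matrices $(cI,X)$ making $X$ scalar, combined with Proposition \ref{simpleness of LP is preserved by isogenies}, is exactly what reconciles the two readings; your converse construction (the lattices $W\cap\Z[\zeta_r]^a$ and $XW\cap\Z[\zeta_r]^a$, pointwise polarizability of the sub-object via part (2) of that proposition) is correct; and your reduction of an arbitrary witness to one of the form $(\Z[\zeta_r]^b,(\Z[\zeta_r]^b)^\vee,\Phi')$ via isotypicity and Proposition \ref{isogeny classes in Mor_Gamma^{pPol}} (2) spells out a step the paper leaves implicit. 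Be aware, though, that what you prove is the criterion for $\{X_iX^{-1}\}$, and this does \emph{not} coincide with the literal statement about the raw $X_i$: the raw condition is not isogeny-invariant while simpleness is, and for $r=1$, $a=k=2$, $X_1=\left(\begin{smallmatrix}1&0\\1&0\end{smallmatrix}\right)$, $X_2=\left(\begin{smallmatrix}0&1\\0&2\end{smallmatrix}\right)$ (polarizable via $\Lambda=\left(\begin{smallmatrix}2&-1\\-1&1\end{smallmatrix}\right)$, since $X_1^{\mathrm t}\Lambda=\mathrm{diag}(1,0)$ and $X_2^{\mathrm t}\Lambda=\mathrm{diag}(0,1)$) the object is not simple --- $X^{-1}X_1=\mathrm{diag}(1,0)$, and the sub-object given by $\Z e_1\hookrightarrow M$, $\Z\cdot(1,1)^{\mathrm t}\hookrightarrow N^\vee$, $\Phi'=(\mathrm{id},0)$ violates Definition \ref{def simpleness of LP} --- even though $X_1,X_2$ have no common eigenvector over $\Q$. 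So your normalized version is the form of the criterion that the argument (including the paper's own converse direction) actually supports; to match the theorem as printed you should say explicitly that the block-triangularization condition is to be imposed on the matrices $X_iX^{-1}$, equivalently on the raw $X_i$ only after your scalar-$X$ normalization.
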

        \begin{proof}
            The proof is very similar to the preceding theorem. We first show that the map in \eqref{map of classification of isogeny log Gamms-modules of higher ranks-2} is well-defined. We fix a $\mathbb{Z}$-basis of $\mathbb{Z}[\zeta_r]$ as in Definition \ref{T_{r,k}} (2) and a dual $\Z$-basis for $\Z[\zeta_r]$.
            Suppose that $(M,N,\Phi)=(\mathbb{Z}[\zeta_{r}]^a,(\mathbb{Z}[\zeta_{r}]^a)^\vee,\Phi)$ is polarizable in $\Mor_{\Gamma'}^P$ for some open subgroup $\Gamma'$ of $\Gamma$ with a polarization $\lambda\colon M\to N$. We can assume that $\Gamma'$ acts trivially on $M$ and $N$. Then $\lambda$ is represented by a matrix $\Lambda \in\mathrm{GL}_{a\phi(r)}(\mathbb{Q})$. 
            So all of $X_1^{\mathrm{t}}\Lambda ,\cdots,X_k^{\mathrm{t}}\Lambda $ are symmetric and positive semi-definite and $\sum_{i=1}^kX_i^{\mathrm{t}}\Lambda $ is symmetric and positive definite (see Corollary \ref{polarization of higher rank vs rank-1-in terms of matrices}). In particular, $X=\sum_iX_i$ is non-singular and thus $[X^{-1}X_1,\cdots,X^{-1}X_k]$ lies in $T_{r,k}^{(a)}/\sim$. Moreover, if we have an isogeny in $\MorGammaPpPol$
            \[
            \psi=(\psi_1,\psi_2)\colon
            (\mathbb{Z}[\zeta_{r}]^a,(\mathbb{Z}[\zeta_{r}]^a)^\vee,\Phi)
            \to
            (\mathbb{Z}[\zeta_{r}]^a,(\mathbb{Z}[\zeta_{r}]^a)^\vee,\widetilde{\Phi})
            \]
            with $\psi_i$ ($i=1,2$) given by
            \[
            \Psi_{Y_i}
            \colon
            \mathbb{Z}[\zeta_r]^a
            \to
            \mathbb{Z}[\zeta_r]^a,
            \quad
            m\mapsto Y_im
            \]
            where $Y_i\in\mathrm{Mat}_a(\mathbb{Z}[\zeta_r])\bigcap
            \mathrm{GL}_a(\mathbb{Q}(\zeta_r))$, then as in the preceding proof, we have
            \[
            X_iY_2=Y_1\widetilde{X}_i.
            \]
            It follows that
            $\widetilde{X}^{-1}\widetilde{X}_i=(Y_1^{-1}XY_2)^{-1}(Y_1^{-1}X_iY_2)=Y_2^{-1}X^{-1}X_iY_2$ and thus $[X^{-1}X_1,\cdots,X^{-1}X_k]$ and $[\widetilde{X}^{-1}\widetilde{X}_1,\cdots,\widetilde{X}^{-1}\widetilde{X}_k]$ are the same in $T_{r,k}^{(a)}/\sim$. So the map (\ref{map of classification of isogeny log Gamms-modules of higher ranks-2}) is well-defined.

            For the surjectivity, given $[X_1,\cdots,X_k]\in T_{r,k}^{(a)}/\sim$, by definition, there is $\Lambda \in\mathrm{GL}_{a\phi(r)}(\mathbb{Q})$ such that $X_1^{\mathrm{t}}\Lambda ,\cdots,X_k^{\mathrm{t}}\Lambda $ are all symmetric and positive semi-definite. Up to multiplying $X_1,\cdots,X_k,\Lambda $ by some positive integer, we can assume that $X_1,\cdots,X_k\in\mathrm{Mat}_a(\mathbb{Z}[\zeta_r])$ (so that $X=\sum_iX_i=n\in\mathbb{N}$) and $\Lambda \in\mathrm{Mat}_{a\phi(r)}(\mathbb{Z})\bigcap\mathrm{GL}_{a\phi(r)}(\mathbb{Q})$.
            Then we define the morphisms $\Phi_i$ in $\Mod_\Gamma$ by
            \[
            \Phi_i
            \colon
            \mathbb{Z}[\zeta_r]^a
            \to
            \mathbb{Z}[\zeta_r]^a,
            \quad
            m
            \mapsto
            X_im.
            \]
            Put $\Phi=(\Phi_1,\cdots,\Phi_k)$, then $(M,N,\Phi)$ is indeed an object in $\MorGammaP$. Moreover, take $\Gamma'$ to be an open subgroup of $\Gamma$ acting trivially on $M,N$, then $(M,N,\Phi)$ is polarizable in $\Mor_{\Gamma'}^P$ with a polarization
            \[
            \lambda
            \colon
            \mathbb{Z}[\zeta_r]^a
            \to
            (\mathbb{Z}[\zeta_r]^a)^\vee,
            \quad
            m
            \mapsto
            \Lambda m.
            \]

            Next we prove the injectivity of (\ref{map of classification of isogeny log Gamms-modules of higher ranks-2}): suppose we have two objects
            $(\mathbb{Z}[\zeta_r]^a,
            (\mathbb{Z}[\zeta_r]^a)^\vee,\Phi)$ and $(\mathbb{Z}[\zeta_r]^a,
            (\mathbb{Z}[\zeta_r]^a)^\vee,\widetilde{\Phi})$ in $\MorGammaPpPol$ such that there is a matrix $Y_2\in\mathrm{GL}_{a}(\mathbb{Q}(\zeta_r))$ with
            \[
            (X^{-1}X_1,\cdots,X^{-1}X_k)
            =            Y_2(\widetilde{X}^{-1}\widetilde{X}_1,\cdots,\widetilde{X}^{-1}\widetilde{X}_k)\in T_{r,k}^{(a)}.
            \]
            Up to multiplying $Y_2$ by a positive integer, we can assume
            \[
            Y_1:=XY_2\widetilde{X}^{-1}
            \in\mathrm{Mat}_a(\mathbb{Z}[\zeta_r])
            \bigcap\mathrm{GL}_a(\mathbb{Q}(\zeta_r)).
            \]
            Then it follows that for any $i=1,\cdots,k$
            \[
            X_iY_2=Y_1\widetilde{X}_i.
            \]
            We define morphisms $\Psi_{Y_i}\colon\mathbb{Z}[\zeta_r]^a\to\mathbb{Z}[\zeta_r]^a$ sending $m$ to $Y_im$ and it follows that
            \[
            \psi=(\psi_1=\Psi_{Y_1},\psi_2=\Psi_{Y_2}^\vee)
            \colon
            (\mathbb{Z}[\zeta_r]^a,(\mathbb{Z}[\zeta_r]^a)^\vee,\Phi)
            \to
            (\mathbb{Z}[\zeta_r]^a,(\mathbb{Z}[\zeta_r]^a)^\vee,\widetilde{\Phi})
            \]
            is an isogeny in $\MorGammaPpPol$. This gives the injectivity.

            For the last point, if $(\mathbb{Z}[\zeta_r]^a,(\mathbb{Z}[\zeta_r]^a)^\vee,\Phi)$ is not simple, that is, there is a morphism of the form
            \[
            \psi=(\psi_1,\psi_2)\colon
            (M',N',\Phi'):=(\mathbb{Z}[\zeta_r]^b,(\mathbb{Z}[\zeta_r]^b)^\vee,\Phi')
            \to
            (\mathbb{Z}[\zeta_r]^a,(\mathbb{Z}[\zeta_r]^a)^\vee,\Phi)
            \]
            for some $0<b<a$ such that $\psi_1,\psi_2^\vee$ are injective and $\psi$ is not an isogeny. We can view $M'$ (resp. $(N')^\vee$ ) as a submodule of $\mathbb{Z}[\zeta_r]^a$ via $\psi_1$ (resp. $\psi_2^\vee$). In particular,
            \[
            \Phi_i(M')\subset (N')^\vee,
            \quad
            \forall\,
            i=1,\cdots,k.
            \]
            In other words, we can find $Z\in\mathrm{Mat}_a(\mathbb{Z}[\zeta_r])\bigcap\mathrm{GL}_a(\mathbb{Q}(\zeta_r))$ such that
            \[
            ZX_iZ^{-1}
            =
            \begin{pmatrix}
                X_i' & \ast \\ 0 & X_i''
            \end{pmatrix}
            \]
            where $X_i'$ is a square matrix of size $b\times b$, $X_i''$ of size $(a-b)\times(a-b)$.

            Conversely, if there is $Z\in\mathrm{GL}_a(\mathbb{Q}(\zeta_r))$ such that $ZX_iZ^{-1}$ are as in the statement of the theorem, then up to multiplying $Z$ by a positive integer, we can assume $Z\in\mathrm{Mat}_a(\mathbb{Z}[\zeta_r])\bigcap\mathrm{GL}_a(\mathbb{Q}(\zeta_r))$. 
            We define a morphism of $\Gamma$-modules to be the following composition
            \[
            \psi_1\colon
            M':=\mathbb{Z}[\zeta_r]^b\hookrightarrow\mathbb{Z}[\zeta_r]^a\xrightarrow{Z}\Z[\zeta_r]^a,
            \]
            where the first map is the inclusion via the first $b$ factors and the second map is given by the matrix $Z$ with respect to the standard $\Z[\zeta_r]$-basis of $\Z[\zeta_r]^a$. We define $\psi_2$ by specifying $\psi_2^\vee\colon(N')^\vee:=\Z[\zeta_r]^b\to\Z[\zeta_r]^a$ by the same formula.       
            Then we define $\Phi_i'\colon M'\to(N')^\vee$ represented by the matrix $X_i'$. This gives rise to an object $(M',N',\Phi')$ in $\MorGammaP$ together with a morphism $(M',N',\Phi')\to(\mathbb{Z}[\zeta_r]^a,(\mathbb{Z}[\zeta_r]^a)^\vee,\Phi)$ in $\MorGammaP$. Suppose $(\mathbb{Z}[\zeta_r]^a,(\mathbb{Z}[\zeta_r]^a)^\vee,\Phi)$ is polarizable in $\Mor_{\Gamma'}^P$ with polarization $\lambda$ (again we can assume $\Gamma'$ acts trivially on $\mathbb{Z}[\zeta_r]^a$). So under the fixed basis of $\mathbb{Z}[\zeta_r]^a$ and the dual basis of $(\mathbb{Z}[\zeta_r]^a)^\vee$, up to replacing
            $\lambda$ by a positive integer multiple of $\lambda$, we can assume that
            $\lambda$ is represented by a matrix of the form
            \[
            \Lambda =
            Z^\mathrm{t}
            \begin{pmatrix}
                A & B \\ C & D
            \end{pmatrix}
            Z
            \]
            with $A\in\mathrm{Mat}_{\mathrm{rk}_{\mathbb{Z}}(M')}(\mathbb{Z})$. Note that
            \[
            X_i^\mathrm{t}\Lambda 
            =
            Z^{\mathrm{t}}
            \begin{pmatrix}
                X_i' & \ast \\ 0 & X_i''
            \end{pmatrix}^\mathrm{t}
            Z^{-\mathrm{t}}
            Z^\mathrm{t}
            \begin{pmatrix}
                A & B \\ C & D
            \end{pmatrix}
            Z
            =
            Z^{\mathrm{t}}
            \begin{pmatrix}
                (X_i')^\mathrm{t}A & \ast \\ \ast & \ast
            \end{pmatrix}
            Z.
            \]
            Since $X_1^\mathrm{t}\Lambda ,\cdots,X_k^\mathrm{t}\Lambda $ are all symmetric and positive semi-definite and $X^\mathrm{t}X$ is symmetric and positive definite, it follows that $(X_1')^\mathrm{t}A,\cdots,(X_k')^\mathrm{t}A$ are all symmetric and positive semi-definite and $\sum_{i=1}^k(X_i')^\mathrm{t}A$ is symmetric and positive definite. Then it follows that the map $\lambda'\colon\mathbb{Z}[\zeta_r]^b\to(\mathbb{Z}[\zeta_r]^b)^\vee$ represented by the matrix $A$ is a polarization for the object $(\mathbb{Z}[\zeta_r]^b,(\mathbb{Z}[\zeta_r]^b)^\vee,\Phi')$ in $\Mor_{\Gamma'}^P$ and thus $(\mathbb{Z}[\zeta_r]^b,(\mathbb{Z}[\zeta_r]^b)^\vee,\Phi')$ is pointwise polarizable in $\MorGammaP$. We conclude that $(\mathbb{Z}[\zeta_r]^a,(\mathbb{Z}[\zeta_r]^a)^\vee,\Phi)$ is \emph{not} simple.
            
        \end{proof}
        \begin{remark}
            It is not hard to see that $(\mathbb{Z}[\zeta_r]^a,(\mathbb{Z}[\zeta_r]^a)^\vee,\Phi)$ is polarizable in $\MorGammaPpPol$ if and only if for the $k$-tuple
            $(X_1X^{-1},\cdots,X_kX^{-1})\in T_{r,k}^{(a)}$, we can choose $\Lambda\in\mathrm{GL}_a(\mathbb{Q}(\zeta_r))$ such that $X_1^\mathrm{t}\Lambda,\cdots,X_k^\mathrm{t}\Lambda$ are all symmetric (as elements in $\mathrm{Mat}_{a\phi(r)}(\mathbb{Q})$), positive semi-definite.
        \end{remark}
        
        Combining Theorems \ref{classificaton of isogenous classes of log Gamma-module of higher ranks} and \ref{classificaton of isogenous classes of log Gamma-module of higher ranks-2}, we get
        \begin{corollary}
            The following map is a bijection
            \[
            T_{r,k}\simeq T_{r,k}^{(1)}\simeq T_{r,k}^{(1)}/\sim,
            \quad
            (t_1,\cdots,t_k)
            \mapsto
            [t_1,\cdots,t_k].
            \]
        \end{corollary}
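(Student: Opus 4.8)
The plan is to deduce the corollary from Theorems~\ref{classificaton of isogenous classes of log Gamma-module of higher ranks} and~\ref{classificaton of isogenous classes of log Gamma-module of higher ranks-2}, exploiting that the objects $(\Z[\zeta_r],\Z[\zeta_r]^\vee,\Phi)$ are \emph{simultaneously} the objects with $M$ a simple free $\Gamma$-module classified by the former and the $a=1$ objects classified by the latter. First I would dispose of the reduction $T_{r,k}^{(1)}=T_{r,k}^{(1)}/\sim$: the relevant group $\mathrm{GL}_1(\Q(\zeta_r))=\Q(\zeta_r)^\times$ acts on $T_{r,k}^{(1)}\subset \mathrm{Mat}_1(\Q(\zeta_r))^k$ by conjugation, which is trivial because $\Q(\zeta_r)$ is commutative; hence the quotient map $T_{r,k}^{(1)}\to T_{r,k}^{(1)}/\sim$ is a bijection, namely the identity on tuples.

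Next I would check that for each fixed $r>0$ the set $\mathcal{I}_r$ of isogeny classes of objects $(\Z[\zeta_r],\Z[\zeta_r]^\vee,\Phi)$ in $\MorGammaPpPol$ is exactly the part of the source of~\eqref{map of classification of isogeny log Gamms-modules of higher ranks} lying over $T_{r,k}$ (immediate: $\Z[\zeta_r]$ is a simple free $\Gamma$-module and, by Proposition~\ref{description of pPol object up to isogeny}, every object with $M$ simple is isogenous to one of this shape, isogeny preserving the rational $\Gamma$-representation $M\otimes_\Z\Q\cong\Q(\zeta_r)$), and is also exactly the $a=1$ stratum of the source of~\eqref{map of classification of isogeny log Gamms-modules of higher ranks-2}, a tautology since $(\Z[\zeta_r]^1)^\vee=\Z[\zeta_r]^\vee$. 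The one subtle point is that the two disjoint unions $\bigsqcup_r$ are indexed compatibly: an isogeny class determines $M\otimes_\Z\Q$, the $\Q(\zeta_r)$ are pairwise non-isomorphic, so each class unambiguously determines $r$ and no stratum is met twice.

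Then the restrictions of the two classification bijections give bijections $\Theta_r\colon\mathcal{I}_r\xrightarrow{\;\sim\;}T_{r,k}$ and $\Theta_r'\colon\mathcal{I}_r\xrightarrow{\;\sim\;}T_{r,k}^{(1)}/\sim$, and I would identify $\Theta_r'\circ\Theta_r^{-1}$ with $(t_1,\dots,t_k)\mapsto[t_1,\dots,t_k]$ by evaluating on the standard representative: given $(t_1,\dots,t_k)\in T_{r,k}$, pick $c\in\mathbb{Z}_{>0}$ with $ct_i\in\Z[\zeta_r]$ for all $i$ and take $(\Z[\zeta_r],\Z[\zeta_r]^\vee,\Phi)$ with $\Phi_i$ multiplication by $ct_i$, exactly as in the surjectivity parts of the two theorems; then $\Theta_r$ sends its class to $(ct_1,\dots,ct_k)$ divided by $\sum_j ct_j=c$, i.e.\ to $(t_1,\dots,t_k)$, while $\Theta_r'$ sends it to $[\,ct_1\cdot c^{-1},\dots,ct_k\cdot c^{-1}\,]=[t_1,\dots,t_k]$. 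Combined with the first step, this exhibits the composite $T_{r,k}\to T_{r,k}^{(1)}\to T_{r,k}^{(1)}/\sim$, $(t_1,\dots,t_k)\mapsto[t_1,\dots,t_k]$, as a bijection, as claimed.

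I expect the whole argument to be short bookkeeping rather than to involve a real obstacle, the only place needing care being the index-set matching in the second step. As an alternative that bypasses the two theorems, one can prove the equality $T_{r,k}=T_{r,k}^{(1)}$ of subsets of $\Q(\zeta_r)^k$ directly: for $\subseteq$, take $\Lambda$ to be the Gram matrix of $(x,y)\mapsto\mathrm{Tr}_{\Q(\zeta_r)/\Q}(x\overline y)$, so that $X_i^{\mathrm t}\Lambda$ is the Gram matrix of $B_{t_i}$ and \eqref{B_t symmetric iff t totally real} together with the positivity of $\mathrm{Tr}_{\Q(\zeta_r)/\Q}(xt_i\overline x)$ used in Propositions~\ref{polarization on (Z[zeta_r],Z[zeta_r]^vee,Id)} and~\ref{condition for lambda_t being a polarization} give symmetry and positive semi-definiteness; for $\supseteq$, note $\Lambda=\sum_i X_i^{\mathrm t}\Lambda$ is symmetric, invertible, and a sum of positive semi-definite matrices, hence positive definite, so each $X_i=\Lambda^{-1}X_i^{\mathrm t}\Lambda$ is self-adjoint and positive semi-definite for the inner product $v^{\mathrm t}\Lambda w$ on $\Q(\zeta_r)\otimes_\Q\R$, and since $X_i$ is multiplication by $t_i$ its eigenvalues over $\C$ are the conjugates $\sigma(t_i)$; realness and non-negativity of all the $\sigma(t_i)$ then force $t_i\in\Q(\zeta_r)^+$ and totally positive or zero.
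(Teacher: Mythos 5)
Your main argument is correct and is essentially the paper's intended proof: the corollary is stated there with no more justification than "combining Theorems \ref{classificaton of isogenous classes of log Gamma-module of higher ranks} and \ref{classificaton of isogenous classes of log Gamma-module of higher ranks-2}", and your bookkeeping (triviality of the $\mathrm{GL}_1$-conjugation action, matching the simple-$M$ classes with the $a=1$, index-$r$ stratum, and evaluating both classification maps on the representative with $\Phi_i$ given by multiplication by $ct_i$) is exactly that deduction. One small repair: for $P=\N^k$ the statement that every class with $M$ a simple free $\Gamma$-module has a representative $(\Z[\zeta_r],\Z[\zeta_r]^\vee,\Phi)$ should be quoted from Proposition \ref{isogeny classes in Mor_Gamma^{pPol}} (2) together with Lemma \ref{simpleness of lattices is preversed under isogeny}, not from Proposition \ref{description of pPol object up to isogeny}, which is stated only for $P=\N$; this is a citation slip, not a gap. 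Your alternative argument proving the set-theoretic equality $T_{r,k}=T_{r,k}^{(1)}$ directly is also correct and is a genuinely different, classification-free route: taking $\Lambda$ to be the Gram matrix of the trace form gives the inclusion $T_{r,k}\subseteq T_{r,k}^{(1)}$ via \eqref{B_t symmetric iff t totally real}, and conversely $\Lambda=\sum_iX_i^{\mathrm t}\Lambda$ is positive definite, each $X_i$ is self-adjoint positive semi-definite for the $\Lambda$-inner product with eigenvalues $\sigma(t_i)$, and injectivity of the embeddings upgrades "totally non-negative" to "totally positive or zero"; this buys an independent consistency check on the two theorems at the cost of redoing a little linear algebra.
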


\appendix
\section{Some lemmas on homomorphisms of (commutative) group schemes}
	Let $S$ be a scheme. 
 
 \begin{lemma}\label{hom of lattices}
     Let $Y$ and $Y'$ be group schemes over $S$ which are \'etale locally isomorphic to a free abelian group of finite rank, and let $f:Y\to Y'$ be a homomorphism. Assume that for any $s\in S$, the fiber $f_{\bar{s}}$ of $f$ at a geometric point $\bar{s}$ above $s$ is injective with finite cokernel. Then $f$ is injective with cokernel a finite locally constant group scheme.
 \end{lemma}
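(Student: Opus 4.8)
The plan is to reduce the statement to the constant case by working \'etale-locally on $S$, where it becomes a matter of elementary divisors for $\Z$-modules.

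First I would note that both conclusions are \'etale-local on $S$: injectivity of a morphism of \'etale sheaves of abelian groups may be tested after an \'etale surjection; the formation of the cokernel sheaf $\coker(f)$ on $S_{\et}$ commutes with restriction along an \'etale morphism, since such restriction (being a pullback) is exact; and the property of an \'etale sheaf of groups being represented by a finite locally constant group scheme descends along \'etale covers, by descent for finite \'etale schemes together with descent of the group structure. Hence, after replacing $S$ by the members of a common \'etale refinement trivializing $Y$ and $Y'$ and then by its connected components, I may assume $S$ is connected and $Y=\underline{M}$, $Y'=\underline{M'}$ are the constant group schemes attached to free abelian groups $M$, $M'$ of finite rank. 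Over a connected base $\Hom_{S_{\et}}(\underline{M},\underline{M'})=\Hom_{\Z}(M,M')$, so $f$ is given by a single $\Z$-linear map $\phi\colon M\to M'$, and for every geometric point $\bar s$ of $S$ the fiber $f_{\bar s}$ is canonically identified with $\phi$. The hypothesis therefore says precisely that $\phi$ is injective with finite cokernel $M'/\phi(M)$.

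With this in hand the argument is immediate. The kernel sheaf $\ker(f)$ is the constant sheaf on $\ker(\phi)=0$, so $f$ is injective. Moreover the sequence of \'etale sheaves $\underline{M}\xrightarrow{\phi}\underline{M'}\to\underline{M'/\phi(M)}\to 0$ is exact, because exactness of a sequence of sheaves is detected on stalks and $M\to M'\to M'/\phi(M)\to 0$ is exact; hence $\coker(f)=\underline{M'/\phi(M)}$ is the constant group scheme on the \emph{finite} abelian group $M'/\phi(M)$, which is finite \'etale over $S$, in particular a finite locally constant group scheme. Descending back along the \'etale cover (using that cokernel commutes with \'etale localization and that finiteness/\'etaleness and the group structure descend) yields the assertion for the original $f$.

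I do not anticipate a genuine obstacle here; the only points that need care are the bookkeeping in the \'etale-local reduction (taking a common refinement that simultaneously trivializes $Y$ and $Y'$, and passing to connected components so that $f$ becomes a fixed matrix) and being consistent that ``cokernel'' is taken in abelian sheaves on the \'etale site, where it is compatible with \'etale restriction. Everything past the reduction is just the Smith normal form picture for the injection $\phi\colon M\hookrightarrow M'$.
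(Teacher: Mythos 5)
Your argument is correct and is essentially the same as what underlies the paper's proof: the paper disposes of this lemma by citing \cite[Tag 093S (3)]{Stacks-project}, and that Stacks lemma is proved by exactly your reduction — \'etale-locally the map becomes a map of constant sheaves attached to a homomorphism of finite free $\Z$-modules, whose kernel and cokernel are again (finite) constant, and everything descends. One small point to tighten: for an arbitrary scheme $S$ (as in the appendix) connected components are closed but need not be open, so ``pass to connected components'' is not literally an \'etale localization; instead observe that $\cHom(\underline{M},\underline{M'})$ is the constant sheaf on $\Hom_{\Z}(M,M')$ because $M$ is finite free, so $f$ is given Zariski-locally by a single matrix $\phi$, after which your identification of all geometric fibers with $\phi$ and the Smith-normal-form conclusion go through verbatim.
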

 \begin{proof}
     This follows from \cite[\href{https://stacks.math.columbia.edu/tag/093S}{Tag 093S} (3)]{Stacks-project}.
 \end{proof}
 
 Let $B$ and $B'$ (resp. $T$ and $T'$) be abelian schemes (resp. quasi-isotrivial tori) over $S$, and let $G$ (resp. $G'$) be a group scheme over $S$ which is an extension of $B$ (resp. $B'$) by $T$ (resp. $T'$).

 \begin{lemma}\label{hom of tori}
     Let $h:T\to T'$ be a homomorphism of tori over $S$. Assume that for any $s\in S$, $h_{\bar{s}}:T_{\bar{s}}\to T'_{\bar{s}}$ is an isogeny. Then $h$ is faithfully flat (hence surjective) with kernel finite locally free.
 \end{lemma}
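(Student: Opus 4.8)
\textbf{Proof proposal for Lemma \ref{hom of tori}.} The plan is to reduce to the lattice statement Lemma \ref{hom of lattices} by passing to character groups. Write $X:=\cHom_S(T,\Gm)$ and $X':=\cHom_S(T',\Gm)$; these are sheaves on $S_{\et}$ which are étale-locally isomorphic to a finite free abelian group, and $h$ induces $h^\ast:=\cHom_S(h,\Gm)\colon X'\to X$. By the standard anti-equivalence between quasi-isotrivial tori over $S$ and such lattices, together with the identities $\cHom_S(X,\Gm)=T$, $\cHom_S(X',\Gm)=T'$, it suffices to analyze $h^\ast$ and then dualize back.

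First I would translate the fiberwise hypothesis on $h$ into one on $h^\ast$. For a geometric point $\bar s$ of $S$, dualizing the short exact sequence $0\to\ker(h_{\bar s})\to T_{\bar s}\to T'_{\bar s}\to 0$ (with $\ker(h_{\bar s})$ finite, since $h_{\bar s}$ is an isogeny) shows that $X'_{\bar s}\to X_{\bar s}$ is injective with finite cokernel; conversely this condition forces $h_{\bar s}$ to be an isogeny. Hence $h^\ast\colon X'\to X$ satisfies the hypotheses of Lemma \ref{hom of lattices}, so $h^\ast$ is injective with cokernel $C$ a finite locally constant (in particular finite locally free) group scheme over $S$, yielding a short exact sequence
\[
0\to X'\xrightarrow{h^\ast} X\to C\to 0
\]
of sheaves of abelian groups on $S_{\et}$, equivalently on $S_{\fppf}$.

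Next I would apply $\cHom_S(-,\Gm)$ to this sequence on the \emph{fppf} site. Since $X$ is étale-locally free, $\cExt^1_{S_{\fppf}}(X,\Gm)=0$; since $C$ is finite locally constant it is étale-locally a finite direct sum of copies of $\Z/n\Z$, and the exact sequence $0\to\Z\xrightarrow{n}\Z\to\Z/n\Z\to 0$ together with the fppf-surjectivity of $n\colon\Gm\to\Gm$ gives $\cExt^1_{S_{\fppf}}(C,\Gm)=0$. The long exact sequence therefore collapses to
\[
0\to\cHom_S(C,\Gm)\to T\xrightarrow{h} T'\to 0,
\]
exact on $S_{\fppf}$. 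Now $\cHom_S(C,\Gm)$ is the Cartier dual of the finite locally free group scheme $C$, hence is itself finite locally free; so $\ker(h)$ is finite locally free and $h$ exhibits $T$ as a torsor under this kernel over $T'$. In particular $h$ is finite locally free, hence faithfully flat and surjective, which is the assertion.

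I expect the only delicate points to be bookkeeping about the sites — one must use the fppf site in the $\Gm$-duality step so that the relevant $\cExt^1$ groups vanish, while Lemma \ref{hom of lattices} is an étale-local statement — and invoking the anti-equivalence of categories and Cartier duality in the relative setting with the correct finiteness hypotheses. As a remark, a more hands-on alternative would be to prove $h$ flat directly via the fiberwise flatness criterion (EGA IV, 11.3.10), using that $T$ and $T'$ are $S$-flat and each $h_{\bar s}$ is flat, then note that $\ker(h)\to S$ is flat by base change along the unit section and quasi-finite, hence finite locally free; this avoids character groups but still requires a quasi-finiteness/properness argument for the kernel, so I would keep the character-group argument as the main proof.
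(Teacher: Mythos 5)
Your proposal is correct and takes essentially the same route as the paper: both reduce to character groups and apply Lemma \ref{hom of lattices} to see that $h^\ast\colon X'\to X$ is injective with finite locally constant cokernel. The only difference is that the paper then transfers this back to $h$ by citing the multiplicative-type duality statement from the Stacks project, whereas you reprove that transfer by hand (vanishing of the fppf sheaf $\cExt^1(-,\Gm)$ for lattices and finite locally constant groups, Cartier duality of the cokernel, and torsor descent to get $h$ finite locally free), which is a correct but not substantively different argument.
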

 \begin{proof}
     Let $X$ (resp. $X'$) be the character group of $T$ (resp. $T'$). Then $X$ and $X'$ are \'etale locally isomorphic to a free abelian group of finite rank, in particular they are quasi-isotrivial ``groupe constant tordu'' in the sense of \cite[\'Expose X, Definition 5.1]{DemazureGrothendieck1970}. Let $h^*:X'\to X$ be the homomorphism induced by $h$. It suffices to show that $h^*$ is injective with cokernel a finite locally constant group scheme by \cite[\href{https://stacks.math.columbia.edu/tag/05B2}{Tag 05B2}]{Stacks-project}. By the assumption in the statement, $(h^*)_{\bar{s}}$ is injective with finite cokernel for any $s\in S$, then we are done by Lemma \ref{hom of lattices}.
 \end{proof}

 \begin{lemma}\label{hom of semiabelian schemes}
     Let $g:G\to G'$ be a homomorphism of group schemes over $S$. 
     \begin{enumerate}
         \item The composition $T\hookrightarrow G\xrightarrow{g} G'\to B'$ is trivial, and thus $g$ induces $g_{\mathrm{t}}:T\to T'$ and $g_{\mathrm{ab}}:B\to B'$.
         \item Assume that for any $s\in S$, $g_{\bar{s}}:G_{\bar{s}}\to G'_{\bar{s}}$ is an isogeny. Then $g$, $g_{\mathrm{t}}$ and $g_{\mathrm{ab}}$ are all faithfully flat (hence surjective) with kernel finite locally free. 
     \end{enumerate}
 \end{lemma}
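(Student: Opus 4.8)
The plan is first to establish the vanishing $\cHom_S(T,B')=0$. Granting this, the composite $h\colon T\hookrightarrow G\xrightarrow{g}G'\to B'$ is zero, so $g$ carries $T=\ker(G\to B)$ into $\ker(G'\to B')=T'$; this yields $g_{\mathrm t}\colon T\to T'$, and then $G\xrightarrow{g}G'\to G'/T'=B'$ kills $T$ and descends to the desired $g_{\mathrm{ab}}\colon B=G/T\to B'$. To prove $\cHom_S(T,B')=0$: since $\cHom_S(-,-)$ is an étale sheaf and $T$ is quasi-isotrivial, étale descent reduces us to $T=\Gm$, so it suffices to show that every homomorphism $\phi\colon\Gm\to B'$ is zero. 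Over a field this is classical: a nonzero $\phi$ would factor through a monomorphism $\Gm/\mu_n\simeq\Gm\hookrightarrow B'$, whose schematic image would be an open dense subgroup of a positive-dimensional closed subgroup scheme $Z$ of $B'$; then $Z^0_{\mathrm{red}}$ is an abelian variety admitting $\Gm$ as a dense open, forcing the smooth projective models of $\Gm$ and of $Z^0_{\mathrm{red}}$ to coincide, which is absurd (genus $0$ versus genus $\ge 1$). For a general base, $\phi$ vanishes after restriction to $S_{\mathrm{red}}$, and deformations of the zero homomorphism along a square-zero ideal $I$ form a torsor under $\Hom_{\mathrm{gp}}(\Gm,\mathrm{Lie}(B')\otimes I)=0$, so a dévissage over nilpotents (reducing first to the Noetherian case by a limit argument) gives $\phi=0$; alternatively one simply cites this standard vanishing from the theory of semi-abelian schemes.

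\textbf{Part (2): the torus and abelian quotients of $g$.}
Assume now $g_{\bar s}$ is an isogeny for every $s\in S$. I would first check that $g_{\mathrm t,\bar s}$ and $g_{\mathrm{ab},\bar s}$ are then isogenies. Since $\ker(g_{\bar s})$ is finite, so is $\ker(g_{\mathrm t,\bar s})=T_{\bar s}\cap\ker(g_{\bar s})$, hence $g_{\mathrm t,\bar s}$ is an isogeny onto a subtorus of $T'_{\bar s}$ of dimension $\dim T_{\bar s}$; applying the snake lemma to the induced map of extensions $0\to T_{\bullet}\to G_{\bullet}\to B_{\bullet}\to 0$ over $\bar s$, the reduced identity component of $\ker(g_{\mathrm{ab},\bar s})$ is an abelian variety mapping to zero in the torus $T'_{\bar s}/g_{\mathrm t,\bar s}(T_{\bar s})$ (no nonzero homomorphism from an abelian variety to a torus), hence it lies in the image of the finite $\ker(g_{\bar s})$ and is therefore trivial, so $g_{\mathrm{ab},\bar s}$ has finite kernel. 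Comparing $\dim T_{\bar s}\le\dim T'_{\bar s}$, $\dim B_{\bar s}\le\dim B'_{\bar s}$ and $\dim G_{\bar s}=\dim G'_{\bar s}$ forces equalities, so $g_{\mathrm t,\bar s}$ and $g_{\mathrm{ab},\bar s}$ are surjective with finite kernel. Now Lemma~\ref{hom of tori} applies to $g_{\mathrm t}$ directly, giving that it is faithfully flat with finite locally free kernel. For $g_{\mathrm{ab}}\colon B\to B'$: it is proper (a map from the $S$-proper $B$ to the $S$-separated $B'$) and quasi-finite (fibres are torsors under finite kernels), hence finite; it is fibrewise flat (an isogeny of abelian varieties is finite flat), so the fibrewise criterion for flatness makes it flat, hence finite locally free; it is fibrewise surjective, hence faithfully flat, and $\ker(g_{\mathrm{ab}})=B\times_{B',e}S$ is finite locally free by base change.

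\textbf{Part (2): the map $g$ itself.}
The new difficulty is that $G$ is not proper over $S$, so ``finite $=$ proper $+$ quasi-finite'' is unavailable. Instead, I note that $g_{\bar s}$ is a surjective homomorphism of smooth connected group schemes over a field, hence faithfully flat; thus $g$ is fibrewise flat, and since $G$ and $G'$ are smooth (in particular flat and locally of finite presentation) over $S$, the fibrewise criterion for flatness gives that $g$ is flat, and fibrewise surjectivity makes it faithfully flat. Hence $\ker(g)=G\times_{G',e}S$ is faithfully flat over $S$; it is a closed subgroup scheme of $G$, quasi-finite over $S$ with finite fibres $\ker(g_{\bar s})$. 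To upgrade this to finiteness, I would use the exact sequence of fppf sheaves $0\to\ker(g_{\mathrm t})\to\ker(g)\to Q\to 0$ with $Q$ a subsheaf of $\ker(g_{\mathrm{ab}})$: locally on $S$, $\ker(g_{\mathrm t})$ and $\ker(g_{\mathrm{ab}})$, being finite locally free group schemes, are killed by integers $N_1,N_2$, so $\ker(g)$ is killed by $m:=N_1N_2$, i.e.\ $\ker(g)\subseteq G[m]$; and $G[m]$ is finite locally free over $S$ because the sequence $0\to T[m]\to G[m]\to B[m]\to 0$ (using that multiplication by $m$ is an epimorphism of fppf sheaves on the torus $T$) exhibits it as an extension of the finite locally free $B[m]$ by the finite locally free $T[m]$. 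A closed subscheme of the finite $S$-scheme $G[m]$ is finite over $S$, and finite plus flat is finite locally free; this completes the argument.

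\textbf{Expected main obstacle.}
I expect the genuinely delicate points to be the vanishing $\cHom_S(T,B')=0$ over an arbitrary (possibly non-reduced, non-Noetherian) base in Part~(1) — standard but requiring either a deformation-theoretic dévissage over nilpotents or an appeal to the semi-abelian literature — and, in Part~(2), the passage from flatness of $g$ to finiteness of $\ker(g)$, where the non-properness of $G/S$ must be circumvented by the embedding $\ker(g)\subseteq G[m]$ described above.
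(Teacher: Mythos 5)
Your proof is correct and runs parallel to the paper's for most of its length, but it diverges in two places worth noting. In Part (1) the paper simply cites Bertolin's Lemma~1.2.1 for the vanishing of homomorphisms from a torus to an abelian scheme, whereas you sketch a proof (étale descent to $\Gm$, the classical field case, then rigidity over nilpotents); this is fine, and your fibrewise snake-lemma argument in Part (2), the use of Lemma~\ref{hom of tori} for $g_{\mathrm{t}}$, the fibrewise flatness criterion for $g$ and $g_{\mathrm{ab}}$, and ``proper $+$ quasi-finite $\Rightarrow$ finite'' for the abelian part all match the paper's steps (the paper deduces $\coker((g_{\mathrm{t}})_{\bar s})=0$ from proper-versus-affine rather than your dimension count, a cosmetic difference). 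The genuine divergence is the finiteness of $\ker(g)$: the paper uses that $g_{\mathrm{t}}$ is an fppf surjection to get the short exact sequence $0\to\ker(g_{\mathrm{t}})\to\ker(g)\to\ker(g_{\mathrm{ab}})\to 0$, so that $\ker(g)$ is a $\ker(g_{\mathrm{t}})$-torsor over the finite locally free $\ker(g_{\mathrm{ab}})$ and is therefore finite locally free by descent; you instead bound $\ker(g)\subseteq G[m]$ and prove $G[m]$ finite locally free via $0\to T[m]\to G[m]\to B[m]\to 0$. Your route works, but it quietly invokes Deligne's theorem that a commutative finite locally free group scheme is killed by its order (to produce $N_1,N_2$); this should be cited, or avoided altogether by using the torsor structure on $\ker(g)$ itself as the paper does, which makes the $G[m]$ detour unnecessary. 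Also, ``finite plus flat is finite locally free'' needs ``locally of finite presentation'' over a general base; this is harmless here since $\ker(g)$ (resp.\ $\ker(g_{\mathrm{ab}})$) is a base change of the locally finitely presented morphism $g$ (resp.\ $g_{\mathrm{ab}}$), which is exactly the point the paper spells out via the Stacks project references, but you should say it.
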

 \begin{proof}
     (1) By \cite[Lemma 1.2.1]{Bertolin2009} the composition $T\hookrightarrow G\xrightarrow{g} G'\to B'$ is trivial, and thus $g$ induces $g_{\mathrm{t}}:T\to T'$ and $g_{\mathrm{ab}}:B\to B'$ as depicted in the commutative diagram
     \[\xymatrix{
     0\ar[r] &T\ar[r]\ar[d]_{g_{\mathrm{t}}} &G\ar[r]\ar[d]^{g} &B\ar[r]\ar[d]^{g_{\mathrm{ab}}} &0 \\
     0\ar[r] &T'\ar[r] &G'\ar[r] &B'\ar[r] &0
     }.\]
     
     (2) Let $s$ be any point of $S$. Applying the snake lemma to the base change of the above diagram to $\bar{s}$, we get an exact sequence
     \[0\to\ker((g_{\mathrm{t}})_{\bar{s}})\to \ker(g_{\bar{s}})\to \ker((g_{\mathrm{ab}})_{\bar{s}})\to \mathrm{coker}((g_{\mathrm{t}})_{\bar{s}})\to 0\]
     and $\mathrm{coker}((g_{\mathrm{ab}})_{\bar{s}})=0$. Since the reduced connected component of $\ker((g_{\mathrm{ab}})_{\bar{s}})$ is an abelian variety which is proper, and $\mathrm{coker}((g_{\mathrm{t}})_{\bar{s}})$ is a torus which is affine, we must have $\mathrm{coker}((g_{\mathrm{t}})_{\bar{s}})=0$. Therefore both $(g_{\mathrm{t}})_{\bar{s}}$ and $(g_{\mathrm{ab}})_{\bar{s}}$ are isogenies.

     By Lemma \ref{hom of tori}, $g_{\mathrm{t}}$ is faithfully flat with kernel finite locally free. Thus we have a short exact sequence $0\to\ker(g_{\mathrm{t}})\to \ker(g)\to \ker(g_{\mathrm{ab}})\to0$ with $\ker(g_{\mathrm{t}})$ a finite locally free group scheme over $S$.

     Since $(g)_{\bar{s}}$ (resp. $(g_{\mathrm{ab}})_{\bar{s}}$) is flat for any $s\in S$, so is $g$ (resp. $g_{\mathrm{ab}}$) by \cite[\href{https://stacks.math.columbia.edu/tag/039E}{Tag 039E}]{Stacks-project}. Therefore $g$ and $g_{\mathrm{ab}}$ are faithfully flat. 
     
     Since $\ker(g_{\mathrm{ab}})$ is both proper and quasi-finite, it is finite by \cite[\href{https://stacks.math.columbia.edu/tag/02LS}{Tag 02LS}]{Stacks-project}. Since $B\to S$ and $B'\to S$ are locally of finite presentation, so is $g_{\mathrm{ab}}$ by \cite[\href{https://stacks.math.columbia.edu/tag/02FV}{Tag 02FV}]{Stacks-project} (1). Then $\ker(g_{\mathrm{ab}})$ is locally of finite presentation over $S$ by \cite[\href{https://stacks.math.columbia.edu/tag/01TS}{Tag 01TS}]{Stacks-project}. Therefore $\ker(g_{\mathrm{ab}})$ is finite locally free by \cite[\href{https://stacks.math.columbia.edu/tag/02KB}{Tag 02KB}]{Stacks-project}. Since $\ker(g)$ is a $\ker(g_{\mathrm{t}})$-torsor over $\ker(g_{\mathrm{ab}})$, it is finite locally free over $\ker(g_{\mathrm{ab}})$. It follows that $\ker(g)\to S$ which is the composition $\ker(g)\to \ker(g_{\mathrm{ab}})\to S$ is finite locally free.
     \end{proof}

\end{document}